\documentclass [leqno,11pt]{amsart}    
\usepackage{amssymb,amsmath,latexsym,amsfonts,amsbsy,amsthm,mathtools,graphicx,color}    
\usepackage{float}                 
\usepackage{hyperref}     
\usepackage{scalerel,stackengine}    
\stackMath       
\newcommand\reallywidehat[1]{%
\savestack{\tmpbox}{\stretchto{%
  \scaleto{%
    \scalerel*[\widthof{\ensuremath{#1}}]{\kern-.6pt\bigwedge\kern-.6pt}%
    {\rule[-\textheight/2]{1ex}{\textheight}}
  }{\textheight}%
}{0.5ex}}%
\stackon[1pt]{#1}{\tmpbox}%
}

\definecolor{myback}{RGB}{204,232,207}


\usepackage{cancel}
\usepackage[dvipsnames]{xcolor}
\setlength{\oddsidemargin}{0mm} 
\setlength{\evensidemargin}{0mm} \setlength{\topmargin}{0mm}
\setlength{\textheight}{220mm} \setlength{\textwidth}{155mm}

\numberwithin{equation}{section}

\allowdisplaybreaks



\setlength{\oddsidemargin}{0mm}
\setlength{\evensidemargin}{0mm} \setlength{\topmargin}{0mm}
\setlength{\textheight}{220mm} \setlength{\textwidth}{155mm}

\numberwithin{equation}{section}

\allowdisplaybreaks



\let\b=\beta
\let\g=\gamma
\let\d=\delta

\let\la=\lambda

\let\s=\sigma
\let\f=\frac

\let\om=\omega

\let\Om=\Omega

\let\pa=\partial
\let\ep=\epsilon



\def\no{\noindent}

\def\eqdef{\buildrel\hbox{\footnotesize def}\over =}

\def\bbT{\mathbb{T}}

\def\fc{{\mathfrak c}}
\newcommand{\beq}{\begin{equation}}
\newcommand{\eeq}{\end{equation}}
\newcommand{\ben}{\begin{eqnarray}}
\newcommand{\een}{\end{eqnarray}}
\newcommand{\beno}{\begin{eqnarray*}}
\newcommand{\eeno}{\end{eqnarray*}}

\newtheorem{theorem}{Theorem}[section]

\newtheorem{lemma}[theorem]{Lemma}
\newtheorem{proposition}[theorem]{Proposition}
\newtheorem{corol}[theorem]{Corollary}
\newtheorem{remark}[theorem]{Remark}

\begin{document}

\title{A dynamical approach to the study of instability near Couette flow}

\author{Hui Li}
\address{Department of Mathematics, New York University Abu Dhabi, Saadiyat Island, P.O. Box 129188, Abu Dhabi, United Arab Emirates.}
\email{lihuiahu@126.com, lihui@nyu.edu}

\author{Nader Masmoudi}
\address{NYUAD Research Institute, New York University Abu Dhabi, Saadiyat Island, Abu Dhabi, P.O. Box 129188, United Arab Emirates\\
Courant Institute of Mathematical Sciences, New York University, 251 Mercer Street New York, NY 10012 USA}
\email{masmoudi@cims.nyu.edu}

\author{Weiren Zhao}
\address{Department of Mathematics, New York University Abu Dhabi, Saadiyat Island, P.O. Box 129188, Abu Dhabi, United Arab Emirates.}
\email{zjzjzwr@126.com, wz19@nyu.edu}

\begin{abstract}
In this paper, we obtain the optimal instability threshold of the Couette flow for Navier-Stokes equations with small viscosity $\nu>0$, when the perturbations are in the critical spaces $H^1_xL_y^2$. More precisely, we introduce a new dynamical approach to prove the instability for some perturbation of size $\nu^{\frac{1}{2}-\delta_0}$ with any small $\delta_0>0$, which implies that $\nu^{\frac{1}{2}}$ is the sharp stability threshold. In our method, we prove a transient exponential growth without referring to eigenvalue or pseudo-spectrum. As an application, for the linearized Euler equations around shear flows that are near the Couette flow, we provide a new tool to prove the existence of growing modes for the corresponding Rayleigh operator and give a precise location of the eigenvalues. 

\end{abstract}
\maketitle
\section{Introduction} 
\subsection{Hydrodynamic stability problems} 
Hydrodynamic stability is an active field of fluid mechanics that deals with the stability and instability of fluid flows. The field of hydrodynamic stability started in the nineteenth century with Stokes, Helmholtz, Reynolds, Rayleigh, Kelvin, Orr, Sommerfeld, and many others. The study of (in)stability of shear flows dates back to Rayleigh \cite{Rayleigh1880}, Kelvin \cite{Kelvin1887}, and Sommerfeld \cite{Sommerfeld1908}.

We consider the instability problem of shear flows for both viscous and inviscid fluids. Let us first introduce the two-dimensional incompressible Navier-Stokes and Euler equations in $\Omega=\bbT\times\mathbb{R}$:
\begin{equation}\label{eq-nl-NS}
  \left\{
    \begin{array}{ll}
      \pa_tU+U\cdot\nabla U+\nabla P-\nu\Delta U=0,\\
      \nabla\cdot U=0,
    \end{array}
  \right.
\end{equation}
where $\nu\ge0$ is viscosity. We denote by $U=(U^{(1)},U^{(2)})$ the velocity and $P$ the pressure. Let $W=-\pa_yU^{(1)}+\pa_xU^{(2)}$ be the vorticity, which satisfies
\begin{align*}
  \pa_t W+U\cdot\nabla W-\nu\Delta W=0.
\end{align*}
Let $b(t,y)$ solve the heat equation:
\begin{equation}\label{eq:heat}
  \left\{
    \begin{array}{l}
      \pa_tb(t,y)-\nu\pa_{yy}b(t,y)=0,\\
      b(0,y)=b_{in}(y).
    \end{array}
  \right.
\end{equation}
Then the shear flow $(b(t,y),0)$ is a solution of \eqref{eq-nl-NS} with vorticity $W=-\pa_yb$. For the Euler case ($\nu=0$), the shear flows $\big(b(y),0\big)$ are steady solutions to Euler equations. The special case, $b(t,y)=y$, namely the Couette flow $(y,0)$ is a steady solution of \eqref{eq-nl-NS} with $W=-1$ for both $\nu>0$ and $\nu=0$.

In this paper, we focus on the (in)stability of the shear flow $(b(t,y),0)$. It is natural to introduce the perturbation. Let $u=(u^{(1)},u^{(2)})=U-(b(t,y), 0)$ and $\om=W-(-\pa_yb)$, then $\omega$ satisfies
\begin{equation}\label{eq:b-perturbation}
  \left\{
    \begin{array}{ll}
      \partial_{t} \omega+b(t,y) \partial_{x} \omega-\nu \Delta \omega-\pa_{yy}b(t,y)u^{(2)}=-u \cdot \nabla \omega , \\
      u=-\nabla^{\perp}(-\Delta)^{-1} \omega=\big(\pa_y(-\Delta)^{-1} \omega,-\pa_x(-\Delta)^{-1} \omega\big),\\
      \omega|_{t=0}=\omega_{i n}.
    \end{array}
  \right.
\end{equation}
If $b(t,y)=y$, the equation is simpler:
\begin{equation}\label{eq-nl-om-chan}
  \left\{
    \begin{array}{ll}
      \partial_{t} \omega+y \partial_{x} \omega-\nu \Delta \omega=-u \cdot \nabla \omega , \\
      u=-\nabla^{\perp}(-\Delta)^{-1} \omega=\big(\pa_y(-\Delta)^{-1} \omega,-\pa_x(-\Delta)^{-1} \omega\big),\\
      \omega|_{t=0}=\omega_{i n}.
    \end{array}
  \right.
\end{equation}
 
The traditional starting point of the study of flow (in)stability usually consists of two stages. The first one is to linearize the system around the background shear flow. For Couette flow, the linearized equation is a simple transport-diffusion equation:
\begin{equation}\label{eq:LNS2}
  \left\{
    \begin{array}{ll}
      \partial_{t} \omega+y \partial_{x} \omega-\nu \Delta \omega=0,\\
      \psi=\Delta^{-1}\omega,\\
      \omega |_{t=0}=\omega_{in},
    \end{array}
  \right.
\end{equation}
where $\psi$ is the stream function. In \cite{Kelvin1887} Kelvin considered system \eqref{eq:LNS2} and showed that if $\hat{\om}(t,k,\eta)$ is the Fourier transform of $\om(t,x,y)$, then the solution of \eqref{eq:LNS2} can be written as
\beq\label{eq: Lin-sol}
\begin{split}
&\hat{\om}(t,k,\eta)=\hat{\om}_{in}(k,\eta+kt)\exp\left(-\nu\int_0^t|k|^2+|\eta-ks+kt|^2ds\right),\\
&\hat{\psi}(t,k,\eta)=\f{-\hat{\om}_{in}(k,\eta+kt)}{k^2+\eta^2}\exp\left(-\nu\int_0^t|k|^2+|\eta-ks+kt|^2ds\right),
\end{split}
\eeq
which gives that
\begin{equation}\label{eq: ED_and_ID}
  \begin{aligned}    
&\|\pa_yP_{\neq}\psi\|_{L^2}+\langle t\rangle \|\pa_xP_{\neq}\psi\|_{L^2}\leq C\langle t\rangle^{-1}\|P_{\neq}\om_{in}\|_{H^{2}},\\
&\|P_{\neq}\om\|_{L^2}\leq C\|P_{\neq}\om_{in}\|_{L^2}e^{-c\nu t^3},
  \end{aligned}
\end{equation}
where we denote by $P_{\neq}f=f(x,y)-\frac{1}{2\pi}\int_{\mathbb{T}}f(x,y)dx$ the projection to the nonzero mode of $f$. The first inequality in \eqref{eq: ED_and_ID} is the {\bf inviscid damping} and the second one is the {\bf enhanced dissipation}. 
The study of nonlinear stability is more difficult, and experiments show that any small perturbation to the linear shear can lead to the transition from the laminar shear flow to turbulence when the Reynolds number is large enough, which is the so-called Couette-Sommerfeld paradox \cite{LiLin2011,FalGiaMul2019} (also called the turbulence paradox \cite{Birkhoff1960}). It was suggested by Lord Kelvin \cite{Kelvin1887} that indeed the flow may be stable, but the stability threshold is decreasing as $\nu\to 0$, resulting in transition at a finite Reynolds number in any real application. In \cite{BGM2017}, Bedrossian, Germain, and Masmoudi formulated the following stability threshold problem:

{\it Given a norm $\|\cdot\|_X$, find a $\beta=\beta(X)$ so that
\begin{align*}
  &\|\omega_{in}\|_{X}\leq \nu^{\beta} \Rightarrow \text{stability},\\
&\|\omega_{in}\|_{X}\gg \nu^{\beta} \Rightarrow \text{instability}.
\end{align*}
}
One can reformulate this in terms of the enhanced dissipation and inviscid damping for the nonlinear system which yield asymptotic stability:

{\it Given a norm $\|\cdot\|_{X}$ $(X\subset L^2)$, determine a $\beta=\beta(X)$ so that if the initial vorticity satisfies $\|\om_{in}\|_{X}\ll \nu^{\beta}$, then for $t>0$
\begin{equation}\label{eq: enha-invis}
  \|\omega_{\neq}\|_{L^2_{x,y}}\leq C\|\omega_{in}\|_{X}e^{-c\nu^{\frac{1}{3}}t}\quad \text{and}\quad
\|u_{\neq}\|_{L^2_{t,x,y}}\leq C\|\omega_{in}\|_{X},
\end{equation}
hold for the Navier-Stokes equation \eqref{eq-nl-om-chan}.}

We summarize some of the recent stability results for the 2D Couette flow in the following tables: 
\begin{table}[H]
\centering
\caption{2D Couette flow} 
\medskip
 \begin{tabular}{|c|c|c|c| }
\hline
Space & $\beta$ &Boundary&Reference \\
\hline
$H_x^{log}L_y^2$ & $\b\geq\frac{1}{2}$ &No&\cite{BVW2018,MasmoudiZhao2020cpde} \\
\hline
$H^{\s}$ & $\b\geq \frac{1}{3}$ &No&\cite{MasmoudiZhao2019} \\
\hline
Gevrey-${2_-}$ & $\b=0$ &No&\cite{BMV2016} \\
\hline
Gevrey-${\f1s}$ & $s=\f{1-3\b}{2-3\b},\ \b\in[0,\f13]$ &No&\cite{LMZ-G2022} \\
\hline
$H^{1}$ & $\b\geq \frac{1}{2}$ &Non-slip&\cite{CLWZ2020} \\
\hline
\end{tabular} 
\end{table} 

To consider the instability problem, the linearization around the Couette flow fails to give growth. A modified linearization is needed, namely, one may consider the linearized equation around a well-chosen shear flow near the Couette flow. More precisely, we consider the linearized equation
\begin{equation}\label{eq:b-perturbation-linear}
  \left\{
    \begin{array}{ll}
      \partial_{t} \omega+b(t,y) \partial_{x} \omega-\nu \Delta \omega-\pa_{yy}b(t,y)u^{(2)}=0, \\
      u=-\nabla^{\perp}(-\Delta)^{-1} \omega=\big(\pa_y(-\Delta)^{-1} \omega,-\pa_x(-\Delta)^{-1} \omega\big)\\
      \omega |_{t=0}=\omega_{i n}. 
    \end{array}
  \right.
\end{equation}
We also introduce the linearized operator $\mathcal{L}_{\nu,b}=b(t,y) \partial_{x} -\pa_{yy}b(t,y)\pa_x(\Delta)^{-1}-\nu \Delta$. 
Then we can rewrite \eqref{eq:b-perturbation-linear} as
\begin{align*}
  \pa_t\omega+\mathcal L_{\nu,b}\omega=0.
\end{align*}

The second stage of the traditional approach is to look for unstable eigenvalues of the linearized problem or to study the resolvent, namely to study its eigenvalue problem (consider first the case $b(y)$ is independent of time $t$)
\ben\label{eq:eigenvalu-pb}
\mathcal{L}_{\nu,b}\Om(\la,x,y)=\la\Om(\la,x,y).
\een
It is also related to the corresponding resolvent $(\la-\mathcal{L}_{\nu,b})^{-1}$. 
If $\nu=0$, then $\mathcal{L}_{0,b}$ is the Rayleigh operator for the linearized Euler equation \cite{Lin2003,Howard1961}, and \eqref{eq:eigenvalu-pb} can be written as the Rayleigh equation, and if $\nu>0$, then $\mathcal{L}_{\nu,b}$ is the Orr–Sommerfeld operator for the linearized Navier-Stokes equation \cite{GGN2016,LiLin2011}, and \eqref{eq:eigenvalu-pb} can be written as the Orr–Sommerfeld equation \cite{Sommerfeld1908}. Both equations are elliptic equations with degenerate or singular coefficients. Suppose that there exists $(\la,\Om)$ with $Re\, \la>0$, then the solution with initial data $\Om(\la,x,y)$
\beno
\om(t,x,y)=\Om(\la,x,y)e^{\la t}
\eeno 
grows exponentially. The eigenvalue problem is helpful in the study of normal operators or operators that are close to normal. For the non-normal operator, the growth may not be due to the existence of eigenvalue. In a remarkable paper entitled `Hydrodynamic Stability Without Eigenvalues' \cite{TTRD1993}, the authors used the $\ep$-pseudo-spectrum ($\s_{\ep}(\mathcal L)=\{\la\in \mathbb{C} \, \|(\la-\mathcal L)^{-1}\|\geq \ep^{-1}\}$) and `pseudoresonance' to explain the growth. More precisely, for a nonnormal operator $\mathcal L$, due to the non-normality, `pseudoresonance' can occur even if $\la$ is far from the spectrum. This resonance leads to a transient growth of the semigroup $e^{-t\mathcal L}$. There are many numerical results using pseudo-spectrum to explain the `subcritical transition to turbulence' phenomenon \cite{HRST2002, RSH1993, TTS1999}. However, there are very few mathematical rigorous results, since the $\ep$-pseudo-spectrum set $\s_{\ep}(\mathcal L)$ is difficult to calculate in a mathematically rigorous way. 

For the threshold problem, the recent stability results take advantage of the change of coordinate, which allows us to get a very precise long-time dynamic. Here we are at the interface between stability and instability, which allows us to borrow some ideas from the study of stability problems. We develop a dynamical approach to obtain the exponential lower bound estimate for the {{}solution operator} generated by $\mathcal{L}_{\nu,b}$ without referring to eigenvalues or studying the resolvent. More precisely, we use the change of coordinate to absorb the transport term and study the resonance caused by the nonlocal term at each critical time. As a consequence, such  {{}estimate for the solution operator} gives the optimal instability threshold. Here the word resonance refers to a strong effect caused by one frequency on another one.

\subsection{Optimal instability threshold}
We consider the viscous flow and study the optimality of the size of the initial perturbation. More precisely, we consider the case $X=H^1_xL_y^2$ and show that $\nu^{\f12}$ is the stability threshold, i.e., 
\beno
\|\omega_{in}\|_{H^1_xL_y^2}\gg \nu^{\frac{1}{2}-} \Rightarrow \text{instability}.
\eeno
Here we recall that the stability part, namely
\beno
\|\omega_{in}\|_{H^{log}_xL_y^2}\le \varepsilon\nu^{\frac{1}{2}} \Rightarrow \text{stability}
\eeno
is proved in \cite{MasmoudiZhao2020cpde}.

Our first result states as follows: 
\begin{theorem}\label{thm-low-exp}
Let $\nu>0$ be small enough. For any small $\delta_0>0$, there exist $M>0$ independent of $\nu$ and shear flows $(b_\nu(t,y),0)$ such that $\pa_{t}b_\nu-\nu\pa_{yy}b_\nu=0$, with $b_{\nu}(0,y)=b_{in}(y)$ satisfying
\begin{align}
 CM\nu^{\f12-\d_0}\ge  \|b_{in}(y)-y\|_{L^{\infty}\cap \dot{H}^1}\ge cM\nu^{\f12-\d_0}
\end{align}
and the linear and nonlinear enhanced dissipation \eqref{eq: enha-invis} fail for the shear flow $(b_\nu(t,y),0)$. 

More precisely, there exist $\omega_{in}(x,y)$ with $\int_{\mathbb{T}}\om_{in}(x,y)dx=0$ such that 
the solution of the linear system \eqref{eq:b-perturbation-linear} with initial data $\omega_{in}$ satisfies: 
for $t\in \big[0,T\big]$ with $T=\varepsilon_1\nu^{-\frac{1}{3}+\frac{2}{3}\delta_0}\ln (\nu^{-1})$ and $\varepsilon_1>0$ small enough depending on $\delta_0$ and independent of $\nu$, 
\begin{align}\label{eq-est-linear-viscous}
   C\|\omega_{in}\|_{H^1_xL_y^2}e^{C\nu^{\frac{1}{3}-\frac{2}{3}\delta_0}t}\ge\|\omega_{\pm 1}(t)\|_{L^2_{x,y}}\ge c\|\omega_{in}\|_{H^1_xL_y^2}e^{c\nu^{\frac{1}{3}-\frac{2}{3}\delta_0}t}.
\end{align}
If the initial data $\omega_{in}(x,y)$ satisfies $\int_{\mathbb{T}}\om_{in}(x,y)dx=0$ and $\|\omega_{in}\|_{H^1_xL_y^2}\approx \varepsilon_0\nu^{\frac{1}{2}+\delta_1}$, then the solution of the nonlinear system \eqref{eq:b-perturbation} with initial data $\omega_{in}$ satisfies: 
for $t\in \big[0,T\big]$ with $T=\varepsilon_1\nu^{-\frac{1}{3}+\frac{2}{3}\delta_0}\ln (\nu^{-1})$ and $\varepsilon_1>0$ small enough depending on $\delta_0,\d_1$ and independent of $\nu$, 
\begin{align}\label{eq-est-nonlinear-viscous}
  C\|\omega_{in}\|_{H^1_xL_y^2}e^{C\nu^{\frac{1}{3}-\frac{2}{3}\delta_0}t}\ge\|\omega_{\pm 1}(t)\|_{L^2_{x,y}}\ge c\|\omega_{in}\|_{H^1_xL_y^2}e^{c\nu^{\frac{1}{3}-\frac{2}{3}\delta_0}t}.
\end{align}
Here $C, c>0$ are constants independent of $t,\nu,\varepsilon_1$ and $f_{\pm 1}(x,y)=\frac{1}{2\pi}\int_{\mathbb{T}}f(x,y)e^{-(\pm)ix}dxe^{\pm ix}$ is the $\pm 1$ Fourier mode.

In particular at $t=T$, for both cases, 
\begin{align}\label{eq-growth-T}
   \|\omega_{\pm 1}(T)\|_{L^2_{x,y}}\ge \frac{c}{\nu^{c\varepsilon_1}}\|\omega_{in}\|_{H^1_xL_y^2}.
\end{align} 
\end{theorem}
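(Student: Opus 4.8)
The plan is, at the level of the first Fourier mode in $x$, to build an explicit approximate growing solution for \eqref{eq:b-perturbation-linear} and then to control the remainder by the new (weighted, time-dependent) energy estimate.

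\textbf{Choice of the shear and reduction to one mode.} I would take $b_{in}(y)-y=A\chi(y)\cos(\alpha y)$ with $\chi$ a fixed smooth bump, $\alpha=\alpha_\nu\approx M^{-1}\nu^{-\frac16+\frac{\delta_0}{3}}$ and $A=A_\nu\approx M\nu^{\frac12-\delta_0}/\alpha$, so that $\|b_{in}-y\|_{L^\infty\cap\dot H^1}\approx M\nu^{\frac12-\delta_0}$ while $\|\pa_{yy}b_{in}\|_{L^\infty}\approx A\alpha^2\approx\nu^{\frac13-\frac23\delta_0}=:r$. Since $b_\nu(t,\cdot)$ is the heat evolution of $b_{in}$ and $\nu\alpha^2T\approx\varepsilon_1\nu^{\frac13+\frac43\delta_0}\ln\nu^{-1}\ll1$, both $b_\nu(t,\cdot)-y$ and $\pa_{yy}b_\nu(t,\cdot)$ keep these sizes on $[0,T]$; likewise $\nu T^3\ll1$, so enhanced dissipation of the perturbation itself is inactive on $[0,T]$ and one is essentially analyzing the linearized (nearly inviscid) Euler operator around $b_\nu$. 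Because \eqref{eq:b-perturbation-linear} preserves the $x$-frequency, I restrict to the $y$-profile $\om_1:=\widehat\om(t,1,\cdot)$ of the first mode, which solves
\begin{equation*}
\pa_t\om_1+ib_\nu(t,y)\om_1-\nu(\pa_{yy}-1)\om_1=-i\,\pa_{yy}b_\nu(t,y)\,(\pa_{yy}-1)^{-1}\om_1 ,
\end{equation*}
with $\om_{-1}=\overline{\om_1}$ and $\|\om_{\pm1}\|_{L^2_{x,y}}\approx\|\om_1\|_{L^2_y}$. The upper bounds in \eqref{eq-est-linear-viscous}, \eqref{eq-est-nonlinear-viscous} then follow from Gr\"onwall, since $\|(\pa_{yy}-1)^{-1}\|_{L^2\to L^2}\le1$, $\|\pa_{yy}b_\nu\|_{L^\infty}\approx r$, and the transport and viscous terms do not contribute to the growth of $\|\om_1\|_{L^2_y}$ (in the nonlinear case one absorbs $u\cdot\na\om$ using the bootstrap estimate below).

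\textbf{The growing direction.} In the mixing frame adapted to $b_\nu$ — conjugating $\om_1$ by $e^{i\int_0^tb_\nu(s,y)\,ds}$, which on $[0,T]$ equals $e^{itb_{in}(y)}$ up to a factor $1+O(\nu\alpha^2t^2)$ — the transport term disappears, the viscosity becomes an (inactive) enhanced-dissipation multiplier, and $(\pa_{yy}-1)^{-1}$ becomes the Orr multiplier, whose $y$-frequency symbol is concentrated near the moving critical times and carries the inviscid-damping/un-mixing gain. A purely $L^2$ energy does not grow — $\tfrac{d}{dt}\|\om_1\|_{L^2}^2$ has no contribution from the source when $\om_1$ is real — so the growth is a genuine Orr/phase effect. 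The oscillation $e^{\pm i\alpha y}$ carried by $b_{in}$ and $\pa_{yy}b_\nu$ couples $y$-frequency $\eta$ to $\eta\pm\alpha$; iterating, the dynamics organizes along a chain of frequencies spaced by $\alpha$, and each passage of a link through its critical time feeds amplified amplitude into the next link. Summing the per-step gains over the $O(\varepsilon_1\ln\nu^{-1})$ links active on $[0,T]$ produces the net rate $r$, which — being larger than the enhanced-dissipation rate $\nu^{\frac13}$ — genuinely survives. I would realize this with an explicit $\om_1^{\mathrm{app}}(t)$, a superposition over the frequency chain with coefficients given by the integrated Orr kernel, solving \eqref{eq:b-perturbation-linear} up to a small forcing and with $\|\om_1^{\mathrm{app}}(t)\|_{L^2}\gtrsim\|\om_{in}\|_{L^2}e^{crt}$ on $[0,T]$.

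\textbf{The new energy estimate.} Writing $\om_1=\om_1^{\mathrm{app}}+\om_1^{\mathrm{err}}$, the remainder solves the same equation with a source of size $\lesssim(\text{small})\cdot\|\om_1^{\mathrm{app}}\|$. The core of the proof is a time-dependent weighted energy $E(t)\approx\|\om_1^{\mathrm{err}}(t)\|_{\mathcal{X}}^2$, in a norm $\mathcal{X}$ upweighting the frequency band not yet reached by the active link (so the error is seen as at most mildly growing), satisfying
\begin{equation*}
\tfrac{d}{dt}E(t)\le Cr\,E(t)+C\,(\text{small})^2\,\|\om_1^{\mathrm{app}}(t)\|_{\mathcal{X}}^2,\qquad t\in[0,T],
\end{equation*}
whose integration with $E(0)=0$ gives $\|\om_1^{\mathrm{err}}(t)\|_{\mathcal{X}}\ll\|\om_1^{\mathrm{app}}(t)\|_{L^2}$ on $[0,T]$ once $\varepsilon_1$ (hence $T$ and the accumulated error) is small enough in terms of $\delta_0$. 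Together with the lower bound on $\|\om_1^{\mathrm{app}}\|$ this yields the lower bound in \eqref{eq-est-linear-viscous}, and \eqref{eq-growth-T} on setting $t=T$, since $e^{crT}=\nu^{-c\varepsilon_1}$. The error terms to be absorbed into $Cr\,E$ are the $O(\nu\alpha^2t^2)$ and $O(\nu t^3)$ corrections from the frame change and the viscosity, the non-resonant frequency interactions, and the contribution of the cutoff $\chi$; each is $\lesssim r\,E$ on $[0,T]$.

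\textbf{Nonlinear case and main obstacle.} For \eqref{eq:b-perturbation} with $\|\om_{in}\|_{H^1_xL^2_y}\approx\varepsilon_0\nu^{\frac12+\delta_1}$ I would add $u\cdot\na\om$ to the error forcing and run the above estimate as a bootstrap: the linear growth inflates $\|\om\|$ by at most $\nu^{-C\varepsilon_1}$ on $[0,T]$, so $\|\om\|\ll r$ when $\varepsilon_1$ is small relative to $\delta_1$; the nonlinear contribution to $\tfrac{d}{dt}E$ is then $\lesssim\|\om\|\,rE\ll rE$, the higher $x$-modes it creates are negligible on the time scale $T$, and the self-generated zero mode stays far below the size of $b_\nu-y$; hence the energy inequality and the growth persist, giving \eqref{eq-est-nonlinear-viscous}. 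The main obstacle is precisely the construction and analysis of the pair $(\om_1^{\mathrm{app}},\mathcal{X})$: one must prove that the linear echo chain forces genuine exponential growth at rate $\sim r$ rather than a bounded transient, and choose $\mathcal{X}$ so that the remainder — and, in the nonlinear problem, the remainder together with the echoes generated by $u\cdot\na\om$ — cannot itself grow at rate $r$ and so cannot cancel the growth on $[0,T]$.
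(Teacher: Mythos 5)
Your overall architecture (reduction to the $\pm1$ modes, an echo--chain lower bound plus a controlled remainder, Duhamel and a bootstrap for the nonlinear terms) is in the right spirit, but the argument fails at its very first step: the shear profile you choose cannot produce the claimed growth. With $b_{in}(y)-y=A\chi(y)\cos(\alpha y)$, $\alpha\approx M^{-1}\nu^{-1/6+\delta_0/3}$ and $A\alpha^2\approx r:=\nu^{1/3-2\delta_0/3}$, you get $\|\widehat{\pa_{yy}b_{in}}\|_{L^1_\xi}\approx A\alpha^2\,\|\hat\chi\|_{L^1}\approx r\ll 1$. In the sheared Fourier variables the mode-$1$ profile satisfies $\pa_t\hat h(t,\xi)=\int K(\xi-\eta)\,\frac{\hat h(t,\eta)}{(\eta-t)^2+1}\,d\eta+\dots$ with $\|K\|_{L^1_\xi}\lesssim r$, and the Duhamel/Neumann-series bound $\sup_{s\le t,\,\xi}|\hat h|\le \|\hat h(0)\|_{L^\infty_\xi}+\pi\|K\|_{L^1}\sup_{s\le t,\,\xi}|\hat h|$ gives $\|\hat h(t)\|_{L^\infty_\xi}\le (1-\pi r)^{-1}\|\hat h(0)\|_{L^\infty_\xi}$ for \emph{all} $t$: the total amplification is $1+O(r)$, not $e^{crt}$ with $t$ up to $\varepsilon_1 r^{-1}\ln\nu^{-1}$. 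Equivalently, in your chain each link is fed only during an $O(1)$ time window (the width of $\hat\chi$) at rate $O(r)$, so the per-echo gain is $O(r)\ll1$ and the recursion $c_n\approx c_n^{(0)}+O(r)\,c_{n-1}$ stays bounded; your heuristic ``growth rate $=\|\pa_{yy}b\|_{L^\infty}$'' is only an upper bound, never a lower one.

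What is actually needed --- and what the paper arranges --- is a per-echo gain $\gtrsim 1$, i.e.\ $\|\widehat{b_{in}''}\|_{L^1_\xi}\approx\|b_{in}''\|_{L^\infty}\approx M\gg1$, with $\widehat{b_{in}''}$ peaked at frequency $\sim\gamma^{-1}$ \emph{and of width} $\sim\gamma^{-1}$, so that each excited frequency is fed for a time $\sim\gamma^{-1}$ at rate $\sim M\gamma$ and gains a factor $\ge 2$ per step (the Fibonacci recursion in Section 3); the rate $c\gamma$ then comes from one multiplicative step per time $\gamma^{-1}$. This is achieved by concentrating the perturbation at the single scale $\gamma=\nu^{1/3-2\delta_0/3}$ (the Gaussian $b_{in}'-1=2\sqrt\pi M\gamma e^{-y^2/\gamma^2}$), which keeps $\|b_{in}-y\|_{L^\infty\cap\dot H^1}\approx M\nu^{1/2-\delta_0}$ while making $\|b_{in}''\|_{L^\infty}\approx M$ large. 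In your ansatz this forces $\alpha\approx\gamma^{-1}$ and an envelope of width $\approx\gamma$, not $\alpha\approx\nu^{-1/6}$ with an $O(1)$ envelope. A secondary, non-fatal difference: the paper does not build an explicit approximate solution with a weighted error norm, but proves pointwise-in-frequency lower bounds on $\hat h$ by induction over time--frequency cells of size $\gamma^{-1}N^{-1}$ with $N=\lfloor\gamma^{-1/3}\rfloor$, combined with a ghost-weight upper bound in Chemin--Lerner type spaces for the Duhamel and nonlinear terms; your plan could in principle be recast in that form once the profile is corrected.
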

Let us compare this result with the previous stability threshold and explain the optimality. We first introduce a corollary of the results in \cite{MasmoudiZhao2020cpde}: 
\begin{corol}\label{col:easy-corol}
  Let $\nu>0$ be small enough, there exist $\varepsilon_0>0$ independent of $\nu$, such that for every shear flow $(b(t,y),0)$ solving $\pa_{t}b-\nu\pa_{yy}b=0$, with $b(0,y)=b_{in}(y)$ satisfying
\begin{align*}
  \|b_{in}(y)-y\|_{L^\infty\cap \dot{H}^1}\leq \varepsilon_0\nu^{\frac{1}{2}},
\end{align*}
the linear and nonlinear enhanced dissipation holds for the shear flow $(b(t,y),0)$. 

More precisely, for every $\omega_{in}(x,y)$ the solution of the linear system \eqref{eq:b-perturbation-linear} with initial data $\omega_{in}$ satisfies: 
\begin{align*}
  \|\omega_{\neq}(t)\|_{L^2_{x,y}}\le C\|\omega_{in}\|_{H^{log}_xL_y^2}e^{-c\nu^{\frac{1}{3}}t}.
\end{align*}
If the initial data satisfies $\|\omega_{in}\|_{H^1_xL_y^2}\le\varepsilon_0\nu^{\frac{1}{2}}$, then the solution of the nonlinear system \eqref{eq:b-perturbation} with initial data $\omega_{in}$ satisfies: 
\begin{align*}
  \|\omega_{\neq}(t)\|_{L^2_{x,y}}\le C\|\omega_{in}\|_{H^{log}_xL_y^2}e^{-c\nu^{\frac{1}{3}}t}.
\end{align*}
\end{corol}
Corollary \ref{col:easy-corol} seems stronger than the results stated in \cite{MasmoudiZhao2020cpde}, as the Couette flow is the special case with $b_{in}(y)-y=0$. However, both results are equivalent. The main reason is that the difference between shear flow $b(t,y)$ and Couette flow is the same size as the perturbation $\omega_{in}$. Linearizing around the Couette flow and the shear flow $\big(b(t,y),0\big)$ are the same. However, if the deviation of the shear flow $b(t,y)$ from the Couette flow is slightly larger, then the linearization around the Couette flow is not accurate anymore. Instead of studying \eqref{eq:LNS2}, it is better to study \eqref{eq:b-perturbation-linear}, which can be regarded as a secondary linearization, see \cite{BOH1988,gill1965, LiLin2011} for some linear instability results. Due to the dissipation effect, such linear growth is a transient growth, which could trigger nonlinear instability, and lead to the transition to turbulence \cite{Waleffe1995, KarpCohen2017}. The secondary linearization gives a possible resolution to the Couette-Sommerfeld paradox.

\begin{remark}
  We emphasize that the constants $c$ and $\varepsilon_1$ in \eqref{eq-growth-T} are independent of $\nu$. Since we consider the small viscosity problem, at time $T$, the amplification can be made as large as we want by taking the limit $\nu\to 0+$.
\end{remark}

\begin{remark}
In \cite{gill1965}, Gill considers the instability of the shear flow $\big(b(y),0\big)$ of the form
\begin{align*}
  b(y)=y+\gamma^2M \Upsilon (\frac{y}{\gamma}).
\end{align*}
He points out that if the shear flow is to become turbulent, then $\gamma^3/\nu$ must necessarily be large. Generally, nonlinear growth happens before the transition. By a formal argument, he obtained the instability and gave the growth rate $e^{C\gamma t-c\nu \gamma^{-1}t^2}$. This is transient growth. The exponential decay part $e^{-c\nu \gamma^{-1}t^2}$ is due to the enhanced dissipation which stabilizes the linear system. Based on such a rate, he conjectured that the time taken for the growth should be of order $\gamma^{-1}\ln(\gamma^{-1})$. 
In our paper, we take $\gamma=\nu^{\frac{1}{3}-\frac{2}{3}\delta_0}$, and give a rigorous proof of the instability with a more precise growth rate as well as a control of the growth time. Moreover, we prove the enhanced dissipation of the solution to the linearized equation \eqref{eq:b-perturbation-linear}, see Appendix \ref{appendix-C}.
\end{remark}
\begin{remark}
  In Theorem \ref{thm-low-exp} we find a flow that gradually deviates from a shear flow. Indeed, the $L^2$ norm of the nonzero mode of the total vorticity $\mathcal P_{\neq}W(t,x,y)=\mathcal P_{\neq}\omega(t,x,y)$ grows exponentially in time, and the enstrophy transfers from the zero mode to the nonzero modes. We track the evolution of $\omega(t,x,y)$ till $T=\varepsilon_1\nu^{-\frac{1}{3}+\frac{2}{3}\delta_0}\ln (\nu^{-1})$. Due to the enstrophy conservation law for the Couette flow i.e. $\|W(t)+1\|_{L^2}\le\|W(0)+1\|_{L^2}$, the exponential growth should stop at some finite time for the nonlinear problem. The growing time $T$ in Theorem \ref{thm-low-exp} is optimal in terms of $\nu$ up to a constant.

  After time $T$, there are two possibilities. One is that the nonzero modes decay back to $0$, the flow first approaches a shear flow and then the Couette flow. The second one is that the nonzero modes do not decay immediately. One may expect that the laminar flow transits to turbulence or that the flow forms some cat's eye structure. In both cases, due to the dissipation effect, the flow will approach the Couette flow as $t\to+\infty$.
\end{remark}

\subsection{Instability of the inviscid flow}
The traditional approach of studying the (in)stability problem is to get the semigroup estimate by studying the eigenvalue problem \cite{Lin2003} or the resolvent \cite{HelSjo2010}. Now we can get the upper and lower bound of the semigroup directly. Moreover, we can deduce some information about the existence and the location of eigenvalues from the lower bound estimate of the semigroup.

We study the linearized Euler equation around the shear flow $\big(b_{0}(y),0\big)$:
\begin{align}\label{eq-Euler}
  \pa_t\omega+\mathcal R_{M,\gamma}\omega=0,
\end{align}
 where
\begin{align}\label{eq-b0}
  b_{0}(y)=\int^y_01+2\sqrt\pi M\gamma e^{-\frac{y'^2}{\gamma^2}}dy',
\end{align}
and
  \begin{align}\label{eq-rayleigh-ori}
   \mathcal R_{M,\gamma}=b_{0}(y)\pa_x-b_{0}''(y)\pa_x(\Delta)^{-1}
  \end{align}
 is the Rayleigh operator.  
We have the following instability result.
\begin{theorem}\label{thm-invisid}
  Let $M_0>0$ be big enough. For each $M\ge M_0$ there exists $0<\gamma_0=\gamma_0(M)$ such that for $0<\gamma\le\gamma_0$, the Rayleigh operator $\mathcal R_{M,\gamma}$ has an unstable eigenvalue $\lambda=\lambda_r+i\lambda_i$ such that $-CM\gamma\le \lambda_r<-\gamma$ and $|\lambda_i|\leq 4\gamma\sqrt{\ln\big(\ln(\gamma^{-1})\big)}$. As a consequence, there exists $\omega_{in}(x,y)\in L^2_{x,y}$ such that for $\forall t>0$
  \begin{align}
    \|e^{-t\mathcal R_{M,\gamma}}\omega_{in}\|_{L^2_{x,y}}\ge C^{-1}e^{\gamma t}\|\omega_{in}\|_{L^2_{x,y}}.
  \end{align}
  Here the constant $C>0$ is independent of $M$ and $\gamma$.
\end{theorem}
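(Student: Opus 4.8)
\medskip
\noindent\textbf{Proof strategy.} The plan is to run the new energy method of Theorem~\ref{thm-low-exp} with $\nu=0$, and then to read off the eigenvalue from the resulting semigroup lower bound by a soft spectral argument. First I would Fourier transform in $x$ and, using the conjugation symmetry $k\mapsto-k$, work with a single mode, say $k=1$, so that $\mathcal R_{M,\gamma}$ acts on $\omega=\omega(y)$ by $\mathcal R_1\omega=i\big(b_0\omega-b_0''\Delta_1^{-1}\omega\big)$, $\Delta_1=\partial_{yy}-1$. The rescaling $y=\gamma Y$ (together with rescaling time by $\gamma$) turns the bump into an $O(1)$, $\gamma$-independent object: $b_0''(\gamma Y)=-4\sqrt\pi\,MYe^{-Y^2}$, while $b_0(\gamma Y)=\gamma Y+\pi M\gamma^2\,\mathrm{erf}(Y)$ and, on the bump scale, $\Delta_1^{-1}$ is convolution against $-\tfrac12 e^{-|y-y'|}$, which to leading order simply replaces $\omega$ by the constant $-\tfrac12\int\omega\,dy$. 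Writing $h$ for this ``velocity at the origin'' $\psi(0)\approx-\tfrac12\int\omega\,dy$ and using the odd symmetry of $b_0''$, the low moments $A=\int\omega\,dY$, $B=\int Y\omega\,dY$ satisfy $\dot A=-iB$, $\dot B=i\pi MA+(\text{higher moments})$ in the rescaled time, so $\ddot h\approx\pi M\,h$ up to errors. This isolates the driving mechanism and reproduces Gill's formal rate $e^{C\gamma t}$ with $C\asymp\sqrt{\pi M}$; in particular the expected exponential rate $\gamma\sqrt{\pi M}$ is comfortably inside the window $(\gamma,\,CM\gamma)$ once $M$ is large.

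The technical heart is the energy functional. Because there is no dissipation to sweep away the debris the shear produces, it cannot be $\|\omega\|_{L^2}^2$ itself (the sheared-off part of $\omega$ persists in $L^2$); guided by the reduction above, the candidate is a time-dependent, spatially weighted quadratic form $E(\tau)$ built from the low-pass object $\psi=\Delta_1^{-1}\omega$ (and its first derivative), comparable to $|h|^2$ up to a controlled remainder, and tuned so that (i) the skew term $iY\omega$ and the nonlocal interaction combine into a coherent gain rather than an Orr-type cancellation -- concretely by starting from data on the unstable branch $B\approx i\sqrt{\pi M}\,A$ -- and (ii) $E$ is essentially blind to the high-$Y$-frequency part of $\omega$. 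One then proves a differential inequality $\frac{d}{d\tau}E\ge\big(2\sqrt{\pi M}-\mathrm{err}(\tau)\big)E$, where $\mathrm{err}(\tau)$ gathers the higher-moment contributions (the Orr mechanism fighting back), the $O(\gamma)$ corrections to $b_0$ and to $\Delta_1^{-1}$, the $O(M\gamma)$ term from $\pi M\gamma\,\mathrm{erf}(Y)$, and the Gaussian tails of $b_0''$; once $\gamma\le\gamma_0(M)$ these are small on a long interval $[0,T]$ with $T\asymp\gamma^{-1}\ln(\gamma^{-1})$ -- and, with more care, integrable on $[0,\infty)$ -- while the margin $\sqrt{\pi M}\gg1$ leaves ample room. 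Integrating yields $E(\tau)\gtrsim e^{2\sqrt{\pi M}\tau}E(0)$, hence $\|e^{-t\mathcal R_{M,\gamma}}\|_{L^2\to L^2}\ge C^{-1}e^{\gamma t}$, keeping only the crude rate $\gamma$.

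From here the eigenvalue follows by a standard argument. The nonlocal term $b_0''\partial_x\Delta^{-1}$ is relatively compact with respect to $b_0\partial_x$ (a Schwartz-class multiplier composed with a smoothing operator), so the essential spectrum of $\mathcal R_{M,\gamma}$ is $i\mathbb R$ and the essential growth bound of $e^{-t\mathcal R_{M,\gamma}}$ vanishes; the exponential growth of the semigroup then forces a genuine eigenvalue $\lambda=\lambda_r+i\lambda_i$ of $\mathcal R_{M,\gamma}$ with $\lambda_r\le-\gamma$, and taking $\omega_{in}$ to be the corresponding eigenfunction gives the last displayed inequality of the theorem immediately. For the remaining bounds one pairs $\mathcal R_1\phi=\lambda\phi$ with $\phi$: after integration by parts these read $\lambda_r\|\phi\|^2=-\,\mathrm{Im}\!\int b_0'''\psi\bar\psi'$ and $\lambda_i\|\phi\|^2=\int b_0|\phi|^2-\mathrm{Re}\!\int b_0''\psi\bar\phi$ (with $\psi=\Delta_1^{-1}\phi$). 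Since $\phi=b_0''\psi/(b_0+i\lambda)$ with $|b_0+i\lambda|\ge|\lambda_r|\ge\gamma$ and $b_0''$ has Gaussian decay on scale $\gamma$, the eigenfunction is concentrated in $|y|\lesssim\gamma$ with profile essentially $\propto Ye^{-Y^2}$, so $\|\psi\|_{L^\infty}+\|\psi'\|_{L^\infty(\mathrm{supp}\,b_0'')}\lesssim\gamma^{1/2}\|\phi\|_{L^2}$ and hence $|\lambda_r|\le CM\gamma$. The near-evenness of this profile, together with $b_0(-y)=-b_0(y)$, makes the two integrals defining $\lambda_i$ almost cancel; carrying this out quantitatively, with the residual controlled by how far the Gaussian tail of $b_0''$ reaches before it drops below the relevant level $\sim(\ln\gamma^{-1})^{-1}$ (which happens only for $|Y|\gtrsim\sqrt{\ln\ln(\gamma^{-1})}$), yields $|\lambda_i|\le 4\gamma\sqrt{\ln\ln(\gamma^{-1})}$.

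The step I expect to be the main obstacle is the construction and closure of the energy inequality in the second paragraph. With no viscosity there is no enhanced dissipation to absorb what the shear carries off, so the functional must be engineered to be blind to that debris while still retaining the delicate $\sqrt M$ gain coming from the near-resonance between the shear and the bump-induced interaction; controlling the higher-moment error terms over the long time interval, with constants independent of $M$ and $\gamma$ as the statement demands, is where the real work lies -- and it is also the ultimate source of the $\ln\ln$ loss in the bound on $\lambda_i$.
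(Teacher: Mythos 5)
Your route differs from the paper's in both main steps, and the second step has a genuine gap. For the growth, the paper does not use a physical-space/moment energy functional at all: it works on the Fourier side of the sheared equation, partitions time-frequency into excitation/growth/waiting regions of width $N^{-1}\gamma^{-1}$, and runs an induction showing $\hat h(j\gamma^{-1},\xi)$ on $[(j+1)\gamma^{-1},(j+2)\gamma^{-1}]$ dominates a Fibonacci sequence (Propositions \ref{prop-lower}, \ref{prop-lower-Euler}). Your moment reduction $\ddot h\approx \pi M h$ is a reasonable heuristic for the same mechanism, but the functional $E(\tau)$ that is supposed to be ``blind to the debris'' is never constructed, and you correctly identify this as the hard part. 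More importantly, even granting the growth estimate, it is only transient: both your sketch and the paper's proof yield $\|e^{-it\mathcal R}\mathfrak w_{in}\|_{L^2}\ge c e^{c\gamma t}\|\mathfrak w_{in}\|_{L^2}$ only for $t\le T=\varepsilon_1\gamma^{-1}\ln(\gamma^{-1})$, i.e.\ a growth factor $\gamma^{-c\varepsilon_1}$ at one finite time.

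This is where your spectral extraction fails. The compact-perturbation argument gives $\omega_{ess}(e^{-t\mathcal R_{M,\gamma}})=0$, and then the absence of eigenvalues with negative real part (unstable ones) would only force the growth bound $\omega_0=\lim_{t\to\infty}t^{-1}\ln\|e^{-t\mathcal R}\|$ to vanish, i.e.\ $\|e^{-t\mathcal R}\|\le M_\epsilon(\gamma)e^{\epsilon t}$ for every $\epsilon>0$ with a constant that may depend on $\gamma$. This is perfectly consistent with a transient amplification by $\gamma^{-c\varepsilon_1}$ over a window of length $\gamma^{-1}\ln\gamma^{-1}$, so no contradiction arises and no eigenvalue is produced. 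The paper closes this loop differently: assuming no eigenvalues, it proves the \emph{quantitative} bound $\|e^{-it\mathcal R}\mathfrak w_{in}\|_{L^2}\le C\|\mathfrak w_{in}\|_{L^2}$ with $C$ independent of both $t$ and $\gamma$ (Lemma \ref{lem-up-bound-noeigen}), via the resolvent representation formula, the limiting absorption analysis of the inhomogeneous Rayleigh equation, and the uniform estimates on $\mathcal J_1,\dots,\mathcal J_4$; only this uniformity in $\gamma$ contradicts the $\gamma^{-c\varepsilon_1}$ transient growth as $\gamma\to0$. That upper bound is the real content your ``standard argument'' skips. (Separately, your localization identities are not the ones the paper uses: the bounds $-CM\gamma\le\lambda_r<-\gamma$ and $|\lambda_i|\le4\gamma\sqrt{\ln\ln\gamma^{-1}}$ come from the Rayleigh-quotient exclusion of $E_1\cup E_2$ and a sign/size analysis of the Wronskian $\mathcal D(\fc)$ on $E_3$, not from near-cancellation in the eigenfunction pairing; in particular the strict lower bound $|\fc_i|>\gamma$ requires showing $\mathcal D(\fc)\neq0$ on $E_3$, which your sketch does not address.)
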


\begin{remark}
  In \cite{Lin2003}, Lin proved the existence of (un)stable eigenvalues for the shear flow in the $\mathcal K^+$ class (see \cite{Lin2003} for the definition) with additional spectral assumption on the corresponding Schr{\"o}dinger type operator. By our dynamical approach, to obtain the existence of a growing mode, it is not necessary to check the spectral assumption. 
\end{remark}
\begin{remark}
  For the nonlinear instability, we refer to \cite{Grenier2000}. We remark that such nonlinear instability also holds for the shear flow $\big(b_{0}(y),0\big)$. 
\end{remark}
\subsection{New ideas and potential application}
Let us now highlight some new ideas in our dynamic approach. On one hand, if the instability problem is studied by finding unstable eigenvalues of $\mathcal{L}$, and the existence of the solution to the eigenvalue problem $\mathcal{L}\Om(\la,x,y)=\la\Om(\la,x,y)$ is obtained by a bifurcation or a fixed point argument, then the growth mechanism remains somehow unclear. Thus if the operator $\mathcal{L}$ is perturbed by the non-compact operator $-\nu\Delta$, then it is difficult to approach the instability problem, since the spectrum may change.  From Theorem \ref{thm-invisid} we can see that the growing mode $\la$ of the linearized operator $\mathcal{L}=y\pa_x-b''\pa_x\Delta^{-1}$ is small. There is a big cancellation between the transport term $y\pa_x\Om(\la,x,y)$ and the nonlocal term $b''\pa_x\Delta^{-1}\Om(\la,x,y)$, which could be broken when the diffusion term $-\nu \Delta\Om(\la,x,y)$ is added to the system. 

On the other hand, to absorb the transport part $y\pa_x$, the change of coordinate $(x,y)\to (z,y)$ with $z=x-ty$ (or its nonlinear modification) was introduced in many proofs of stability results \cite{BM2015,BGM2017,BVW2018}. Our method uses this idea to get rid of the transport term, thus the linearized operator after the change of coordinates becomes $\tilde{\mathcal{L}}_{\nu,b}=-b''\pa_z\Delta_{L}^{-1}-\nu\Delta_{L}$ with $\Delta_{L}=\pa_{zz}+(\pa_v-t\pa_z)^2$. So we only need to compare the effects of the nonlocal term $b''\pa_z\Delta_{L}^{-1}$ and the new diffusion $\nu\Delta_{L}$, which are both time-dependent operators. It turns out that for $t\ll\nu^{-\f13}$, the growth due to $b''\pa_z\Delta_{L}^{-1}$ is stronger than the diffusion effect. This time scale can be seen easily in the $(z,y)$ coordinate, and such information is hidden in the $(x,y)$ coordinate.  Moreover, after taking Fourier transform, the resonance between different frequencies becomes clear. We benefit from these good structures. 

Let us also mention the robustness of our method. 1. It can treat the time-dependent operator. 2. It provides a new way of proving the growth without solving \eqref{eq:eigenvalu-pb}. Moreover, such growth can be obtained for a large class of initial data instead of taking the initial data from some eigenspace. 3. It shows how the information transfers from one frequency to another and how the resonance happens. 4. It provides a new way of proving the existence of eigenvalues.

At last, let us mention some potential applications. There are a lot of linearized operators in fluid dynamics containing a transport term, some nonlocal terms, and a diffusion term. The transport term leads to the mixing which together with the diffusion term stabilizes the system. The nonlocal term causes potential growth. One can use the change of coordinate which is usually used in the stability problem to absorb the transport terms. In the new coordinate system, the information will move along the time. Linearized operators with such structure also appear in plasma physics, such as the Vlasov equation. Also, parallel to the transport structure, one can apply the idea to the study of the eigenvalues of the Schr\"odinger operator with (complex) potential, where the free  Schr\"odinger operator plays the same role as the transport term that offers scattering which stabilizes the system, and the potential plays the same role as the nonlocal term that creates some growth. 

\subsection{Historical comments of (in)stability results}
In this subsection, we present some historical comments. 
In studying the stability of Couette flow of inviscid flow, Orr \cite{Orr1907} observed an important phenomenon that the velocity tends to 0 as $t\to \infty$. This phenomenon is called inviscid damping, which is the analog in hydrodynamics of Landau damping found by Landau \cite{Landau1946}, which predicted the rapid decay of the electric field of the linearized Vlasov equation around the homogeneous equilibrium. Mouhot and Villani \cite{MouhotVillani2011} made a breakthrough and proved nonlinear Landau damping for the perturbation in Gevrey class(see also \cite{BMM2016}). For the inviscid damping, the mechanism leading to the damping is the vorticity mixing driven by shear flows or Orr mechanism \cite{Orr1907}. See \cite{MSZ2020, RenZhao2017, Ryutov1999} for similar phenomena in various systems. The nonlinear inviscid damping was first proved by Bedrossian and Masmoudi \cite{BM2015} for the perturbations in the Gevrey-$m$  class ($1\leq m<2$). We also refer to \cite{IonescuJia2020cmp, IonescuJia2021} and references therein for other related interesting results.

Due to the presence of the nonlocal term, the inviscid damping for general shear flows is a challenging problem even at the linear level. In the case of the finite channel, Case \cite{Case1960} gave a
formal proof of $t^{-1}$ decay for the velocity. 
Lin and Zeng \cite{LinZeng2011} gave the optimal linear decay estimates of the velocity
for data in Sobolev spaces. Zillinger \cite{Zillinger2017} proved the linear inviscid damping for a class of monotone shear flows that are close to Couette flow. Wei, Zhang, and Zhao \cite{WeiZhangZhao2018} proved the linear inviscid damping for general monotone shear flows. We also refer to \cite{Jia2020siam, Jia2020arma} for a simplified proof and the linear inviscid damping in Gevrey class. 
For non-monotone flows such as the Poiseuille flow and the Kolmogorov flow, another phenomenon should be taken into consideration, which is the so-called vorticity depletion phenomenon, predicted by Bouchet and Morita \cite{BM2010} and later proved by Wei, Zhang, and Zhao \cite{WeiZhangZhao2019, WeiZhangZhao2020}. See also \cite{BCV2017, I-Jiahao2021, LMZZ2021} for similar phenomena in vortex dynamics and MHD. Very recently, Ionescu-Jia \cite{IJ2020}, and Masmoudi-Zhao \cite{MasmoudiZhao2020} proved that the nonlinear inviscid damping holds for general linear stable monotone shear flows. 

The instability of shear flow $(b(y),0)$ for the Euler equation is also well-studied. Rayleigh \cite{Rayleigh1880} proved that if the shear flow $(b(y),0)$ is linearly unstable, then $b''(y)$ must change sign. Howard \cite{Howard1961} proved the semicircle theorem which describes the possible location of eigenvalues. Lin \cite{Lin2003} proved that if the shear flow $(b(y),0)$ is in some class then this shear flow is unstable. We refer to \cite{belenkaya1999,friedlander1997, Grenier2000, Lin2003, Vishik2003} for the instability results of different shear flows. All the results are obtained by studying the Rayleigh equations. For the asymptotic instability, 
Lin and Zeng \cite{LinZeng2011} proved that nonlinear inviscid damping is not true for perturbations of the Couette flow in $H^{s}$ ($s<\f32$). 
Deng and Masmoudi \cite{DM2018}  proved some instability for initial perturbations in Gevrey-$m$ class ($m>2$). 
We also refer to \cite{DZ2020, DZ2021}, where the instability of some toy models related to linearized Euler equations was studied.

For the viscous fluid, there is the enhanced dissipation phenomenon, namely, the decay rate is much faster than the diffusive decay rate of $e^{-\nu t}$. The mechanism leading to enhanced dissipation is also due to vorticity mixing. Generally speaking, the sheared velocity sends information to a higher frequency, enhancing the effect of diffusion. This is a common phenomenon in the viscous fluid. Beside the 2D Couette flow, we refer to \cite{BGM2015,BGM2017,BGM2020,WeiZhang2020,ChenWeiZhang2020} for the enhanced dissipation of the 3D Couette flow, to  \cite{LinXu2019,IMM2019,WeiZhangZhao2020,LiWeiZhang2020} for Kolmogorov flow, and to \cite{CotiElgindiWidmayer2020,del2021,ding2020enhanced} for Poiseuille flow.  We also refer to \cite{DRM2021,BC2017,Coti2020,Wei2021,He2021,li2021metastability,GNRS2020} for the enhanced dissipation in different models.

\no{\bf Notations}: Through this paper, we will use the following notations. 
We use $C$ (or, $c$) to denote a positive big (or, small) enough constant which may be different from line to line. We also use $C_M$ (or, $c_M$) to emphasize that such a constant depends on a variable $M$. 

We use $f\lesssim g$ ($f\approx g$) to denote
\begin{align*}
  f\le C g\quad (C^{-1}g\le f\le C g).
\end{align*}

Given a function $f(t,y)$, we denote its derivation in $y$ by
\begin{align*}
  f'(t,y)=\pa_yf(t,y),\qquad f''(t,y)=\pa_y^2f(t,y),
\end{align*}
and denote its Fourier transform in $x$ by
\begin{align*}
  \tilde f(k,y)=\frac{1}{2\pi}\int_{\mathbb T} f(x,y)e^{- i\xi x} dx,
\end{align*}
and denote its Fourier transform in $(x,y)$ by
\begin{align*}
  \hat f(k,\xi)=\frac{1}{4\pi^2}\int_{\mathbb T}\int_{\mathbb R} f(x,y)e^{-ikx}e^{- i\xi y} dy dx.
\end{align*}
We denote the projection to the $k$th mode of $f(x,y)$ by
\begin{align*}
  P_kf(x,y)=f_k(x,y)=\frac{1}{2\pi}\left(\int_{\mathbb{T}}f(x',y)e^{-ikx'}dx'\right)e^{ikx},
\end{align*}
and denote the projection to the non-zero mode by
\begin{align*}
  P_{\neq}f(x,y)=f_{\neq}(x,y)=\sum_{k\in\mathbb Z/0}f_k(x,y).
\end{align*}
We also use $\hat f_k(\xi)$ to denote $\hat f(k,\xi)$ to emphasize it is the Fourier transform of the $k$ mode. For a function $f(t,x,y)$ we introduce the following function spaces which are of the same spirit as the Chemin-Lerner's Besov space \cite{CheminLerner1995},
\begin{align*}
  \|f(t)\|_{\mathcal F L^1_kL^q_y}=\sum_{k\in\mathbb Z}\left(\int_{\mathbb R} |\tilde f_k(t,y)|^qd y\right)^{\frac{1}{q}},
\end{align*}
and
\begin{align*}
  \|f\|_{\tilde L^p_t\left([0,T];\mathcal F L^1_kL^q_y\right)}=\sum_{k\in\mathbb Z}\left(\int_{0}^T\left(\int_{\mathbb R} |\tilde f_k(t,y)|^qd y\right)^{\frac{p}{q}} d t\right)^{\frac{1}{p}}.
\end{align*}
\section{Main ideas and sketch of the proof}
In this section, we present the main ideas of the dynamical approach and the proof of instability. We study the linearized system around a shear flow $\big(b_{\nu}(t,y),0\big)$ where  
\begin{align}\label{eq-shear-b}
  b'_\nu(t,y)=1+\frac{2\sqrt\pi M\gamma^2}{\sqrt{4\nu t+\gamma^2}}e^{-\frac{y^2}{4\nu t+\gamma^2}},\quad b_\nu(t,0)=0.
\end{align}
It is easy to check that $b_\nu(t,y)$ solves \eqref{eq:heat} with initial data
\begin{align*}
  b_{in}(y)=\int^y_01+2\sqrt\pi M\gamma e^{-\frac{y'^2}{\gamma^2}}dy'.
\end{align*}
For the viscous problem ($\nu>0$), $b_\nu(t,y)$ varies with time. And for the inviscid problem ($\nu=0$), we study the  time-independent shear flow $\big(b_0(y),0\big)$ with 
\begin{align}\label{eq-Euler-shear}
  b'_0(y)=1+2\sqrt\pi M\gamma e^{-\frac{y^2}{\gamma^2}},\quad b_0(0)=0.
\end{align}
It holds for $\nu\geq 0$ that
\begin{align*}
  \sup_{t\geq 0}\|b_\nu(t,y)-y\|_{L^\infty_y}&=\pi M\gamma^2,\\
  \|b_\nu(t,y)-y\|_{\dot H^1_y}&=\frac{(2\pi)^{\frac{3}{4}}M\gamma^2}{(4\nu t+\gamma^2)^{\frac{1}{4}}},
\end{align*}
which means that the shear flow $\big(b_{\nu}(t,y),0\big)$ is a perturbation of Couette flow. 

We construct the shear flow based on the Gaussian function as its time evolution through the heat equation has a precise formula. We remark that in \cite{LinZeng2011} the authors also chose Gaussian-related functions to prove Kelvin's cat's eye structure near Couette flow for the inviscid problem.

For the viscous problem, we consider the case $\nu=\gamma^{\frac{3}{1-2\delta_0}}$ for any small $\delta_0>0$, so that this shear flow $\big(b_\nu(t,y),0\big)$ is close to the Couette flow in the sense:
\begin{align*}
   \sup_{t\in [0,\gamma^2/\nu]}\|b_\nu(t,y)-y\|_{L^\infty\cap \dot H^1}\approx M\nu^{\frac{1}{2}-\delta_0}.
\end{align*} 
\subsection{The dynamical approach}
Usually, the key step for studying (non)linear instability is finding the growing mode of the linearized operators. This approach is effective in dealing with time-independent linearized operators \cite{GGN2016, Lin2003}. However, it is hard to reduce the instability problem to the eigenvalue problem if the linearized operator is time-dependent, especially, for problems where the distribution of eigenvalue varies in time. For the viscous problem, the background shear flow $b_\nu(t,y)$ varies with time, so the corresponding linearized operator is time-dependent. 

 We first introduce the modified linearized equation for $\nu\ge0$:
 \begin{equation}\label{eq-g-linear}
    \partial_{t}  g+ y\partial_{x}  g-\pa_y^2b_\nu(t,y)\pa_x(\Delta)^{-1} g-\nu \Delta  g=0,
 \end{equation}
where we replace the transport term $b_\nu(t,y)\pa_xg$ by $y\pa_x g$ and keep the nonlocal term $-\pa_y^2b_\nu(t,y)\pa_x(\Delta)^{-1} g$, since $b_\nu(t,y)-y$ is small. The control of the difference $(b_\nu(t,y)-y)\partial_{x}  g$ (between \eqref{eq:b-perturbation-linear} and \eqref{eq-g-linear})  will be carefully studied in Section 4. 

It is natural to introduce the linear change of coordinate $z=x-ty$ and let
\begin{align*}
  h(t,z,y)=h(t,x-ty,y)=g(t,x,y).
\end{align*}
Then $h$ satisfies
\begin{equation}\label{eq-lin-h}
  \partial_{t}  h(t,z,y)+\frac{4\sqrt\pi M\gamma^2y}{(4\nu t+\gamma^2)^{\frac{3}{2}}}e^{-\frac{y^2}{4\nu t+\gamma^2}}\pa_z(\Delta_L)^{-1}h(t,z,y)-\nu \Delta_L h(t,z,y)=0,
\end{equation}
where $\Delta_L=\pa_{zz}+(\pa_y-t\pa_z)^2$. By taking Fourier transform in $(z,y)$, we get that
\begin{equation}\label{eq-h-ft-ori}
  \begin{aligned}    
      \partial_{t} \hat h_k(t,\xi) &+\nu\big(k^2+(\xi-kt)^2\big)\hat h_k(t,\xi)\\
  &-\int_{\mathbb{R}}M\gamma^2(\xi-\eta)e^{-(\nu t+\frac{\gamma^2}{4})|\xi-\eta|^2}\frac{k\hat h_k(t,\eta)}{(\eta-kt)^2+k^2} d\eta =0,
  \end{aligned}
\end{equation}
For the viscous problem $(\nu=\gamma^{\frac{3}{1-2\delta_0}})$, the possible growth only happens for low frequencies in $z$ and $t\lesssim \nu^{-\f13-\f{16}{9}\d_0}$, see Appendix \ref{appendix-C}. This shows that the growth obtained for the viscous flow is transient growth. However, for the inviscid problem, the growth could be sustained all the time, and this allows us to prove the existence of unstable eigenvalues.

To capture the growth, we choose the initial data that has only $\pm1$ modes,
\begin{equation}\label{eq-h-ini0}
  h_{in}(z,y)=2\cos (z)\mathfrak h_{in}(y),\quad \hat {\mathfrak h}_{in}(\xi)=\left\{
    \begin{array}{ll}
      1,&|\xi|\le 2\varepsilon_1\gamma^{-1}\ln(\gamma^{-1}),\\
      0,&|\xi|> 2\varepsilon_1\gamma^{-1}\ln(\gamma^{-1}).
    \end{array}
  \right.
\end{equation}
The corresponding initial data of the linear system \eqref{eq-g-linear} is
\begin{equation}\label{eq-ini-g}
  g_{in}(x,y)=2\cos (x)\mathfrak h_{in}(y).
\end{equation}
From \eqref{eq-h-ft-ori} and the choice of $h_{in}$, one can easily check that $\hat h_k(t,\xi)\equiv0$ for $k\neq\pm1$, $\hat h_{-1}(t,-\xi)=\hat h_1(t,\xi)$ for all $t\ge0$ and $\xi\in \mathbb R$. Here the constant $M>0$ is large enough determined in \eqref{eq-M}, and the constant $\varepsilon_1$ is small enough determined in \eqref{eq-epsilon1}. 

For the viscous problem $(\nu=\gamma^{\frac{3}{1-2\delta_0}})$, we can see that $b_\nu(t,y)$ is close to $ b_{in}(y)$ for $t\in[0,\varepsilon_1\nu^{-\frac{1}{3}+\frac{2}{3}\delta_0}\ln (\nu^{-1})]$, and the dissipation effect is weak. Thus for both viscous and inviscid problems, we only need to catch the growth of $\hat h_1(t,\xi)$ which satisfies
\begin{equation*}
  \left\{
    \begin{array}{l}
      \partial_{t} \hat h_1(t,\xi)=\int_{\mathbb{R}}M\gamma^2(\xi-\eta)e^{-\gamma^2|\xi-\eta|^2}\frac{\hat h_1(t,\eta)}{(\eta-t)^2+1} d\eta+\text{lower order terms},\\
      \hat h_1(0,\xi)=\hat {\mathfrak h}_{in}(\xi).
    \end{array}
  \right.
\end{equation*}
The kernel 
\begin{align*}
  M\gamma^2(\xi-\eta)e^{-{\gamma^2}|\xi-\eta|^2}\frac{1}{(\eta-t)^2+1}\lesssim \gamma
\end{align*}
reaches its maximum at $\eta=t,\ \xi=\eta+ \frac{\sqrt2}{2}\gamma^{-1}$. The scenario we are most interested in is a low-to-high transition. On one hand, at time $t$, those frequencies $\eta$ which are close to $t$ have a strong effect that excites those frequencies $\xi\in[t+\gamma^{-1},t+3\gamma^{-1}]$:
\begin{align*}
  \pa_t\hat h_1(t,\xi)|_{\xi\in[t+\f1{10}\gamma^{-1},t+3\gamma^{-1}]}\gtrsim \gamma \inf_{|\eta-t|\le C}\hat h_1(t,\eta).
\end{align*}
On the other hand, those frequencies $\xi$ which are close to $t$ do not change too much. 

For the formal argument, it is enough to follow the growth of two frequencies. Formally, let us assume 
\beq\label{eq:formal}
\inf_{|\eta-t|\le C}\hat h_1(t,\eta)\gtrsim \hat h_1(0,0),\quad \text{for}\quad 0\leq t\leq \f12\g^{-1}
\eeq
then for $t$ close to $0$, the two frequencies $\xi=\g^{-1}$ and $\xi=2\g^{-1}$ are forced by $\hat h_1(0,0)$. 
From $t=0$ to $t=\gamma^{-1}$ we ignore the effect when $t$ and $\xi$ are close. Thus we get 
\begin{align*}
\hat h_1(\gamma^{-1},\gamma^{-1})
&\gtrsim \hat h_1(0,\gamma^{-1})+\int_0^{\f1{2}\g^{-1}}\pa_t\hat h_1(s,\g^{-1})ds\\
&\gtrsim \hat h_1(0,\gamma^{-1})+\hat h_1(0,0),
\end{align*}
and
\begin{align*}
h_1(\gamma^{-1},2\gamma^{-1})\gtrsim \hat h_1(0,2\gamma^{-1})+\int_0^{\g^{-1}}\pa_t\hat h_1(s,\g^{-1})ds\gtrsim \hat h_1(0,2\gamma^{-1})+\hat h_1(0,0).
\end{align*}
Let us use the following lower bound instead:
\begin{align*}
  &\hat h_1(\gamma^{-1},\gamma^{-1})=\hat h_1(0,\gamma^{-1})+\hat h_1(0,0),\quad \hat h_1(\gamma^{-1},2\gamma^{-1})=\hat h_1(0,2\gamma^{-1})+\hat h_1(0,0).\\
  &\hat h_1(\gamma^{-1},(1+m)\gamma^{-1})=\hat h_1(0,(1+m)\gamma^{-1}),\quad m\geq 2
\end{align*}
 In general, from $t=(j-1)\gamma^{-1}$ to  $t=j\gamma^{-1}$ with $j\geq 1$,
 \begin{align*}
   \hat h_1\big(j\gamma^{-1},j\gamma^{-1}\big)=&\hat h_1\big((j-1)\gamma^{-1},j\gamma^{-1}\big)+\hat h_1\big((j-1)\gamma^{-1},(j-1)\gamma^{-1}\big),\\
   \hat h_1\big(j\gamma^{-1},(j+1)\gamma^{-1}\big)=&\hat h_1\big((j-1)\gamma^{-1},(j+1)\gamma^{-1}\big)+\hat h_1\big((j-1)\gamma^{-1},(j-1)\gamma^{-1}\big),\\
   \hat h_1\big(j\gamma^{-1},(j+m)\gamma^{-1}\big)=&\hat h_1\big((j-1)\gamma^{-1},(j+m)\gamma^{-1}\big),\quad m\geq 2.
 \end{align*}
 Note that $\hat h_1(0,\xi)=1$ for $|\xi|\le \varepsilon_1\gamma^{-1}\ln{(\gamma^{-1})}$, so it is easy to check that 
 \begin{align*}
 \hat h_1\big(j\gamma^{-1},(j+m)\gamma^{-1}\big)=&1,\quad j\geq 0,\ m\geq 2,\\
 \hat h_1\big((j-1)\gamma^{-1},j\gamma^{-1}\big)=&1+\hat h_1\big((j-2)\gamma^{-1},(j-2)\gamma^{-1}\big),\\
 \hat h_1\big(j\gamma^{-1},(j+1)\gamma^{-1}\big)=&1+\hat h_1\big((j-1)\gamma^{-1},(j-1)\gamma^{-1}\big)\\
 =&1+\hat h_1\big((j-2)\gamma^{-1},(j-1)\gamma^{-1}\big)+\hat h_1\big((j-2)\gamma^{-1},(j-2)\gamma^{-1}\big)\\
 =&\hat h_1\big((j-2)\gamma^{-1},(j-1)\gamma^{-1}\big)+\hat h_1\big((j-1)\gamma^{-1},j\gamma^{-1}\big),
 \end{align*}
 and thus $\hat h_1(j\gamma^{-1},(j+1)\gamma^{-1})=b_j$ where $\{b_j\}$ is the Fibonacci sequence, which grows exponentially. Indeed, the numerical evidence (see Figure \ref{figure1}) gives the above growth mechanism. 
\begin{figure}
\centering
\includegraphics[height=9.0cm,width=12cm]{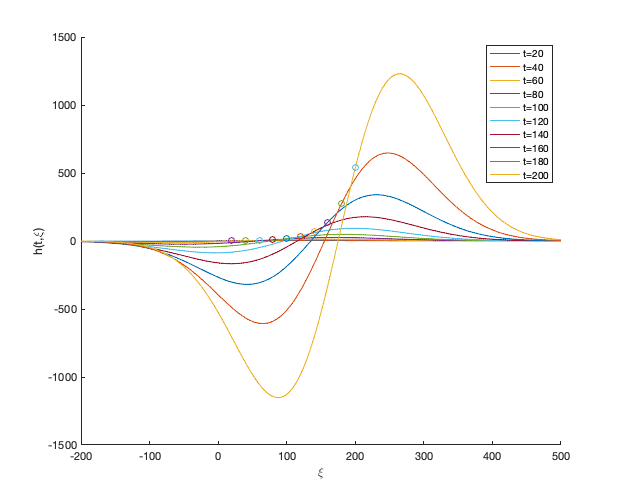}
\caption{Profile of $\hat h_1(t,\xi)$ for viscous problem with $M=5$, $\gamma=0.01$, $\delta_0=\frac{1}{20}$}\label{figure1}
\end{figure}
The formal assumption \eqref{eq:formal} is not very accurate. To estimate $\inf_{|\eta-t|\le C}\hat h_1(t,\eta)$, we will subdivide the time interval $[j\g^{-1},(j+1)\g^{-1}]$ into smaller intervals of size $\g^{-\f23}$, see the definition of $I_m$ and Lemma \ref{lem-evo-h-case2} in Section 3. 
Mathematically, we prove that
\begin{align*}
   \hat h_1(j\gamma^{-1},\xi)|_{\xi\in[(j+1)\gamma^{-1},(j+2)\gamma^{-1}]}\ge b_j,\quad \text{for}\quad 1\leq j\lesssim \varepsilon_1\ln(\g^{-1})
\end{align*}
which describes the exponential growth of the positive parts for each graph shown in the picture. See Section 3 for more details.

Based on the linear instability proved by the dynamical approach, we solve two important questions, namely, the optimal instability of the Couette flow for the viscous problem and the existence of a growing mode for the inviscid problem.   
\subsection{Nonlinear instability of the viscous problem}
In Section 4, we study the nonlinear viscous system \eqref{eq:b-perturbation}, we first get a priori estimates by using the classical energy method via the ghost type weight \cite{Alinhac2001} and bootstrap argument, then prove the solution of system \eqref{eq:b-perturbation} has exponential growth from the initial data 
\begin{equation}
  \omega_{in}(x,y)=\frac{\varepsilon_0\nu^{\frac{2}{3}+\delta_1-\frac{1}{3}\delta_0}}{\sqrt{\varepsilon_1}\sqrt{-\ln{\nu}}}g_{in}(x,y),
\end{equation}
where $g_{in}$ is given in \eqref{eq-ini-g}, the constant $\delta_1$ is small enough determined in \eqref{eq-delta1}, and $\varepsilon_0$ is a positive small constant which is determined in the proof.

\subsection{Inviscid unstable shear flow}
In Section 5, we study the linear instability and study the existence and location of eigenvalues for the Rayleigh operator $\mathcal{R}=b(y)-b''(y)(\pa_y^2-1)^{-1}$ corresponding to mode $1$.  
We prove the existence result by using a contradiction argument. 
We first show that $\mathcal{R}$ has no embedded eigenvalue (see Lemma \ref{lem-no-emb}). 
If $\mathcal{R}$ has no eigenvalues, we prove that for any $\mathfrak{w}\in L^2$ and $t\ge 0$, it holds that 
\begin{equation}\label{eq-up-aa}
\|e^{it\mathcal{R}}\mathfrak{w}\|_{L^2}\le C\|\mathfrak{w}\|_{L^2}
\end{equation}
where $C$ is a constant independent of $\gamma,t$. 

However, by the dynamical approach introduced in Section 3, we prove that 
\begin{equation}\label{eq-low-aa}
\|e^{it\mathcal{R}}\mathfrak{w}\|_{L^2}\ge ce^{c\gamma t}\|\mathfrak{w}\|_{L^2}
\end{equation}
holds for all $t\le \varepsilon_1\gamma^{-1}\ln (\gamma^{-1})$ with a well-chosen data $\mathfrak{w}\in L^2$. Here $c$ is independent of $\gamma, t$. Thus by taking $\gamma>0$ small enough, the estimates of \eqref{eq-up-aa} and \eqref{eq-low-aa} lead to a contradiction, which gives the existence of an eigenvalue. 

To get the precise location of eigenvalues, we study the Rayleigh equation. The result uses energy methods and an ODE argument.

\section{The dynamical approach: lower bound estimate}
In this section, we use the dynamical approach to give lower bound estimates for the solution of the linear system \eqref{eq-g-linear} for both viscous and inviscid case $(\nu=\gamma^{\frac{3}{1-2\delta_0}} \text{ and } \nu=0)$  from the initial data \eqref{eq-ini-g}. {{}Now we introduce the solution operator
\begin{align*}
  S_\nu(t,s): L^2_{x,y}\to L^2_{x,y},\text{ for }\nu\ge0,\ t\ge s\ge0
\end{align*}
which satisfies that $g(t,x,y)=S_\nu(t,s)f(x,y)$ solves the linear  system \eqref{eq-g-linear} with $g(s,x,y)=f(x,y)$. Specifically, for $s=0$, $g(t,x,y)=S_\nu(t,0)f(x,y)$ solves \eqref{eq-g-linear} with initial data $g(0,x,y)=f(x,y)$. We remark that for the inviscid case $\nu=0$, the coefficients in \eqref{eq-g-linear} are independent of time, and $S_0(t,0)$ is a semi-group.} By taking the well-chosen initial data, we prove the following propositions.
\begin{proposition}\label{prop-lower}
There exists $M_0>0$, for any $M>M_0$, and any small $\delta_0>0$, there exist constants $c_0,c_1,\varepsilon_1,\gamma_0>0$ such that for $0<\gamma\le\gamma_0$, $\nu=\gamma^{\frac{3}{1-2\delta_0}}$, and $0\le t\le T=\varepsilon_1\gamma^{-1}(\ln{\nu^{-1}})$, it holds that
   \begin{align*}
     \left\|P_{\pm1}\left({S_\nu(t,0)}g_{in}\right)\right\|_{L^2_{x,y}}\ge c_0e^{c_1\gamma t}\|g_{in}\|_{L^2_{x,y}},
   \end{align*}
   and in particular
   \begin{align*}
     \left\|P_{\pm1}\left({S_\nu(T,0)}g_{in}\right)\right\|_{L^2_{x,y}}\ge c_0\nu^{-c_1\varepsilon_1}\|g_{in}\|_{L^2_{x,y}},
   \end{align*}
   where $g_{in}(x,y)$ is given in \eqref{eq-ini-g}.
\end{proposition}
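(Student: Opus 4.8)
The plan is to reduce everything to the analysis of the single Fourier mode $\hat h_1(t,\xi)$, exploit the Fibonacci-type growth mechanism described in Section 2, and then convert the pointwise-in-$\xi$ lower bound back to an $L^2_{x,y}$ bound. First, I would work with the coordinate $h(t,z,y)=g(t,x-ty,y)$ and its Fourier transform, which satisfies \eqref{eq-h-ft-ori}; because of the choice of initial data $h_{in}=2\cos(z)\mathfrak{h}_{in}(y)$ in \eqref{eq-h-ini0} only the modes $k=\pm 1$ survive, and $\hat h_{-1}(t,-\xi)=\hat h_1(t,\xi)$, so it suffices to study the scalar integro-differential equation for $\hat h_1(t,\xi)$ on the time interval $[0,T]$. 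Since $T=\varepsilon_1\gamma^{-1}\ln(\nu^{-1})\approx \varepsilon_1 \gamma^{-1}\ln(\gamma^{-1})$ and $\nu=\gamma^{3/(1-2\delta_0)}$, on this range the dissipative factor $e^{-\nu(k^2+(\xi-kt)^2)}$ contributes at most a bounded multiplicative loss (this is where the restriction $|\xi|\lesssim \varepsilon_1\gamma^{-1}\ln(\gamma^{-1})$ and the smallness of $\varepsilon_1$ enter), and the time-dependent coefficient $e^{-(\nu t+\gamma^2/4)|\xi-\eta|^2}$ is comparable to $e^{-\gamma^2|\xi-\eta|^2/4}$. So the equation is, up to controlled perturbations, the model equation written in Section 2 with kernel $M\gamma^2(\xi-\eta)e^{-\gamma^2|\xi-\eta|^2}\big((\eta-t)^2+1\big)^{-1}$.

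Next, I would make rigorous the heuristic "low-to-high cascade." Partition $[0,T]$ into blocks $[j\gamma^{-1},(j+1)\gamma^{-1}]$ and each block further into subintervals $I_m$ of length $\sim\gamma^{-2/3}$. The two quantitative inputs are: (i) a \emph{near-diagonal stability} estimate showing that for $|\eta-t|\le C$ the value $\hat h_1(t,\eta)$ does not decrease much over a block (the kernel is $\lesssim\gamma$ and acts over time $\gamma^{-1}$, but the relevant contributions are controlled using the subdivision into $I_m$), which replaces the formal assumption \eqref{eq:formal}; and (ii) an \emph{excitation} estimate showing that for $\xi\in[t+\tfrac1{10}\gamma^{-1},t+3\gamma^{-1}]$ one has $\partial_t\hat h_1(t,\xi)\gtrsim \gamma\,\inf_{|\eta-t|\le C}\hat h_1(t,\eta)$, so integrating over a block transfers a full copy of the "diagonal" value to the frequencies one and two units ahead. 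Combining (i) and (ii) through the block recursion yields, by induction on $j$, the lower bound $\hat h_1(j\gamma^{-1},\xi)\ge b_j$ for $\xi\in[(j+1)\gamma^{-1},(j+2)\gamma^{-1}]$ and $1\le j\lesssim \varepsilon_1\ln(\gamma^{-1})$, where $b_j$ is the Fibonacci sequence; I would also need to propagate nonnegativity of $\hat h_1$ on the relevant region so that no cancellation can occur. This is exactly the content of the lemmas referenced as Lemma \ref{lem-evo-h-case2} and the companion estimate Lemma \ref{lem-h-up-inf}.

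From $b_j\gtrsim \phi^j$ with $\phi=\tfrac{1+\sqrt5}{2}$ and $j\approx \gamma t$ one reads off $\hat h_1(t,\xi)\gtrsim e^{c_1\gamma t}$ on an interval of $\xi$'s of length $\gtrsim\gamma^{-1}$, hence $\|\hat h_1(t,\cdot)\|_{L^2_\xi}\gtrsim \gamma^{-1/2}e^{c_1\gamma t}$, while $\|\hat{\mathfrak h}_{in}\|_{L^2_\xi}\approx (\varepsilon_1\gamma^{-1}\ln(\gamma^{-1}))^{1/2}$; since $\|g_{in}\|_{L^2_{x,y}}\approx\|h_{in}\|_{L^2_{z,y}}\approx \|\mathfrak{h}_{in}\|_{L^2_y}\approx\|\hat{\mathfrak h}_{in}\|_{L^2_\xi}$ and Fourier/Plancherel is an isometry, this gives $\|P_{\pm1}(S_\nu(t)g_{in})\|_{L^2}\gtrsim \frac{1}{\sqrt{\varepsilon_1\ln(\gamma^{-1})}}e^{c_1\gamma t}\|g_{in}\|_{L^2}$; absorbing the harmless logarithmic factor into $e^{c_1\gamma t}$ (possibly shrinking $c_1$) and into the constant $c_0$ yields the stated estimate, and evaluating at $t=T$ with $\gamma t=\varepsilon_1\ln(\nu^{-1})$ gives the $\nu^{-c_1\varepsilon_1}$ form. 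The main obstacle is step (i)–(ii) of the second paragraph: controlling the near-diagonal evolution carefully enough that the "copy" transferred to the shifted frequency is a genuine $\gtrsim 1$ fraction (not merely $\gtrsim \gamma^{\delta}$), uniformly over $1\le j\lesssim\varepsilon_1\ln(\gamma^{-1})$ blocks, while simultaneously showing the dissipation and the error terms (the discarded contributions near $\xi\approx t$ and the coefficient errors from $\nu t\ll\gamma^2$ being not quite zero) do not erode the Fibonacci recursion — this is the delicate bookkeeping that forces the subdivision into scale-$\gamma^{-2/3}$ subintervals $I_m$ and the smallness of $\varepsilon_1$ depending on $\delta_0$.
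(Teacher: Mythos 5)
Your reduction to the single mode $\hat h_1(t,\xi)$, the partition of each block $[j\gamma^{-1},(j+1)\gamma^{-1}]$ into subintervals of length $\sim\gamma^{-2/3}$, the excitation/near-diagonal estimates, and the Fibonacci recursion are exactly the paper's strategy (Lemmas \ref{lem-evo-h-case2}--\ref{lem-low-9/10} and the induction in the proof of Proposition \ref{prop-lower}). One structural omission in the middle part: besides ``near-diagonal stability'' and ``excitation'' you also need the waiting-region estimate (Lemma \ref{lem-evo-h-case4}), i.e.\ that a frequency $\xi$ retains $\hat h_1(t,\xi)\ge \tfrac{19}{20}$ during the long time ($\sim\varepsilon_1\gamma^{-1}\ln(\gamma^{-1})$) it spends before entering the region of growth; merely ``propagating nonnegativity'' is not enough, because the increment delivered by the excitation estimate is proportional to $\inf\hat h_1$ over the excitation region, and closing the induction with that infimum at $\tfrac{9}{10}$ requires the quantitative $\tfrac{19}{20}$ bound upstream.

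The genuine gap is in your last step. Your lower bound on $\|\hat h_1(t,\cdot)\|_{L^2_\xi}$ uses only the interval of length $\sim\gamma^{-1}$ carrying the Fibonacci growth, which gives $\gamma^{-1/2}e^{c_1\gamma t}$, while $\|g_{in}\|_{L^2_{x,y}}\approx\sqrt{\varepsilon_1\gamma^{-1}\ln(\gamma^{-1})}$; the ratio is $e^{c_1\gamma t}/\sqrt{\varepsilon_1\ln(\gamma^{-1})}$. The deficit $1/\sqrt{\ln(\gamma^{-1})}\to 0$ is already present at $t=0$, where the exponential equals $1$, so no shrinking of $c_1$, and no choice of a $\gamma$-independent $c_0$, can ``absorb the logarithm'': the claimed inequality would fail for all $t$ with $\gamma t=O(1)$. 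The paper closes this by adding to the $L^2_\xi$ norm the waiting-region contribution: $\hat h_1(t,\xi)\ge\tfrac{19}{20}$ on $[(j+2)\gamma^{-1},\,2\varepsilon_1\gamma^{-1}\ln(\gamma^{-1})]$, a set whose measure is comparable to that of the support of $\hat{\mathfrak h}_{in}$, so that
\begin{align*}
\|\hat h_1(t,\cdot)\|_{L^2_\xi}\ \ge\ \tilde c_0e^{-\tilde c_1}e^{\tilde c_1\gamma t}\gamma^{-\frac12}+\tfrac{19}{20}\big|2\varepsilon_1\gamma^{-1}\ln(\gamma^{-1})-t-4\gamma^{-1}\big|^{\frac12},
\end{align*}
where the second term alone is already $\gtrsim\|\hat{\mathfrak h}_{in}\|_{L^2_\xi}$ for every $t\le T$, and the first term supplies the exponential factor once $\gamma t$ exceeds a multiple of $\ln\ln(\gamma^{-1})$ (this matching is also where $c_1$ must be taken strictly smaller than the Fibonacci growth rate $\tilde c_1$). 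You need both contributions; neither alone covers the whole interval $[0,T]$.
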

\begin{proposition}\label{prop-lower-Euler}
There exists $M_0>0$ and for any $M>M_0$, there exist constants $c_0,c_1,\varepsilon_1,\gamma_0>0$ such that for $0<\gamma\le\gamma_0$, and $0\le t\le T=\varepsilon_1\gamma^{-1}(\ln{\gamma^{-1}})$, it holds that
   \begin{align*}
     \left\|P_{\pm1}\left({S_0(t,0)}g_{in}\right)\right\|_{L^2_{x,y}}\ge c_0e^{c_1\gamma t}\|g_{in}\|_{L^2_{x,y}},
   \end{align*}
   and in particular
   \begin{align*}
     \left\|P_{\pm1}\left({S_0(T,0)}g_{in}\right)\right\|_{L^2_{x,y}}\ge c_0\gamma^{-c_1\varepsilon_1}\|g_{in}\|_{L^2_{x,y}},
   \end{align*}
   where $g_{in}(x,y)$ is given in \eqref{eq-ini-g}.
\end{proposition}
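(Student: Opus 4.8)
The plan is to establish the pointwise lower bound on $\hat h_1(t,\xi)$ described heuristically in Section 2, and then transfer it back to an $L^2$ estimate on $P_{\pm1}(S_0(t)g_{in})$. Recall that with the data \eqref{eq-h-ini0}–\eqref{eq-ini-g} the only surviving modes are $k=\pm1$, with $\hat h_{-1}(t,-\xi)=\hat h_1(t,\xi)$, so it suffices to work with the single scalar equation \eqref{eq-h-ft-ori} for $k=1$ and $\nu=0$, namely
\begin{align*}
  \pa_t\hat h_1(t,\xi)=\int_{\mathbb R} M\gamma^2(\xi-\eta)e^{-\frac{\gamma^2}{4}|\xi-\eta|^2}\frac{\hat h_1(t,\eta)}{(\eta-t)^2+1}\,d\eta.
\end{align*}
First I would record the elementary facts: $\hat h_1(0,\xi)\ge 0$ and, by a Grönwall/positivity argument on this linear integro-differential equation, $\hat h_1(t,\xi)\ge 0$ for all $t\ge0$ (the kernel has a sign but one can split $\xi-\eta>0$ and $<0$ and use the upper bound $\|\hat h_1(t,\cdot)\|_{L^\infty}\le e^{M\pi\gamma t}$ from Lemma \ref{lem-h-up-inf} to control the "wrong-sign" part). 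This confines all possible cancellation and lets me run a one-sided comparison.

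The core is the discrete Fibonacci-type lower bound. I would prove by induction on $j$ that there are constants $c,C$ (independent of $\gamma$) so that
\begin{align*}
  \hat h_1(j\gamma^{-1},\xi)\ge c\,b_j \quad\text{for } \xi\in[(j+1)\gamma^{-1},(j+2)\gamma^{-1}],\qquad 1\le j\le C\varepsilon_1\ln(\gamma^{-1}),
\end{align*}
where $b_j$ is the Fibonacci sequence. The inductive step is where the real work lies: on the interval $t\in[j\gamma^{-1},(j+1)\gamma^{-1}]$ I need to show (a) the frequencies $\xi\approx t$ (i.e. $|\xi-t|\lesssim$ const, which were "$(j{+}1)\gamma^{-1}$-ish" at the start) gain a contribution $\gtrsim \gamma\cdot b_{j-1}$ from the kernel peak at $\eta=t$, $\xi=\eta+\tfrac{\sqrt2}{2}\gamma^{-1}$, integrated over a time of length $\sim\gamma^{-1}$; and (b) the frequencies $\xi\approx t$ do not lose more than a controlled fraction of their value, so that $\inf_{|\eta-t|\le C}\hat h_1(t,\eta)$ stays $\gtrsim b_{j-1}$ throughout. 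To make (b) rigorous — and this is the step flagged after \eqref{eq:formal} as "not very accurate" — I would subdivide $[j\gamma^{-1},(j+1)\gamma^{-1}]$ into $O(\gamma^{1/3})$ subintervals $I_m$ of length $\gamma^{-2/3}$ and run a secondary induction on $m$ (Lemma \ref{lem-evo-h-case2}), showing that on each $I_m$ the relevant low-frequency band stays essentially constant because the nonlocal term only moves mass by an $O(\gamma^{1/3})$ relative amount per subinterval, while over the full interval the accumulated gain from the kernel peak is $\Theta(1)\cdot b_{j-1}$. Summing the $\gamma^{1/3}$ subintervals gives the clean inductive step $\hat h_1((j{+}1)\gamma^{-1},\xi)\gtrsim \hat h_1(j\gamma^{-1},\xi)+\hat h_1(j\gamma^{-1},\xi') \gtrsim b_j+b_{j-1}=b_{j+1}$ for $\xi$ in the appropriate band.

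Once the discrete bound holds, I would finish as follows. Since $b_j\sim \phi^j$ with $\phi=\tfrac{1+\sqrt5}{2}$, and the band $[(j+1)\gamma^{-1},(j+2)\gamma^{-1}]$ has length $\gamma^{-1}$, Plancherel in $(z,y)$ gives $\|\hat h_1(j\gamma^{-1},\cdot)\|_{L^2}^2\gtrsim \gamma^{-1} b_j^2$, hence $\|h(j\gamma^{-1})\|_{L^2}\gtrsim \gamma^{-1/2}b_j \gtrsim \gamma^{-1/2}\phi^j = \gamma^{-1/2}e^{(\ln\phi) j}$. Interpolating to non-integer times $t\in[j\gamma^{-1},(j+1)\gamma^{-1}]$ (the value can only change by a bounded factor over one unit of $j$, again by Lemma \ref{lem-h-up-inf} for the upper direction and monotone-type control for the lower) yields $\|h(t)\|_{L^2}\ge c_0 e^{c_1\gamma t}\|h_{in}\|_{L^2}$ with $c_1=\ln\phi$ up to constants; normalizing against $\|h_{in}\|_{L^2}^2 = \|g_{in}\|_{L^2}^2 \approx \gamma^{-1}\varepsilon_1\ln(\gamma^{-1})$ gives the stated inequality after absorbing the $\sqrt{\varepsilon_1\ln(\gamma^{-1})}$ factors into $c_0$. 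Finally, since $h$ and $g$ differ only by the volume-preserving change of variables $z=x-ty$, $\|P_{\pm1}(S_0(t)g_{in})\|_{L^2_{x,y}}=\|P_{\pm1}h(t)\|_{L^2_{z,y}}$, and at $t=T=\varepsilon_1\gamma^{-1}\ln(\gamma^{-1})$ we get the amplification $c_0 e^{c_1\gamma T}=c_0\gamma^{-c_1\varepsilon_1}$. The main obstacle, as noted, is step (b): controlling the near-diagonal ($\xi\approx t$) dynamics finely enough through the $\gamma^{-2/3}$-subinterval bookkeeping so that the "input" frequencies feeding the Fibonacci recursion do not degrade — everything else is bookkeeping with the explicit Gaussian kernel and Plancherel.
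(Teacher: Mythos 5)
Your overall route is the paper's: work with the $k=1$ equation \eqref{eq-h-ft-Euler}, run the Fibonacci induction on the bands $[(j+1)\gamma^{-1},(j+2)\gamma^{-1}]$ through the secondary subdivision into intervals $I_m$ of length $\gamma^{-1}/N\approx\gamma^{-2/3}$ with the excitation/growth/waiting decomposition (this is exactly Lemmas \ref{lem-evo-h-case2-Euler}--\ref{lem-low-9/10-Euler} combined with the proof of Proposition \ref{prop-lower}). However, two of your steps do not hold as stated. First, the claimed global positivity $\hat h_1(t,\xi)\ge 0$ is false: the kernel $M\gamma^2(\xi-\eta)e^{-\gamma^2|\xi-\eta|^2/4}$ is odd in $\xi-\eta$, so a frequency $\xi$ that has already fallen behind the excitation region (say $\xi\le t-\gamma^{-1}$) receives its dominant contribution from $\eta\approx t>\xi$, where the kernel is negative of size $\approx\gamma$ while $\hat h_1(t,\eta)$ is of order one or larger; over a time of order $\gamma^{-1}$ this drives such modes negative (Figure \ref{figure1} shows exactly this). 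Fortunately positivity on all of $\mathbb R$ is never needed: the induction only requires a positive lower bound on the excitation band $|\eta-t|\le\gamma^{-1}/N$ (where the kernel seen from the region of growth has a definite sign), while the contribution of all remaining frequencies is controlled in absolute value, $|\mathcal B_m|\le 2MN\gamma^2e^{M\pi\gamma t}$, via the $L^\infty$ bound of Lemma \ref{lem-h-up-inf}. Drop the positivity claim and phrase the comparison only on those bands.

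Second, and more substantively, your final normalization does not close. The growth band has measure $\approx\gamma^{-1}$, so it contributes only $\gamma^{-1/2}b_j$ to $\|\hat h_1(t,\cdot)\|_{L^2_\xi}$, whereas $\|g_{in}\|_{L^2}\approx\gamma^{-1/2}\sqrt{\varepsilon_1\ln(\gamma^{-1})}$. The deficit $\sqrt{\ln(\gamma^{-1})}$ diverges as $\gamma\to0$ and cannot be ``absorbed into $c_0$,'' which must be independent of $\gamma$; already for $t$ of order $\gamma^{-1}$ the band contribution alone is smaller than the required right-hand side by this factor. The missing ingredient is the mass still sitting in the waiting region: for $t\le\frac54\varepsilon_1\gamma^{-1}\ln(\gamma^{-1})$ one has $\hat h_1(t,\xi)\ge\frac{19}{20}$ on a set of measure $\gtrsim\varepsilon_1\gamma^{-1}\ln(\gamma^{-1})$ (Lemma \ref{lem-evo-h-case4-Euler}), which supplies the factor $\sqrt{\varepsilon_1\ln(\gamma^{-1})}$ at small and moderate times; one then takes $c_1$ strictly below the Fibonacci rate $\tilde c_1$ and balances the two contributions, as in the concluding display of the proof of Proposition \ref{prop-lower}. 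Without this term the stated inequality cannot be derived from the band estimate alone.
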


Recall that $h(t,z,y)$ is the solution of \eqref{eq-lin-h} satisfying
\begin{align*}
  h(t,z,y)=h(t,x-ty,y)=g(t,x,y)={S_\nu(t,0)}g_{in}(x,y),\text{ for }\nu\ge0.
\end{align*}
Accordingly,
\begin{align*}
  2\pi\|\hat h(t)\|_{L^2_{k,\xi}}=\| h(t)\|_{L^2_{z,y}}=\| g(t)\|_{L^2_{x,y}}=\|{S_\nu(t,0)}g_{in}\|_{L^2_{x,y}},\text{ for }\nu\ge0.
\end{align*}
It is sufficient to prove Propositions \ref{prop-lower} and \ref{prop-lower-Euler} by giving the lower bound estimate of $\|\hat h(t)\|_{L^2_{k,\xi}}$. Next, we focus on the system \eqref{eq-h-ft-ori}.
\subsection{Upper bound estimate in $L^\infty_{\xi}$}
We first give the upper bound estimate of $\|\hat h_k(t,\xi)\|_{L^\infty_\xi}$.
\begin{lemma}\label{lem-h-up-inf}
  Let $\hat h_k(t,\xi)$ be the solution to \eqref{eq-h-ft-ori}. For any $\nu\ge0$, we have the following upper bound estimate:
  \begin{align}\label{ineq-h-inf}
    \|\hat h_k(t,\xi)\|_{L^\infty_\xi}\le e^{M\pi\gamma t}\|\hat h_k(0,\xi)\|_{L^\infty_\xi}.
\end{align}
\end{lemma}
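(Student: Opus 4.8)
The plan is to derive an $L^\infty_\xi$ bound for $\hat h_k(t,\xi)$ directly from the integro-differential equation \eqref{eq-h-ft-ori} by a Gronwall/maximum-principle argument. First I would isolate the structure of \eqref{eq-h-ft-ori}: writing $a_k(t,\xi)=\nu(k^2+(\xi-kt)^2)\ge 0$ for the dissipative coefficient and $K(t,\xi,\eta)=M\gamma^2(\xi-\eta)e^{-(\nu t+\gamma^2/4)|\xi-\eta|^2}\frac{k}{(\eta-kt)^2+k^2}$ for the nonlocal kernel, the equation reads $\partial_t\hat h_k + a_k\hat h_k = \int_{\mathbb R}K(t,\xi,\eta)\hat h_k(t,\eta)\,d\eta$. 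Using the integrating factor $e^{\int_0^t a_k(s,\xi)\,ds}$ and noting $\int_0^t a_k(s,\xi)\,ds = \nu\big(k^2 t + \int_0^t(\xi-ks)^2\,ds\big)\ge \nu k^2 t$ (the cross term integrates to a perfect square contribution that is nonnegative after completing the square; more carefully $\int_0^t (\xi-ks)^2ds = t\xi^2 - k t^2\xi + k^2 t^3/3$, whose minimum over $\xi$ is $k^2t^3/12$, giving $\int_0^t a_k\,ds \ge \nu k^2(t + t^3/12)$), one sees the dissipation only helps.

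Next I would bound the kernel in $L^1_\eta$ uniformly in $\xi$ and $t$. The elementary estimate $|(\xi-\eta)e^{-(\gamma^2/4)|\xi-\eta|^2}|\le \sqrt{2/e}\,\gamma^{-1}$ (maximized at $|\xi-\eta|=\gamma^{-1}\sqrt 2$), together with $\int_{\mathbb R}\frac{|k|}{(\eta-kt)^2+k^2}\,d\eta = \pi$, gives $\int_{\mathbb R}|K(t,\xi,\eta)|\,d\eta \le M\gamma^2\cdot\sqrt{2/e}\,\gamma^{-1}\cdot\pi = \sqrt{2/e}\,\pi M\gamma < \pi M\gamma$; the extra Gaussian factor $e^{-\nu t|\xi-\eta|^2}\le 1$ only improves this. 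Hence, setting $m_k(t)=\|\hat h_k(t,\cdot)\|_{L^\infty_\xi}$ and $b_k(t,\xi)=e^{\int_0^t a_k(s,\xi)ds}\hat h_k(t,\xi)$, we get from the Duhamel form $|\hat h_k(t,\xi)|\le e^{-\nu k^2(t+t^3/12)}\big(|\hat h_k(0,\xi)| + \int_0^t \pi M\gamma\, m_k(s)\,ds\big)$; taking the sup over $\xi$ and then applying Gronwall to the quantity $e^{\nu k^2(t+t^3/12)}m_k(t)$ (or simply to $m_k(t)$ after absorbing the monotone decaying prefactor) yields $m_k(t)\le e^{-\nu k^2(t+t^3/12)}e^{\pi M\gamma t}m_k(0)$, which is exactly \eqref{ineq-h-inf}.

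To make the Gronwall step clean and avoid fussing with the $\xi$-dependence of $a_k$, I would instead work pointwise: fix $\xi$, let $F(t)=\sup_{\eta}|\hat h_k(t,\eta)|$, and from $\frac{d}{dt}\big(e^{\int_0^t a_k(s,\xi)ds}|\hat h_k(t,\xi)|\big)\le e^{\int_0^t a_k(s,\xi)ds}\,\pi M\gamma\, F(t)$ (differential inequality for the absolute value, valid a.e.), integrate and use $e^{\int_0^t a_k}\ge e^{\nu k^2(t+t^3/12)}$ and the reverse bound $e^{\int_0^s a_k(\cdot,\xi)}\le e^{\int_0^t a_k(\cdot,\xi)}$ for $s\le t$ to cancel the integrating factors; this produces a closed inequality $F(t)\le F(0)e^{-\nu k^2(t+t^3/12)} + \int_0^t\pi M\gamma F(s)\,ds$ after one more sup, then Gronwall finishes. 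The only mild subtlety — the \emph{main obstacle}, such as it is — is justifying the differentiation of the $L^\infty$ norm and of $|\hat h_k|$ (Rademacher/Danskin-type argument) and ensuring enough regularity of $\hat h_k$ in $(t,\xi)$ for the manipulations; this is handled by a standard approximation/mollification argument or by working with $\|\hat h_k(t)\|_{L^p_\xi}$ and sending $p\to\infty$, using that the $L^1_\eta\to L^\infty_\xi$ (equivalently $L^p$) operator norm of the kernel is bounded by $\pi M\gamma$ for every $p\in[1,\infty]$ by Schur's test (the kernel also has $\sup_\eta\int|K|\,d\xi$ bounded similarly). Everything else is the routine computation of the two integrals above.
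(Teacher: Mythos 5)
Your strategy is the same as the paper's: bound the kernel in $L^1_\eta$ by $\pi M\gamma$ using $\sup_x \gamma^2 xe^{-(\nu t+\gamma^2/4)x^2}\le\gamma$ and $\int|k|((\eta-kt)^2+k^2)^{-1}d\eta=\pi$, use the integrating factor for $\nu(k^2+(\xi-kt)^2)$, complete the square to get $\int_0^t(\xi-ks)^2ds\ge k^2t^3/12$, and close with Gronwall. All of these computations are correct and match the paper.

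The one place where your write-up as stated would not deliver the full estimate is the Gronwall step. Your displayed Duhamel bound $|\hat h_k(t,\xi)|\le e^{-\nu k^2(t+t^3/12)}\big(|\hat h_k(0,\xi)|+\int_0^t\pi M\gamma\,m_k(s)\,ds\big)$ is not correct as written: the factor multiplying the source at time $s$ is $e^{-(A(t,\xi)-A(s,\xi))}$ with $A(t,\xi)=\int_0^t a_k$, which is bounded by $e^{-\nu k^2(t-s+\frac{t^3-s^3}{12})}$ but certainly not by $e^{-\nu k^2(t+t^3/12)}$ (take $s$ near $t$). Likewise, your alternative route --- ``absorb the monotone decaying prefactor'' and run Gronwall on $F(t)\le F(0)e^{-\nu k^2(t+t^3/12)}+\pi M\gamma\int_0^tF(s)\,ds$ --- only yields $F(t)\le F(0)e^{\pi M\gamma t}$: for a decreasing forcing $g$, Gronwall gives $F(t)\le g(t)+\pi M\gamma\int_0^tg(s)e^{\pi M\gamma(t-s)}ds$, which is \emph{larger} than $g(t)e^{\pi M\gamma t}$, so the dissipative factor $e^{-\nu k^2(t+t^3/12)}$ (the whole point of the lemma, and what encodes the enhanced dissipation) is lost. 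The correct version is the first option you mention in passing: keep the $s$-dependent factor $e^{-\nu k^2(t-s+\frac{t^3-s^3}{12})}$ inside the Duhamel integral (for which you need the elementary observation, made in the paper, that the remaining exponent $(t-s)(\frac{\xi}{k}-\frac{t+s}{2})^2+\frac{st(t-s)}{4}$ is nonnegative) and apply Gronwall to $G(t)=e^{\nu k^2(t+t^3/12)}m_k(t)$, giving $G(t)\le G(0)e^{\pi M\gamma t}$. With that correction your proof coincides with the paper's.
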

\begin{proof}
From \eqref{eq-h-ft-ori}, we deduce that
\begin{align*}
  &\left|\pa_t\hat h_k(t,\xi)+\nu\big(k^2+(\xi-kt)^2\big)\hat h_k(t,\xi)\right|\\
  \le& \int_{\mathbb{R}}M\gamma^2|\xi-\eta|e^{-(\nu t+\frac{\gamma^2}{4})|\xi-\eta|^2}\frac{k\|\hat h_k(t,\xi)\|_{L^\infty_\xi}}{(\eta-kt)^2+k^2}d\eta\\
  \le&M\|\hat h_k(t,\xi)\|_{L^\infty_\xi}\int_{\mathbb{R}}\frac{k\sup_\eta\left|\gamma^2|\xi-\eta|e^{-(\nu t+\frac{\gamma^2}{4})|\xi-\eta|^2}\right|}{(\eta-kt)^2+k^2}d\eta\le M\pi\gamma\|\hat h_k(t,\xi)\|_{L^\infty_\xi}.
\end{align*}
Here we use the fact that
\begin{align}\label{eq-est-ex-max}
  \sup_\eta\left|\gamma^2|\xi-\eta|e^{-(\nu t+\frac{\gamma^2}{4})|\xi-\eta|^2}\right|\le\frac{\gamma^2}{\sqrt{2e(\nu t+\gamma^2/4)}}\le \gamma.
\end{align}
Then by using Gronwall's inequality, we have
\begin{equation}\label{eq-est-hk}
  \begin{aligned}    
      &\left|\hat h_k(t,\xi)\right|\\
      \le& e^{-\nu\big(k^2t+\xi^2t-k\xi t^2+\frac{k^2t^3}{3}\big)}\Big(\left|\hat h_k(0,\xi)\right|+\int^t_0e^{\nu\big(k^2s+\xi^2s-k\xi s^2+\frac{k^2s^3}{3}\big)}M\pi\gamma \|\hat h_k(s,\xi)\|_{L^\infty_\xi}ds\Big)\\
  =&e^{-\nu k^2\big(t+t(\frac{t}{2}-\frac{\xi}{k})^2+\frac{t^3}{12}\big)}\left|\hat h_k(0,\xi)\right|\\
  &+\int^t_0e^{-\nu k^2\big((t-s)+(t-s)(\frac{\xi}{k}-\frac{(t+s)}{2})^2+\frac{(t-s)^3}{12}\big)}M\pi\gamma \|\hat h_k(s,\xi)\|_{L^\infty_\xi}ds\\
  \leq&\left|\hat h_k(0,\xi)\right|+\int^t_0M\pi\gamma \|\hat h_k(s,\xi)\|_{L^\infty_\xi}ds,
  \end{aligned}
\end{equation}
which gives $\|\hat h_k(t,\xi)\|_{L^\infty_\xi}\le e^{M\pi\gamma t}\|\hat h_k(0,\xi)\|_{L^\infty_\xi}$. 
\end{proof}

\subsection{Lower bound estimate for the viscous problem}
In this subsection, we use the dynamical approach to study the lower bound for the viscous problem ($\nu=\gamma^{\frac{3}{1-2\delta_0}}$). Note that the initial data $h_{in}(z,y)$ (see \eqref{eq-h-ini0}) has only $\pm1$ modes with $\hat h_{in,1}(\xi)=\hat h_{in,-1}(\xi)$. So the solution to the linear system satisfies $\hat h_k(t,\xi)\equiv0$ for $k\neq\pm1$ and  $\hat h_{-1}(t,-\xi)=\hat h_1(t,\xi)$ for all $t\ge0$ and $\xi\in \mathbb R$. Therefore, we focus on the $k=1$ mode, and study the following system:
\begin{equation}\label{eq-h-ft}
  \left\{
    \begin{array}{l}
      \partial_{t} \hat h(t,\xi)=\int_{\mathbb{R}}M\gamma^2(\xi-\eta)e^{-(\nu t+\frac{\gamma^2}{4})|\xi-\eta|^2}\frac{\hat h(t,\eta)}{(\eta-t)^2+1} d\eta-\nu\big(1+(\xi-t)^2\big)\hat h(t,\xi) ,\\
      \hat h(0,\xi)=\hat {\mathfrak h}_{in}(\xi)\ge0.
    \end{array}
  \right.
\end{equation}
For simplicity of notation, here we use $\hat h(t,\xi)$ instead of $\hat h_1(t,\xi)$. 

Analyzing the evolution from the initial time $t=0$, we have the following observations.

The kernel in \eqref{eq-h-ft} reaches its maximum $\frac{\sqrt2M\gamma^2}{\sqrt e\sqrt{4\nu t+\gamma^2}}\approx \gamma$ at $\eta= t$, $\eta-t=\frac{\sqrt2}{\sqrt{4\nu t+\gamma^2}}\approx\gamma^{-1}$, and satisfies
\begin{align*}
  M\gamma^2(\xi-\eta)e^{-(\nu t+\frac{\gamma^2}{4})|\xi-\eta|^2}\frac{1}{(\eta-t)^2+1}\ge c\gamma \text{ for }\xi\in[t+c\gamma^{-1},t+C\gamma^{-1}],\ \eta\in[t-C,t+C].
\end{align*}
Roughly speaking, for fixed $t\ge0$, we call $[t-C,t+C]$ the excitation region, $[t+c\gamma^{-1},t+C\gamma^{-1}]$ the region of growth, and $[t+C\gamma^{-1},\varepsilon_1\gamma^{-1}\ln (\gamma^{-1})]$ the waiting region, see Figure \ref{figure2}. One can see that, at $t=0$, the value of the integral in \eqref{eq-h-ft} is bigger than $c\gamma $ for $\xi$ in the region of growth.  Meanwhile, for such $\xi$, as $\nu=\gamma^{\frac{3}{1-2\delta_0}}$, the dissipation effect is very weak. As a result, $\hat h(0,\eta)$ with $\eta$ in the excitation region will excite $\hat h(0+,\xi)$ for $\xi$ in the region of growth. For $\xi$ in the waiting region, both the integral and the dissipation term are extremely small, which leads to the fact that $\hat h(0+,\xi)$ will stay close to $1$.  The region of growth is far in front of the excitation region, and these two regions move in the same direction as time changes. For each fixed frequency $\eta>C\gamma^{-1}$, it first belongs to the waiting region, then enters into the region of growth, and at last, enters into the excitation region. In a word, $\hat h(t,\eta)$ will first be excited and then excites the one with higher frequency in the new region of growth, which shows that the function $\hat h(t,t)$ is increasing with respect to $t$. In the end, a cascade effect generated from the huge scale of the region of growth causes exponential growth.
\begin{figure}[H]
\centering
\includegraphics[height=6.0cm,width=14cm]{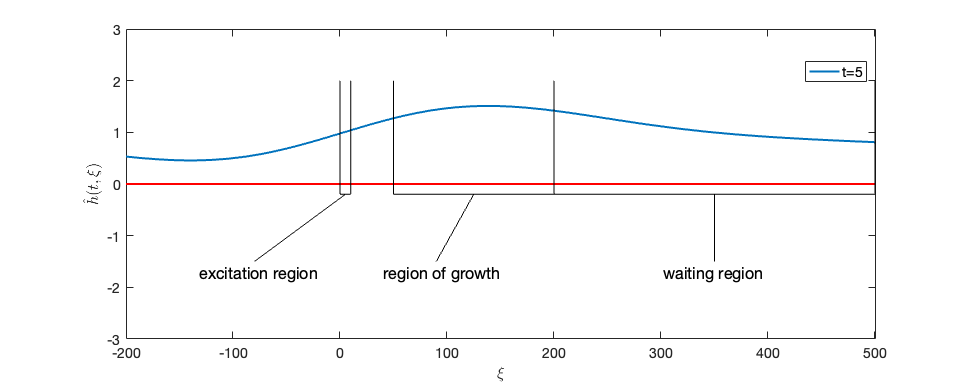}  
\caption{excitation region, region of growth and waiting region}\label{figure2}
\end{figure}
To clarify the growth mechanism, we divide the time-frequency into small intervals and study the evolution of $\hat h(t,\xi)$ during each small time interval. Let 
\begin{align*}
  \mathbb R^+\times\mathbb R=\bigcup_{m\in\mathbb Z^+,n\in \mathbb Z} I_m\times I_n,
\end{align*}
where $I_n=(\frac{n}{N}\gamma^{-1},\frac{n+1}{N}\gamma^{-1}]$, and $N=\lfloor\gamma^{-\frac{1}{3}}\rfloor$. Here we denote by $\lfloor a\rfloor$ the biggest integer that is not greater than $a$. We also define $T_m=\frac{m}{N}\gamma^{-1}$ which are the endpoints for the time intervals. Now we give precise definitions of the excitation region, the region of growth, and the waiting region.   For $t\in I_m$, we divide into 4 cases depending on the frequency $\xi\in \mathbb{R}=\mathfrak{I}^{r}_m\cup \mathfrak{I}^{e}_m\cup \mathfrak{I}^{g}_m\cup \mathfrak{I}^{w}_m$, where 
\begin{itemize}
\item[Case 1.] $\xi\in \mathfrak{I}^{r}_m=\left(\bigcup_{n\le m-3}I_n\right)\cup\left(\bigcup_{n>2\varepsilon_1\ln{(\gamma^{-1})}N}I_n\right)$;
\item[Case 2.] $\xi\in \mathfrak{I}^{e}_m=\bigcup_{|n-m|\le 2}I_n$;
\item[Case 3.] $\xi\in \mathfrak{I}^{g}_m=\bigcup_{m+3\le n\le m+2+4N}I_n$;
\item[Case 4.] $\xi\in \mathfrak{I}^{w}_m=\bigcup_{m+3+4N\le n\le 2\varepsilon_1\ln{(\gamma^{-1})}N}I_n$.
\end{itemize}
For Case 1, $\xi\in \mathfrak{I}^{r}_m$ is not in the excitation region nor the region of growth, we only need the upper bound estimate given in Remark \ref{rmk-basic}. For Case 2, the interval $\mathfrak{I}^{e}_m$ contains the excitation region, we prove that $\hat h(t,\xi)$ barely change during the time interval $I_m$, see Lemma \ref{lem-evo-h-case2}. For Case 3, the interval $\mathfrak{I}^{g}_m$ is the region of growth, $\hat h(t,\xi)$  keeps growing during the time interval $I_m$, see Lemma \ref{lem-evo-h-case3}.  For Case 4, $\mathfrak{I}^{w}_m$ is basically the waiting region, we prove that $\hat h(t,\xi)\ge \frac{19}{20}$, see Lemma \ref{lem-evo-h-case4}. The growth accumulated in Case 3 will eventually produce exponential growth.

\begin{remark}\label{rmk-basic}
   It holds for $t\ge0$ that
   \begin{align}\label{eq-h-up-basic}
  \|\hat h(t)\|_{L^\infty_\xi}\le e^{M\pi\gamma t}\|\hat {\mathfrak h}_{in}\|_{L^\infty_\xi}=e^{M\pi\gamma t}.
\end{align}
 \end{remark} 
 \begin{proof}
   The estimate \eqref{eq-h-up-basic} follows directly from Lemma \ref{lem-h-up-inf} by taking the well-chosen initial data \eqref{eq-h-ini0}.
 \end{proof}

Now, let us focus on Case 2.
\begin{lemma}\label{lem-evo-h-case2}
   Let $\hat h$ be the solution of \eqref{eq-h-ft}. For $t\in I_m$, $\xi\in I_n$, with $m\le \frac{3}{2}\varepsilon_1\ln(\gamma^{-1}) N-1$ and $\ |n-m|\le2$, it holds that
   \begin{align*}
     \hat h(T_{m},\xi)-\frac{6M\pi}{N^2}e^{\frac{M\pi(m+1)}{N}}\le\hat h(t,\xi)\le\hat h(T_{m},\xi)+\frac{6M\pi}{N^2}e^{\frac{M\pi(m+1)}{N}}.
   \end{align*}
\end{lemma}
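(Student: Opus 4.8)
The plan is to control the increment $\hat h(t,\xi)-\hat h(T_m,\xi)=\int_{T_m}^{t}\partial_s\hat h(s,\xi)\,ds$ by showing that, for $s\in I_m$ and $\xi\in I_n$ with $|n-m|\le 2$, the right-hand side of \eqref{eq-h-ft} is no larger than $\tfrac{6\pi M\gamma}{N}e^{M\pi(m+1)/N}$ up to strictly lower-order terms, and then multiplying by the length $|I_m|=T_{m+1}-T_m=\gamma^{-1}/N$ of the time interval. Throughout I use two elementary facts: by Remark \ref{rmk-basic}, $|\hat h(s,\eta)|\le e^{M\pi\gamma s}\le e^{M\pi(m+1)/N}$ for all $s\le T_{m+1}$ and all $\eta$; and since $\xi\in I_n$, $s\in[T_m,T_{m+1}]$ with $|n-m|\le2$, one has $|\xi-s|\le 3\gamma^{-1}/N$.

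For the nonlocal term I discard the harmless factor $e^{-\nu s|\xi-\eta|^2}\le1$, substitute $\eta=s+u$, and split the $u$-integral at $|u|=L$. On $\{|u|\le L\}$ I bound $|\xi-\eta|\le 3\gamma^{-1}/N+L$ and $e^{-\gamma^2|\xi-\eta|^2/4}\le1$, together with $\int_{|u|\le L}\frac{du}{u^2+1}\le\pi$, giving a contribution $\le \pi M\gamma^2\big(3\gamma^{-1}/N+L\big)$. On $\{|u|>L\}$ I use $\frac1{u^2+1}\le L^{-2}$ and the first absolute moment $\int_{\mathbb R}M\gamma^2|\xi-\eta|e^{-\gamma^2|\xi-\eta|^2/4}\,d\eta=4M$, giving a contribution $\le 4M/L^2$. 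Choosing $L\asymp\gamma^{-2/3}$ (the balance point, and precisely the reason the mesh is taken of size $\gamma^{-1}/N$ with $N=\lfloor\gamma^{-1/3}\rfloor$) and using $\gamma^{1/3}\le 1/N$, the two contributions are each $O(M\gamma/N)$; inserting the bound $|\hat h(s,\eta)|\le e^{M\pi(m+1)/N}$ then shows the nonlocal term is $\le \tfrac{6\pi M\gamma}{N}e^{M\pi(m+1)/N}$ for $\gamma$ small.

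The dissipative term $\nu(1+(\xi-s)^2)\hat h(s,\xi)$ is bounded, using $|\xi-s|\le 3\gamma^{-1}/N$ and Remark \ref{rmk-basic}, by $\nu\big(1+9\gamma^{-2}/N^2\big)e^{M\pi(m+1)/N}$; since $\nu=\gamma^{\frac{3}{1-2\delta_0}}\ll\gamma^{3}$ and $N^{-2}\asymp\gamma^{2/3}$, this is of strictly lower order than $\gamma^{4/3}\asymp\gamma/N$ once $\gamma$ is small, hence absorbable into the nonlocal estimate. Adding the two and integrating over $s\in[T_m,t]$ with $t-T_m\le\gamma^{-1}/N$ yields $|\hat h(t,\xi)-\hat h(T_m,\xi)|\le\tfrac{6M\pi}{N^2}e^{M\pi(m+1)/N}$, which is the claim. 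The hypothesis $m\le\tfrac32\varepsilon_1\ln(\gamma^{-1})N-1$ enters only through $e^{M\pi(m+1)/N}\le\gamma^{-\frac32 M\pi\varepsilon_1}$: it keeps this factor a fixed power of $\gamma^{-1}$, so that (after $\varepsilon_1$ is fixed small in later sections) the right-hand side is a genuinely small perturbation and the lower-order corrections can be absorbed uniformly in $m$.

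The only real work is the kernel integral, and the mechanism is the following: in the excitation regime ($\eta$ within $\sim\gamma^{-2/3}$ of $s$, and $\xi$ likewise near $s$) the antisymmetric kernel $M\gamma^2(\xi-\eta)e^{-(\nu s+\gamma^2/4)|\xi-\eta|^2}$ is small simply because $|\xi-\eta|\lesssim\gamma^{-2/3}$, so its size is $\lesssim M\gamma^{4/3}$; while for $\eta$ far from $s$ the kernel is only $O(M\gamma)$ but is then killed by the $(\eta-s)^{-2}$ decay of $\frac1{(\eta-s)^2+1}$. Optimizing the crossover at $|\eta-s|\sim\gamma^{-2/3}$ is what produces the extra factor $\gamma^{1/3}=1/N$ beyond the crude bound $|\partial_s\hat h|\lesssim M\gamma$, hence the $N^{-2}$ (rather than $N^{-1}$) in the statement. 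One does not need to exploit the antisymmetry of the kernel here; the absolute-value estimate already suffices, so the proof reduces to the split-and-optimize computation described above.
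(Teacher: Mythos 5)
Your proof is correct and follows essentially the same route as the paper's: split the $\eta$-integral at distance $\sim\gamma^{-1}/N\sim\gamma^{-2/3}$ from $t$, bound the near part by the smallness of the factor $\gamma^2|\xi-\eta|$ there, bound the far tail, note the dissipation is of lower order, and integrate over the time interval of length $\gamma^{-1}/N$ using the $L^\infty$ bound of Remark \ref{rmk-basic}. The only (cosmetic) difference is in the far region, where you pair the sup of the Lorentzian tail with the $L^1$ norm of the Gaussian kernel, while the paper pairs the sup of the kernel (estimate \eqref{eq-est-ex-max}) with the $L^1$ norm of the Lorentzian tail; both give an admissible $O(M\gamma/N)$ bound.
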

\begin{proof}
It follows from \eqref{eq-h-ft} that
\begin{equation}\label{eq-h-divide}
  \begin{aligned}    
      \pa_t\hat h(t,\xi)
  =&\int_{\cup_{i\in \mathbb Z,|i-m|\le1}I_{i}}M\gamma^2(\xi-\eta)e^{-(\nu t+\frac{\gamma^2}{4})|\xi-\eta|^2}\frac{1}{(\eta-t)^2+1} \hat h(t,\eta)d\eta\\
  &+\int_{\cup_{i\in \mathbb Z,|i-m|>1}I_{i}}M\gamma^2(\xi-\eta)e^{-(\nu t+\frac{\gamma^2}{4})|\xi-\eta|^2}\frac{1}{(\eta-t)^2+1} \hat h(t,\eta)d\eta\\
  &-\nu\big(1+(\xi-t)^2\big)\hat h(t,\xi)\\
  \eqdef&\mathcal A_m+\mathcal B_m+\mathcal C.
  \end{aligned}
\end{equation}
As $\xi\in I_n,|n-m|\le2$, and $\eta\in\cup_{i\in \mathbb Z,|i-m|\le1}I_{i}$, it holds that $|\xi-\eta|\le \frac{4}{N}\gamma^{-1}$. Then from \eqref{eq-h-up-basic}, we have
\begin{align*}
  |\mathcal A_m|\le M\gamma^2\frac{4}{N}\gamma^{-1}\int_{\mathbb R} \frac{1}{(\eta-t)^2+1}d \eta \|\hat h(t,\xi)\|_{L^\infty_\xi}\le \frac{4M\pi\gamma}{N}e^{M\pi\gamma t}.
\end{align*}
Then by using \eqref{eq-est-ex-max}, we have
\begin{align*}
  |\mathcal B_m|\le&M\gamma \int_{|\eta-t|\ge \frac{1}{N}\gamma^{-1}}\frac{1}{(\eta-t)^2+1} d \eta\|\hat h(t,\xi)\|_{L^\infty_\xi}\le 2MN\gamma^2e^{M\pi\gamma t},
\end{align*}
As $t\in I_m$, and $\xi\in I_n,|n-m|\le2$, we have $|\xi-t|\le \frac{3}{N}\gamma^{-1}$, then we have
\begin{align*}
  |\mathcal C|\le& \frac{10\nu\gamma^{-2}}{N^2}\|\hat h(t,\xi)\|_{L^\infty_\xi}\le \frac{10\nu\gamma^{-2}}{N^2}e^{M\pi\gamma t}.
\end{align*}
Recalling that $N=\lfloor\gamma^{-\frac{1}{3}}\rfloor$, $\nu=\gamma^{\frac{3}{1-2\delta_0}}$, and $\gamma>0$ small enough, we have
\begin{align*}
  \|\pa_t\hat h(t,\xi)\chi_{\xi\in \cup_{i\in \mathbb Z,|i|\le2}I_{i}}\|_{L^\infty_\xi}\le |\mathcal A_m|+|\mathcal B_m|+|\mathcal C|\le \frac{5}{4}|\mathcal A_m|\le\frac{5M\pi\gamma}{N}e^{M\pi\gamma t}.
\end{align*}
Therefore, for $\xi\in I_n, |n-m|\le2$, it holds that
\begin{align*}
  \hat h(T_{m},\xi)-\frac{6M\pi}{N^2}e^{\frac{M\pi(m+1)}{N}}\le\hat h(t,\xi)\le\hat h(T_{m},\xi)+\frac{6M\pi}{N^2}e^{\frac{M\pi(m+1)}{N}}.
\end{align*}
\end{proof}
Next, we show the main growth and study Case 3.
\begin{lemma}\label{lem-evo-h-case3}
   Let $\hat h$ be the solution of \eqref{eq-h-ft}. There exists $\varepsilon_1>0$ such that, for $t\in I_m=[T_{m},T_{m+1}]$, $\xi\in I_n$ with $m\le \frac{3}{2}\varepsilon_1\ln(\gamma^{-1}) N-1$, and $m+3\le n\le m+2+4N$, if it holds that
   \begin{align}\label{eq-h-assump}
     \inf_{\tau\in I_m}\ \inf_{\eta\in\cup_{ |i-m|\le1}I_{i}}\hat h(\tau,\eta)\ge \frac{9}{10},
   \end{align}
   we have 
\begin{align}\label{eq-grow}
  \hat h(t,\xi)\ge \hat h(T_{m},\xi)+(t-T_{m})\frac{4\gamma M(n-m-2)\pi  }{9N}e^{-\frac{(n-m-2)^2}{2N^2}}\inf_{\tau\in I_m}\inf_{\eta\in\cup_{ |i-m|\le1}I_{i}}\hat h(\tau,\eta),
\end{align}
and in particular at the endpoint,
   \begin{align*}
  \hat h(T_{m+1},\xi)\ge \hat h(T_{m},\xi)+\frac{4M(n-m-2)\pi}{9N^2}e^{-\frac{(n-m-2)^2}{2N^2}} \inf_{\tau\in I_m}\inf_{\eta\in\cup_{ |i-m|\le1}I_{i}}\hat h(\tau,\eta).
\end{align*}
\end{lemma}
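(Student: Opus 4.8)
\textbf{Proof plan for Lemma \ref{lem-evo-h-case3}.}

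The plan is to estimate $\pa_t \hat h(t,\xi)$ from below for $\xi$ in the region of growth by isolating the dominant contribution of the kernel near $\eta \approx t$, showing that the positive nonlocal term beats the negative dissipation term, and then integrating in time. First I would go back to \eqref{eq-h-ft} and split the right-hand side exactly as in the proof of Lemma \ref{lem-evo-h-case2}: write $\pa_t\hat h(t,\xi)=\mathcal A_m+\mathcal B_m+\mathcal C$, where $\mathcal A_m$ is the integral over $\eta\in\cup_{|i-m|\le1}I_i$ (the excitation region), $\mathcal B_m$ the integral over the complement, and $\mathcal C=-\nu(1+(\xi-t)^2)\hat h(t,\xi)$. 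The key observation is that on $\mathcal A_m$ the kernel $M\gamma^2(\xi-\eta)e^{-(\nu t+\gamma^2/4)|\xi-\eta|^2}\frac{1}{(\eta-t)^2+1}$ is \emph{positive} (since $\xi\in I_n$ with $n\ge m+3$ forces $\xi-\eta>0$ for $\eta$ in the excitation region) and, by the assumption \eqref{eq-h-assump}, $\hat h(t,\eta)\ge \frac{9}{10}\,\inf_{\tau\in I_m}\inf_{\eta\in\cup_{|i-m|\le1}I_i}\hat h(\tau,\eta)$ there; wait — more precisely $\hat h(t,\eta)\ge \inf_{\tau\in I_m}\inf_{\eta}\hat h(\tau,\eta)\ge \frac9{10}$, so I get
\begin{align*}
  \mathcal A_m \ge M\gamma^2\Big(\inf_{\tau\in I_m}\inf_{\eta\in\cup_{|i-m|\le1}I_i}\hat h(\tau,\eta)\Big)\int_{\cup_{|i-m|\le1}I_i}(\xi-\eta)e^{-(\nu t+\gamma^2/4)|\xi-\eta|^2}\frac{1}{(\eta-t)^2+1}\,d\eta.
\end{align*}

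Next I would lower-bound this integral. For $\eta$ in the excitation region $|\eta-t|\le \frac{2}{N}\gamma^{-1}$ (so the integration length is comparable to $\frac1N\gamma^{-1}$ on, say, the slab $|\eta-t|\le C$ where $\frac{1}{(\eta-t)^2+1}\gtrsim 1$) and $\xi\in I_n$ with $m+3\le n\le m+2+4N$, one has $\xi-\eta\approx \frac{n-m}{N}\gamma^{-1}$, hence $\gamma^2(\xi-\eta)\approx \frac{n-m}{N}\gamma$ and $e^{-(\nu t+\gamma^2/4)|\xi-\eta|^2}\ge e^{-\frac{(n-m-2)^2}{2N^2}}$ (using $\nu t\le \gamma^2/4$ on this time range together with a careful bound on $|\xi-\eta|$, and the factor $\frac14$ absorbed by the constants). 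Multiplying by the length of the slab and the constant $\pi/(\text{const})$ from $\int \frac{d\eta}{(\eta-t)^2+1}$ over $|\eta-t|\lesssim 1$, I arrive at
\begin{align*}
  \mathcal A_m \ge \frac{4\gamma M(n-m-2)\pi}{9N}\,e^{-\frac{(n-m-2)^2}{2N^2}}\,\inf_{\tau\in I_m}\inf_{\eta\in\cup_{|i-m|\le1}I_i}\hat h(\tau,\eta)\cdot(1+o(1)),
\end{align*}
where the explicit constants $\frac49$, $\pi$, $\frac12$ are exactly what one extracts from the Gaussian and from $\arctan$ after fixing the slab; the factor $\frac{1}{10}$ buffer in \eqref{eq-h-assump} is consumed in passing from $\frac9{10}$-type bounds to clean constants.

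Then I would show $\mathcal B_m$ and $\mathcal C$ are negligible compared to $\mathcal A_m$. As in Lemma \ref{lem-evo-h-case2}, $|\mathcal B_m|\le M\gamma\int_{|\eta-t|\ge \frac1N\gamma^{-1}}\frac{d\eta}{(\eta-t)^2+1}\|\hat h(t)\|_{L^\infty_\xi}\lesssim MN\gamma^2 e^{M\pi\gamma t}$ (using \eqref{eq-est-ex-max} and Remark \ref{rmk-basic}), and $|\mathcal C|\le \nu(1+(\xi-t)^2)\|\hat h(t)\|_{L^\infty_\xi}\lesssim \nu\gamma^{-2}e^{M\pi\gamma t}$ since $|\xi-t|\le \frac{6+4N}{N}\gamma^{-1}\lesssim \gamma^{-1}$; with $N=\lfloor\gamma^{-1/3}\rfloor$, $\nu=\gamma^{3/(1-2\delta_0)}$, $t\le \frac32\varepsilon_1\gamma^{-1}\ln(\gamma^{-1})$, and $\varepsilon_1$ chosen small, both are $o(\gamma)$ — in fact $\ll \gamma/N^{10}$ — hence absorbed into the $(1+o(1))$ above, so that $\pa_t\hat h(t,\xi)\ge \frac{4\gamma M(n-m-2)\pi}{9N}e^{-\frac{(n-m-2)^2}{2N^2}}\inf_{\tau\in I_m}\inf_\eta\hat h(\tau,\eta)$ pointwise for $t\in I_m$. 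Integrating from $T_m$ to $t$ gives \eqref{eq-grow}, and setting $t=T_{m+1}$, using $T_{m+1}-T_m=\frac1N\gamma^{-1}$, gives the endpoint statement. The main obstacle I anticipate is not the overall scheme but the bookkeeping of the explicit constants: one must be careful that on the region of growth $\xi-\eta$ is bounded below (to get the linear factor $n-m-2$) yet bounded above by roughly $\frac{n-m}{N}\gamma^{-1}$ (to get the Gaussian weight $e^{-(n-m-2)^2/(2N^2)}$ rather than something worse), and that $\nu t\le \gamma^2/4$ so the exponent $(\nu t+\gamma^2/4)|\xi-\eta|^2$ really is controlled by $\frac{(n-m-2)^2}{2N^2}$ — this forces the restriction $m\lesssim \varepsilon_1\ln(\gamma^{-1})N$, i.e. $t\lesssim \varepsilon_1\gamma^{-1}\ln(\gamma^{-1})$, and is precisely where the choice of $\varepsilon_1$ and the relation $\nu=\gamma^{3/(1-2\delta_0)}$ enter.
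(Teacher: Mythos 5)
Your overall scheme is the paper's: the same decomposition $\pa_t\hat h=\mathcal A_m+\mathcal B_m+\mathcal C$ as in Lemma \ref{lem-evo-h-case2}, the same use of positivity of the kernel on the excitation region together with \eqref{eq-h-assump}, the same extraction of the factor $\frac{M\gamma(n-m-2)}{N}e^{-(n-m-2)^2/(2N^2)}$ from the kernel (with the same benign imprecision about whether the Gaussian exponent should carry $n-m-2$ or the upper bound $n-m+2$; this is harmless since the two differ by a factor $e^{-O(1/N)}$), and the same mechanism by which the choice of $\varepsilon_1$ makes $e^{M\pi\gamma t}$ cost only $\gamma^{-3\delta_0}$.

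There is, however, a genuine quantitative gap in your absorption of the dissipation term $\mathcal C$. You bound $|\mathcal C|\lesssim\nu\gamma^{-2}e^{M\pi\gamma t}$ using the worst case $|\xi-t|\lesssim\gamma^{-1}$ and then declare it ``$o(\gamma)$, in fact $\ll\gamma/N^{10}$''. Both claims are wrong as stated: with $\nu=\gamma^{3/(1-2\delta_0)}$ and $e^{M\pi\gamma t}\le\gamma^{-3\delta_0}$ one gets $\nu\gamma^{-2}e^{M\pi\gamma t}\approx\gamma^{1+3\delta_0}$, which is far larger than $\gamma/N^{10}=\gamma^{13/3}$; and, more importantly, the relevant comparison is not with $\gamma$ but with $\mathcal A_m$, whose lower bound $\frac{4M\pi\gamma(n-m-2)}{9N}e^{-(n-m-2)^2/(2N^2)}$ degenerates to $\sim M\gamma^{4/3}$ at the inner edge $n=m+3$. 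Since $\gamma^{1+3\delta_0}\gg\gamma^{4/3}$ for $\delta_0<1/9$, your uniform bound on $\mathcal C$ does not beat $\mathcal A_m$ precisely where $\mathcal A_m$ is smallest, and the pointwise inequality $\pa_t\hat h\ge\frac12\mathcal A_m$ would fail to follow. The repair is what the paper does: keep the $n$-dependence, $|\xi-t|\le\frac{n-m+2}{N}\gamma^{-1}$, so that
\begin{align*}
|\mathcal C|\le\frac{\nu(n-m+2)^2\gamma^{-2}}{N^2}e^{M\pi\gamma t}\lesssim\gamma^{\frac43+3\delta_0}(n-m+2),
\end{align*}
and then compare with the $(n-m-2)$-dependent lower bound of $\mathcal A_m$, using $\frac{n-m+2}{n-m-2}\le5$, to get $|\mathcal C|\le\frac14\mathcal A_m$ uniformly in $n$. (Your treatment of $\mathcal B_m$ is fine: $2MN\gamma^2e^{M\pi\gamma t}\approx\gamma^{5/3-3\delta_0}\ll\gamma^{4/3}$ already beats the worst case of $\mathcal A_m$.)
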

\begin{proof}
  Similar to \eqref{eq-h-divide}, we write
  \begin{align*}
      \pa_t\hat h(t,\xi)=\mathcal A_m+\mathcal B_m+\mathcal C,
  \end{align*}
 where $\mathcal A_m$ describes the effect from the excitation region. 
  For $\xi\in I_n$, $n-m\ge3$, it holds that $(\xi-\eta)\ge \frac{n-m-2}{N}\gamma^{-1}$ with $\eta\in \cup_{ |i-m|\le1}I_{i}$. Then we can see from the assumption \eqref{eq-h-assump} that
\begin{align*}
  \mathcal A_m\ge& \frac{M\gamma(n-m-2)}{N}e^{-(\nu t+\frac{\gamma^2}{4})\frac{(n-m-2)^2\gamma^{-2}}{N^2}}\int_{|\eta-t|\le \frac{\gamma^{-1}}{N}} \frac{1}{(\eta-t)^2+1}d \eta\inf_{\tau\in I_m}\inf_{\eta\in\cup_{ |i-m|\le1}I_{i}}\hat h(\tau,\eta)\\
  \ge&\frac{M\gamma(n-m-2)}{N}e^{-\frac{(n-m-2)^2}{2N^2}} \frac{8\pi}{9}\inf_{\tau\in I_m}\inf_{\eta\in\cup_{ |i-m|\le1}I_{i}}\hat h(\tau,\eta)\\
  \ge&\frac{4M\gamma(n-m-2)\pi}{5N}e^{-\frac{(n-m-2)^2}{2N^2}}.
\end{align*}
As $t\in I_m$, $\xi\in I_n$ with $m+3\le n\le m+2+4N$ and $m\le \frac{3}{2}\varepsilon_1\ln(\gamma^{-1}) N-1$, it is clear that $t\le \frac{3}{2}\varepsilon_1\gamma^{-1}\ln(\gamma^{-1})$. One can easily check that
\begin{align*}
  \frac{1}{4}\mathcal A_m\ge&\frac{M\gamma(n-m-2)\pi}{5N}e^{-8}\ge\frac{M\gamma^{\frac{4}{3}}(n-m-2)\pi}{5}e^{-8},\\
  |\mathcal B_m|\le& 2MN\gamma^2e^{M\pi\gamma t}\le 2M\gamma^{\frac{5}{3}-\frac{3}{2}\varepsilon_1 M\pi},\\
  |\mathcal C|\le& \frac{\nu(n-m+2)^2\gamma^{-2}}{N^2}e^{M\pi\gamma t}\le \gamma^{\frac{5}{3}+6\delta_0-\frac{3}{2}\varepsilon_1 M\pi}(n-m+2)^2\le5\gamma^{\frac{4}{3}+6\delta_0-\frac{3}{2}\varepsilon_1 M\pi}(n-m+2).
\end{align*}
Then, by taking 
\begin{equation}\label{eq-epsilon1}
  \varepsilon_1=\frac{2\delta_0}{M\pi},
\end{equation}
we get
\begin{align*}
  |\mathcal B_m|\le2M\gamma^{\frac{5}{3}-3\delta_0}\le\frac{1}{4}\mathcal A_m,\quad |\mathcal C|\le5\gamma^{\frac{4}{3}+3\delta_0}(n-m+2)\le\frac{1}{4}\mathcal A_m
\end{align*}
for $\delta_0\le \frac{1}{10}$ and $\gamma$ small enough.

Therefore, we get that 
\begin{align}\label{eq-ht-deter}
  \pa_t\hat h(t,\xi)\ge \frac{1}{2}\mathcal A_m\ge \frac{4M\gamma(n-m-2)\pi}{9N}e^{-\frac{(n-m-2)^2}{2N^2}} \inf_{\tau\in I_m}\inf_{\eta\in\cup_{ |i-m|\le1}I_{i}}\hat h(\tau,\eta),
\end{align}
and 
\begin{align*}
  \hat h(t,\xi)\ge \hat h(T_{m},\xi)+(t-T_{m})\frac{4\gamma M(n-m-2)\pi  }{9N}e^{-\frac{(n-m-2)^2}{2N^2}}\inf_{\tau\in I_m}\inf_{\eta\in\cup_{ |i-m|\le1}I_{i}}\hat h(\tau,\eta),
\end{align*}
which gives the lemma. 
\end{proof}

For Case 4, we study the time evolution in the waiting region. The dissipation effect will be obvious, that is to say, $|\mathcal C|$ will be comparable to $\mathcal A_m$. So we do not expect $\hat h(t,\xi)$ to grow for $\xi\in \mathfrak{I}^{w}_m$. We give a lower-bound estimate.
\begin{lemma}\label{lem-evo-h-case4}
   Let $\hat h$ be the solution of \eqref{eq-h-ft}. For $t\in I_m$ and $\xi\in I_n$ with $m\le \frac{3}{2}\varepsilon_1\ln(\gamma^{-1}) N-1$, $m+3+4N\le n\le 2\varepsilon_1\ln{(\gamma^{-1})}N$, and $\varepsilon_1$ given in Lemma \ref{lem-evo-h-case3}, if it holds that 
   \begin{align}\label{eq-h-assump2}
     \inf_{0\le m'\le m}\ \inf_{\tau\in I_m}\ \inf_{\eta\in\cup_{ |i-m|\le1}I_{i}}\hat h(\tau,\eta)\ge 0,
   \end{align}
    we have
   \begin{align*}
  \hat h(t,\xi)\ge \frac{19}{20}.
\end{align*}
\end{lemma}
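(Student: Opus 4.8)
The plan is to fix a frequency $\xi\in I_n$ with $m+3+4N\le n\le 2\varepsilon_1\ln(\gamma^{-1})N$, follow $t'\mapsto\hat h(t',\xi)$ forward in time on $[0,t]$, and show that it can decrease by less than $\tfrac1{20}$. Since $|\xi|\le 2\varepsilon_1\gamma^{-1}\ln(\gamma^{-1})$, the chosen data \eqref{eq-h-ini0} gives $\hat h(0,\xi)=\hat{\mathfrak h}_{in}(\xi)=1$, so it suffices to prove $\int_0^t\partial_{t'}\hat h(t',\xi)\,dt'\ge-\tfrac1{20}$.

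For $t'\in[0,t]$ we have $t'\in I_{m'}$ for some $0\le m'\le m$, and I use the same splitting as in \eqref{eq-h-divide}, $\partial_{t'}\hat h(t',\xi)=\mathcal A_{m'}+\mathcal B_{m'}+\mathcal C$, with $\mathcal A_{m'}$ the integral over the excitation region $\eta\in\bigcup_{|i-m'|\le1}I_i$. The one essential observation is that $\mathcal A_{m'}\ge0$: because $n\ge m+3+4N>m'+1$ we have $\xi-\eta>0$ on the excitation region, so the kernel $M\gamma^2(\xi-\eta)e^{-(\nu t'+\gamma^2/4)|\xi-\eta|^2}/((\eta-t')^2+1)$ is nonnegative there, and by hypothesis \eqref{eq-h-assump2} (read as: $\hat h(\tau,\eta)\ge0$ whenever $\tau\in I_{m'}$, $m'\le m$, and $\eta\in\bigcup_{|i-m'|\le1}I_i$) so is $\hat h(t',\eta)$. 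Hence $\partial_{t'}\hat h(t',\xi)\ge-|\mathcal B_{m'}|-|\mathcal C|$, and it remains only to check that $\int_0^t(|\mathcal B_{m'}|+|\mathcal C|)\,dt'$ is arbitrarily small for $\gamma$ small.

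For $\mathcal B_{m'}$ this is exactly the estimate from the proof of Lemma \ref{lem-evo-h-case2}: by \eqref{eq-est-ex-max}, $\int_{|\eta-t'|\ge\gamma^{-1}/N}(\eta-t')^{-2}\,d\eta\le 2N\gamma$, and Remark \ref{rmk-basic}, $|\mathcal B_{m'}|\le 2MN\gamma^2 e^{M\pi\gamma t'}$. The only term genuinely different from Cases 2--3 is the dissipation, since now $\xi-t'$ may be as large as $\sim\gamma^{-1}\ln(\gamma^{-1})$; but this is only logarithmically larger than $\gamma^{-1}$, so from $n\le 2\varepsilon_1\ln(\gamma^{-1})N$ one gets $1+(\xi-t')^2\le 10\varepsilon_1^2\gamma^{-2}\ln^2(\gamma^{-1})$ and hence $|\mathcal C|\le 10\varepsilon_1^2\nu\gamma^{-2}\ln^2(\gamma^{-1})e^{M\pi\gamma t'}$. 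Using $N=\lfloor\gamma^{-1/3}\rfloor$, $\nu=\gamma^{3/(1-2\delta_0)}$, the choice $\varepsilon_1=2\delta_0/(M\pi)$ from \eqref{eq-epsilon1} (so that $e^{M\pi\gamma t'}\le\gamma^{-3\delta_0}$ on the window $t'\le\tfrac32\varepsilon_1\gamma^{-1}\ln(\gamma^{-1})$), and integrating over a time interval of length $\le\tfrac32\varepsilon_1\gamma^{-1}\ln(\gamma^{-1})$,
\[
\int_0^t|\mathcal B_{m'}|\,dt'\ \lesssim\ M\varepsilon_1\,\gamma^{\frac23-3\delta_0}\ln(\gamma^{-1}),\qquad
\int_0^t|\mathcal C|\,dt'\ \lesssim\ \varepsilon_1^3\,\gamma^{3\delta_0}\ln^3(\gamma^{-1}),
\]
and for $\delta_0\le\tfrac1{10}$ both right-hand sides tend to $0$ as $\gamma\to0^+$. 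Choosing $\gamma_0$ small enough makes their sum at most $\tfrac1{20}$, whence $\hat h(t,\xi)\ge\tfrac{19}{20}$.

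The main point — essentially the only delicate step — is that the dissipation remains negligible even for frequencies $\xi$ as far out as $\sim\gamma^{-1}\ln(\gamma^{-1})$ over the entire time window $t\lesssim\varepsilon_1\gamma^{-1}\ln(\gamma^{-1})$; this is where the relation $\nu=\gamma^{3/(1-2\delta_0)}$ (which makes $\nu\gamma^{-2}$ a small positive power of $\gamma$) and the calibration $\varepsilon_1=2\delta_0/(M\pi)$ work together. Unlike in Case 3, no lower bound on $\mathcal A_{m'}$ is used, only its sign, because in the waiting region the excitation kernel is Gaussian-suppressed by the large frequency gap $|\xi-\eta|\gtrsim\gamma^{-1}$: there is no growth to extract, merely the absence of decay.
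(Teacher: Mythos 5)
Your proposal is correct and follows essentially the same route as the paper: the key observation in both is that assumption \eqref{eq-h-assump2} forces $\mathcal A_{m'}\ge 0$ for all $0\le m'\le m$, after which only the smallness of $\mathcal B_{m'}$ and of the dissipation term $\mathcal C$ (which is only logarithmically worse than in Case 3, controlled by $\nu\gamma^{-2}$ being a positive power of $\gamma$) matters. The paper keeps $\mathcal C$ linear in $\hat h$ and closes with Gronwall as in \eqref{eq-h-dis}, whereas you insert the a priori bound of Remark \ref{rmk-basic} and integrate directly — an immaterial variation.
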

\begin{proof}
Under the assumption \eqref{eq-h-assump2}, it holds that
\begin{align*}
  \mathcal A_{m'}\ge0 \text{ for }0\le m'\le m.
\end{align*}
Then, by $m+3+4N\le n\le 2\varepsilon_1\ln{(\gamma^{-1})}N$, we have for $t\in \cup_{m'=0}^mI_{m'}$, $\xi\in I_n$ that
\begin{align*}
      &\partial_{t} \hat h(t,\xi)\ge-4\varepsilon_1^2\gamma^{1+6\delta_0}(\ln{\gamma})^2\hat h(t,\xi)-2MN\gamma^2e^{M\pi\gamma t}.
\end{align*}
Using Gronwall's inequality, we deduce that
\begin{align}\label{eq-h-dis}
  \hat h(t,\xi)\ge e^{-4\varepsilon_1^2\gamma^{1+6\delta_0}(\ln{\gamma})^2t}\Big(1-\int^t_02MN\gamma^2e^{(M\pi\gamma+4\varepsilon_1^2\gamma^{1+6\delta_0}(\ln{\gamma})^2) s}ds\Big).
\end{align}
With $\gamma$ small enough, it follows that
\begin{align*}
  \inf_{t\in I_m}\hat h(t,\xi)\ge e^{-6\varepsilon_1^3\gamma^{6\delta_0}(\ln{\gamma^{-1}})^3}\big(1-\frac{\gamma^{\frac{2}{3}-3\delta_0}}{\pi}\big)\ge \frac{19}{20}.
\end{align*}
\end{proof}
The assumptions \eqref{eq-h-assump} and \eqref{eq-h-assump2} hold naturally for the linear system with our well-chosen initial data, see the following lemma.
\begin{lemma}\label{lem-low-9/10}
With our well-chosen initial data, assumption \eqref{eq-h-assump} in Lemma \ref{lem-evo-h-case3} and assumption \eqref{eq-h-assump2} in Lemma \ref{lem-evo-h-case4} are automatically satisfied.
\end{lemma}
\begin{proof}
 From Lemma \ref{lem-evo-h-case2} and the fact $\gamma$ small enough, we have for $m=0,1$ that
\begin{align}\label{eq-assum-step}
  \inf_{\tau\in I_m}\inf_{\eta\in I_{m+3-j}}\hat h(\tau,\eta)\ge \frac{19}{20}-j\frac{6M\pi}{N^2}\gamma^{-3\delta_0}\ge \frac{9}{10},\quad j=1,2,3,4.
\end{align}
Next, we use mathematical induction to complete the proof. We assume that \eqref{eq-assum-step} holds for $0\le m\le K-1$ with $K\le\frac{3}{2}\varepsilon_1 \ln(\gamma^{-1}) N-1$, then assumption \eqref{eq-h-assump} and assumption \eqref{eq-h-assump2} holds for $0\le m\le K-1$. We will show that \eqref{eq-assum-step} holds for $m=K$. We only need to prove that 
\begin{align*}
  \inf_{\tau\in I_K}\inf_{\eta\in I_{K+2}}\hat h(\tau,\eta)\ge \frac{19}{20}-\frac{6M\pi}{N^2}\gamma^{-3\delta_0}.
\end{align*}
If $K\le 4N$, then the frequency $\xi\in I_{K+2}$ stay in the region of growth for $t\in \cup_{m=0}^{K-1}I_m=[0,T_K]$, then we deduce from Lemma \ref{lem-evo-h-case3} that
\begin{align*}
  \hat h(T_{K},\xi)\ge 1,\text{ for } \xi\in I_{K+2},\ K\le2\sqrt{\delta_0}\sqrt{-\ln{\gamma}}N.
\end{align*}
If $4N+1\le K\le \frac{3}{2}\varepsilon_1 \ln(\gamma^{-1}) N-1$, the frequency $\xi\in I_{K+2}$ will fist belong to the waiting region for $t\in \cup_{m=0}^{K-4N-1}I_m=[0,T_{K-4N}]$. We get from Lemma \ref{lem-evo-h-case4} that
\begin{align*}
  \hat h(T_{K-4N},\xi)\ge \frac{19}{20},\text{ for } \xi\in I_{K+2},\ 4N+1\le K\le \frac{3}{2}\varepsilon_1 \ln(\gamma^{-1}) N-1.
\end{align*}
Then such $\xi$ belong to the region of growth for $t\in \cup_{m=K-4N}^{K-1}I_m=[T_{K-4N},T_{K}]$, and it holds from Lemma \ref{lem-evo-h-case3} that
\begin{align*}
  \hat h(T_{K},\xi)\ge \frac{19}{20},\text{ for } \xi\in I_{K+2},\ 4N+1\le K\le \frac{3}{2}\varepsilon_1 \ln(\gamma^{-1}) N-1.
\end{align*}
Thus, for all $1\le K\le\frac{3}{2}\varepsilon_1 \ln(\gamma^{-1}) N-1$, we get from Lemma \ref{lem-evo-h-case2} that
\begin{align*}
  \inf_{\tau\in I_K}\inf_{\eta\in I_{K+2}}\hat h(\tau,\eta)\ge \frac{19}{20}-\frac{6M\pi}{N^2}\gamma^{-3\delta_0},
\end{align*}
and then \eqref{eq-assum-step} holds for $K$. 

Hence, by the principle of mathematical induction, \eqref{eq-assum-step} holds for all $m\le\frac{3}{2}\varepsilon_1 \ln(\gamma^{-1}) N-1$. The desired result follows immediately.
\end{proof}
Lemma \ref{lem-low-9/10} together with Lemma \ref{lem-evo-h-case2}-\ref{lem-evo-h-case4} give the evolution of $\hat h(t,\xi)$ in the excitation region, region of growth, and waiting region respectively. In the proof of Lemma \ref{lem-low-9/10}, we only use the fact that $\hat h(t,\xi)$ is growing in the region of growth, but do not care about the amount of the growth. Next, we give the proof of Proposition \ref{prop-lower}, and show how the growth accumulated in the region of growth eventually produces exponential growth.
\begin{proof}[Proof of Proposition \ref{prop-lower}]
We prove the following statement by mathematical induction:\\
{\bf Let $\{a_j\}$ and $\{b_j\}$ satisfy $a_0=b_0=1$, and $j\geq 1$,
\begin{align*}
  a_j=a_{j-1}+b_{j-1},\quad b_j=a_{j-1}+1.
\end{align*}
Then, for $1\le j\le \frac{5}{4}\varepsilon_1 \ln(\gamma^{-1})+2$,
\beq\label{eq:mathinduc}
\begin{aligned}
  &\frac{10}{9}\mathcal E_j\ge a_{j-1},\quad 
  \hat h(j\gamma^{-1},\xi)|_{\xi\in [(j+2)\gamma^{-1},(j+3)\gamma^{-1}]}\ge 1,\\
  &\hat h(j\gamma^{-1},\xi)|_{\xi\in[j\gamma^{-1},(j+1)\gamma^{-1}]}\ge a_j,\quad
  \hat h(j\gamma^{-1},\xi)|_{\xi\in[(j+1)\gamma^{-1},(j+2)\gamma^{-1}]}\ge b_j,
\end{aligned}
\eeq
where
  \begin{align*}
    \mathcal E_j\eqdef\inf_{(j-1)N\le m\le jN-1}\left(\inf_{\tau\in I_m}\left(\inf_{\eta\in\cup_{ |i-m|\le1}I_{i}}\hat h(\tau,\eta)\right)\right).
  \end{align*}
  }

\no\underline{Base case ($j=1,2$):}
  From Lemma \ref{lem-low-9/10} we can see that
  \begin{align*}
    \mathcal E_1\ge\frac{9}{10}.
  \end{align*}
  Then from Lemma \ref{lem-evo-h-case3}, we know for $\xi\in I_n$ with $N+2\le n\le 2+4N$ that
  \begin{equation}\label{eq-step-h2}
    \begin{aligned}    
    \hat h(\gamma^{-1},\xi)\ge&\hat h(0,\xi)+\sum_{m=0}^{N-1}\frac{4M(n-m-2)\pi}{9N^2}e^{-\frac{(n-m-2)^2}{2N^2}}\mathcal E_1\\
    \ge&\hat h(0,\xi)+\sum_{m=0}^{N-1}\frac{4M(n-m-2)\pi}{9N^2}e^{-\frac{(n-m-2)^2}{2N^2}}\mathcal E_1\\
    \ge&\hat h(0,\xi)+\int_{n-N}^{n}\frac{4M\pi}{9N^2}\sigma e^{-\frac{\sigma^2}{2N^2}}d\sigma\mathcal E_1\\
    \ge&\hat h(0,\xi)+\frac{4M\pi}{9}\big(e^{-\frac{(n-N)^2}{2N^2}}-e^{-\frac{n^2}{2N^2}}\big)\mathcal E_1.      
    \end{aligned}
  \end{equation}
Similarly, from Lemma \ref{lem-evo-h-case2} and Lemma \ref{lem-low-9/10}, for  $\xi\in I_n$ with $n=N,N+1$, it holds that
\begin{align*}
  \hat h(\gamma^{-1},\xi)\ge\hat h(0,\xi)+\frac{4M\pi}{9}\big(1-e^{-\frac{(N-2)^2}{2N^2}}\big)\mathcal E_1-\frac{12M\pi\gamma^{-3\delta_0}}{N^2}.
\end{align*}
Then, for $N\le n\le 3N-1$, it holds that
\begin{align*}
  \hat h(\gamma^{-1},\xi)|_{\xi\in I_n}\ge\hat h(0,\xi)+\frac{4M\pi}{9}\big(e^{-\frac{(3N-N)^2}{2N^2}}-e^{-\frac{(3N)^2}{2N^2}}\big)\mathcal E_1=1+\frac{4M\pi}{9}\big(e^{-2}-e^{-\frac{9}{2}}\big)\mathcal E_1.
\end{align*}
Let 
\begin{equation}\label{eq-M}
  M\ge\frac{5}{2\pi(e^{-2}-e^{-\frac{9}{2}})},
\end{equation}
then we have
\begin{align*}
  \hat h(\gamma^{-1},\xi)\ge 1+\frac{10}{9}\mathcal E_1\ge2,\text{ for }\xi\in[\gamma^{-1},3\gamma^{-1}].
\end{align*}
Therefore, using the same argument to Lemma \ref{lem-low-9/10}, we can see that
\begin{align}\label{eq-step-E2}
  \mathcal E_2\ge \hat h(\gamma^{-1},\eta)|_{\eta\in[\gamma^{-1},2\gamma^{-1}]}-\frac{1}{10}\ge \frac{19}{10},
\end{align}
which will excite $\hat h(2\gamma^{-1},\xi)|_{\xi\in [2\gamma^{-1},4\gamma^{-1}]}$ to
\begin{align*}
  \hat h(2\gamma^{-1},\xi)|_{\xi\in [2\gamma^{-1},3\gamma^{-1}]}\ge& \hat h(\gamma^{-1},\xi)|_{\xi\in [2\gamma^{-1},3\gamma^{-1}]}+\frac{10}{9}\mathcal E_2\ge2+\frac{10}{9}\frac{19}{10}\ge4,
\end{align*}
and
\begin{align*}
  \hat h(2\gamma^{-1},\xi)|_{\xi\in [3\gamma^{-1},4\gamma^{-1}]}\ge& \hat h(\gamma^{-1},\xi)|_{\xi\in [3\gamma^{-1},4\gamma^{-1}]}+\frac{10}{9}\mathcal E_2\ge1+\frac{10}{9}\frac{19}{10}\ge3.
\end{align*}
We also have 
\begin{align*}
  \hat h(2\gamma^{-1},\xi)|_{\xi\in [4\gamma^{-1},5\gamma^{-1}]}\ge& 1.
\end{align*}

\no\underline{Inductive step: } Suppose that the  statement \eqref{eq:mathinduc} holds for $1,2,\dots,j$, then let us prove that it holds for $j+1$: 

We have for the same reason as \eqref{eq-step-E2} that
\begin{align*}
  \frac{10}{9}\mathcal E_{j+1}\ge \frac{10}{9}\Big(\hat h\big(j\gamma^{-1},\eta\big)|_{\eta\in[j\gamma^{-1},(j+1)\gamma^{-1}]}-\frac{1}{10}\Big)\ge a_{j}+\frac{a_{j}-1}{9}\ge a_{j}.
\end{align*}
We also have that
\begin{align*}
  &\frac{4M\pi}{9}\big(e^{-\frac{(4N-N)^2}{2N^2}}-e^{-\frac{(4N)^2}{2N^2}}\big)\mathcal E_j\ge\frac{4M\pi}{9}\frac{9 a_j}{10}\big(e^{-\frac{9}{2}}-e^{-8}\big)\ge \frac{1}{20}.
\end{align*}
Therefore, by Lemma \ref{lem-evo-h-case4}, we deduce for $2\le j\le \frac{5}{4}\varepsilon_1 \ln(\gamma^{-1})+1$ that
\begin{align*}
  \hat h(j\gamma^{-1},\xi)|_{\xi\in [(j+2)\gamma^{-1},(j+3)\gamma^{-1}]}\ge& \frac{19}{20}+\frac{4M\pi}{9}\frac{9}{10}\big(e^{-\frac{(4N-N)^2}{2N^2}}-e^{-\frac{(4N)^2}{2N^2}}\big)\ge\frac{19}{20}+\frac{1}{20}=1.
\end{align*}
Similar to \eqref{eq-step-h2}, we have from \eqref{eq-M} that
\begin{align*}
  \hat h((j+1)\gamma^{-1},\xi)|_{\xi\in [(j+1)\gamma^{-1},(j+2)\gamma^{-1}]}\ge& \hat h(j\gamma^{-1},\xi)|_{\xi\in [(j+1)\gamma^{-1},(j+2)\gamma^{-1}]}+\frac{10}{9}\mathcal E_{j+1}\\
  \ge& b_j+a_j=a_{j+1},\\
  \hat h((j+1)\gamma^{-1},\xi)|_{\xi\in [(j+2)\gamma^{-1},(j+3)\gamma^{-1}]}\ge& \hat h(j\gamma^{-1},\xi)|_{\xi\in [(j+2)\gamma^{-1},(j+3)\gamma^{-1}]}+\frac{10}{9}\mathcal E_{j+1}\\
  \ge&1+a_j=b_{j+1}.
\end{align*}
That is, the statement also holds for $j+1$.

Recalling the definition of the series $\{a_j\}$ and $\{b_j\}$, one can see that
\begin{align*}
  b_j=a_{j-1}+1=a_{j-2}+b_{j-2}+1=b_{j-1}+b_{j-2}.
\end{align*}
So $\{b_j\}$ is Fibonacci sequence and $\{a_j\}$ is the summation of Fibonacci sequence. Therefore we know that there exists constants $\tilde c_0\ge \frac{1}{3}$ and $\tilde c_1\ge \ln \frac{3}{2}$ such that
\begin{align*}
  b_j\ge \tilde c_0 e^{\tilde c_1j}.
\end{align*}

As a conclusion, at each time $t\in[j\gamma^{-1},(j+1)\gamma^{-1}]$ with $1\le j\le \frac{5}{4}\varepsilon_1 \ln(\gamma^{-1})+1$ we have
\begin{equation}
  \hat h(t,\xi)\ge\left\{
    \begin{array}{ll}
      \tilde c_0e^{-\tilde c_1} e^{\tilde c_1\gamma t},&\xi\in [(j+1)\gamma^{-1},(j+2)\gamma^{-1}];\\
      \frac{19}{20},&\xi\in [(j+2)\gamma^{-1},2\varepsilon_1\gamma^{-1}\ln{(\gamma^{-1})}-1].
    \end{array}
  \right.
\end{equation}

From \eqref{eq-h-ini0}, we can see that
\begin{align*}
  \|\mathfrak h_{in}\|_{L^2_y}=4\pi\sqrt{\varepsilon_1}\gamma^{-\frac{1}{2}}\sqrt{-\ln{(\gamma)}}.
\end{align*}
Therefore, for $t\in[0,\frac{5}{4}\varepsilon_1\gamma^{-1}\ln (\gamma^{-1})]$, there exists $c_0,c_1>0$ such that
\begin{align*}
  \|\hat h(t,\xi)\|_{L^2_\xi}\ge& \tilde c_0e^{-\tilde c_1} e^{\tilde c_1\gamma t}\gamma^{-\frac{1}{2}}+ \frac{19}{20}|2\varepsilon_1\ln{(\gamma^{-1})}\gamma^{-1}-t-4\gamma^{-1}|^{\frac{1}{2}}\\
  \ge&c_0e^{c_1\gamma t}\sqrt{8\varepsilon_1}\gamma^{-\frac{1}{2}}\sqrt{-\ln{(\gamma)}}=\frac{1}{\sqrt2\pi}c_0e^{c_1\gamma t}\|\mathfrak h_{in}\|_{L^2_y}.
\end{align*}
As $\nu=\gamma^{\frac{3}{1-2\delta_0}}$, it is clear that
\begin{align*}
  T=\varepsilon_1\gamma^{-1}(\ln{\nu^{-1}})\le\frac{5}{4}\varepsilon_1\gamma^{-1}\ln (\gamma^{-1}).
\end{align*}
Recalling that $\hat h(t,\xi)$ we studied here is the $1$ mode of $\hat h(t,k,\xi)$, one can do the same estimate for the $-1$ mode, and get for $t\in[0,T]$ that
\begin{align*}
   &\|P_{\pm1}{S_\nu(t,0)}g_{in}\|_{L^2_{x,y}}=2\pi\|\hat h_{\pm1}(t)\|_{L^2_{\xi}}\ge \sqrt 2c_0e^{c_1\gamma t}\|\mathfrak h_{in}\|_{L^2_xL^2_y}=c_0e^{c_1\gamma t}\| g_{in}\|_{L^2_xL^2_y},
\end{align*}
which gives Proposition \ref{prop-lower}.
\end{proof}
As can be seen from the above proof, the result of Proposition \ref{prop-lower} is still valid for $0\le\nu<\gamma^{\frac{3}{1-2\delta_0}}$.
\subsection{Lower bound estimate for the inviscid problem}
In this subsection, we give the application of the dynamical approach to the inviscid problem. Similar to the viscous problem, we focus on the $k=1$ mode, and study the following system:
\begin{equation}\label{eq-h-ft-Euler}
  \left\{
    \begin{array}{l}
      \partial_{t} \hat h(t,\xi)=\int_{\mathbb{R}}M\gamma^2(\xi-\eta)e^{-\frac{\gamma^2}{4}|\xi-\eta|^2}\frac{\hat h(t,\eta)}{(\eta-t)^2+1} d\eta ,\\
      \hat h(0,\xi)=\hat {\mathfrak h}_{in}(\xi)\ge0.
    \end{array}
  \right.
\end{equation}
We also divide the time-frequency into small intervals and study the evolution of $\hat h(t,\xi)$ during each small time interval. For $t\in I_m$, we divide into 4 cases depending on the frequency $\xi\in \mathbb{R}=\mathfrak{I}^{r}_m\cup \mathfrak{I}^{e}_m\cup \mathfrak{I}^{g}_m\cup \mathfrak{I}^{w}_m$, where 
\begin{itemize}
\item[Case 1.] $\xi\in \mathfrak{I}^{r}_m=\left(\bigcup_{n\le m-3}I_n\right)\cup\left(\bigcup_{n>2\varepsilon_1\ln{(\gamma^{-1})}N}I_n\right)$;
\item[Case 2.] $\xi\in \mathfrak{I}^{e}_m=\bigcup_{|n-m|\le 2}I_n$;
\item[Case 3.] $\xi\in \mathfrak{I}^{g}_m=\bigcup_{m+3\le n\le m+2+4N}I_n$;
\item[Case 4.] $\xi\in \mathfrak{I}^{w}_m=\bigcup_{m+3+4N\le n\le 2\varepsilon_1\ln{(\gamma^{-1})}N}I_n$.
\end{itemize}
For Case 1, $\xi\in I_n$ is not in the excitation region nor the region of growth, we have from Lemma \ref{lem-h-up-inf} with $\nu=0$ that
   \begin{align}\label{eq-h-up-basic-Euler}
  \|\hat h(t)\|_{L^\infty_\xi}\le e^{M\pi\gamma t}\|\hat {\mathfrak h}_{in}\|_{L^\infty_\xi}=e^{M\pi\gamma t},\text{ for }t\ge0.
\end{align}
For Case 2, by using the same argument in Lemma \ref{lem-evo-h-case2}, one can see that the self-interaction in the excitation region is weak.
\begin{lemma}\label{lem-evo-h-case2-Euler}
   Let $\hat h$ be the solution of \eqref{eq-h-ft-Euler}. For $t\in I_m$, $\xi\in I_n$, with $m\le \frac{3}{2}\varepsilon_1\ln(\gamma^{-1}) N-1$ and $\ |n-m|\le2$, it holds that
   \begin{align*}
     \hat h(T_{m},\xi)-\frac{6M\pi}{N^2}e^{\frac{M\pi(m-1)}{N}}\le\hat h(t,\xi)\le\hat h(T_{m},\xi)+\frac{6M\pi}{N^2}e^{\frac{M\pi(m-1)}{N}}.
   \end{align*}
\end{lemma}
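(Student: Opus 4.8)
The plan is to run the argument of Lemma \ref{lem-evo-h-case2} essentially verbatim, with the single simplification that for the inviscid equation \eqref{eq-h-ft-Euler} there is no diffusive term, so the decomposition of $\pa_t\hat h(t,\xi)$ has only two pieces,
\begin{align*}
  \pa_t\hat h(t,\xi)=\int_{\cup_{|i-m|\le1}I_i}M\gamma^2(\xi-\eta)e^{-\frac{\gamma^2}{4}|\xi-\eta|^2}\frac{\hat h(t,\eta)}{(\eta-t)^2+1}\,d\eta+\int_{\cup_{|i-m|>1}I_i}(\cdots)\,d\eta\eqdef\mathcal A_m+\mathcal B_m,
\end{align*}
and no third term $\mathcal C$ coming from the dissipation $-\nu\big(1+(\xi-t)^2\big)\hat h$.

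First I would estimate the near-diagonal piece $\mathcal A_m$: for $t\in I_m$, $\xi\in I_n$ with $|n-m|\le2$, and $\eta\in\cup_{|i-m|\le1}I_i$ one has $|\xi-\eta|\le\frac{4}{N}\gamma^{-1}$, so pulling out this factor, integrating the Poisson kernel $\frac{1}{(\eta-t)^2+1}$ over $\mathbb R$, and using the crude bound \eqref{eq-h-up-basic-Euler} gives $|\mathcal A_m|\le\frac{4M\pi\gamma}{N}e^{M\pi\gamma t}$. Next I would bound the tail $\mathcal B_m$: for $t\in I_m$, every $\eta\notin\cup_{|i-m|\le1}I_i$ satisfies $|\eta-t|\ge\frac1N\gamma^{-1}$, so combining the uniform kernel bound \eqref{eq-est-ex-max} (with $\nu=0$) with $\int_{|\eta-t|\ge\frac1N\gamma^{-1}}\frac{d\eta}{(\eta-t)^2+1}\le 2N\gamma$ yields $|\mathcal B_m|\le 2MN\gamma^2e^{M\pi\gamma t}$. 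Since $N=\lfloor\gamma^{-1/3}\rfloor$, the tail is of order $\gamma^{5/3}$, far smaller than $|\mathcal A_m|\sim\gamma^{4/3}$ once $\gamma$ is small, so on the excitation band $\|\pa_t\hat h(t,\cdot)\|_{L^\infty}\le\frac{5M\pi\gamma}{N}e^{M\pi\gamma t}$. Integrating over $I_m$, whose length is $\frac1N\gamma^{-1}$, produces $|\hat h(t,\xi)-\hat h(T_m,\xi)|\le\frac{5M\pi}{N^2}e^{M\pi\gamma t}$, and bounding $e^{M\pi\gamma t}$ on $I_m$ by $e^{M\pi(m-1)/N}e^{2M\pi/N}$ with the harmless factor $e^{2M\pi/N}\to 1$ absorbed into enlarging the constant from $5$ to $6$ (legitimate for $\gamma$, hence $N$, small enough, large enough) gives the claimed two-sided estimate.

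I do not expect any genuine obstacle here: this is a soft perturbative bound, much easier than Lemmas \ref{lem-evo-h-case3} and \ref{lem-evo-h-case4}, and it is exactly the inviscid specialization of Lemma \ref{lem-evo-h-case2}. The only two points requiring attention are (i) checking that the far self-interaction $\mathcal B_m$ really is lower order than the near-diagonal contribution $\mathcal A_m$, which is precisely what dictates the partition scale $N=\lfloor\gamma^{-1/3}\rfloor$, and (ii) tracking the exponential envelope $e^{M\pi\gamma t}$ across the short window $I_m$ so that the final constant and exponent match the statement. One could alternatively replace the use of \eqref{eq-h-up-basic-Euler} by a short Gronwall argument on the excitation band, but the direct estimate above is cleaner.
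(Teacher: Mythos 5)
Your proposal is correct and is essentially the paper's own argument: the paper proves this lemma simply by invoking the proof of Lemma \ref{lem-evo-h-case2} with the dissipative term $\mathcal C$ dropped, which is exactly the decomposition and the bounds $|\mathcal A_m|\le\frac{4M\pi\gamma}{N}e^{M\pi\gamma t}$, $|\mathcal B_m|\le 2MN\gamma^2e^{M\pi\gamma t}$ you give. Your handling of the exponent (converting $e^{M\pi(m+1)/N}$ to $e^{M\pi(m-1)/N}$ by absorbing $e^{2M\pi/N}$ into the constant $6$ for $\gamma$ small) correctly accounts for the only cosmetic difference between the two statements.
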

For Case 3, we treat the region of growth and give the growing rate.
\begin{lemma}\label{lem-evo-h-case3-Euler}
   Let $\hat h$ be the solution of \eqref{eq-h-ft-Euler}. There exists $\varepsilon_1>0$ such that, for $t\in I_m$, $\xi\in I_n$ with $m\le \frac{3}{2}\varepsilon_1\ln(\gamma^{-1}) N-1$, and $m+3\le n\le m+2+4N$, if it holds that
   \begin{align}\label{eq-h-assump3}
     \inf_{\tau\in I_m}\ \inf_{\eta\in\cup_{ |i-m|\le1}I_{i}}\hat h(\tau,\eta)\ge \frac{9}{10},
   \end{align}
   we have 
\begin{align}\label{eq-grow-Euler}
  \hat h(t,\xi)\ge \hat h(T_{m},\xi)+(t-T_{m})\frac{4\gamma M(n-m-2)\pi  }{9N}e^{-\frac{(n-m-2)^2}{2N^2}}\inf_{\tau\in I_m}\inf_{\eta\in\cup_{ |i-m|\le1}I_{i}}\hat h(\tau,\eta).
\end{align}
\end{lemma}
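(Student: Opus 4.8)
The plan is to reproduce, with obvious simplifications, the argument of Lemma \ref{lem-evo-h-case3}: the system \eqref{eq-h-ft-Euler} is exactly \eqref{eq-h-ft} with $\nu=0$, so the dissipative term disappears and the Gaussian weight $e^{-\frac{\gamma^2}{4}|\xi-\eta|^2}$ only becomes larger. Fix $t\in I_m$ and $\xi\in I_n$ with $m+3\le n\le m+2+4N$ and $m\le\frac32\varepsilon_1\ln(\gamma^{-1})N-1$. First I would split the right-hand side of \eqref{eq-h-ft-Euler}, writing $\pa_t\hat h(t,\xi)=\mathcal A_m+\mathcal B_m$, where $\mathcal A_m$ is the part of the integral over the excitation region $\bigcup_{|i-m|\le1}I_i$,
\begin{align*}
  \mathcal A_m=\int_{\cup_{|i-m|\le1}I_i}M\gamma^2(\xi-\eta)e^{-\frac{\gamma^2}{4}|\xi-\eta|^2}\frac{\hat h(t,\eta)}{(\eta-t)^2+1}\,d\eta,
\end{align*}
and $\mathcal B_m$ the part over the complement; there is no third term because $\nu=0$.

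For the lower bound on $\mathcal A_m$ I would argue exactly as in Lemma \ref{lem-evo-h-case3}. For $\eta$ with $|\eta-t|\le\gamma^{-1}/N$ one has $\eta\in\bigcup_{|i-m|\le1}I_i$ (since $t\in I_m$), hence $\hat h(t,\eta)\ge\inf_{\tau\in I_m}\inf_{\eta'\in\cup_{|i-m|\le1}I_i}\hat h(\tau,\eta')$, which by \eqref{eq-h-assump3} is at least $\frac{9}{10}$; moreover $\xi-\eta\ge\frac{n-m-2}{N}\gamma^{-1}>0$ there. Using the monotonicity of $s\mapsto se^{-\frac{\gamma^2}{4}s^2}$, the bound $\int_{|\eta-t|\le\gamma^{-1}/N}\frac{d\eta}{(\eta-t)^2+1}\ge\frac{8\pi}{9}$ (valid for $\gamma$ small, since $\gamma^{-1}/N\approx\gamma^{-2/3}\to\infty$), and $e^{-\frac{(n-m-2)^2}{4N^2}}\ge e^{-\frac{(n-m-2)^2}{2N^2}}$, this gives
\begin{align*}
  \mathcal A_m\ge\frac{8\pi M\gamma(n-m-2)}{9N}\,e^{-\frac{(n-m-2)^2}{2N^2}}\,\inf_{\tau\in I_m}\inf_{\eta\in\cup_{|i-m|\le1}I_i}\hat h(\tau,\eta).
\end{align*}

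For $\mathcal B_m$ I would use, as in Lemma \ref{lem-evo-h-case3}, the pointwise kernel bound \eqref{eq-est-ex-max} (which for $\nu=0$ gives $M\gamma^2|\xi-\eta|e^{-\frac{\gamma^2}{4}|\xi-\eta|^2}\le M\gamma$), the tail estimate $\int_{|\eta-t|\ge\gamma^{-1}/N}\frac{d\eta}{(\eta-t)^2+1}\le2N\gamma$, and the crude bound \eqref{eq-h-up-basic-Euler}, to obtain $|\mathcal B_m|\le2MN\gamma^2e^{M\pi\gamma t}$. Since $t\le\frac32\varepsilon_1\gamma^{-1}\ln(\gamma^{-1})$ we have $e^{M\pi\gamma t}\le\gamma^{-\frac32\varepsilon_1 M\pi}$, so using $N\le\gamma^{-1/3}$ and $n-m-2\ge1$ one checks, provided $\varepsilon_1>0$ is chosen small enough (say $\varepsilon_1 M\pi<\frac{1}{9}$), that $|\mathcal B_m|\le\frac14\mathcal A_m$ for all $\gamma$ small enough. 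Hence $\pa_t\hat h(t,\xi)\ge\frac12\mathcal A_m\ge\frac{4M\gamma(n-m-2)\pi}{9N}e^{-\frac{(n-m-2)^2}{2N^2}}\inf_{\tau\in I_m}\inf_{\eta\in\cup_{|i-m|\le1}I_i}\hat h(\tau,\eta)$ for every $t\in I_m$, the inviscid analogue of \eqref{eq-ht-deter}; integrating this differential inequality from $T_m$ to $t$ (both in $I_m$) then yields exactly \eqref{eq-grow-Euler}.

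The main (and essentially only) delicate point, shared with Lemma \ref{lem-evo-h-case3}, is the inequality $|\mathcal B_m|\le\frac14\mathcal A_m$: one has to ensure that the aggregate contribution of all frequencies far from the excitation region is dominated by the genuine growth produced in it. This is precisely what the time window $t\le\frac32\varepsilon_1\gamma^{-1}\ln(\gamma^{-1})$ — keeping $e^{M\pi\gamma t}$ a small power of $\gamma^{-1}$ — and the scaling $N=\lfloor\gamma^{-1/3}\rfloor$ are designed for. In contrast to the viscous case, no dissipative term $\mathcal C$ must be absorbed, so there is more room and $\varepsilon_1$ can be taken to be an absolute constant depending only on $M$ (hence on $M_0$).
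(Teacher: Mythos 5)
Your proposal is correct and follows essentially the same route as the paper: the same decomposition $\partial_t\hat h=\mathcal A_m+\mathcal B_m$, the same lower bound $\mathcal A_m\ge\frac{8\pi}{9}\frac{M\gamma(n-m-2)}{N}e^{-\frac{(n-m-2)^2}{2N^2}}\inf(\cdots)$, the same tail bound $|\mathcal B_m|\le 2MN\gamma^2e^{M\pi\gamma t}$ absorbed by a smallness condition on $\varepsilon_1 M\pi$ (the paper fixes $\varepsilon_1=\frac{1}{9M\pi}$ and absorbs with factor $\frac12$ rather than $\frac14$; this is immaterial), followed by integration over $[T_m,t]$. The only shared implicit point is the lower bound on the kernel over the full range of $\xi-\eta$ when $n-m$ is near $4N$ (where $s\mapsto se^{-\gamma^2s^2/4}$ is no longer monotone), which both you and the paper treat the same way and which does check out.
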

\begin{proof}
  Similar to the viscous problem, we write
  \begin{align*}
  \pa_t\hat h(t,\xi)=&\int_{\mathbb{R}}M\gamma^2(\xi-\eta)e^{-\frac{\gamma^2}{4}|\xi-\eta|^2}\frac{1}{(\eta-t)^2+1}\hat h(t,\eta)d\eta\\
  =&\int_{\cup_{i\in \mathbb Z,|i-m|\le1}I_{i}}M\gamma^2(\xi-\eta)e^{-\frac{\gamma^2}{4}|\xi-\eta|^2}\frac{1}{(\eta-t)^2+1} \hat h(t,\eta)d\eta\\
  &+\int_{\cup_{i\in \mathbb Z,|i-m|>1}I_{i}}M\gamma^2(\xi-\eta)e^{-\frac{\gamma^2}{4}|\xi-\eta|^2}\frac{1}{(\eta-t)^2+1} \hat h(t,\eta)d\eta\\
  \eqdef&\mathcal A_m+\mathcal B_m,
  \end{align*}
  and have
  \begin{align*}
  \mathcal A_m\ge&\frac{M\gamma(n-m-2)}{N}e^{-\frac{(n-m-2)^2}{2N^2}} \frac{8\pi}{9}\inf_{\tau\in I_m}\inf_{\eta\in\cup_{ |i-m|\le1}I_{i}}\hat h(\tau,\eta),
\end{align*}
\begin{align*}
  |\mathcal B_m|\le&2MN\gamma^2e^{M\pi\gamma t}\le2MN\gamma^{2-\frac{3M\pi}{2}\varepsilon_1}.
\end{align*}
By taking 
\begin{align}\label{eq-epsilon1-Euler}
  \varepsilon_1=\frac{1}{9M\pi},
\end{align}
one can easily check that $|\mathcal B_m|\le \frac{1}{2}\mathcal A_m$, which gives the result of this lemma.
\end{proof}
For case 4, we have a similar estimate to the one in the viscous case. 
\begin{lemma}\label{lem-evo-h-case4-Euler}
   Let $\hat h$ be the solution of \eqref{eq-h-ft-Euler}. For $t\in I_m$ and $\xi\in I_n$ with $m\le \frac{3}{2}\varepsilon_1\ln(\gamma^{-1}) N-1$, $m+3+4N\le n\le 2\varepsilon_1\ln{(\gamma^{-1})}N$, and $\varepsilon_1$ given in Lemma \ref{lem-evo-h-case3-Euler}, if it holds that 
   \begin{align}\label{eq-h-assump4}
     \inf_{0\le m'\le m}\ \inf_{\tau\in I_m}\ \inf_{\eta\in\cup_{ |i-m|\le1}I_{i}}\hat h(\tau,\eta)\ge 0,
   \end{align}
    we have
   \begin{align*}
  \hat h(t,\xi)\ge \frac{19}{20}.
\end{align*}
\end{lemma}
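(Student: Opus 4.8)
The plan is to repeat, in a simplified form, the argument behind the viscous Case~4 estimate (Lemma~\ref{lem-evo-h-case4}). The point is that the Euler system \eqref{eq-h-ft-Euler} has no dissipative term $-\nu\bigl(1+(\xi-t)^2\bigr)\hat h$, so nothing competes with the nonnegative self-interaction coming from the excitation region; the only mechanism that could push $\hat h(t,\xi)$ below its initial value $1$ is the far-field contribution, which is of size $O(\gamma^{1/2})$ on the time window under consideration. Hence the inviscid version is strictly easier than its viscous counterpart.

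Concretely, I would fix $\xi\in I_n$ with $m+3+4N\le n\le 2\varepsilon_1\ln(\gamma^{-1})N$. For $s\in[0,t]$ let $m'$ be the index with $s\in I_{m'}$, so $0\le m'\le m$, and split the right-hand side of \eqref{eq-h-ft-Euler} exactly as in the proof of Lemma~\ref{lem-evo-h-case3-Euler},
\begin{align*}
\pa_s\hat h(s,\xi)=\mathcal A_{m'}(s)+\mathcal B_{m'}(s),
\end{align*}
where $\mathcal A_{m'}(s)$ is the integral over $\eta\in\cup_{|i-m'|\le1}I_i$ and $\mathcal B_{m'}(s)$ the integral over its complement. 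The first claim is that $\mathcal A_{m'}(s)\ge0$: for such $\eta$ one has $\eta\le\frac{m'+2}{N}\gamma^{-1}$ while $\xi>\frac{m'+3+4N}{N}\gamma^{-1}$, hence $\xi-\eta\ge\frac{1+4N}{N}\gamma^{-1}>0$, so the kernel $M\gamma^2(\xi-\eta)e^{-\frac{\gamma^2}{4}|\xi-\eta|^2}$ is nonnegative, and $\hat h(s,\eta)\ge0$ is precisely what assumption \eqref{eq-h-assump4} provides for every such $m'$. The second claim is the bound
\begin{align*}
|\mathcal B_{m'}(s)|\le 2MN\gamma^2 e^{M\pi\gamma s},
\end{align*}
which follows, just as in Lemma~\ref{lem-evo-h-case3-Euler}, from the kernel estimate \eqref{eq-est-ex-max}, the tail bound $\int_{|\eta-s|\ge\frac1N\gamma^{-1}}\frac{d\eta}{(\eta-s)^2+1}\le 2N\gamma$, and the crude $L^\infty$ estimate \eqref{eq-h-up-basic-Euler}.

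It then remains to integrate in time. Since $t\in I_m$ with $m\le\frac32\varepsilon_1\ln(\gamma^{-1})N-1$ we have $t\le\frac32\varepsilon_1\gamma^{-1}\ln(\gamma^{-1})$, and with the choice $\varepsilon_1=\frac1{9M\pi}$ from \eqref{eq-epsilon1-Euler} this yields $M\pi\gamma t\le\frac16\ln(\gamma^{-1})$, i.e. $e^{M\pi\gamma t}\le\gamma^{-1/6}$. Therefore
\begin{align*}
\hat h(t,\xi)\ \ge\ \hat h(0,\xi)-\int_0^t 2MN\gamma^2 e^{M\pi\gamma s}\,ds\ \ge\ 1-\frac{2N\gamma}{\pi}e^{M\pi\gamma t}\ \ge\ 1-\frac{2N}{\pi}\gamma^{5/6}\ \ge\ 1-\frac{2}{\pi}\gamma^{1/2},
\end{align*}
where in the last step we used $N=\lfloor\gamma^{-1/3}\rfloor\le\gamma^{-1/3}$; for $\gamma$ small enough the right-hand side is $\ge\frac{19}{20}$.

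I do not expect any genuine obstacle here: positivity of $\mathcal A_{m'}$, smallness of $\mathcal B_{m'}$, and the shortness of the time window $[0,t]$ are all already in place. The only point deserving attention is the bookkeeping — the split $\mathcal A_{m'}+\mathcal B_{m'}$ has to be re-chosen relative to the interval $I_{m'}$ containing the integration variable $s$ rather than relative to the fixed $m$, and one should verify that assumption \eqref{eq-h-assump4} is phrased exactly so as to guarantee $\mathcal A_{m'}(s)\ge0$ simultaneously for all $0\le m'\le m$.
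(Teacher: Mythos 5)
Your argument is correct and is essentially the proof the paper intends (the paper omits it, since it is the viscous Lemma \ref{lem-evo-h-case4} with the dissipation term $\mathcal C$ deleted): positivity of $\mathcal A_{m'}$ from assumption \eqref{eq-h-assump4}, the tail bound $|\mathcal B_{m'}|\le 2MN\gamma^2 e^{M\pi\gamma t}$, and direct time integration over $t\le \frac{3}{2}\varepsilon_1\gamma^{-1}\ln(\gamma^{-1})$ with $\varepsilon_1=\frac{1}{9M\pi}$, which makes the loss $O(\gamma^{1/2})\le\frac{1}{20}$. The only difference is that without the $-\nu(1+(\xi-t)^2)\hat h$ term you can integrate directly instead of invoking Gronwall, which is a harmless simplification.
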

\begin{lemma}\label{lem-low-9/10-Euler}
With our well-chosen initial data, assumption \eqref{eq-h-assump3} in Lemma \ref{lem-evo-h-case3-Euler} and assumption \eqref{eq-h-assump4} in Lemma \ref{lem-evo-h-case4-Euler} are automatically satisfied.
\end{lemma}
\begin{proof}[Proof of Proposition \ref{prop-lower-Euler}]
  Proposition \ref{prop-lower-Euler} can be proved by combining Lemma \ref{lem-evo-h-case2-Euler}-\ref{lem-low-9/10-Euler} and following the proof of Proposition \ref{prop-lower}. We omit the details.
\end{proof}

\section{Nonlinear instability of the viscous flow}
In this section, we focus on the nonlinear system of the viscous flow and give the proof of Theorem \ref{thm-low-exp}. The nonlinear instability is produced from the linear instability which was derived by the dynamical approach in Section 3.

We rewrite \eqref{eq:b-perturbation} in the following form:
\begin{equation}\label{eq-om-nonl}
  \left\{
    \begin{array}{l}
      \pa_t\omega_{\neq}+y\pa_x\omega_{\neq}-\pa_y^2b\pa_x(\Delta)^{-1}\omega_{\neq}-\nu\Delta\omega_{\neq}=-\mathcal L-\mathcal N^{(1)}-\mathcal N^{(2)}-\mathcal N^{(3)},\\
      (u^{(1)}_{\neq},u^{(2)}_{\neq})=(\pa_y(-\Delta)^{-1}\omega_{\neq},-\pa_x(-\Delta)^{-1}\omega_{\neq}),\\
      \omega_{\neq}|_{t=0}=\omega_{in}=\frac{\varepsilon_0\nu^{\frac{2}{3}+\delta_1-\frac{1}{3}\delta_0}}{\sqrt{\varepsilon_1}\sqrt{-\ln{\gamma}}}g_{in},
    \end{array}
  \right.
\end{equation}
where $b_\nu(t,y)$ is given in \eqref{eq-shear-b}, $g_{in}$ is given in \eqref{eq-ini-g} which has only $\pm1$ modes, $\varepsilon_1=\frac{2\delta_0}{M\pi}$ given in \eqref{eq-epsilon1}, and $\gamma=\nu^{\frac{1}{3}-\frac{2}{3}\delta_0}$.

Here
\begin{align*}
  \mathfrak L=(b-y)\pa_x\omega_{\neq},&\quad  \mathcal N^{(1)}=\big(u^{(1)}_{\neq}\pa_x\omega_{\neq}\big)_{\neq}+\big(u^{(2)}_{\neq}\pa_y\omega_{\neq}\big)_{\neq},\\
  \mathcal N^{(2)}=u^{(1)}_{0}\pa_x\omega_{\neq},&\qquad \mathcal N^{(3)}=u^{(2)}_{\neq}\pa_y\omega_0,
\end{align*}
$\omega_0(t,y)$ is the $0$ mode of vorticity which satisfies
\begin{equation*}
  \left\{
    \begin{array}{l}
      \pa_t\omega_0-\nu\pa_y^2\omega_0=-\big(u^{(1)}_{\neq}\pa_x\omega_{\neq}\big)_0-\big(u^{(2)}_{\neq}\pa_y\omega_{\neq}\big)_0,\\
      \omega_0|_{t=0}=0,
    \end{array}
  \right.
\end{equation*}
and $u^{(1)}_0$ is the $0$ mode of horizontal velocity which satisfies
\begin{equation*}
  \left\{
    \begin{array}{l}
      \pa_t u^{(1)}_0-\nu\pa_y^2u^{(1)}_0=-\big(u^{(1)}_{\neq}\pa_xu^{(1)}_{\neq}\big)_0-\big(u^{(2)}_{\neq}\pa_yu^{(1)}_{\neq}\big)_0,\\
      u^{(1)}_0|_{t=0}=0.
    \end{array}
  \right.
\end{equation*}
Then we have
\begin{align}\label{eq:DH-1}
  \omega_{\neq}(t,x,y)={S_\nu(t,0)}\omega_{in}(x,y)-\int^t_0 {S_\nu(t,s)}(\mathfrak L+\mathcal N^{(1)}+\mathcal N^{(2)}+\mathcal N^{(3)})(s,x,y)ds,
\end{align}
\begin{align}\label{eq:DH-2}
  \omega_0(t,y)=-\int^t_0 e^{(t-s)\nu\pa_y^2}\Big(\big(u^{(1)}_{\neq}\pa_x\omega_{\neq}\big)_0+\big(u^{(2)}_{\neq}\pa_y\omega_{\neq}\big)_0\Big)(s,y) ds,
\end{align}
and
\begin{align}\label{eq:DH-3}
  u^{(1)}_0(t,y)=-\int^t_0 e^{(t-s)\nu\pa_y^2}\Big(\big(u^{(1)}_{\neq}\pa_xu^{(1)}_{\neq}\big)_0+\big(u^{(2)}_{\neq}\pa_yu^{(1)}_{\neq}\big)_0\Big)(s,y) ds.
\end{align}

From \eqref{eq-h-ini0} and \eqref{eq-ini-g}, we have 
\begin{align}\label{eq-size-om-ini}
  \|\omega_{in}\|_{\mathcal F L^1_kL^2_y}=\frac{\varepsilon_0\nu^{\frac{2}{3}+\delta_1-\frac{1}{3}\delta_0}}{\sqrt{\pi\varepsilon_1}\sqrt{-\ln{\gamma}}}\|g_{in}\|_{L^2_{x,y}}=\frac{\varepsilon_0\nu^{\frac{2}{3}+\delta_1-\frac{1}{3}\delta_0}}{\sqrt{2\pi\varepsilon_1}\sqrt{-\ln{\gamma}}}\|\mathfrak h_{in}\|_{L^2_{y}}=\sqrt{8\pi}\varepsilon_0\nu^{\frac{1}{2}+\delta_1}.
\end{align}

We first give upper bound estimates for the {{}solution operator $S_\nu(t,s)$}, then prove the upper bound estimates for the solution of \eqref{eq-om-nonl} by Duhamel's principle and bootstrap argument. Compared to \cite{MasmoudiZhao2020cpde}, here we modify the function spaces of the solution which simplifies the proof. Then we prove the lower bound estimates in Theorem \ref{thm-low-exp} by using the linear lower bound estimate obtained in Proposition \ref{prop-lower} together with the upper bound estimates.
\subsection{Upper bound estimates of ${S_\nu(t,s)}$}
In this subsection, we give upper bound estimates of the solution to \eqref{eq-g-linear} with $\gamma=\nu^{\frac{1}{3}-\frac{2}{3}\delta_0}$ from general initial data.
\begin{proposition}\label{prop-lin-upper}
Given $f(x,y)\in {\mathcal F L^1_kL^2_y}(\mathbb T\times\mathbb R)$ such that $\int_{\mathbb T} f(x,y) dx=0$. There exists constant $C_0,C_1>0$ such that for any $T\ge s\ge0$,
{{}\begin{align}
  \|e^{-\frac{1}{2}C_0M^2\gamma (t-s)}S_\nu(t,s)f\|_{\tilde L^\infty_t\left([s,T];\mathcal F L^1_kL^2_y\right)}\le C_1\|f\|_{\mathcal F L^1_kL^2_y};\label{eq-up-est-1}\\
  \|e^{-\frac{1}{2}C_0M^2\gamma (t-s)}\nabla S_\nu(t,s)f\|_{\tilde L^2_t\left([s,T];\mathcal F L^1_kL^2_y\right)}\le C_1 \nu^{-\frac{1}{2}} \|f\|_{\mathcal F L^1_kL^2_y};\label{eq-up-est-2}\\
  \|e^{-\frac{1}{2}C_0M^2\gamma (t-s)}\pa_xS_\nu(t,s)f\|_{\tilde L^1_t\left([s,T];\mathcal F L^1_kL^2_y\right)}\le C_1 \nu^{-\frac{1}{2}} \|f\|_{\mathcal F L^1_kL^2_y};\label{eq-up-est-3}\\
   \|e^{-\frac{1}{2}C_0M^2\gamma (t-s)} \nabla(-\Delta)^{-1} S_\nu(t,s)f\|_{\tilde L^\infty_t\left([s,T];\mathcal F L^1_kL^\infty_y\right)}\le C_1 \|f\|_{\mathcal F L^1_kL^2_y};\label{eq-up-est-4}\\
  \|e^{-\frac{1}{2}C_0M^2\gamma (t-s)}\pa_x(-\Delta)^{-1} S_\nu(t,s)f\|_{\tilde L^2_t\left([s,T];\mathcal F L^1_kL^\infty_y\right)}\le C_1 \|f\|_{\mathcal F L^1_kL^2_y}.\label{eq-up-est-5}
\end{align}
Here $S_\nu(t,s)$ denotes the solution operator of \eqref{eq-g-linear} that defined in Section 3.}
\end{proposition}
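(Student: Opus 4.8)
The plan is to reduce everything to a single horizontal frequency and then sum in $k$. The nonlocal operator $\pa_y^2b_\nu(t,y)\,\pa_x(\Delta)^{-1}$ in \eqref{eq-g-linear} is diagonal in the $x$-frequency $k$, so $S_\nu(t)$ commutes with $P_k$ and it suffices to estimate each mode $\tilde g_k(t,y)$, which solves a closed linear equation; equivalently one works with $\hat h_k(t,\xi)$ from \eqref{eq-h-ft-ori}. Since the norm $\mathcal{F}L^1_kL^q_y$ and the Chemin--Lerner norms $\tilde L^p_t(\mathcal{F}L^1_kL^q_y)$ are all $\ell^1$-sums over $k$, and $\|\tilde g_k\|_{L^2_y}=\|\hat h_k\|_{L^2_\xi}$ by Plancherel, it is enough to prove the corresponding single-mode bounds uniformly in $k$ and $T$ and then sum. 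Throughout, $\gamma=\nu^{\frac{1}{3}-\frac{2}{3}\delta_0}$, so $\nu\ll\gamma\ll 1$.

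The heart of the matter is the single-mode $L^2$ energy estimate, which I would run with a ghost-type weight in the spirit of Alinhac \cite{Alinhac2001} (and following \cite{MasmoudiZhao2020cpde}, but with the simplified choice of spaces above). Testing the equation for $\hat h_k$ against $\overline{w_k(t,\xi)^2\hat h_k}$ with $w_k\approx 1$ a suitably designed Fourier multiplier: the transport is skew and sees only $\pa_t w_k$, the dissipation contributes $-\nu\int(k^2+(\xi-kt)^2)|w_k\hat h_k|^2\,d\xi$ up to a harmless commutator, and the nonlocal term must be absorbed. A brute-force Cauchy--Schwarz/Schur bound on the nonlocal term gives only a growth rate $O_M(1)$; using that the convolution kernel $M\gamma^2(\xi-\eta)e^{-(\nu t+\gamma^2/4)|\xi-\eta|^2}$ is \emph{odd} — so, as in Lemma \ref{lem-h-up-inf}, only its pairing against the variation of the Lorentzian $\tfrac{k}{(\eta-kt)^2+k^2}$ and of $w_k$ survives — brings it down to $O_M(\gamma^{1/2})$; and choosing $w_k$ so that it turns on a negative term $-(\pa_t w_k/w_k)|w_k\hat h_k|^2$ exactly on the frequencies being resonantly excited absorbs the remainder and leaves the sharp rate. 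The outcome is a differential inequality of the form
\[
\frac{d}{dt}\Big(e^{-C_0M^2\gamma t}\|w_k\hat h_k\|_{L^2_\xi}^2\Big)+2\nu e^{-C_0M^2\gamma t}\int_{\mathbb R}\big(k^2+(\xi-kt)^2\big)|w_k\hat h_k|^2\,d\xi\le 0,
\]
which, since $w_k\approx 1$, yields both $\|\hat h_k(t)\|_{L^2_\xi}\le Ce^{\frac12 C_0M^2\gamma t}\|\hat h_k(0)\|_{L^2_\xi}$ — and hence \eqref{eq-up-est-1} after summing in $k$ — and the time-integrated dissipation bound $\nu\int_0^Te^{-C_0M^2\gamma t}\int_{\mathbb R}(k^2+(\xi-kt)^2)|\hat h_k|^2\,d\xi\,dt\le C\|\hat h_k(0)\|_{L^2_\xi}^2$.

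The remaining four estimates are then deductions. In the moving frame $\|\nabla S_\nu(t)f\|_{L^2_{x,y}}^2\simeq\sum_k\int(k^2+(\xi-kt)^2)|\hat h_k|^2\,d\xi$ is precisely the dissipation integrand, so the time-integrated bound, after taking square roots and summing in $k$, gives \eqref{eq-up-est-2} with the factor $\nu^{-1/2}$. Estimate \eqref{eq-up-est-3} follows by Cauchy--Schwarz in time, pairing the $\tilde L^2_t$ control from \eqref{eq-up-est-2} against the exponential decay of the weight $e^{-\frac12 C_0M^2\gamma t}$ (which makes the effective time window finite uniformly in $T$), with the $\nu k^2$ dissipation and the $|k|$-weight of $\pa_x$ ensuring summability in $k$ with a $\nu$-independent constant. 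For the velocity, write the mode as $\tilde u_k=(\pa_y\psi_k,-ik\psi_k)$ with $\psi_k=(\pa_y^2-k^2)^{-1}\tilde g_k$; the elliptic bounds $\|\pa_y\psi_k\|_{L^2_y}\lesssim|k|^{-1}\|\tilde g_k\|_{L^2_y}$, $\|\pa_y^2\psi_k\|_{L^2_y}\lesssim\|\tilde g_k\|_{L^2_y}$ together with $\|v\|_{L^\infty_y}\lesssim\|v\|_{L^2_y}^{1/2}\|\pa_y v\|_{L^2_y}^{1/2}$ give $\|\tilde u_k\|_{L^\infty_y}\lesssim|k|^{-1/2}\|\tilde g_k\|_{L^2_y}$, whence \eqref{eq-up-est-4} by \eqref{eq-up-est-1} and \eqref{eq-up-est-5} by \eqref{eq-up-est-2}.

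The main obstacle is the second step: obtaining the \emph{sharp} rate $O_M(\gamma)$ in the weighted energy estimate rather than the $O_M(1)$ or $O_M(\gamma^{1/2})$ that comes out of naive arguments. This is not cosmetic — the whole construction lives on the time scale $T\sim\gamma^{-1}\ln\nu^{-1}$, on which a rate $O_M(\gamma)$ produces only a fixed power of $\nu^{-1}$ (matching the linear lower bound of Proposition \ref{prop-lower}), whereas $O_M(\gamma^{1/2})$ would produce a diverging power of $\nu^{-1}$ and be useless in the nonlinear bootstrap later in this section. Making it work requires simultaneously exploiting the moving-frame structure, the oddness of the kernel, the concentration of $\pa_y^2b_\nu$ in the thin layer $|y|\lesssim\gamma$, elliptic estimates for the stream function, and a carefully tuned ghost weight; everything else is routine bookkeeping, the only subtlety being to keep all constants independent of $\nu$ and $T$ inside the Chemin--Lerner norms, for which the $\nu$-dissipation — and, for large $k$ or large $t$, the enhanced-dissipation decay $e^{-c\nu k^2 t^3}$ inherited from the Couette transport in \eqref{eq-g-linear} — provides the needed time-integrability.
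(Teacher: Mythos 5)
Your core energy estimate is essentially the paper's: work mode by mode on \eqref{eq-h-ft-ori}, introduce a bounded ghost weight (the paper takes $e^{\arctan(\xi/k-t)}$, whose time derivative produces exactly the negative term $-2D_k$ with $D_k=\int\frac{k^2}{(\xi-kt)^2+k^2}e^{2\arctan(\xi/k-t)}|\hat h_k|^2d\xi$ concentrated at the resonant frequencies), bound the nonlocal term by $CM\gamma^{1/2}D_k^{1/2}E_k^{1/2}$ with $E_k$ the weighted energy, and absorb $D_k$ to be left with the rate $C_0M^2\gamma$. One correction of attribution: the factor $\gamma^{1/2}$ needs no cancellation from oddness of the kernel — it is just Young's convolution inequality, $\|\gamma^2\xi e^{-(\nu t+\gamma^2/4)\xi^2}\|_{L^2_\xi}\le C\gamma^{1/2}$ paired with $\|\tfrac{k}{(\cdot-kt)^2+k^2}\hat h_k\|_{L^1_\xi}\le CD_k^{1/2}$ — so your ``brute force gives $O_M(1)$'' baseline is pessimistic; the only real work is the last step from $\gamma^{1/2}$ to $\gamma$, which your weight performs exactly as in the paper.

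There are, however, two genuine gaps in how you deduce \eqref{eq-up-est-3} and \eqref{eq-up-est-5}, both traceable to the fact that your displayed differential inequality retains only the viscous dissipation and discards the inviscid-damping term $-D_k$ on the left-hand side. First, \eqref{eq-up-est-5} does not follow from \eqref{eq-up-est-2}: your elliptic/Gagliardo--Nirenberg bound gives $\|\tilde u^{(2)}_k\|_{L^\infty_y}\lesssim |k|^{-1/2}\|\tilde g_k\|_{L^2_y}$, so placing it in $L^2_t$ requires control of $\|\tilde g_k\|_{L^2_tL^2_y}$, which the energy estimate only bounds by $T^{1/2}\|\hat f_k\|_{L^2_\xi}$ — useless for arbitrary $T$, and note \eqref{eq-up-est-5} must hold with a $\nu$-independent constant. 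The correct source is the time-integrated damping term $\int_0^T D_k\,dt\le C\|\hat f_k\|_{L^2_\xi}^2$ together with $\|\tfrac{|k|}{k^2+(\xi-kt)^2}\hat h_k\|_{L^1_\xi}\le CD_k^{1/2}$; you must keep $-D_k$ in the inequality. Second, your route to \eqref{eq-up-est-3} is circular: the weight $e^{-\frac12C_0M^2\gamma t}$ sits inside both the $\tilde L^2_t$ norm of \eqref{eq-up-est-2} and the $\tilde L^1_t$ norm of \eqref{eq-up-est-3}, so it cannot also be ``paired against'' in a Cauchy--Schwarz in time; that argument only yields a $T^{1/2}$ loss. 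The mechanism that closes (and that the paper uses) is the pointwise interpolation $\frac{k^2}{(\xi-kt)^2+k^2}+\nu\big(k^2+(\xi-kt)^2\big)\ge 2\nu^{1/2}|k|$, which upgrades the energy inequality to $\pa_t\mathcal E_k^{1/2}\le-\nu^{1/2}|k|\,\mathcal E_k^{1/2}$ and hence $\int_0^T|k|\mathcal E_k(t)^{1/2}dt\le C\nu^{-1/2}\mathcal E_k(0)^{1/2}$, uniformly in $T$ and summable in $k$. With these two repairs your proof coincides with the paper's.
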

\begin{proof}
  By the change of coordinate, it suffices to study the system \eqref{eq-h-ft-ori} with $\hat h_k(s,\xi)=\hat f_k(\xi-ks)$. We introduce the ghost type weight $e^{\arctan{(\frac{\xi}{k}-t)}}$ which satisfies
\begin{align*}
  e^{-\pi}\le e^{2\arctan{(\frac{\xi}{k}-t)}}\le e^{\pi}.
\end{align*}
This kind of weight is first used by Alinhac \cite{Alinhac2001}, and it is useful in studying stability problems in fluid flow, see \cite{BGM2015, BVW2018}.

By energy method, we have
\begin{align*}
  &\pa_t\int_{\mathbb R} e^{2\arctan{(\frac{\xi}{k}-t)}} |\hat h_k(t,\xi)|^2d\xi\\
  =&-2\int_{\mathbb R}\frac{|k|^2}{(\xi-kt)^2+k^2} e^{2\arctan{(\frac{\xi}{k}-t)}} |\hat h_k(t,\xi)|^2d\xi\\
  &+2\int_{\mathbb R}\int_{\mathbb R}M\gamma^2(\xi-\eta)e^{-(\nu t+\frac{\gamma^2}{4})|\xi-\eta|^2}\frac{k}{(\eta-kt)^2+k^2}\hat h_k(t,\eta)d\eta\  e^{2\arctan{(\frac{\xi}{k}-t)}} \hat h_k(t,\xi)d\xi\\
  &-2\int_{\mathbb R}\nu e^{2\arctan{(\frac{\xi}{k}-t)}}\big(k^2+(\xi-kt)^2\big)|\hat h_k(t,\xi)|^2d\xi\\
  \eqdef& I+II+III.
\end{align*}
By H{\"o}lder's and Young's convolution inequality, we have
\begin{align*}
  |II|\le&C\left\|\int_{\mathbb R}M\gamma^2(\xi-\eta)e^{-(\nu t+\frac{\gamma^2}{4})|\xi-\eta|^2}\frac{k}{(\eta-kt)^2+k^2}\hat h_k(t,\eta)d\eta\right\|_{L^2_\xi} \|e^{2\arctan{(\frac{\xi}{k}-t)}} \hat h_k(t,\xi)\|_{L^2_\xi}\\
  \le&M\left(\int_{\mathbb R}\gamma^4\xi^2e^{-2(\nu t+\frac{\gamma^2}{4})|\xi|^2}d\xi\right)^{\frac{1}{2}}
  \int_{\mathbb R}\frac{|k|}{(\xi-kt)^2+k^2}|\hat h_k(t,\xi)|d\xi\|e^{2\arctan{(\frac{\xi}{k}-t)}} \hat h_k(t,\xi)\|_{L^2_\xi}\\
  \le&CM\gamma^{\frac{1}{2}}
  \left(\int_{\mathbb R}\frac{|k|^2}{(\xi-kt)^2+k^2}|\hat h_k(t,\xi)|^2d\xi\right)^{\frac{1}{2}}\left(\int_{\mathbb R}\frac{1}{(\xi-kt)^2+k^2}d\xi\right)^{\frac{1}{2}}\|e^{\arctan{(\frac{\xi}{k}-t)}} \hat h_k(t,\xi)\|_{L^2_\xi}\\
  \le&CM\gamma^{\frac{1}{2}}\left(\int_{\mathbb R}\frac{|k|^2}{(\xi-kt)^2+k^2}|\hat h_k(t,\xi)|^2d\xi\right)^{\frac{1}{2}}\|e^{\arctan{(\frac{\xi}{k}-t)}} \hat h_k(t,\xi)\|_{L^2_\xi}\\
  \le&CM\gamma^{\frac{1}{2}} \left(\int_{\mathbb R}\frac{|k|^2}{(\xi-kt)^2+k^2} e^{2\arctan{(\frac{\xi}{k}-t)}} |\hat h_k(t,\xi)|^2d\xi\right)^{\frac{1}{2}}\|e^{\arctan{(\frac{\xi}{k}-t)}} \hat h_k(t,\xi)\|_{L^2_\xi},
\end{align*}
which implies
\begin{align*}
  &\pa_t\int_{\mathbb R} e^{2\arctan{(\frac{\xi}{k}-t)}} |\hat h_k(t,\xi)|^2d\xi\\
  \le&-2\int_{\mathbb R}\frac{|k|^2}{(\xi-kt)^2+k^2} e^{2\arctan{(\frac{\xi}{k}-t)}} |\hat h_k(t,\xi)|^2d\xi\\
  &+CM\gamma^{\frac{1}{2}} \left(\int_{\mathbb R}\frac{|k|^2}{(\xi-kt)^2+k^2} e^{2\arctan{(\frac{\xi}{k}-t)}} |\hat h_k(t,\xi)|^2d\xi\right)^{\frac{1}{2}}\left(\int_{\mathbb R} e^{2\arctan{(\frac{\xi}{k}-t)}} |\hat h_k(t,\xi)|^2d\xi\right)^{\frac{1}{2}}\\
  &-2\int_{\mathbb R}\nu e^{2\arctan{(\frac{\xi}{k}-t)}}\big(k^2+(\xi-kt)^2\big)|\hat h_k(t,\xi)|^2d\xi\\
  \le&C_0M^2\gamma \int_{\mathbb R} e^{2\arctan{(\frac{\xi}{k}-t)}} |\hat h_k(t,\xi)|^2d\xi-\int_{\mathbb R}\frac{|k|^2}{(\xi-kt)^2+k^2} e^{2\arctan{(\frac{\xi}{k}-t)}} |\hat h_k(t,\xi)|^2d\xi\\
  &-2\int_{\mathbb R}\nu e^{2\arctan{(\frac{\xi}{k}-t)}}\big(k^2+(\xi-kt)^2\big)|\hat h_k(t,\xi)|^2d\xi.
\end{align*}
From this, we deduce that
\begin{equation}\label{eq-est-h-energy}
  \begin{aligned}    
      &\pa_t\int_{\mathbb R}e^{-C_0M^2\gamma t} e^{2\arctan{(\frac{\xi}{k}-t)}} |\hat h_k(t,\xi)|^2d\xi\\
      \le&-\int_{\mathbb R} e^{-C_0M^2\gamma t}e^{2\arctan{(\frac{\xi}{k}-t)}}\frac{|k|^2}{(\xi-kt)^2+k^2} |\hat h_k(t,\xi)|^2d\xi\\
      &-2\int_{\mathbb R}\nu e^{-C_0M^2\gamma t}e^{2\arctan{(\frac{\xi}{k}-t)}}\big(k^2+(\xi-kt)^2\big)|\hat h_k(t,\xi)|^2d\xi.
  \end{aligned}
\end{equation}
It follows that{{}
\begin{equation}\label{eq-est-h-energy2}
  \begin{aligned}    
   &\int_{\mathbb R}e^{-C_0M^2\gamma t} e^{2\arctan{(\frac{\xi}{k}-t)}} |\hat h_k(t,\xi)|^2d\xi\\
  &+\int_s^t\int_{\mathbb R}e^{-C_0M^2\gamma s'}\frac{|k|^2}{(\xi-ks')^2+k^2} e^{2\arctan{(\frac{\xi}{k}-s')}} |\hat h_k(s',\xi)|^2d\xi ds'\\
  &+2\int_s^t\int_{\mathbb R}\nu e^{-C_0M^2\gamma s'} e^{2\arctan{(\frac{\xi}{k}-s')}}\big(k^2+(\xi-ks')^2\big)|\hat h_k(s',\xi)|^2d\xi ds'\\
  \le&\int_{\mathbb R}e^{-C_0M^2\gamma s} e^{2\arctan{(\frac{\xi}{k}-s)}} |\hat h_k(s,\xi)|^2d\xi=\int_{\mathbb R}e^{-C_0M^2\gamma s}e^{2\arctan{(\frac{\xi}{k}-s)}} |\hat f_k(\xi-ks)|^2d\xi.   
  \end{aligned}
\end{equation}
Summing up the above inequality in $k$, we get
\begin{align*}
  \|e^{-\frac{1}{2}C_0M^2\gamma (t-s)}\hat h\|_{\tilde L^\infty_t\left([s,T];\mathcal F L^1_kL^2_\xi\right)}\le C\|f\|_{\mathcal F L^1_kL^2_y},
\end{align*}
and
\begin{align*}
  \|e^{-\frac{1}{2}C_0M^2\gamma (t-s)}\big(k,\xi-kt\big)\hat h\|_{\tilde L^2_t\left([s,T];\mathcal F L^1_kL^2_y\right)}\le C \nu^{-\frac{1}{2}} \|f\|_{\mathcal F L^1_kL^2_y}.
\end{align*}}
As
\begin{align*}
 \|\tilde h_k(t)\|_{L^2_y}=\|\big(\widetilde {S_\nu(t,s)f}\big)_k\|_{L^2_y}, \quad \|(k,\pa_y-ikt)\tilde h_k(t)\|_{L^2_y}=\|(k,\pa_y)\big(\widetilde {S_\nu(t,s)f}\big)_k\|_{L^2_y}
\end{align*}
the estimates \eqref{eq-up-est-1} and \eqref{eq-up-est-2} follows immediately.

Next, we prove \eqref{eq-up-est-3}. By using Cauchy inequality, we have
\begin{align*}
  &\int_{\mathbb R}\nu^{\frac{1}{2}}|k|  e^{-C_0M^2\gamma t} e^{2\arctan{(\frac{\xi}{k}-t)}}|\hat h_k(t,\xi)|^2d\xi \\
  \le&\int_{\mathbb R}e^{-C_0M^2\gamma t}\frac{|k|^2}{(\xi-kt)^2+k^2} e^{2\arctan{(\frac{\xi}{k}-t)}} |\hat h_k(t,\xi)|^2d\xi \\
  &+\int_{\mathbb R}\nu e^{-C_0M^2\gamma t} e^{2\arctan{(\frac{\xi}{k}-t)}}\big(k^2+(\xi-kt)^2\big)|\hat h_k(t,\xi)|^2d\xi.
\end{align*}
Then, we deduce from \eqref{eq-est-h-energy} that
\begin{align*}
  \pa_t\int_{\mathbb R}e^{-C_0M^2\gamma t} e^{2\arctan{(\frac{\xi}{k}-t)}} |\hat h_k(t,\xi)|^2d\xi\le-\nu^{\frac{1}{2}}|k|\int_{\mathbb R}e^{-C_0M^2\gamma t} e^{2\arctan{(\frac{\xi}{k}-t)}} |\hat h_k(t,\xi)|^2d\xi.
\end{align*}
Therefore,
\begin{align*}
  \pa_t\Big(\int_{\mathbb R}e^{-C_0M^2\gamma t} e^{2\arctan{(\frac{\xi}{k}-t)}} |\hat h_k(t,\xi)|^2d\xi\Big)^{\frac{1}{2}}\le-\nu^{\frac{1}{2}}|k|\Big(\int_{\mathbb R}e^{-C_0M^2\gamma t}e^{2\arctan{(\frac{\xi}{k}-t)}} |\hat h_k(t,\xi)|^2d\xi\Big)^{\frac{1}{2}}.
\end{align*}
Then, it follows from Gronwall's inequality that{{}
\begin{align*}
  &\Big(\int_{\mathbb R}e^{-C_0M^2\gamma t} e^{2\arctan{(\frac{\xi}{k}-t)}} |\hat h_k(t,\xi)|^2d\xi\Big)^{\frac{1}{2}}\\
  \le& e^{-\nu^{\frac{1}{2}}|k|(t-s)}\Big(\int_{\mathbb R}e^{-C_0M^2\gamma s} e^{2\arctan{(\frac{\xi}{k}-s)}} |\hat h_k(s,\xi)|^2d\xi\Big)^{\frac{1}{2}}\\
  =&e^{-\nu^{\frac{1}{2}}|k|(t-s)}\Big(\int_{\mathbb R}e^{-C_0M^2\gamma s}e^{2\arctan{(\frac{\xi}{k}-s)}} |\hat f_k(\xi-ks)|^2d\xi\Big)^{\frac{1}{2}},
\end{align*}
and
\begin{align*}
  &\int^T_s\Big(\int_{\mathbb R}e^{-C_0M^2\gamma t} e^{2\arctan{(\frac{\xi}{k}-t)}} |\hat h_k(t,\xi)|^2d\xi\Big)^{\frac{1}{2}}dt\\
  \le& C \nu^{-\frac{1}{2}}|k|^{-1}\Big(\int_{\mathbb R}e^{-C_0M^2\gamma s}e^{2\arctan{(\frac{\xi}{k}-s)}} |\hat f_k(\xi-ks)|^2d\xi\Big)^{\frac{1}{2}}.
\end{align*}
As a result, we get
\begin{align*}
  \|e^{-\frac{1}{2}C_0M^2\gamma (t-s)}|k|\hat h_k(t,\xi)\|_{L^1_t\left([s,T];L^2_\xi\right)}\le C \nu^{-\frac{1}{2}} \|\hat f_k(\xi)\|_{L^2_\xi},
\end{align*}}
and \eqref{eq-up-est-3} follows immediately.

Recalling inequality \eqref{eq-est-h-energy2} and the fact that $\tilde f_k(y)=0$ for $k=0$, we have $\tilde h_k(t,y)\equiv0$ and{{}
\begin{align*}
  &\|e^{-\frac{1}{2}C_0M^2\gamma t}\nabla(-\Delta)^{-1}S_\nu(t,s)f\|_{\tilde L^\infty_t\left([s,T];\mathcal F L^1_kL^\infty_y\right)}\\
  \le& \sum_{k\in\mathbb Z/0}\left\|\int_{\mathbb R}e^{-\frac{1}{2}C_0M^2\gamma t}\frac{(|k|,|\xi-kt|)}{k^2+(\xi-kt)^2}|\hat h_k(t,\xi)| d \xi\right\|_{L^\infty_t([s,T])}\\
  \le& C\sum_{k\in\mathbb Z/0}\|e^{-\frac{1}{2}C_0M^2\gamma t} \hat h_k(t,\xi)\|_{L^\infty_t([s,T];L^2_{\xi})}\\
  \le& C e^{-\frac{1}{2}C_0M^2\gamma s}\sum_{k\in\mathbb Z/0}\|\hat f_k(\xi)\|_{L^2_{\xi}}=Ce^{-\frac{1}{2}C_0M^2\gamma s}\|f\|_{\mathcal F L^1_kL^2_y},
\end{align*}}
which is \eqref{eq-up-est-4}.

At last, we turn to \eqref{eq-up-est-5}. By using \eqref{eq-est-h-energy2}, we deduce that{{}
\begin{align*}
  &\|e^{-\frac{1}{2}C_0M^2\gamma t}\pa_x(-\Delta)^{-1}S_\nu(t,s)f\|_{\tilde L^2_t\left([s,T];\mathcal F L^1_kL^\infty_y\right)}\\
  \le&\sum_{k\in \mathbb Z/0}\left\|e^{-\frac{1}{2}C_0M^2\gamma t}\frac{|k|}{k^2+(\xi-kt)^2}|\hat h_k(t,\xi)|\right\|_{L^2_t([s,T];L^1_\xi)}\\
  \le&C\sum_{k\in \mathbb Z/0}\Big(\int_s^T\int_{\mathbb R}e^{-C_0M^2\gamma s'}\frac{|k|^2}{k^2+(\xi-ks')^2} e^{2\arctan{(\frac{\xi}{k}-s')}}|\hat h_k(s',\xi)|^2d\xi ds'\Big)^{\frac{1}{2}}\\
  \le& C e^{-\frac{1}{2}C_0M^2\gamma s}\sum_{k\in\mathbb Z/0}\|\hat f_k(\xi)\|_{L^2_{\xi}}=Ce^{-\frac{1}{2}C_0M^2\gamma s}\|f\|_{\mathcal F L^1_kL^2_y},
\end{align*}}
which gives \eqref{eq-up-est-5}.
\end{proof}
\subsection{Upper bound estimates for the nonlinear equation}
In this subsection, we use the bootstrap argument to prove the upper bound estimates for the solution of \eqref{eq-om-nonl}.

 Suppose for some $0\le \widetilde T\le T=\varepsilon_1\gamma^{-1} \ln(\nu^{-1})$, the following inequalities hold:
\begin{align}
  \|e^{-\frac{1}{2}C_0M^2\gamma t}\omega_{\neq}\|_{\tilde L^\infty_t\left([0,\widetilde T];\mathcal F L^1_kL^2_y\right)}\le& 2C_2 \|\omega_{in}\|_{\mathcal F L^1_kL^2_y};\label{eq-om-nonl-est1}\\
  \|e^{-\frac{1}{2}C_0M^2\gamma t}\pa_y\omega_{\neq}\|_{\tilde L^2_t\left([0,\widetilde T];\mathcal F L^1_kL^2_y\right)}\le& 2C_2 \nu^{-\frac{1}{2}}\|\omega_{in}\|_{\mathcal F L^1_kL^2_y};\label{eq-om-nonl-est2}\\
  \|e^{-\frac{1}{2}C_0M^2\gamma t}\pa_x\omega_{\neq}\|_{\tilde L^1_t\left([0,\widetilde T];\mathcal F L^1_kL^2_y\right)}\le& 2C_2 \nu^{-\frac{1}{2}}\|\omega_{in}\|_{\mathcal F L^1_kL^2_y};\label{eq-om-nonl-est3}\\
  \|e^{-\frac{1}{2}C_0M^2\gamma t}u^{(1)}_{\neq}\|_{\tilde L^\infty_t\left([0,\widetilde T];\mathcal F L^1_kL^\infty_y\right)}\le& 2C_2 \|\omega_{in}\|_{\mathcal F L^1_kL^2_y};\label{eq-om-nonl-est4}\\
  \|e^{-\frac{1}{2}C_0M^2\gamma t}u^{(2)}_{\neq}\|_{\tilde L^2_t\left([0,\widetilde T];\mathcal F L^1_kL^\infty_y\right)}\le& 2C_2 \|\omega_{in}\|_{\mathcal F L^1_kL^2_y};\label{eq-om-nonl-est5}\\
  \|e^{-\frac{1}{2}C_0M^2\gamma t}u^{(1)}_0\|_{L^\infty_t\left([0,\widetilde T]; L^\infty_y\right)}\le& 2C_2 \|\omega_{in}\|_{\mathcal F L^1_kL^2_y};\label{eq-om-nonl-est6}\\
  \|e^{-\frac{1}{2}C_0M^2\gamma t}\pa_y\omega_0\|_{ L^2_t\left([0,\widetilde T]; L^2_y\right)}\le& 2C_2\nu^{-\frac{1}{2}} \|\omega_{in}\|_{\mathcal F L^1_kL^2_y},\label{eq-om-nonl-est7}
\end{align}
where $C_2>0$ is a constant which will be determined later. 
\begin{proposition}[Bootstrap]\label{prop-upper}
  Let $\nu>0$ be small enough, and $\omega$ be the solution of \eqref{eq-om-nonl} for some $0<\widetilde T\le\varepsilon_1\gamma^{-1} \ln(\nu^{-1})$ with $\gamma=\nu^{\frac{1}{3}-\frac{2}{3}\delta_0}$, the estimates \eqref{eq-om-nonl-est1}-\eqref{eq-om-nonl-est7} hold for some constant $C_2>0$ on $[0,\widetilde T]$. Then there exist constants $\delta_1,\varepsilon_0>0$ so that these same estimates hold with all the occurrences of $2$ on the right-hand side replaced by $1$.
\end{proposition}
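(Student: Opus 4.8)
The plan is a continuity (bootstrap) argument built on the Duhamel representations \eqref{eq:DH-1}--\eqref{eq:DH-3}, the linear semigroup bounds of Proposition \ref{prop-lin-upper} for the nonzero modes, and parabolic smoothing for $e^{t\nu\pa_y^2}$ for the zero modes. For each of the seven quantities I would split into a homogeneous part ($S_\nu(t)\omega_{in}$ for the nonzero modes; $0$ for $\omega_0$ and $u^{(1)}_0$, which start from rest) and a Duhamel integral of the forcing $\mathcal L+\mathcal N^{(1)}+\mathcal N^{(2)}+\mathcal N^{(3)}$. The homogeneous part is handled directly by the matching estimate in Proposition \ref{prop-lin-upper}; by \eqref{eq-size-om-ini} it costs at most $C_1\|\omega_{in}\|_{\mathcal F L^1_kL^2_y}=C_1\sqrt{8\pi}\,\varepsilon_0\nu^{\frac12+\delta_1}$ (with an extra $\nu^{-1/2}$ in the estimates carrying a derivative), so it already sits below the desired bound once $C_2\ge 2C_1$. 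The whole problem reduces to controlling the forcing.

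For the Duhamel integral I would invoke Minkowski's inequality to get, for instance, $\|e^{-\frac{1}{2}C_0M^2\gamma t}\int_0^tS_\nu(t-s)F\,ds\|_{\tilde L^\infty_t([0,T];\mathcal F L^1_kL^2_y)}\le C_1\|e^{-\frac{1}{2}C_0M^2\gamma t}F\|_{\tilde L^1_t([0,T];\mathcal F L^1_kL^2_y)}$, and analogously from \eqref{eq-up-est-2}--\eqref{eq-up-est-5} (each with its $\nu^{-1/2}$ or no loss), so that matters reduce to the weighted $\tilde L^1_t\mathcal F L^1_kL^2_y$ norm of the forcing. Because $\mathcal F L^1_k$ is a convolution (Wiener) algebra and $L^\infty_y\cdot L^2_y\hookrightarrow L^2_y$, the product rule $\|fg\|_{\mathcal F L^1_kL^2_y}\le\|f\|_{\mathcal F L^1_kL^\infty_y}\|g\|_{\mathcal F L^1_kL^2_y}$ holds, and combined with Hölder in time it lets me pair the bootstrap norms: e.g. $\|u^{(1)}_{\neq}\pa_x\omega_{\neq}\|_{\tilde L^1_t\mathcal F L^1_kL^2_y}\le\|u^{(1)}_{\neq}\|_{\tilde L^\infty_t\mathcal F L^1_kL^\infty_y}\|\pa_x\omega_{\neq}\|_{\tilde L^1_t\mathcal F L^1_kL^2_y}$ and $\|u^{(2)}_{\neq}\pa_y\omega_{\neq}\|_{\tilde L^1_t\mathcal F L^1_kL^2_y}\le\|u^{(2)}_{\neq}\|_{\tilde L^2_t\mathcal F L^1_kL^\infty_y}\|\pa_y\omega_{\neq}\|_{\tilde L^2_t\mathcal F L^1_kL^2_y}$; the terms in $\mathcal N^{(2)},\mathcal N^{(3)}$ are treated the same way after using the Biot--Savart bounds $\|\pa_xu^{(1)}_{\neq}\|_{\mathcal F L^1_kL^2_y}+\|\pa_yu^{(1)}_{\neq}\|_{\mathcal F L^1_kL^2_y}\lesssim\|\omega_{\neq}\|_{\mathcal F L^1_kL^2_y}$ and bounding the zero-mode factor $u^{(1)}_0$ in $L^\infty_y$. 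The easy forcing term is $\mathcal L=(b_\nu-y)\pa_x\omega_{\neq}$: pulling out $\sup_{t\ge 0}\|b_\nu(t,y)-y\|_{L^\infty_y}=\pi M\gamma^2$ and using \eqref{eq-om-nonl-est3} yields, relative to the target norm, a gain $M\gamma^2\nu^{-1/2}=M\nu^{\frac16-\frac43\delta_0}$, which is a small power of $\nu$ for $\delta_0<\frac18$.

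The only delicate point is the weight in the genuinely quadratic terms. Writing $e^{-\frac12 C_0M^2\gamma t}(fg)=e^{\frac12 C_0M^2\gamma t}\,(e^{-\frac12 C_0M^2\gamma t}f)(e^{-\frac12 C_0M^2\gamma t}g)$ leaves a spare factor $e^{\frac12 C_0M^2\gamma t}$, which on $[0,T]$ is bounded by $e^{\frac12 C_0M^2\gamma T}=\nu^{-C_0M\delta_0/\pi}$ by the definition $T=\varepsilon_1\gamma^{-1}\ln(\nu^{-1})$ and the choice $\varepsilon_1=\frac{2\delta_0}{M\pi}$ in \eqref{eq-epsilon1}. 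Hence every quadratic contribution to \eqref{eq-om-nonl-est1}--\eqref{eq-om-nonl-est5} is $\lesssim C_1C_2^2\,\nu^{-\frac12-C_0M\delta_0/\pi}\|\omega_{in}\|^2=C_1C_2^2\sqrt{8\pi}\,\varepsilon_0\,\nu^{\delta_1-C_0M\delta_0/\pi}\times(\nu^{-1/2}\|\omega_{in}\|)$, a small fraction of the target provided $\delta_1>C_0M\delta_0/\pi$, which is how $\delta_1$ is chosen in \eqref{eq-delta1}, and $\varepsilon_0,\nu$ are small. The zero-mode bounds \eqref{eq-om-nonl-est6}--\eqref{eq-om-nonl-est7} follow from the weighted $L^2_y$ energy identity for the heat equations in \eqref{eq:DH-2}--\eqref{eq:DH-3}, giving $\|e^{-\frac12 C_0M^2\gamma t}\pa_y\omega_0\|_{L^2_tL^2_y}\lesssim\nu^{-1/2}\|e^{-\frac12 C_0M^2\gamma t}(u^{(1)}_{\neq}\pa_x\omega_{\neq}+u^{(2)}_{\neq}\pa_y\omega_{\neq})_0\|_{L^1_tL^2_y}$ (the forcing estimated as above), and for $u^{(1)}_0$ one uses in addition $\|u^{(1)}_0\|_{L^\infty_y}\lesssim\|u^{(1)}_0\|_{L^2_y}^{1/2}\|\pa_yu^{(1)}_0\|_{L^2_y}^{1/2}$ together with $\|\pa_ye^{t\nu\pa_y^2}f\|_{L^2_y}\lesssim(\nu t)^{-1/2}\|f\|_{L^2_y}$, which only costs a harmless factor $(\nu^{-1}T)^{1/2}$.

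Collecting everything, I would fix $C_2=2C_1$, then choose $\delta_1$ as in \eqref{eq-delta1} with $C_0M\delta_0/\pi<\delta_1$ (possible because $\delta_0$ can be taken small and $M$ is then a fixed constant), then $\varepsilon_0$ small, then $\nu$ small; every nonlinear contribution to \eqref{eq-om-nonl-est1}--\eqref{eq-om-nonl-est7} is then at most $\frac12 C_2$ times the relevant weighted norm of $\omega_{in}$, which closes the bootstrap with the factor $2$ improved to $1$. The main obstacle is exactly this bookkeeping of competing powers of $\nu$ — above all, keeping the weight factor $e^{\frac12 C_0M^2\gamma T}$ a small negative power of $\nu$. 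This is what forces the growth window $T\approx\gamma^{-1}\ln(\nu^{-1})$ and the relation $\delta_1\gtrsim\delta_0$, and it is the reason the argument cannot be continued past $t\approx\gamma^{-1}\ln(\nu^{-1})$; with the Chemin--Lerner-type spaces $\tilde L^p_t\mathcal F L^1_kL^q_y$ and their product rule in place, the remaining work is routine.
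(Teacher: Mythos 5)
Your proposal follows essentially the same route as the paper: split each quantity into a homogeneous part controlled by Proposition~\ref{prop-lin-upper} plus a Duhamel integral, reduce the latter to the $\tilde L^1_t\mathcal F L^1_kL^2_y$ norm of the forcing, estimate the forcing with the product rule of Lemma~\ref{lem-chemin-space} (this is exactly Lemma~\ref{lem-est-nonl}), absorb the spare weight $e^{\frac12 C_0M^2\gamma \widetilde T}=\nu^{-C_0M\delta_0/\pi}$ by the choice of $\delta_1$ in \eqref{eq-delta1}, and treat the zero modes through the heat-equation Duhamel formulas and Gagliardo--Nirenberg. The one detail to fix is the bound \eqref{eq-om-nonl-est6}: invoking $\|\pa_y e^{t\nu\pa_y^2}f\|_{L^2_y}\lesssim(\nu t)^{-1/2}\|f\|_{L^2_y}$ there would cost a factor of order $(\nu T)^{-1/2}\gg 1$, which is \emph{not} harmless and would break the estimate; instead one should use the zero-mode identity $\pa_yu^{(1)}_0=-\omega_0$, so that both factors in the interpolation $\|u^{(1)}_0\|_{L^\infty_y}\lesssim\|u^{(1)}_0\|_{L^2_y}^{1/2}\|\omega_0\|_{L^2_y}^{1/2}$ are controlled in $L^\infty_tL^2_y$ by the plain $L^2$ contraction of the heat semigroup applied to \eqref{eq:DH-2}--\eqref{eq:DH-3}, with no smoothing loss --- which is how the paper closes this term.
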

We first give some auxiliary lemmas.
\begin{lemma}\label{lem-chemin-space}
  Let $$f(t,x,y)\in \tilde L^{p_1}_t\left([0,\widetilde T];\mathcal F L^1_kL^{q_1}_y(\mathbb T\times\mathbb R)\right),\ g(t,x,y)\in \tilde L^{p_2}_t\left([0,\widetilde T];\mathcal F L^1_kL^{q_2}_y(\mathbb T\times\mathbb R)\right),$$ then we have
  \begin{align*}
  \|fg\|_{\tilde L^p_t\left([0,\widetilde T];\mathcal F L^1_kL^q_y\right)}\le \|f\|_{\tilde L^{p_1}_t\left([0,\widetilde T];\mathcal F L^1_kL^{q_1}_y\right)}\|g\|_{\tilde L^{p_2}_t\left([0,\widetilde T];\mathcal F L^1_kL^{q_2}_y\right)},
\end{align*}
where $p,p_1,p_2,q,q_1,q_2>0$ are constants satisfying
\begin{align}\label{eq-holder}
  \frac{1}{p}=\frac{1}{p_1}+\frac{1}{p_2},\quad\frac{1}{q}=\frac{1}{q_1}+\frac{1}{q_2}.
\end{align}
As 
\begin{align*}
  \|f_{0}\|_{ L^p_t\left([0,\widetilde T];L^q_y\right)}+\|f_{\neq}\|_{\tilde L^p_t\left([0,\widetilde T];\mathcal F L^1_kL^q_y\right)}=\|f\|_{\tilde L^p_t\left([0,\widetilde T];\mathcal F L^1_kL^q_y\right)},
\end{align*}
it follows from \eqref{eq-holder} that
\begin{align*}
  \|(fg)_{0}\|_{ L^p_t\left([0,\widetilde T];L^q_y\right)}+\|(fg)_{\neq}\|_{\tilde L^p_t\left([0,\widetilde T];\mathcal F L^1_kL^q_y\right)}\le& \|f\|_{\tilde L^{p_1}_t\left([0,\widetilde T];\mathcal F L^1_kL^{q_1}_y\right)}\|g\|_{\tilde L^{p_2}_t\left([0,\widetilde T];\mathcal F L^1_kL^{q_2}_y\right)}.
\end{align*}
\begin{proof}
  By using H{\"o}lder's inequality and Young's convolution inequality, we have
  \begin{align*}
    &\|fg\|_{\tilde L^p_t\left([0,\widetilde T];\mathcal F L^1_kL^q_y\right)}\\
    =&\sum_{k\in\mathbb Z}\left(\int_{0}^{\widetilde T}\left(\int_{\mathbb R} \left|\sum_{j\in\mathbb Z}\tilde f_j\tilde g_{k-j}(t,y)\right|^qd y\right)^{\frac{p}{q}} d t\right)^{\frac{1}{p}}\\
    \le&\sum_{k\in\mathbb Z}\sum_{j\in\mathbb Z}\left(\int_{0}^{\widetilde T}\left(\int_{\mathbb R} \left|\tilde f_j\tilde g_{k-j}(t,y)\right|^qd y\right)^{\frac{p}{q}} d t\right)^{\frac{1}{p}}\\
    \le&\sum_{k\in\mathbb Z}\sum_{j\in\mathbb Z}\left(\int_{0}^{\widetilde T}\left(\int_{\mathbb R} |\tilde {f}_j(t,y)|^{q_1}d y\right)^{\frac{p_1}{q_1}} d t\right)^{\frac{1}{p_1}}\left(\int_{0}^{\widetilde T}\left(\int_{\mathbb R} |\tilde {g}_{k-j}(t,y)|^{q_2}d y\right)^{\frac{p_2}{q_2}} d t\right)^{\frac{1}{p_2}}\\
    \le&\sum_{k\in\mathbb Z}\left(\int_{0}^{\widetilde T}\left(\int_{\mathbb R}|\tilde {f}_k(t,y)|^{q_1}d y\right)^{\frac{p_1}{q_1}} d t\right)^{\frac{1}{p_1}}\sum_{j\in\mathbb Z}\left(\int_{0}^{\widetilde T}\left(\int_{\mathbb R} |\tilde {g}_{j}(t,y)|^{q_2}d y\right)^{\frac{p_2}{q_2}} d t\right)^{\frac{1}{p_2}}\\
    =&\|f\|_{\tilde L^{p_1}_t\left([0,\widetilde T];\mathcal F L^1_kL^{q_1}_y\right)}\|g\|_{\tilde L^{p_2}_t\left([0,\widetilde T];\mathcal F L^1_kL^{q_2}_y\right)},
  \end{align*}
which gives the lemma.
\end{proof}
\end{lemma}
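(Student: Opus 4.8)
The plan is to pass to the Fourier side in the $x$ variable, where the product $fg$ becomes a discrete convolution of Fourier coefficients, and then to remove the three norms one at a time, in the order $L^q_y$ (pointwise in $t$), then $L^p_t$, then the outer $\ell^1$ sum over $k\in\mathbb Z$; the first two steps are Hölder's inequality with the stated exponent relations, and the last is the elementary bound $\|a\ast b\|_{\ell^1}\le\|a\|_{\ell^1}\|b\|_{\ell^1}$.

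Concretely, I would first write, for each $k\in\mathbb Z$,
\[
  \widetilde{(fg)}_k(t,y)=\sum_{j\in\mathbb Z}\tilde f_j(t,y)\,\tilde g_{k-j}(t,y),
\]
and apply the triangle inequality for the norm $\|\cdot\|_{L^p_t([0,\widetilde T];L^q_y)}$ to this (at most countable) sum over $j$, which is legitimate by monotone convergence since all summands are nonnegative and the right-hand side will turn out finite. Next, for fixed $t$, Hölder's inequality in $y$ with $\frac1q=\frac1{q_1}+\frac1{q_2}$ gives $\|\tilde f_j(t)\tilde g_{k-j}(t)\|_{L^q_y}\le\|\tilde f_j(t)\|_{L^{q_1}_y}\|\tilde g_{k-j}(t)\|_{L^{q_2}_y}$, and Hölder in $t$ with $\frac1p=\frac1{p_1}+\frac1{p_2}$ then yields
\[
  \big\|\tilde f_j\,\tilde g_{k-j}\big\|_{L^p_t L^q_y}\le\|\tilde f_j\|_{L^{p_1}_t L^{q_1}_y}\,\|\tilde g_{k-j}\|_{L^{p_2}_t L^{q_2}_y}.
\]
Summing over $k$ and $j$ and factoring the double sum by Tonelli,
\[
  \sum_{k}\sum_{j}\|\tilde f_j\|_{L^{p_1}_t L^{q_1}_y}\|\tilde g_{k-j}\|_{L^{p_2}_t L^{q_2}_y}
  =\Big(\sum_{j}\|\tilde f_j\|_{L^{p_1}_t L^{q_1}_y}\Big)\Big(\sum_{m}\|\tilde g_m\|_{L^{p_2}_t L^{q_2}_y}\Big),
\]
which is exactly $\|f\|_{\tilde L^{p_1}_t(\mathcal F L^1_k L^{q_1}_y)}\,\|g\|_{\tilde L^{p_2}_t(\mathcal F L^1_k L^{q_2}_y)}$, establishing the first inequality of the lemma.

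For the second assertion I would simply note that, by definition, the $\tilde L^p_t(\mathcal F L^1_k L^q_y)$ norm is an $\ell^1$ sum over $k\in\mathbb Z$ of $\|\cdot\|_{L^p_t L^q_y}$ applied to the $k$-th Fourier coefficient; splitting off the $k=0$ term gives immediately
\[
  \|f_0\|_{L^p_t L^q_y}+\|f_{\neq}\|_{\tilde L^p_t(\mathcal F L^1_k L^q_y)}=\|f\|_{\tilde L^p_t(\mathcal F L^1_k L^q_y)}.
\]
Applying this identity with $fg$ in place of $f$ and combining it with the product inequality already proved yields $\|(fg)_0\|_{L^p_t L^q_y}+\|(fg)_{\neq}\|_{\tilde L^p_t(\mathcal F L^1_k L^q_y)}\le\|f\|_{\tilde L^{p_1}_t(\mathcal F L^1_k L^{q_1}_y)}\|g\|_{\tilde L^{p_2}_t(\mathcal F L^1_k L^{q_2}_y)}$, as claimed.

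There is no genuine obstacle here; the proof is a bookkeeping exercise. The only points needing a little care are: justifying the triangle inequality applied to an infinite sum (monotone convergence, using that the bound on the right is finite), keeping the order of the three norm removals correct so that the Hölder exponent relations $\frac1p=\frac1{p_1}+\frac1{p_2}$ and $\frac1q=\frac1{q_1}+\frac1{q_2}$ are invoked at exactly the right stages, and observing that the $\ell^1\!\ast\!\ell^1\hookrightarrow\ell^1$ step carries constant $1$, so that the chain of inequalities loses nothing.
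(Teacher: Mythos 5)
Your proposal is correct and follows essentially the same route as the paper's proof: expand the $x$-Fourier product as a convolution, apply the triangle inequality in $j$, then H\"older in $y$ and in $t$ with the stated exponent relations, and conclude with the $\ell^1\ast\ell^1\hookrightarrow\ell^1$ bound; the final splitting into zero and nonzero modes is the same definitional observation. No gaps.
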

\begin{lemma}\label{lem-est-nonl}
  Under the bootstrap assumptions \eqref{eq-om-nonl-est1}-\eqref{eq-om-nonl-est7}, it holds that
  \begin{align*}
    \|e^{-\frac{1}{2}C_0M^2\gamma t}\mathfrak L\|_{\tilde L^1_t\left([0,\widetilde T];\mathcal F L^1_kL^2_y\right)}\le CC_2M\gamma^2\nu^{-\frac{1}{2}}\|\omega_{in}\|_{\mathcal F L^1_kL^2_y},
  \end{align*}
  and
  \begin{align*}
    \sum_{i=1,2,3}\|e^{-\frac{1}{2}C_0M^2\gamma t}\mathcal N^{(i)}\|_{\tilde L^1_t\left([0,\widetilde T];\mathcal F L^1_kL^2_y\right)}\le CC_2^2e^{\frac{1}{2}C_0M^2\gamma \widetilde T}\varepsilon_0\nu^{\delta_1}\|\omega_{in}\|_{\mathcal F L^1_kL^2_y}.
  \end{align*}
We also have
  \begin{align*}
    &\|e^{-\frac{1}{2}C_0M^2\gamma t}\big(u^{(1)}_{\neq}\pa_xu^{(1)}_{\neq}\big)_0\|_{L^1_t([0,\widetilde T];L^2_{y})}+\|e^{-\frac{1}{2}C_0M^2\gamma t}\big(u^{(2)}_{\neq}\pa_yu^{(1)}_{\neq}\big)_0\|_{L^1_t([0,\widetilde T];L^2_{y})}\\
    &+\|e^{-\frac{1}{2}C_0M^2\gamma t}\big(u^{(1)}_{\neq}\pa_x\omega_{\neq}\big)_0\|_{L^1_t([0,\widetilde T];L^2_{y})}+\|e^{-\frac{1}{2}C_0M^2\gamma t}\big(u^{(2)}_{\neq}\pa_y\omega_{\neq}\big)_0\|_{L^1_t([0,\widetilde T];L^2_{y})}\\
    \le& CC_2^2e^{\frac{1}{2}C_0M^2\gamma \widetilde T}\varepsilon_0\nu^{\delta_1}\|\omega_{in}\|_{\mathcal F L^1_kL^2_y}.
  \end{align*}
\end{lemma}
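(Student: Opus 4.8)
The plan is to reduce every term on the right-hand side to a product of at most two factors, each controlled by one of the bootstrap hypotheses \eqref{eq-om-nonl-est1}--\eqref{eq-om-nonl-est7} (or, for $\mathcal L$, to a single such factor times the \emph{explicit} size of $b_\nu-y$), and then to invoke the bilinear estimate of Lemma~\ref{lem-chemin-space} with H\"older-conjugate time exponents, always landing in $\tilde L^1_t\left([0,\widetilde T];\mathcal F L^1_kL^2_y\right)$. Three auxiliary inputs are needed: (i) the pointwise bound $\sup_{t\ge0}\|b_\nu(t,y)-y\|_{L^\infty_y}=\pi M\gamma^2$ from Section~2; (ii) the elementary Biot--Savart identity that, since $u^{(1)}_{\neq}=\partial_y(-\Delta)^{-1}\omega_{\neq}$, the pairs $\partial_x u^{(1)}_{\neq},\partial_x\omega_{\neq}$ and $\partial_y u^{(1)}_{\neq},\partial_y\omega_{\neq}$ are related by the multiplier $\tfrac{i\xi}{k^2+\xi^2}$ on the $k$-th mode ($k\neq0$), of modulus $\le\tfrac{1}{2|k|}\le\tfrac12$, so that $\partial_x u^{(1)}_{\neq}$ and $\partial_y u^{(1)}_{\neq}$ inherit the bounds \eqref{eq-om-nonl-est3} and \eqref{eq-om-nonl-est2} respectively; and (iii) a redistribution of the exponential weight: write $e^{-\frac12 C_0M^2\gamma t}(FG)=e^{\frac12 C_0M^2\gamma t}\big(e^{-\frac12 C_0M^2\gamma t}F\big)\big(e^{-\frac12 C_0M^2\gamma t}G\big)$ and bound $e^{\frac12 C_0M^2\gamma t}\le e^{\frac12 C_0M^2\gamma\widetilde T}$ on $[0,\widetilde T]$ — this is precisely the origin of the factor $e^{\frac12 C_0M^2\gamma\widetilde T}$ in the quadratic estimates. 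Throughout, projecting onto the $\neq$ mode or the $0$ mode only decreases the $\mathcal F L^1_k$ sum, so it is harmless.

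For $\mathcal L=(b_\nu-y)\partial_x\omega_{\neq}$ the term is linear in the solution, so only one weight factor appears and no redistribution is needed: since $b_\nu-y$ depends only on $(t,y)$ it acts diagonally on the $k$-modes, whence $\|e^{-\frac12 C_0M^2\gamma t}\mathcal L\|_{\tilde L^1_t\mathcal F L^1_kL^2_y}\le\|b_\nu-y\|_{L^\infty_{t,y}}\,\|e^{-\frac12 C_0M^2\gamma t}\partial_x\omega_{\neq}\|_{\tilde L^1_t\mathcal F L^1_kL^2_y}\le\pi M\gamma^2\cdot 2C_2\nu^{-\frac12}\|\omega_{in}\|_{\mathcal F L^1_kL^2_y}$ by (i) and \eqref{eq-om-nonl-est3}, which is the claimed bound.

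For the quadratic terms I would go through them one at a time with Lemma~\ref{lem-chemin-space}: $u^{(1)}_{\neq}\partial_x\omega_{\neq}$ pairs $\tilde L^\infty_tL^\infty_y$ (\eqref{eq-om-nonl-est4}) with $\tilde L^1_tL^2_y$ (\eqref{eq-om-nonl-est3}); $u^{(2)}_{\neq}\partial_y\omega_{\neq}$ pairs $\tilde L^2_tL^\infty_y$ (\eqref{eq-om-nonl-est5}) with $\tilde L^2_tL^2_y$ (\eqref{eq-om-nonl-est2}); $u^{(1)}_0\partial_x\omega_{\neq}$ pairs $L^\infty_tL^\infty_y$ (\eqref{eq-om-nonl-est6}) with $\tilde L^1_tL^2_y$; $u^{(2)}_{\neq}\partial_y\omega_0$ pairs $\tilde L^2_tL^\infty_y$ with $L^2_tL^2_y$ (\eqref{eq-om-nonl-est7}); and the two $0$-mode terms $u^{(1)}_{\neq}\partial_xu^{(1)}_{\neq}$ and $u^{(2)}_{\neq}\partial_y u^{(1)}_{\neq}$ are handled identically after replacing $\partial_x u^{(1)}_{\neq},\partial_y u^{(1)}_{\neq}$ by $\partial_x\omega_{\neq},\partial_y\omega_{\neq}$ via (ii). In every case the time exponents add to $1$ and the product of the two bootstrap bounds is $\lesssim C_2^2\nu^{-\frac12}\|\omega_{in}\|_{\mathcal F L^1_kL^2_y}^2$; the decisive arithmetic is then $\nu^{-\frac12}\|\omega_{in}\|_{\mathcal F L^1_kL^2_y}=\nu^{-\frac12}\cdot\sqrt{8\pi}\,\varepsilon_0\nu^{\frac12+\delta_1}=\sqrt{8\pi}\,\varepsilon_0\nu^{\delta_1}$ by \eqref{eq-size-om-ini}, which turns $C_2^2\nu^{-\frac12}\|\omega_{in}\|^2$ into $CC_2^2\varepsilon_0\nu^{\delta_1}\|\omega_{in}\|_{\mathcal F L^1_kL^2_y}$. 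Combining with the $e^{\frac12 C_0M^2\gamma\widetilde T}$ from (iii) gives the two remaining bounds.

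The part I expect to require the most care is not any single inequality but (a) keeping the two coordinate frames consistent — the bootstrap norms and the elliptic bounds in (ii) are used in the original $(x,y)$ variables (where $\nabla$ is the genuine gradient), whereas the semigroup estimates of Proposition~\ref{prop-lin-upper} were produced in the moving frame, so one must verify that the $\nu^{-1/2}$-gains feeding \eqref{eq-om-nonl-est2}--\eqref{eq-om-nonl-est3} are indeed fixed-frame derivatives — and (b) the bookkeeping of the exponential weights so that exactly one factor $e^{\frac12 C_0M^2\gamma\widetilde T}$ survives. Once those are settled, everything else is the routine H\"older/Young computation above; and it is worth noting already here, for use in Proposition~\ref{prop-upper}, that $\gamma\widetilde T\le\varepsilon_1\ln(\nu^{-1})$ makes $e^{\frac12 C_0M^2\gamma\widetilde T}\le\nu^{-\frac12 C_0M^2\varepsilon_1}$, so that $\nu^{\delta_1}e^{\frac12 C_0M^2\gamma\widetilde T}$ is still a positive power of $\nu$ once $\delta_1$ and $\varepsilon_1=\tfrac{2\delta_0}{M\pi}$ are chosen small, which is what ultimately closes the bootstrap.
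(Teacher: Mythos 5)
Your proposal is correct and follows essentially the same route as the paper: the $\mathcal L$ term is handled by the pointwise bound $\|b_\nu-y\|_{L^\infty}=\pi M\gamma^2$ together with \eqref{eq-om-nonl-est3}, and each quadratic term is estimated by exactly the H\"older pairings you list via Lemma~\ref{lem-chemin-space}, with the Biot--Savart reduction of $\nabla u^{(1)}_{\neq}$ to $\nabla\omega_{\neq}$ and the identity $\nu^{-1/2}\|\omega_{in}\|_{\mathcal F L^1_kL^2_y}\lesssim\varepsilon_0\nu^{\delta_1}$ closing the argument. Your bookkeeping of the weight (a single surviving factor $e^{+\frac12 C_0M^2\gamma\widetilde T}$) is the correct one and matches the lemma's statement.
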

\begin{proof}
Recalling that $\|b_\nu(t,y)-y\|_{L^\infty_y}=\pi M\gamma^2$, we get from \eqref{eq-om-nonl-est3} that
  \begin{align*}
    &\|e^{-\frac{1}{2}C_0M^2\gamma t}\mathfrak L\|_{\tilde L^1_t\left([0,\widetilde T];\mathcal F L^1_kL^2_y\right)}\\
    \le&\|b_\nu(t,y)-y\|_{L^\infty_t([0,\widetilde T];L^\infty_y)}\|e^{-\frac{1}{2}C_0M^2\gamma t}\pa_x\omega_{\neq}\|_{\tilde L^1_t\left([0,\widetilde T];\mathcal F L^1_kL^2_y\right)}\\
    \le& CC_2M\gamma^2\nu^{-\frac{1}{2}} \|\omega_{in}\|_{\mathcal F L^1_kL^2_y}.
  \end{align*}
From \eqref{eq-size-om-ini}, \eqref{eq-om-nonl-est2}-\eqref{eq-om-nonl-est7}, and Lemma \ref{lem-chemin-space}, we get that
\begin{align*}
  &\sum_{i=1,2,3}\|e^{-\frac{1}{2}C_0M^2\gamma t}\mathcal N^{(i)}\|_{\tilde L^1_t\left([0,\widetilde T];\mathcal F L^1_kL^2_y\right)}\\
  \le&\|e^{-\frac{1}{2}C_0M^2\gamma t}\big(u^{(1)}_{\neq}\pa_x\omega_{\neq}\big)_{\neq}\|_{\tilde L^1_t\left([0,\widetilde T];\mathcal F L^1_kL^2_y\right)}+\|e^{-\frac{1}{2}C_0M^2\gamma t}\big(u^{(2)}_{\neq}\pa_y\omega_{\neq}\big)_{\neq}\|_{\tilde L^1_t\left([0,\widetilde T];\mathcal F L^1_kL^2_y\right)}\\
  &+\|e^{-\frac{1}{2}C_0M^2\gamma t}u^{(1)}_{0}\pa_x\omega_{\neq}\|_{\tilde L^1_t\left([0,\widetilde T];\mathcal F L^1_kL^2_y\right)}+\|e^{-\frac{1}{2}C_0M^2\gamma t}u^{(2)}_{\neq}\pa_y\omega_0\|_{\tilde L^1_t\left([0,\widetilde T];\mathcal F L^1_kL^2_y\right)}\\
  \le&e^{\frac{1}{2}C_0M^2\gamma \widetilde T}\|e^{-\frac{1}{2}C_0M^2\gamma t}u^{(1)}_{\neq}\|_{\tilde L^\infty_t\left([0,\widetilde T];\mathcal F L^1_kL^\infty_y\right)}\|e^{-\frac{1}{2}C_0M^2\gamma t}\pa_x\omega_{\neq}\|_{\tilde L^1_t\left([0,\widetilde T];\mathcal F L^1_kL^2_y\right)}\\
  &+e^{\frac{1}{2}C_0M^2\gamma \widetilde T}\|e^{-\frac{1}{2}C_0M^2\gamma t}u^{(2)}_{\neq}\|_{\tilde L^2_t\left([0,\widetilde T];\mathcal F L^1_kL^\infty_y\right)}\|e^{-\frac{1}{2}C_0M^2\gamma t}\pa_y\omega_{\neq}\|_{\tilde L^2_t\left([0,\widetilde T];\mathcal F L^1_kL^2_y\right)}\\
  &+e^{\frac{1}{2}C_0M^2\gamma \widetilde T}\|e^{-\frac{1}{2}C_0M^2\gamma t}u^{(1)}_0\|_{L^\infty_t\left([0,\widetilde T]; L^\infty_y\right)}\|e^{-\frac{1}{2}C_0M^2\gamma t}\pa_x\omega_{\neq}\|_{\tilde L^1_t\left([0,\widetilde T];\mathcal F L^1_kL^2_y\right)}\\
  &+e^{\frac{1}{2}C_0M^2\gamma \widetilde T}\|e^{-\frac{1}{2}C_0M^2\gamma t}u^{(2)}_{\neq}\|_{\tilde L^2_t\left([0,\widetilde T];\mathcal F L^1_kL^\infty_y\right)}\|e^{-\frac{1}{2}C_0M^2\gamma t}\pa_y\omega_0\|_{ L^2_t\left([0,\widetilde T]; L^2_y\right)}\\
  \le&16C_2^2e^{\frac{1}{2}C_0M^2\gamma \widetilde T}\nu^{-\frac{1}{2}}\|\omega_{in}\|_{\mathcal F L^1_kL^2_y}^2\le C\varepsilon_0C_2^2e^{\frac{1}{2}C_0M^2\gamma \widetilde T}\nu^{\delta_1}\|\omega_{in}\|_{\mathcal F L^1_kL^2_y}.
\end{align*}
A similar argument shows that
  \begin{align*}
    &\|e^{-\frac{1}{2}C_0M^2\gamma t}\big(u^{(1)}_{\neq}\pa_xu^{(1)}_{\neq}\big)_0\|_{L^1_t([0,\widetilde T];L^2_{y})}+\|e^{-\frac{1}{2}C_0M^2\gamma t}\big(u^{(2)}_{\neq}\pa_yu^{(1)}_{\neq}\big)_0\|_{L^1_t([0,\widetilde T];L^2_{y})}\\
    &+\|e^{-\frac{1}{2}C_0M^2\gamma t}\big(u^{(1)}_{\neq}\pa_x\omega_{\neq}\big)_0\|_{L^1_t([0,\widetilde T];L^2_{y})}+\|e^{-\frac{1}{2}C_0M^2\gamma t}\big(u^{(2)}_{\neq}\pa_y\omega_{\neq}\big)_0\|_{L^1_t([0,\widetilde T];L^2_{y})}\\
    \le&e^{\frac{1}{2}C_0M^2\gamma \widetilde T}\|e^{-\frac{1}{2}C_0M^2\gamma t}u^{(1)}_{\neq}\|_{\tilde L^\infty_t\left([0,\widetilde T];\mathcal F L^1_kL^\infty_y\right)}\|e^{-\frac{1}{2}C_0M^2\gamma t}\pa_xu^{(1)}_{\neq}\|_{\tilde L^1_t\left([0,\widetilde T];\mathcal F L^1_kL^2_y\right)}\\
  &+e^{\frac{1}{2}C_0M^2\gamma \widetilde T}\|e^{-\frac{1}{2}C_0M^2\gamma t}u^{(2)}_{\neq}\|_{\tilde L^2_t\left([0,\widetilde T];\mathcal F L^1_kL^\infty_y\right)}\|e^{-\frac{1}{2}C_0M^2\gamma t}\pa_yu^{(1)}_{\neq}\|_{\tilde L^2_t\left([0,\widetilde T];\mathcal F L^1_kL^2_y\right)}\\
  &+e^{\frac{1}{2}C_0M^2\gamma \widetilde T}\|e^{-\frac{1}{2}C_0M^2\gamma t}u^{(1)}_{\neq}\|_{\tilde L^\infty_t\left([0,\widetilde T];\mathcal F L^1_kL^\infty_y\right)}\|e^{-\frac{1}{2}C_0M^2\gamma t}\pa_x\omega_{\neq}\|_{\tilde L^1_t\left([0,\widetilde T];\mathcal F L^1_kL^2_y\right)}\\
  &+e^{\frac{1}{2}C_0M^2\gamma \widetilde T}\|e^{-\frac{1}{2}C_0M^2\gamma t}u^{(2)}_{\neq}\|_{\tilde L^2_t\left([0,\widetilde T];\mathcal F L^1_kL^\infty_y\right)}\|e^{-\frac{1}{2}C_0M^2\gamma t}\pa_y\omega_{\neq}\|_{\tilde L^2_t\left([0,\widetilde T];\mathcal F L^1_kL^2_y\right)}\\
  \le&C\varepsilon_0C_2^2e^{\frac{1}{2}C_0M^2\gamma \widetilde T}\nu^{\delta_1}\|\omega_{in}\|_{\mathcal F L^1_kL^2_y}.
  \end{align*}
  Here we use the fact that
  \begin{align*}
    &\|e^{-\frac{1}{2}C_0M^2\gamma t}\pa_xu^{(1)}_{\neq}\|_{\tilde L^1_t\left([0,\widetilde T];\mathcal F L^1_kL^2_y\right)}+\|e^{-\frac{1}{2}C_0M^2\gamma t}\pa_yu^{(1)}_{\neq}\|_{\tilde L^2_t\left([0,\widetilde T];\mathcal F L^1_kL^2_y\right)}\\
    \le& \|e^{-\frac{1}{2}C_0M^2\gamma t}\pa_x\omega_{\neq}\|_{\tilde L^1_t\left([0,\widetilde T];\mathcal F L^1_kL^2_y\right)}+\|e^{-\frac{1}{2}C_0M^2\gamma t}\pa_y\omega_{\neq}\|_{\tilde L^2_t\left([0,\widetilde T];\mathcal F L^1_kL^2_y\right)}.
  \end{align*}
\end{proof}

Now, we can prove Proposition \ref{prop-upper}.
\begin{proof}[Proof of Proposition \ref{prop-upper}]
We first give the estimates for $u^{(1)}_0$ and $\omega_0$. By using \eqref{eq:DH-2}, \eqref{eq:DH-3}, \eqref{eq-om-nonl-est2}-\eqref{eq-om-nonl-est5}, Lemma \ref{lem-est-nonl}, and the properties of heat kernel, we get for $0\le t\le \widetilde T$ that
\begin{align*}
  &\left\|e^{-\frac{1}{2}C_0M^2\gamma t}u^{(1)}_0(t)\right\|_{L^2_y}+\|e^{-\frac{1}{2}C_0M^2\gamma t}\omega_0(t)\|_{L^2_y}\\
  \le& \left\|e^{-\frac{1}{2}C_0M^2\gamma t}\int^t_0 e^{(t-s)\nu\pa_y^2}\Big(\big(u^{(1)}_{\neq}\pa_xu^{(1)}_{\neq}\big)_0+\big(u^{(2)}_{\neq}\pa_yu^{(1)}_{\neq}\big)_0\Big)(s,y) ds\right\|_{L^2_y} \\
  &+\left\|e^{-\frac{1}{2}C_0M^2\gamma t}\int^t_0 e^{(t-s)\nu\pa_y^2}\Big(\big(u^{(1)}_{\neq}\pa_x\omega_{\neq}\big)_0+\big(u^{(2)}_{\neq}\pa_y\omega_{\neq}\big)_0\Big)(s,y) ds\right\|_{L^2_y} \\
  \le&\|e^{-\frac{1}{2}C_0M^2\gamma t}\big(u^{(1)}_{\neq}\pa_xu^{(1)}_{\neq}\big)_0\|_{L^1_t([0,\widetilde T];L^2_{y})}+\|e^{-\frac{1}{2}C_0M^2\gamma t}\big(u^{(2)}_{\neq}\pa_yu^{(1)}_{\neq}\big)_0\|_{L^1_t([0,\widetilde T];L^2_{y})}\\
    &+\|e^{-\frac{1}{2}C_0M^2\gamma t}\big(u^{(1)}_{\neq}\pa_x\omega_{\neq}\big)_0\|_{L^1_t([0,\widetilde T];L^2_{y})}+\|e^{-\frac{1}{2}C_0M^2\gamma t}\big(u^{(2)}_{\neq}\pa_y\omega_{\neq}\big)_0\|_{L^1_t([0,\widetilde T];L^2_{y})}\\
  \le&CC_2^2e^{\frac{1}{2}C_0M^2\gamma \widetilde T}\varepsilon_0\nu^{\delta_1}\|\omega_{in}\|_{\mathcal F L^1_kL^2_y}.
\end{align*}
It follows from the Gagliardo-Nirenberg interpolation inequality that
\begin{align*}
  \|e^{-\frac{1}{2}C_0M^2\gamma t} u^{(1)}_0(t)\|_{L^\infty_t\left([0,\widetilde T]; L^\infty_y\right)}\le&C\|e^{-\frac{1}{2}C_0M^2\gamma t} u^{(1)}_0(t)\|^{\frac{1}{2}}_{L^\infty_t\left([0,\widetilde T]; L^2_y\right)}\|e^{-\frac{1}{2}C_0M^2\gamma t} \omega_0\|^{\frac{1}{2}}_{L^\infty_t\left([0,\widetilde T]; L^2_y\right)}\\
  \le&CC_2^2e^{\frac{1}{2}C_0M^2\gamma \widetilde T}\varepsilon_0\nu^{\delta_1}\|\omega_{in}\|_{\mathcal F L^1_kL^2_y}.
\end{align*}
Recalling the property of heat kernel that
\begin{align*}
  \|\pa_y e^{t\nu\pa_y^2}f\|_{L^2_t([0,\widetilde T];L^2_y)}\le C\nu^{-\frac{1}{2}}\|f\|_{L^2_y},
\end{align*}
we have
\begin{align*}
  &\|e^{-\frac{1}{2}C_0M^2\gamma t}\pa_y\omega_0\|_{ L^2_t\left([0,\widetilde T]; L^2_y\right)}\\
  \le& \left\|e^{-\frac{1}{2}C_0M^2\gamma t}\int^t_0 \pa_ye^{(t-s)\nu\pa_y^2}\Big(\big(u^{(1)}_{\neq}\pa_x\omega_{\neq}\big)_0+\big(u^{(2)}_{\neq}\pa_y\omega_{\neq}\big)_0\Big)(s,y) ds\right\|_{ L^2_t\left([0,\widetilde T]; L^2_y\right)}\\
  \le&\int^{\widetilde T}_0e^{-\frac{1}{2}C_0M^2\gamma s}\left(\int^{\widetilde T}_s \left\|\pa_ye^{(t-s)\nu\pa_y^2}\Big(\big(u^{(1)}_{\neq}\pa_x\omega_{\neq}\big)_0+\big(u^{(2)}_{\neq}\pa_y\omega_{\neq}\big)_0\Big)(s)\right\|_{L^2_y}^2dt\right)^{\frac{1}{2}}ds\\
  \le&C\nu^{-\frac{1}{2}}\int^{\widetilde T}_0e^{-\frac{1}{2}C_0M^2\gamma s}\left\|\Big(\big(u^{(1)}_{\neq}\pa_x\omega_{\neq}\big)_0+\big(u^{(2)}_{\neq}\pa_y\omega_{\neq}\big)_0\Big)(s)\right\|_{L^2_y}ds\\
  \le&CC_2^2e^{\frac{1}{2}C_0M^2\gamma \widetilde T}\varepsilon_0\nu^{\delta_1-\frac{1}{2}}\|\omega_{in}\|_{\mathcal F L^1_kL^2_y}.
\end{align*}
From \eqref{eq:DH-1} and Proposition \ref{prop-lin-upper}, we have
\begin{align*}
    &\|e^{-\frac{1}{2}C_0M^2\gamma t}\omega_{\neq}(t)\|_{\tilde L^\infty_t\left([0,\widetilde T];\mathcal F L^1_kL^2_y\right)}\\
  \le&\|e^{-\frac{1}{2}C_0M^2\gamma t}{S_\nu(t,0)}\omega_{in}\|_{\tilde L^\infty_t\left([0,\widetilde T];\mathcal F L^1_kL^2_y\right)}\\
  &+\left\|e^{-\frac{1}{2}C_0M^2\gamma t}\int^t_0 {S_\nu(t,s)}\big(\mathfrak L+\sum_{i=1,2,3}\mathcal N^{(i)}\big)(s)ds\right\|_{\tilde L^\infty_t\left([0,\widetilde T];\mathcal F L^1_kL^2_y\right)}\\
  \le&C_1\|\omega_{in}\|_{\mathcal F L^1_kL^2_y}+\|e^{-\frac{1}{2}C_0M^2\gamma t}\mathfrak L\|_{\tilde L^1_t\left([0,\widetilde T];\mathcal F L^1_kL^2_y\right)}+\sum_{i=1,2,3}\|e^{-\frac{1}{2}C_0M^2\gamma t}\mathcal N^{(i)}\|_{\tilde L^1_t\left([0,\widetilde T];\mathcal F L^1_kL^2_y\right)}\\
  \le&\Big(C_1+CC_2M\gamma^2\nu^{-\frac{1}{2}}+CC_2^2e^{\frac{1}{2}C_0M^2\gamma \widetilde T}\varepsilon_0\nu^{\delta_1}\Big) \|\omega_{in}\|_{\mathcal F L^1_kL^2_y}.
\end{align*}
By using similar arguments, we deduce that
\begin{align*}
    &\nu^{\frac{1}{2}}\|e^{-\frac{1}{2}C_0M^2\gamma t}\pa_y\omega_{\neq}\|_{\tilde L^2_t\left([0,\widetilde T];\mathcal F L^1_kL^2_y\right)}+\nu^{\frac{1}{2}}\|e^{-\frac{1}{2}C_0M^2\gamma t}\pa_x\omega_{\neq}\|_{\tilde L^1_t\left([0,\widetilde T];\mathcal F L^1_kL^2_y\right)}\\
  &+\|e^{-\frac{1}{2}C_0M^2\gamma t}u^{(1)}_{\neq}\|_{\tilde L^\infty_t\left([0,\widetilde T];\mathcal F L^1_kL^\infty_y\right)}+\|e^{-\frac{1}{2}C_0M^2\gamma t}u^{(2)}_{\neq}\|_{\tilde L^2_t\left([0,\widetilde T];\mathcal F L^1_kL^\infty_y\right)}\\
  \le&\Big(4C_1+CC_2M\gamma^2\nu^{-\frac{1}{2}}+CC_2^2e^{\frac{1}{2}C_0M^2\gamma \widetilde T}\varepsilon_0\nu^{\delta_1}\Big) \|\omega_{in}\|_{\mathcal F L^1_kL^2_y}.
\end{align*}

As $\widetilde T\le \varepsilon_1\gamma^{-1} \ln(\nu^{-1})$ with $\varepsilon_1=\frac{2\delta_0}{M\pi}$, then $e^{\frac{1}{2}C_0M^2\gamma \widetilde T}\le \nu^{-\frac{1}{\pi}C_0M\delta_0}$. By choosing 
\begin{equation}\label{eq-delta1}
  \delta_1=\frac{3}{\pi}C_0M\delta_0>0,
\end{equation}
we have
\begin{align*}
  e^{\frac{1}{2}C_0M^2\gamma \widetilde T}\nu^{\delta_1}\le \nu^{\frac{1}{3}\delta_1},
\end{align*}
and
\begin{align*}
  C_1+C_2M\gamma^2\nu^{-\frac{1}{2}}+C_2^2e^{\frac{1}{2}C_0M^2\gamma \widetilde T}\varepsilon_0\nu^{\delta_1}\le C_1+C_2M\nu^{\frac{1}{6}-\frac{2}{3}\delta_0}+C_2^2\varepsilon_0\nu^{\frac{2}{3}\delta_1}.
\end{align*}
By taking $C_2\ge CC_1$ big enough, and $\varepsilon_0$ small enough, we prove the Proposition \ref{prop-upper}.
\end{proof}
\subsection{Lower bound estimate to the nonlinear equation}
Now, we start to prove Theorem \ref{thm-low-exp}.
\begin{proof}[Proof of Theorem \ref{thm-low-exp}]
We study the solution $\omega(t,x,y)$ of nonlinear system \eqref{eq:b-perturbation} with initial data $\omega_{in}$ given in \eqref{eq-om-nonl} which satisfying $\int_{\mathbb T} \omega_{in}(x,y) d x=0$. The upper bound in \eqref{eq-est-nonlinear-viscous} follows from \eqref{eq-om-nonl-est1} and Proposition \ref{prop-upper}. By Proposition \ref{prop-lower}, Proposition \ref{prop-lin-upper}, Proposition \ref{prop-upper}, and Lemma \ref{lem-est-nonl}, we have the lower bound estimate
  \begin{align*}
  &\|\omega_{\pm1}(t)\|_{L^2_xL^2_y}\\
  =&\left\|P_{\pm1}\left({S_\nu(t,0)}\omega_{in}-\int^t_0 {S_\nu(t,s)}\big(\mathfrak L+\sum_{i=1,2,3}\mathcal N^{(i)}\big)(s)ds\right)\right\|_{L^2_xL^2_y}\\
  \ge&\|{S_\nu(t,0)}\omega_{in,\pm1}\|_{L^2_xL^2_y}-e^{\frac{1}{2}C_0M^2\gamma T}\left\|e^{-\frac{1}{2}C_0M^2\gamma t}\int^t_0 {S_\nu(t,s)}\big(\mathfrak L+\sum_{i=1,2,3}\mathcal N^{(i)}\big)(s)ds\right\|_{\tilde L^\infty_t\left([0,T];\mathcal F L^1_kL^2_y\right)}\\
  \ge&c_0e^{c_1\gamma t}\|\omega_{in}\|_{L^2_xL^2_y}\\
  &- e^{\frac{1}{2}C_0M^2\gamma T}\left(\|e^{-\frac{1}{2}C_0M^2\gamma t}\mathfrak L\|_{\tilde L^1_t\left([0,\widetilde T];\mathcal F L^1_kL^2_y\right)}+\sum_{i=1,2,3}\|e^{-\frac{1}{2}C_0M^2\gamma t}\mathcal N^{(i)}\|_{\tilde L^1_t\left([0,\widetilde T];\mathcal F L^1_kL^2_y\right)}\right)\\
  \ge&c_0e^{c_1\gamma t}\|\omega_{in}\|_{L^2_xL^2_y}-C\nu^{\frac{1}{3}\delta_1}\|\omega_{in}\|_{\mathcal F L^1_kL^2_y}
  \ge \frac{1}{2}c_0e^{c_1\gamma t}\|\omega_{in}\|_{H^1_xL^2_y}.
\end{align*}
The last inequality follows from the properties of our well-chosen initial data that
\begin{align*}
  \|\omega_{in}\|_{H^1_xL^2_y}=\|\omega_{in}\|_{L^2_xL^2_y}=\frac{1}{\sqrt\pi}\|\omega_{in}\|_{\mathcal F L^1_kL^2_y}.
\end{align*}
In particular, we have
\begin{align*}
  \|\omega(T)|\|_{L^2_xL^2_y}\ge \frac{c_0}{2\nu^{c_1\varepsilon_1}}\|\omega_{in}\|_{H^1_xL^2_y} \text{ for }{T=\varepsilon_1\gamma^{-1} \ln(\nu^{-1})}.
\end{align*}

To prove \eqref{eq-est-linear-viscous}, we study the solution of the linear system \eqref{eq:b-perturbation-linear} which satisfies
\begin{align*}
  \omega(t,x,y)=\omega_{\pm1}(t,x,y)={S_\nu(t,0)}\omega_{in}(x,y)-\int^t_0 {S_\nu(t,s)} \mathfrak L (s,x,y)ds.
\end{align*}
Note that for the linear system, $\mathfrak L $ has only $\pm1$ modes. Following the proof of Proposition \ref{prop-upper}, we have 
\begin{align*}
  \left\|\int^t_0 {S_\nu(t,s)} \mathfrak L (s,x,y)ds\right\|_{L^\infty_t([0,T];L^2_{x,y})}\le C\nu^{\frac{1}{3}\delta_1}\|\omega_{in}\|_{H^1_xL^2_y}.
\end{align*}
Finally, we get from Proposition \ref{prop-lower} and Proposition \ref{prop-lin-upper} that
  \begin{align*}
  Ce^{\frac{1}{2}C_0M^2\gamma t}\|\omega_{in}\|_{H^1_xL^2_y}\ge\|\omega(t)\|_{L^2_xL^2_y}\ge\frac{1}{2}c_0e^{c_1\gamma t}\|\omega_{in}\|_{H^1_xL^2_y}.
\end{align*}
\end{proof}

\section{Linear instability for the inviscid flow}
In this section, we study the inviscid flow and prove Theorem \ref{thm-invisid}. The proof is based on a combination of the dynamical approach and the classical ODE argument. 

After taking Fourier transform of \eqref{eq-Euler-shear} in $x$, we have
\begin{align*}
  \widetilde{\mathcal R_{M,\gamma}\omega}(k,y)=&\mathcal F_{x\to k}\left(b_0(y)\pa_x\omega-b_0''(y)\pa_x(\Delta)^{-1}\omega\right)(k,y)\\
  =&ik\left(b_0(y)-b_0''(y)(\pa_y^2-k^2)^{-1}\right)\tilde\omega(k,y).
\end{align*}
In this section, we only focus on the $k=1$ mode and study the operator
\begin{align*}
  \mathcal R=b_0(y)-b_0''(y)(\pa_y^2-1)^{-1}.
\end{align*}
If $\fc=\fc_r+i\fc_i$ is an eigenvalue of $\mathcal R$, then there exists $\omega_{\fc}(y)\in L^2_y$ such that
\begin{align*}
  \mathcal R\omega_{\fc}(y)=\fc\omega_{\fc}(y).
\end{align*}
Let $\fc=i\lambda$, and $\omega_\lambda(x,y)=e^{ix}\omega_{\fc}(y)$. Then, it holds that  
\begin{align*}
  \mathcal R_{M,\gamma}\omega_{\lambda}=\lambda\omega_{\lambda}.
\end{align*}

Therefore, to prove Theorem \ref{thm-invisid}, we prove the following proposition:
\begin{proposition}\label{prop-Rayleigh}
Let $M_0>0$ be big enough. For each $M\ge M_0$ there exists $0<\gamma_0=\gamma_0(M)$ such that for $0<\gamma\le\gamma_0$, the following properties hold for $\mathcal R$:
\begin{enumerate}
\item $\mathcal R$ has no embedded eigenvalue. 
\item $\mathcal R$ has eigenvalues, and the number of eigenvalues is finite. 
\item If $\fc=\fc_r+\fc_i$ is an eigenvalue of $\mathcal R$ then $\gamma\le|\fc_i|\le CM\gamma$ and $|\fc_r|< 4\gamma\sqrt{\ln\big(\ln(\gamma^{-1})\big)}$. 
\end{enumerate}
\end{proposition}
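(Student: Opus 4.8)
The plan is to prove Proposition~\ref{prop-Rayleigh} by combining a contradiction argument (for the existence statement) with classical energy identities for the Rayleigh equation (for the spectral localization and the absence of embedded eigenvalues), using the new energy method of Section~3 as the source of growth. Throughout I would pass to the stream-function formulation: $\fc$ is an eigenvalue of $\mathcal R$ with eigenfunction $\omega_\fc\in L^2_y$ if and only if $\psi=(\pa_y^2-1)^{-1}\omega_\fc\in H^2_y$ solves the Rayleigh equation
\begin{align*}
  \psi''-\psi-\frac{b_0''(y)}{b_0(y)-\fc}\psi=0,\qquad \psi(\pm\infty)=0 ,
\end{align*}
and I would use freely that $b_0'\ge1$, so $b_0:\mathbb R\to\mathbb R$ is an increasing bijection, and that $b_0''$ is odd, concentrated on $\{|y|\lesssim\gamma\}$, with $\|b_0''\|_{L^1_y}=4\sqrt\pi M\gamma$ small while $\|b_0''\|_{L^\infty_y}=O(M)$.

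For (1), suppose $\fc=\fc_r\in\mathbb R$ were an eigenvalue, with unique critical layer $y_c=b_0^{-1}(\fc_r)$. If $y_c\ne0$ then $b_0''(y_c)\ne0$; the equation forces $\psi''\sim \tfrac{b_0''(y_c)}{b_0'(y_c)}\tfrac{\psi(y_c)}{y-y_c}$ near $y_c$, which is incompatible with $\psi\in H^2_y$ unless $\psi(y_c)=0$, and after this desingularization the regular branch must also decay at $\pm\infty$, i.e. $0$ must be an eigenvalue of the Schr\"odinger operator $-\pa_y^2+V$, $V=1+\tfrac{b_0''}{b_0-\fc_r}$. Since $V\to1$ at infinity while the perturbation $\tfrac{b_0''}{b_0-\fc_r}$ is a narrow, shallow well (depth $O(M\gamma^{-1})$, width $O(\gamma)$; rescaling $y=\gamma s$ shows it contributes at most one bound state, at energy $\approx1-4\pi^2M^2<0$ for $M\ge M_0$), the value $0$ sits in a spectral gap and cannot be an eigenvalue; the case $y_c=0$ is identical with $V$ smooth, and small $|\fc_r|$ follows by perturbation off $\fc_r=0$. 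Hence $\mathcal R$ has no embedded eigenvalue. A quantitative version of the same near-critical-layer analysis shows that an eigenvalue with $|\fc_i|<\gamma$ would be too close to a (non-existent) neutral mode, which gives the lower bound $|\fc_i|\ge\gamma$ in~(3).

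For (2) I would argue by contradiction: assume $\mathcal R$ has no eigenvalue. Writing the semigroup by Stone's formula in terms of the boundary values $(\mathcal R-(\lambda\pm i0))^{-1}$ and using that $\mathcal R$ is an $O(M\gamma^2)$-in-$L^\infty$ plus $O(M\gamma^{1/2})$-in-$\mathcal B(L^2)$ perturbation of the Couette operator $y\cdot$ (whose resolvent is explicit), a limiting-absorption / Neumann-series argument — in the spirit of the resolvent estimates for monotone shear flows \cite{WeiZhangZhao2018} — yields, in the absence of poles, the uniform bound \eqref{eq-up-aa}: $\|e^{it\mathcal R}\mathfrak w\|_{L^2}\le C\|\mathfrak w\|_{L^2}$ with $C$ independent of $\gamma,t$. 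On the other hand, Proposition~\ref{prop-lower-Euler} gives $\|S_0(t)g_{in}\|_{L^2}\ge c_0e^{c_1\gamma t}\|g_{in}\|_{L^2}$ for $t\le\varepsilon_1\gamma^{-1}\ln(\gamma^{-1})$; since the modified flow $S_0(t)$ and the Rayleigh flow differ only through multiplication by $b_0(y)-y=O(M\gamma^2)$, a Duhamel estimate controlled by the ghost-weight a~priori bound $\|e^{it\mathcal R}\|\lesssim e^{CM\gamma t}$ (Proposition~\ref{prop-lin-upper} with $\nu=0$) transfers this to \eqref{eq-low-aa}, $\|e^{it\mathcal R}\mathfrak w\|_{L^2}\ge \tfrac{c_0}{2}e^{c_1\gamma t}\|\mathfrak w\|_{L^2}$ on the same interval, and evaluating at $t=\varepsilon_1\gamma^{-1}\ln(\gamma^{-1})$ produces a factor $\gtrsim\gamma^{-c_1\varepsilon_1}\to\infty$, contradicting \eqref{eq-up-aa} for $\gamma$ small. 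Hence eigenvalues exist. For finiteness, $b_0''(\pa_y^2-1)^{-1}$ is compact (Gaussian decay of $b_0''$ and Rellich), so $\sigma_{ess}(\mathcal R)=\overline{\mathrm{Range}\,b_0}=\mathbb R$; by~(3) all eigenvalues lie in the compact set $\{\gamma\le|\fc_i|\le CM\gamma,\ |\fc_r|\le 4\gamma\sqrt{\ln(\ln\gamma^{-1})}\}$, disjoint from $\sigma_{ess}$, so the point spectrum there is discrete and hence finite.

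For the remaining estimates in (3), multiply the Rayleigh equation by $\bar\psi$ and integrate to get $\int(|\psi'|^2+|\psi|^2)+\int\frac{b_0''}{b_0-\fc}|\psi|^2=0$. The imaginary part gives $\int\frac{b_0''}{|b_0-\fc|^2}|\psi|^2=0$ (using $\fc_i\ne0$), and the real part together with $|b_0-\fc|\ge|\fc_i|$ and $\|\psi\|_{L^\infty}^2\le2\|\psi\|_{L^2}\|\psi'\|_{L^2}\le\int(|\psi'|^2+|\psi|^2)$ yields $\int(|\psi'|^2+|\psi|^2)\le\frac{\|b_0''\|_{L^1}}{|\fc_i|}\int(|\psi'|^2+|\psi|^2)$, hence $|\fc_i|\le\|b_0''\|_{L^1}=4\sqrt\pi M\gamma=CM\gamma$. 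The bound on $|\fc_r|$ is the delicate point: the imaginary-part identity forces the critical layer to lie essentially over the support of $b_0''$, i.e. within $O(\gamma\sqrt{\ln(\ln\gamma^{-1})})$ of the origin, and making this precise requires the ODE argument — estimating the exponentially large/small behaviour of Rayleigh solutions away from the critical layer and a Wronskian-type matching — which is exactly where the $\sqrt{\ln(\ln\gamma^{-1})}$ enters. I expect the two genuine difficulties to be (a) the uniform-in-$\gamma$ resolvent/semigroup bound \eqref{eq-up-aa}, where the continuous spectrum is all of $\mathbb R$ and $b_0''$, though $L^1$- and $L^2$-small, is $O(M)$ in $L^\infty$ and lives on the $\gamma$-scale, and (b) the ODE matching that pins down $|\fc_r|$ with the sharp constant; the rest follows from the energy identities and the machinery of Section~3 already in place.
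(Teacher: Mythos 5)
Your overall architecture for the existence statement is exactly the paper's: assume no eigenvalues, derive a $\gamma$-uniform bound $\|e^{-it\mathcal R}\|_{L^2\to L^2}\le C$ from a spectral representation, and contradict it with the exponential lower bound of Proposition \ref{prop-lower-Euler}, transferred from $S_0(t)$ to $e^{-it\mathcal R}$ by Duhamel using $\|b_0-y\|_{L^\infty}=O(M\gamma^2)$ and the ghost-weight bound $e^{CM^2\gamma t}$ over $t\le\varepsilon_1\gamma^{-1}\ln(\gamma^{-1})$; that transfer is precisely Lemma \ref{cor-invisid} and is sound. The compactness argument for finiteness is also fine (the paper instead invokes analyticity of the Wronskian $\mathcal D(\fc)$ on the bounded region $E$, but the two are equivalent).

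The genuine gap is that the two steps you flag as ``difficulties'' are the actual content of the proof, and the routes you sketch for them would not go through as stated. For the upper bound \eqref{eq-up-aa}, a Neumann series off the Couette resolvent fails: one needs the boundary values $(\fc_r\pm i0-\mathcal L)^{-1}$, and $\|b_0''(\pa_y^2-1)^{-1}(\lambda-b_0)^{-1}\|$ is not small as $\Im\lambda\to0$, however small $\|b_0''\|_{L^2}$ is. The paper instead constructs the regular solution $\phi=(b_0-\fc)\phi_1\phi_2$ of the homogeneous Rayleigh equation (Proposition \ref{prop-phi}), writes the resolvent explicitly through the Wronskian $\mathcal D(\fc)$ and the functionals $\mathcal J_1,\dots,\mathcal J_4$, and proves the quantitative bounds $\mathcal J_1^2+\mathcal J_2^2\ge C_M$ and $\|\mathcal J_3(f,\cdot)\|_{L^2}\le C_M\|f\|_{L^2}$ uniformly in $\gamma$ (Lemmas \ref{lem-est-J1J2J3}, \ref{lem-up-bound-noeigen}); none of this is supplied by your sketch. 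Likewise for the localization in (3): ``an eigenvalue with $|\fc_i|<\gamma$ would be too close to a non-existent neutral mode'' is not an argument. The paper excludes $E_3$ by showing $\Re\mathcal D(\fc)\gtrsim M$ for $|\fc_r|\le\tfrac14\gamma$ and $|\Im\mathcal D(\fc)|\gtrsim M\sqrt{\ln\ln(\gamma^{-1})}\,(\ln\gamma^{-1})^{-4}$ for $\tfrac14\gamma\le|\fc_r|\le4\gamma\sqrt{\ln\ln(\gamma^{-1})}$ (Lemma \ref{lem-eigen-E3}), and excludes $E_1$ by a delicate energy estimate splitting $\int b_0''|\psi|^2/(b_0-\fc)$ near and away from the critical layer with $\ln(b_0-\fc)$ integrations by parts (Lemma \ref{lem-E1E2}); only your $|\fc_i|\le CM\gamma$ bound via $\|b_0''\|_{L^1}$ is complete. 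Finally, for (1) you take a genuinely different route (a shallow-well spectral-gap argument for $-\pa_y^2+1+b_0''/b_0$ instead of the paper's computation $\mathcal J_1(0)\gtrsim M$); the heuristic is reasonable, but note that for $y_\fc\neq0$ your potential is singular and sign-indefinite, that ``exactly one bound state at energy $\approx1-4\pi^2M^2$'' needs the shallow-well asymptotics made rigorous, and that the paper's route through $\mathcal J_1,\mathcal J_2$ is not optional extra work --- the same quantities are what make the resolvent representation and hence \eqref{eq-up-aa} possible.
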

The first statement {in Proposition \ref{prop-Rayleigh} that $\mathcal R$} has no embedded eigenvalue is proved in Lemma \ref{lem-no-emb} by a classical ODE argument. 

Let us focus on the second statement {in Proposition \ref{prop-Rayleigh}}, namely, the existence of eigenvalues. 
We have the following lower bound estimates for the semi-group $e^{-it\mathcal R}$ by our new energy estimate presented in Section 3.
\begin{lemma}\label{cor-invisid}
There exists $\mathfrak w_{in}(y)\in L^2_y$ such that for $t\in[0,T]$ with $T=\varepsilon_1 \gamma^{-1}\ln(\gamma^{-1})$ and $\varepsilon_1>0$ small, it holds that
\begin{align}\label{eq-est-lower-invis}
  \|e^{-it\mathcal R}\mathfrak w_{in}\|_{L^2_{y}}\ge C^{-1}e^{C^{-1}\gamma t}\|\mathfrak w_{in}\|_{L^2_{y}},
\end{align}
where $C$ is independent of $\gamma$.
\end{lemma}
\begin{proof}
  We study the following system
 \begin{equation*}
    \partial_{t}  \omega(t,x,y)+ y\partial_{x} \omega(t,x,y)-\pa_y^2b_0(y)\pa_x(\Delta)^{-1}\omega(t,x,y)= -\big(b_0(y)-y\big)\pa_x\omega(t,x,y),
 \end{equation*}
with initial data $\omega_{in}(x,y)=2\cos(x)\mathfrak w_{in}(y)$. Note that 
\begin{align}\label{eq-qe-inviscid}
  \|\omega(t)\|_{L^2_{x,y}}=\|\pa_x\omega(t)\|_{L^2_{x,y}}=\sqrt2\|e^{-it\mathcal R}\mathfrak w_{in}\|_{L^2_{y}},\quad\|\omega_{in}\|_{L^2_{x,y}}=\sqrt2\|\mathfrak w_{in}\|_{L^2_{y}}.
\end{align}

By following the proof of Proposition \ref{prop-lin-upper}, we have for $s \le T$
\begin{align*}{
  \|e^{-\frac{1}{2} C_0 M^2\gamma (t-s)}S_0(t,s)\omega(s,x,y)\|_{L^\infty_t([s,T]; L^2_{x,y})}\le C \|\omega(s,x,y)\|_{L^2_{x,y}}.}
\end{align*}
We also have 
\begin{align*}
  \omega(t,x,y)={S_0(t,0)}\omega_{in}(x,y)-\int^t_0 {S_0(t,s)} \big(b_0(y)-y\big)\pa_x\omega(s,x,y)ds,
\end{align*}
and then
\begin{align*}
  &\|e^{-\frac{1}{2} C_0 M^2\gamma t}\omega\|_{L^\infty_t([0,T];L^2_{x,y})}\\
  \le & \|e^{-\frac{1}{2} C_0 M^2\gamma t}{S_0(t,0)}\omega_{in}\|_{L^\infty_t([0,T];L^2_{x,y})}+\left\|e^{-\frac{1}{2} C_0 M^2\gamma t}\int^t_0 {S_0(t,s)} \big(b_0(y)-y\big)\pa_x\omega(s,x,y)ds\right\|_{L^\infty_t([0,T];L^2_{x,y})}\\
  \le& C \|\omega_{in}\|_{L^2_{x,y}}+C \|b_0(y)-y\|_{L^\infty_y}T\|e^{-\frac{1}{2} C_0 M^2\gamma t}\pa_x\omega\|_{L^\infty_t([0,T];L^2_{x,y})}\\
  \le&C \|\omega_{in}\|_{L^2_{x,y}}+C M\gamma^2 T\|e^{-\frac{1}{2} C_0 M^2\gamma t}\omega\|_{L^\infty_t([0,T];L^2_{x,y})},
\end{align*}
which gives that 
\begin{align*}
  \|e^{-\frac{1}{2} C_0 M^2\gamma t}\omega\|_{L^\infty_t([0,T];L^2_{x,y})}\le C \|\omega_{in}\|_{L^2_{x,y}}. 
\end{align*}
From Proposition \ref{prop-lower-Euler}, we obtain that for well-chosen initial data, it holds that
\begin{align*}
  \|{S_0(t,0)}\omega_{in}\|_{L^2_{x,y}}\ge C^{-1} e^{c\gamma t}\|\omega_{in}\|_{L^2_{x,y}}, \text{ for } 0\le t\le T.
\end{align*}
Thus 
\begin{align*}
 \|\omega(t)\|_{L^2_{x,y}}
  \ge & \|{S_0(t,0)}\omega_{in}\|_{L^2_{x,y}}-e^{\frac{1}{2} C_0 M^2\gamma T}\left\|e^{-\frac{1}{2} C_0 M^2\gamma t}\int^t_0 {S_0(t,s)} \big(b_0(y)-y\big)\pa_x\omega(s)ds\right\|_{L^\infty_t([0,T];L^2_{x,y})}\\
  \ge & C^{-1} e^{c\gamma t}\|\omega_{in}\|_{L^2_{x,y}}-C M\gamma^2 Te^{\frac{1}{2} C_0 M^2\gamma T}\|\omega_{in}\|_{L^2_{x,y}}\ge C^{-1} e^{c\gamma t}\|\omega_{in}\|_{L^2_{x,y}},
\end{align*}
which together with \eqref{eq-qe-inviscid} gives the corollary.
\end{proof}
Normally, for self-adjoint operators, the exponential growth given in the lower bound estimate of the semi-group $e^{-it\mathcal R}$ \eqref{eq-est-lower-invis} implies the existence of unstable eigenvalues of $\mathcal R$ for $\gamma$ small enough. Here $\mathcal R$ is not self-adjoint. To prove the existence of an unstable eigenvalue, we will use a contradiction argument and an upper bound estimate of the semi-group $e^{-it\mathcal R}$. Indeed, under the assumption that $\mathcal R$ has no eigenvalues, we will prove that for $\g$ small
\begin{align}\label{eq-up-bound-R}
  \|e^{-it\mathcal R}\mathfrak w_{in}\|_{L^2_{y}}\le C\|\mathfrak w_{in}\|_{L^2_{y}},
\end{align}
where $C$ is a constant independent of $\gamma$. By taking $\gamma$ small, the upper bound \eqref{eq-up-bound-R} and the lower bound estimates \eqref{eq-est-lower-invis} of the semi-group lead to a contradiction.

The estimate \eqref{eq-up-bound-R} is obtained by the representation formula and the resolvent estimate. For the bounded domain case, namely, if $\Omega$ is replaced by $\mathbb T\times[0,1]$, the upper bound estimate is well studied in \cite{WeiZhangZhao2018}, for the unbounded domain case, the result \eqref{eq-up-bound-R} is new. Here we also need uniformity in $\gamma$.

Let us admit \eqref{eq-up-bound-R} which is given in Section 5.4 and finish the proof of the existence of eigenvalues. 
Suppose that $\mathcal R$ has no eigenvalues, then by Lemma \ref{lem-up-bound-noeigen} we get the upper bound estimate \eqref{eq-up-bound-R} which contradicts Lemma \ref{cor-invisid} by taking $\gamma$ small enough. 

The third statement {in Proposition \ref{prop-Rayleigh}} follows from Lemma \ref{lem-E1E2} and Lemma \ref{lem-eigen-E3}. Moreover, Corollary \ref{cor-finit-eigen} gives the fact that $\mathcal R$ has a finite number of eigenvalues. The rest parts of this section are mainly to prove \eqref{eq-up-bound-R}.

\subsection{Homogeneous Rayleigh equation}\label{sec-Ray}
Let $\fc=\fc_r+i\fc_i\in \mathbb C$ be an eigenvalue of $\mathcal R$, then there exists $0\not\equiv\psi\in H^2_y$  solution of the homogeneous Rayleigh equation
\begin{align}\label{eq-Rayleigh}
  \psi''(y)-\psi(y)-\frac{b_0''(y)}{b_0(y)-\fc}\psi(y)=0.
\end{align}

As there is a singularity in the coefficients when $\fc\in \mathbb R$, we first construct a regular solution $\phi(y,\fc)$ (not necessary in $H^2_y$) of \eqref{eq-Rayleigh}.
\begin{proposition}\label{prop-phi}
  There exist $0<\varepsilon_3\le 1$ and $0<C_4\le1$ which depend only on the upper and lower bounds of $b_0'(y)$, such that for $\fc\in\mathbb C$  with $0\leq |\fc_i|\le \varepsilon_3$, \eqref{eq-Rayleigh} has a regular solution
  \begin{align*}
    \phi(y,\fc)=\big(b_0(y)-\fc\big)\phi_1(y,\fc_r)\phi_2(y,\fc),
  \end{align*}
  which satisfies $\phi(y_\fc,\fc)={-i\fc_i}$ and $\phi'(y_\fc,\fc)=b_0'(y_\fc)$. Here $y_\fc=b_0^{-1}(\fc_r)$, $\phi_1(y,\fc_r)$ is a real function that solves
  \begin{equation}\label{eq-phi1}
    \left\{
      \begin{array}{ll}
        \pa_y\left((b_0(y)-\fc_r)^2\phi_1'(y,\fc_r)\right)-\phi_1(y,\fc_r)(b_0(y)-\fc_r)^2=0,\\
        \phi_1(y_\fc,\fc_r)=1,\quad\phi_1'(y_\fc,\fc_r)=0,
      \end{array}
    \right.
  \end{equation}
and $\phi_2(y,\fc)$ solves
\begin{equation}\label{eq-phi2}
  \left\{
    \begin{array}{ll}
      \pa_y\Big((b_0(y)-\fc)^2\phi_1^2(y,\fc_r)\phi_2'(y,\fc)\Big)+\frac{2i{\fc_i} b_0'(y)(b_0(y)-\fc)}{b_0(y)-\fc_r}\phi_1(y,\fc_r)\phi_1'(y,\fc_r)\phi_2(y,\fc)=0,\\
      \phi_2(y_\fc,\fc)=1,\quad\phi_2'(y_\fc,\fc)=0.
    \end{array}
  \right.
\end{equation}
It holds that
  \begin{equation}\label{eq-phi1-est}
    \begin{aligned}    
      \phi_1(y,\fc_r)\ge1,\quad Ce^{C_4(|y-y_\fc|)}\le\phi_1(y,\fc_r)\le e^{|y-y_\fc|},\  |\phi_1'(y,\fc_r)|\le\phi_1(y,\fc_r),\ \forall y\in\mathbb R,\\
     |\phi_1(y,\fc_r)-1|\le C|y-y_\fc|^2\text{ for } |y-y_\fc|\le1,\  |\phi_1'(y,\fc_r)|\ge C_4\phi_1(y,\fc_r)\text{ for } |y-y_\fc|\ge1,\\
     \frac{\phi_1(y,\fc_r)}{\phi_1(y',\fc_r)}\le Ce^{-C_4(y-y')}\text{ for }y'\le y\le y_\fc,\quad \frac{\phi_1(y,\fc_r)}{\phi_1(y',\fc_r)}\le Ce^{-C_4(y'-y)}\text{ for }y'\ge y\ge y_\fc,
    \end{aligned}
  \end{equation}
    \begin{equation}\label{eq-phi2-est}
    \begin{aligned}    
      \left|\phi_2(y,\fc)-1\right|\le C\min(|\fc_i|,|\fc_i||y-y_\fc|,|y-y_\fc|^2),\ \forall y\in\mathbb R,\\
       \left|\phi_2'(y,\fc)\right|\le C\min(|\fc_i|, |y-y_\fc|),\quad \|\phi_2''(y,\fc)\|_{L^\infty_y}\le C,\ \forall y\in\mathbb R,
    \end{aligned}
  \end{equation}
  and
    \begin{align}\label{eq-phi-est}
    C^{-1}(|y-y_\fc|+|\fc_i|)e^{C_4|y-y_\fc|}\le|\phi(y,\fc)|\le C(|y-y_\fc|+|\fc_i|)e^{|y-y_\fc|}\text{ for }y\in\mathbb R,
  \end{align}
  where $C>0$ is a constant independent of $\fc$ and $\gamma$.
\end{proposition}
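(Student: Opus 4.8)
## Proof proposal for Proposition \ref{prop-phi}

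The plan is to solve the Rayleigh equation \eqref{eq-Rayleigh} by the standard factorization trick: pull out the factor $b_0(y)-\fc$ that carries the singularity, and write the remaining regular part as a product $\phi_1\phi_2$, where $\phi_1$ solves the \emph{real}, $\fc_i$-independent problem \eqref{eq-phi1} and $\phi_2$ captures the (small) imaginary correction through \eqref{eq-phi2}. First I would verify the algebra: substituting $\phi=(b_0-\fc)\phi_1\phi_2$ into \eqref{eq-Rayleigh} and using $b_0''=(b_0-\fc)''$, one checks that the equation becomes equivalent to the conservation-form system \eqref{eq-phi1}–\eqref{eq-phi2}, with the initial conditions at the critical point $y_\fc=b_0^{-1}(\fc_r)$ chosen so that $\phi(y_\fc,\fc)=0$ and $\phi'(y_\fc,\fc)=b_0'(y_\fc)$. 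Note $y_\fc$ is well-defined and smooth in $\fc_r$ because $b_0'(y)=1+2\sqrt\pi M\gamma e^{-y^2/\gamma^2}\in[1,1+2\sqrt\pi M\gamma]$ is bounded above and below, so $b_0$ is a bi-Lipschitz diffeomorphism of $\mathbb R$.

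Next I would establish the estimates \eqref{eq-phi1-est} for $\phi_1$. Writing \eqref{eq-phi1} as $\phi_1''+\frac{2b_0'}{b_0-\fc_r}\phi_1'-\phi_1=0$ away from $y_\fc$, and observing that since $\phi_1(y_\fc)=1,\phi_1'(y_\fc)=0$ the conservation form gives $(b_0-\fc_r)^2\phi_1'(y)=\int_{y_\fc}^y(b_0-\fc_r)^2\phi_1\,dy'$, one sees inductively (or by a continuity/maximum-principle argument) that $\phi_1\ge 1$ and $\phi_1'$ has the sign of $y-y_\fc$. The upper bound $\phi_1\le e^{|y-y_\fc|}$ and lower bound $\phi_1\ge Ce^{C_4|y-y_\fc|}$ follow from comparing with the constant-coefficient equation $u''-u=0$: the coefficient $\frac{2b_0'}{b_0-\fc_r}$ is of order $1/|y-y_\fc|$ near $y_\fc$ and decays, and a Grönwall/ODE-comparison argument on $(\ln\phi_1)'$ pins the exponential rate between $C_4$ and $1$. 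The quadratic behavior $|\phi_1-1|\le C|y-y_\fc|^2$ for $|y-y_\fc|\le 1$ is immediate from the integral representation of $\phi_1'$ together with $\phi_1'(y_\fc)=0$, and the linear lower bound $|\phi_1'|\ge C_4\phi_1$ for $|y-y_\fc|\ge 1$ comes from the same integral formula once $\phi_1$ has grown past a definite threshold. All constants depend only on $\sup b_0'$ and $\inf b_0'$, hence only on $M\gamma$, which is harmless after shrinking $\gamma_0$.

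Then for $\phi_2$ I would solve \eqref{eq-phi2} perturbatively: the nonhomogeneous/coupling term carries an explicit factor $\fc_i$, so writing the conservation form $(b_0-\fc)^2\phi_1^2\phi_2'(y)=-\int_{y_\fc}^y 2i\fc_i b_0'\,\phi_1\phi_1'\,\phi_2\,dy'$ and dividing by $(b_0-\fc)^2\phi_1^2$ (which is bounded below in modulus by $c\,|y-y_\fc|^2\,e^{2C_4|y-y_\fc|}$, since $|b_0-\fc|\ge c|y-y_\fc|$ and $|b_0-\fc|\gtrsim|\fc_i|$), the integrand is controlled by $|\fc_i|\,\phi_1^{-1}|\phi_1'|\cdot\|\phi_2\|_\infty$; using $|\phi_1'|\le\phi_1$ and the exponential lower bound on $\phi_1$, this integral converges and is $O(|\fc_i|)$. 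A fixed-point argument in $L^\infty_y$ then gives existence together with $\|\phi_2-1\|_{L^\infty}\le C|\fc_i|$, and differentiating the integral relation yields the bounds on $\phi_2'$ and $\phi_2''$ in \eqref{eq-phi2-est}. Finally, \eqref{eq-phi-est} follows by combining $|b_0(y)-\fc|\approx |y-y_\fc|+|\fc_i|$ (from the lower bound on $b_0'$ and $|\mathrm{Im}\,\fc|=|\fc_i|$), the two-sided bound \eqref{eq-phi1-est} on $\phi_1$, and $|\phi_2|\approx 1$ from \eqref{eq-phi2-est} after shrinking $\varepsilon_3$.

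The main obstacle I anticipate is making the $\phi_2$ estimate uniform all the way out to $y=\pm\infty$: the weight $(b_0-\fc)^2\phi_1^2$ in the denominator decays like $e^{-2C_4|y-y_\fc|}$ at worst, so one must check that the numerator integral decays faster (it does, thanks to $|\phi_1'|\le\phi_1$ giving $\phi_1\phi_1'\lesssim\phi_1^2\lesssim e^{2|y-y_\fc|}$—but one needs the \emph{lower} rate $C_4$ in the denominator to beat the \emph{upper} rate in the numerator, which forces the bookkeeping to be done carefully and is precisely why both one-sided exponential rates on $\phi_1$ are recorded in \eqref{eq-phi1-est}). A secondary subtlety is the regularity of $\phi$ across $y_\fc$ when $\fc_i=0$: there $b_0-\fc_r$ vanishes linearly, but the factored form shows $\phi=(b_0-\fc_r)\phi_1\phi_2$ is actually smooth there (the apparent singularities in \eqref{eq-phi1}, \eqref{eq-phi2} are removable because the forcing integrals vanish to the right order at $y_\fc$), which is exactly the point of the factorization and must be stated carefully.
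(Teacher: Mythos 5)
Your overall route is the same as the paper's: factor out $b_0-\fc$, build $\phi_1$ from the conservation-form integral equation $\phi_1=1+T_1\phi_1$, control $F=(\ln\phi_1)'=\phi_1'/\phi_1$ (the paper runs a maximum-principle argument on the Riccati equation $F'+F^2+\tfrac{2b_0'}{b_0-\fc_r}F-1=0$ with $F(y_\fc)=0$, $F'(y_\fc)=\tfrac13$ to get $|F|\le\min(1,|y-y_\fc|)$ and $|F|\ge C_4$ for $|y-y_\fc|\ge1$), and then solve for $\phi_2$ by a contraction in $L^\infty_y$. The only structural difference is that the paper first establishes existence of $\phi_1$ by inverting $I-T_1$ in a $\cosh(\tilde C(y-y_\fc))$-weighted space before extracting the pointwise bounds; your sketch leaves the existence step implicit, but it is recoverable from the same integral equation.

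There is, however, one concrete soft spot: your proposed resolution of the difficulty at $y\to\pm\infty$ in the $\phi_2$ estimate is the step that would fail as stated. You propose to beat the growth $\phi_1\phi_1'\lesssim e^{2|y-y_\fc|}$ of the inner integrand by the decay $e^{-2C_4|y-y_\fc|}$ coming from the lower bound on the denominator $(b_0-\fc)^2\phi_1^2$; since $C_4\le1$, the rate comparison goes the wrong way and the ratio is \emph{not} bounded this way. The correct mechanism — which the paper uses and which is available from the very representation you wrote down — is the exact cancellation
\begin{align*}
\Bigl|\int_{y_\fc}^{y'}b_0'\,\phi_1\phi_1'\,\phi_2\,dz\Bigr|\le C\|\phi_2\|_{L^\infty}\bigl(\phi_1^2(y')-1\bigr),
\end{align*}
valid because $\phi_1\phi_1'$ has a fixed sign on each side of $y_\fc$ and integrates exactly to $\tfrac12(\phi_1^2-1)$; the numerator therefore grows like the \emph{same function} $\phi_1^2(y')$ appearing in the denominator, not merely at a comparable exponential rate. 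After this cancellation one is left with $\int\frac{\phi_1^2(y')-1}{(b_0(y')-\fc)^2\phi_1^2(y')}\,dy'\le C$ uniformly in $\fc_i$ (the quadratic vanishing of $\phi_1^2-1$ at $y_\fc$ kills the singularity there, and $(b_0-\fc)^{-2}\lesssim|y'-y_\fc|^{-2}$ at infinity), and the explicit prefactor $\fc_i$ gives $\|T_2\phi_2\|_{L^\infty}\le C|\fc_i|\,\|\phi_2\|_{L^\infty}$. Note also that, judging from the splitting performed in the paper's proof, the coefficient in \eqref{eq-phi2} is intended to carry a factor $(b_0-\fc_r)/(b_0-\fc)=1+i\fc_i/(b_0-\fc)$ rather than $1$; the extra piece contributes an additional $\fc_i\int\frac{\phi_1^2+C}{(b_0-\fc)^2\phi_1^2}\le C$ using $\int(b_0-\fc)^{-2}\lesssim|\fc_i|^{-1}$ and $|F/(b_0-\fc_r)|\le C$, so it does not change the conclusion, but your bookkeeping should account for it.
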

It is easy to check that 
\beno
\phi(y,\fc)\quad &\text{and}&\quad \varphi^{+}(y,\fc),\\
\phi(y,\fc)\quad &\text{and}&\quad \varphi^{-}(y,\fc),
\eeno
where
\begin{align*}
  \varphi^\pm(y,\fc)=&\phi(y,\fc)\int^y_{\pm\infty}\frac{1}{\phi^2(y',\fc)}dy',
\end{align*}
are two fundamental sets of solutions of \eqref{eq-Rayleigh}. 
Therefore, if $\psi(y,\fc)\in H^2_y$ is a solution of \eqref{eq-Rayleigh}, then $\psi(y,\fc)$ has the following form
\begin{align}\label{eq-psi-eigenfunction}
  \psi(y,\fc)=a_1^-\phi(y,\fc)+a_2^-\varphi^-(y,\fc)=a_1^+\phi(y,\fc)+a_2^+\varphi^+(y,\fc),
\end{align}
where $a_1^\pm$, $a_2^\pm$ are constants. Note that by Proposition \ref{prop-phi},  $\varphi^\pm$ are well defined for $|\fc_i|\le \varepsilon_3$.

We give a criterion for whether $\fc$ is an eigenvalue of $\mathcal R$.
\begin{lemma}\label{lem-eigen}
  Let
  \begin{align*}
  \mathcal D(\fc)=&\int^{+\infty}_{-\infty}\frac{1}{\phi^2(y',\fc)}dy'.
  \end{align*}
Then, a number $\fc\in\mathbb C$  with $0<|\fc_i|\le \varepsilon_3$ is an eigenvalue of $\mathcal R$ if and only if $\mathcal D(\fc)=0$.
\end{lemma}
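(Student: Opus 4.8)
The plan is to reduce ``$\fc$ is an eigenvalue of $\mathcal R$'' to the vanishing of a Wronskian of the two ``recessive'' solutions of \eqref{eq-Rayleigh}, and to identify that Wronskian with $-\mathcal D(\fc)$. Since $0<|\fc_i|\le\varepsilon_3$, the Rayleigh potential $b_0''/(b_0-\fc)$ is bounded (and in $L^2$), so any $L^2$ solution of \eqref{eq-Rayleigh} automatically lies in $H^2$ via $\psi''=\psi+\frac{b_0''}{b_0-\fc}\psi$; thus ``$\fc$ is an eigenvalue'' $\iff$ \eqref{eq-Rayleigh} has a nonzero solution decaying at both $\pm\infty$.

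First I would record two elementary facts about the fundamental solutions. Splitting $\int_{-\infty}^y=\int_{-\infty}^{+\infty}-\int_y^{+\infty}$ and using that $|\phi(y,\fc)|\ge C^{-1}|\fc_i|e^{C_4|y-y_\fc|}$ from \eqref{eq-phi-est} makes $\phi^{-2}$ integrable on $\mathbb R$, one gets the identity
\[
  \varphi^-(y,\fc)-\varphi^+(y,\fc)=\phi(y,\fc)\int_{-\infty}^{+\infty}\frac{dy'}{\phi^2(y',\fc)}=\mathcal D(\fc)\,\phi(y,\fc).
\]
A one-line computation gives $W(\phi,\varphi^\pm)=\phi(\varphi^\pm)'-\phi'\varphi^\pm=1$, so $\{\phi,\varphi^+\}$ and $\{\phi,\varphi^-\}$ are each linearly independent; combining with the identity, $W(\varphi^+,\varphi^-)=\mathcal D(\fc)\,W(\varphi^+,\phi)=-\mathcal D(\fc)$.

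The analytic input is that $\phi$ is dominant at both ends while $\varphi^+$ is recessive at $+\infty$ and $\varphi^-$ recessive at $-\infty$; in particular the space of solutions of \eqref{eq-Rayleigh} that decay at $+\infty$ (resp.\ $-\infty$) is one–dimensional, spanned by $\varphi^+$ (resp.\ $\varphi^-$). Domination of $\phi$ is immediate from \eqref{eq-phi-est}. For recessiveness, since $b_0''$—hence the coefficient $b_0''/(b_0-\fc)$—is super–exponentially small for $|y|\gtrsim\gamma$, equation \eqref{eq-Rayleigh} is an $L^1$–perturbation of $\psi''=\psi$ near each end; a Volterra/Levinson argument yields $\phi(y,\fc)=\alpha_\pm(\fc)e^{\pm y}(1+o(1))$ with $\alpha_\pm\neq0$ as $y\to\pm\infty$, whence $\varphi^\pm(y,\fc)=\phi(y,\fc)\int_{\pm\infty}^y\phi^{-2}=\mp\tfrac1{2\alpha_\pm}e^{\mp y}(1+o(1))\to0$ exponentially. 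Granting this, the lemma follows: $\fc$ is an eigenvalue iff the decaying–at–both–ends solution space is nonzero, i.e.\ iff $\varphi^+$ and $\varphi^-$ are proportional, i.e.\ iff $W(\varphi^+,\varphi^-)=-\mathcal D(\fc)=0$. Concretely, if $\mathcal D(\fc)=0$ then $\varphi^+=\varphi^-$ is a nonzero $H^2$ eigenfunction; conversely, given $0\not\equiv\psi\in H^2$, the representations \eqref{eq-psi-eigenfunction} together with $\psi\to0$ at $+\infty$ (while $\phi$ blows up and $\varphi^+$ stays bounded) force $a_1^+=0$, and likewise $a_1^-=0$, so matching coefficients in $\{\phi,\varphi^+\}$ via $\varphi^-=\varphi^++\mathcal D(\fc)\phi$ gives $a_2^+=a_2^-$ and $0=a_1^+=a_1^-+a_2^-\mathcal D(\fc)=a_2^-\mathcal D(\fc)$; since $a_2^-\neq0$ (otherwise $\psi\equiv0$), $\mathcal D(\fc)=0$.

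\textbf{Main obstacle.} The only genuinely nontrivial point is the refined asymptotic $\phi(y,\fc)\sim\alpha_\pm e^{\pm y}$, equivalently the $L^2$–integrability/decay of $\varphi^\pm$ at the appropriate end: the crude two–sided exponential bounds in Proposition \ref{prop-phi} only supply a lower rate $C_4\le1$ in \eqref{eq-phi1-est}, which is not enough to conclude $\varphi^+=\varphi^-\in L^2$ when $\mathcal D(\fc)=0$. Establishing this—either as a short supplement here, exploiting that $b_0''$ is exponentially small in $\gamma$ away from the origin and running a perturbative (Volterra) analysis of \eqref{eq-Rayleigh} at each end, or by importing it from the construction of $\varphi^\pm$—is the crux; everything else is linear algebra of fundamental solutions.
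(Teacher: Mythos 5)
Your overall architecture coincides with the paper's: both arguments work with the fundamental system $\{\phi,\varphi^{\pm}\}$, use $W(\phi,\varphi^{\pm})=1$ and $\varphi^{-}-\varphi^{+}=\mathcal D(\fc)\,\phi$, and reduce the eigenvalue condition to the vanishing of the connection coefficient (the paper records exactly your Wronskian identity in Remark \ref{Rmk:D(c)wronskian}). Your ``only if'' direction is complete as written, since $a_1^{\pm}=0$ follows merely from $\varphi^{\pm}/\phi=\int_{\pm\infty}^{y}\phi^{-2}\to 0$ and $\phi\to\infty$.

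The gap is the one you flag yourself: for the ``if'' direction you must show that $\varphi^{-}=\varphi^{+}$ actually lies in $L^2$ (hence $H^2$), i.e.\ that $\varphi^{\pm}$ decays at its recessive end, and you defer this to an uncarried-out Levinson/Volterra analysis. Without it the implication $\mathcal D(\fc)=0\Rightarrow\fc$ is an eigenvalue is not established. Moreover, your diagnosis that the bounds of Proposition \ref{prop-phi} are ``not enough'' is not accurate, and this is precisely where the paper's proof does something you missed. Rather than comparing the growth rate of $\phi(y)$ with the tail $\int_{-\infty}^{y}\phi^{-2}$ (which, as you observe, loses when $C_4<\tfrac12$), the paper exploits the sign information $\phi_1'<0$ on $(-\infty,y_\fc)$, i.e.\ $\phi_1(y')\ge\phi_1(y)$ for $y'\le y\le y_\fc$, to cancel $\phi_1(y)$ against one factor of $\phi_1(y')$ \emph{inside} the integrand: $\phi_1(y)/\phi_1(y')^2\le 1/\phi_1(y')\le C e^{-C_4(y_\fc-y')}$, combined with $|b_0(y)-\fc|\le|b_0(y')-\fc|$ for $y'\le y\le y_\fc$. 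This gives $\bigl|e^{\frac{C_4}{2}(y_\fc-y)}\varphi^{-}(y,\fc)\bigr|\le C$ directly from \eqref{eq-phi1-est}--\eqref{eq-phi-est} (see \eqref{eq-exp-decay-phi}), with the analogous bound for $(\varphi^{-})'$ via $F=\phi_1'/\phi_1$; the decay at the opposite end then follows from $\varphi^{-}=a_2\varphi^{+}$ once $a_1=\mathcal D(\fc)=0$. Your Levinson route would also close the gap (and $\alpha_\pm\neq 0$ does follow since $\phi$ is dominant), but it is extra machinery; to complete the proof you should either execute that asymptotic analysis or, more economically, use the monotonicity of $\phi_1$ already supplied by Proposition \ref{prop-phi}.
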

The criterion function $\mathcal D(\fc)$ can be extended to $\mathbb R$.
\begin{lemma}\label{lem-lim-eigen}
  It holds that
  \begin{align*}
    \lim_{\fc_i\to0\pm}\mathcal D(\fc)=\mathcal J_1(\fc_r)\mp i\mathcal J_2(\fc_r),
  \end{align*}
  where
  \begin{align*}
    \mathcal J_1(\fc_r)&=\frac{1}{b_0'(y_\fc)}\Pi_1(\fc_r)+\Pi_2(\fc_r),\quad\mathcal J_2(\fc_r)=\pi\frac{b_0''(y_\fc)}{{b_0'}^3(y_\fc)}.
  \end{align*}
  and
  \begin{align*}
  \Pi_1(\fc_r)= \text{P.V.}\int  \frac{b_0'(y_\fc)-b_0'(y)}{(b_0(y)-\fc_r)^2} dy,\quad \Pi_2(\fc)= \int_{-\infty}^{+\infty} \frac{1}{(b_0(y)-\fc_r)^2}\left(\frac{1}{\phi_1^2(y,\fc_r)}-1\right) dy.
\end{align*}
\end{lemma}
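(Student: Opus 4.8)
The plan is to compute the boundary limit $\lim_{\fc_i\to 0\pm}\mathcal D(\fc)$ directly from the factorization $\phi(y,\fc)=(b_0(y)-\fc)\phi_1(y,\fc_r)\phi_2(y,\fc)$ supplied by Proposition \ref{prop-phi}. First I would write
\[
\frac{1}{\phi^2(y,\fc)}=\frac{1}{(b_0(y)-\fc)^2}\cdot\frac{1}{\phi_1^2(y,\fc_r)}\cdot\frac{1}{\phi_2^2(y,\fc)},
\]
and use \eqref{eq-phi2-est} to replace $\phi_2^{-2}$ by $1+O(|\fc_i|)$ so that in the limit $\fc_i\to 0$ this factor disappears. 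The only genuine singularity as $\fc_i\to 0$ comes from $(b_0(y)-\fc)^{-2}$ near the critical point $y_\fc=b_0^{-1}(\fc_r)$; away from a neighborhood of $y_\fc$ the integrand converges uniformly and contributes $\int (b_0(y)-\fc_r)^{-2}\phi_1^{-2}(y,\fc_r)\,dy$, which I would split as $\int (b_0-\fc_r)^{-2}(\phi_1^{-2}-1)\,dy + \int(b_0-\fc_r)^{-2}\,dy$; the first piece is exactly $\Pi_2(\fc_r)$ (and is absolutely convergent since $\phi_1^{-2}-1=O(|y-y_\fc|^2)$ near $y_\fc$ by \eqref{eq-phi1-est}), while the second piece must be combined with the local analysis.

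The heart of the computation is the Plemelj-type formula for $\int (b_0(y)-\fc_r-i\fc_i)^{-2}\,dy$ as $\fc_i\to 0\pm$. Near $y_\fc$ I would change variables via $b_0(y)-\fc_r = b_0'(y_\fc)(y-y_\fc)+\tfrac12 b_0''(y_\fc)(y-y_\fc)^2+\cdots$, so that
\[
\frac{1}{(b_0(y)-\fc)^2}=\frac{1}{b_0'(y_\fc)^2(y-y_\fc-\tfrac{i\fc_i}{b_0'(y_\fc)})^2}+ (\text{less singular}).
\]
The leading term is a perfect derivative $-\partial_y\big[b_0'(y_\fc)^{-2}(y-y_\fc-\tfrac{i\fc_i}{b_0'(y_\fc)})^{-1}\big]$, so it integrates to a boundary contribution that is actually regular in $\fc_i$; hence the true distributional singularity sits at the next order. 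The cleanest route is: write $\frac{1}{(b_0(y)-\fc)^2}=-\partial_y\frac{1}{b_0'(y)(b_0(y)-\fc)}+\frac{b_0''(y)}{b_0'(y)^2(b_0(y)-\fc)}$; the total-derivative term gives only harmless boundary values, and for the remaining $\int \frac{b_0''(y)}{b_0'(y)^2(b_0(y)-\fc)}\,dy$ I apply the standard Sokhotski–Plemelj identity $\lim_{\fc_i\to 0\pm}(b_0(y)-\fc_r\mp i\fc_i)^{-1}=\text{P.V.}(b_0(y)-\fc_r)^{-1}\pm i\pi\delta(b_0(y)-\fc_r)$. The delta term localizes at $y=y_\fc$ with Jacobian $|b_0'(y_\fc)|^{-1}$, producing exactly $\pm i\pi\frac{b_0''(y_\fc)}{b_0'(y_\fc)^3}=\pm i\mathcal J_2(\fc_r)$ (sign opposite to the one in the claim, matching the stated $\mp i\mathcal J_2$), while the principal-value term, after integrating the total derivative back and recombining, assembles into $\frac{1}{b_0'(y_\fc)}\Pi_1(\fc_r)$. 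I would double-check the bookkeeping of the total-derivative / boundary terms so that nothing extra survives and the regular remainder is precisely $\Pi_2(\fc_r)$ together with the $\frac{1}{b_0'(y_\fc)}$ factor in front of $\Pi_1$.

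The main obstacle is the careful handling of the double pole: a naive application of Sokhotski–Plemelj to $(b_0-\fc)^{-2}$ would formally produce a $\delta'$, and one has to justify the integration-by-parts identity $\frac{1}{(b_0-\fc)^2}=-\partial_y\frac{1}{b_0'(b_0-\fc)}+\frac{b_0''}{b_0'^2(b_0-\fc)}$ rigorously, controlling the boundary terms at $y\to\pm\infty$ (where $b_0'(y)\to 1$ and the integrand decays like $|y|^{-2}$, so they vanish) and verifying that the cross terms between the $\phi_1^{-2}$ factor and the singular kernel are integrable. I would also need uniform-in-$\fc_i$ domination to pass the limit under the integral on the non-singular region and on the $\phi_1^{-2}-1$ part; the bounds \eqref{eq-phi1-est} and \eqref{eq-phi2-est} furnish exactly the decay and regularity needed, so this is technical rather than deep. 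Once the local $\delta$-contribution and the principal value are correctly separated, matching real and imaginary parts gives $\mathcal J_1(\fc_r)=\frac{1}{b_0'(y_\fc)}\Pi_1(\fc_r)+\Pi_2(\fc_r)$ and $\mathcal J_2(\fc_r)=\pi\frac{b_0''(y_\fc)}{b_0'(y_\fc)^3}$, completing the proof.
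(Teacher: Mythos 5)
Your overall strategy coincides with the paper's: isolate $\Pi_2$ by a dominated-convergence argument, and reduce the double pole in $\int (b_0(y)-\fc)^{-2}\,dy$ to a simple pole before taking boundary values. The paper does the reduction by substituting $v=b_0(y)$ and integrating by parts in $v$, then invoking the Hilbert-transform/Poisson-kernel limits; you integrate by parts in $y$ directly and invoke Sokhotski--Plemelj. These are the same maneuver in different coordinates, and your principal-value term, after one more principal-value integration by parts (exactly the step carried out in \eqref{eq-lim-fci}), does assemble into $\frac{1}{b_0'(y_\fc)}\Pi_1(\fc_r)$. Two points, however, need repair before the argument closes.

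First, your pointwise identity has the wrong sign. Differentiating gives
\[
\partial_y\frac{1}{b_0'(y)\big(b_0(y)-\fc\big)}=-\frac{b_0''(y)}{b_0'(y)^2\big(b_0(y)-\fc\big)}-\frac{1}{\big(b_0(y)-\fc\big)^2},
\]
so the correct decomposition is
\[
\frac{1}{\big(b_0(y)-\fc\big)^2}=-\partial_y\frac{1}{b_0'(y)\big(b_0(y)-\fc\big)}\;-\;\frac{b_0''(y)}{b_0'(y)^2\big(b_0(y)-\fc\big)},
\]
with a minus, not a plus, on the simple-pole term. With your sign the delta contribution comes out as $\pm i\mathcal J_2(\fc_r)$ for $\fc_i\to 0\pm$, the opposite of the claimed $\mp i\mathcal J_2(\fc_r)$; your parenthetical about the sign suggests you noticed the tension but did not resolve it. With the corrected sign everything matches.

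Second, and more substantively, ``replace $\phi_2^{-2}$ by $1+O(|\fc_i|)$ so that in the limit this factor disappears'' is not a valid argument as stated: the uniform bound $\|\phi_2^{-2}-1\|_{L^\infty_y}\le C|\fc_i|$ multiplies a kernel of total mass of order $|\fc_i|^{-1}$ (indeed $\int |b_0(y)-\fc|^{-2}dy\approx \pi/(b_0'(y_\fc)|\fc_i|)$), so the product is a priori $O(1)$, not $o(1)$. To make this term vanish you must combine $\phi_2(y_\fc,\fc)=1$ with the bound $\|\phi_2'\|_{L^\infty_y}\le C|\fc_i|$ from \eqref{eq-phi2-est} to get the pointwise estimate $|\phi_2(y,\fc)-1|\le C|\fc_i|\min(|y-y_\fc|,1)$; then the contribution near $y_\fc$ is $O(|\fc_i|\ln(|\fc_i|^{-1}))\to 0$, and the remainder is handled by dominated convergence using $|\phi_1^{-2}-1|\lesssim \min(|y-y_\fc|^2,1)$ from \eqref{eq-phi1-est}. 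Both repairs use only estimates already contained in Proposition \ref{prop-phi}, so the proof goes through, but as written these two steps do not.
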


Recall that Ran $b_0(y)=\mathbb R$ is the continuous spectrum of $\mathcal R$. If $\fc\in\mathbb R$ is an eigenvalue of $\mathcal R$, we call $\fc$ an embedded eigenvalue.
\begin{lemma}\label{lem-iff-emb}
A number $\fc_r\in \mathbb R$ is an embedded eigenvalue of $\mathcal R$ if and only if 
\begin{align*}
  \mathcal J_1^2(\fc_r)+\mathcal J_2^2(\fc_r)=0.
\end{align*}
\end{lemma}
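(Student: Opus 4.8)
The plan is to combine the criterion of Lemma \ref{lem-eigen} with the boundary-value computation of Lemma \ref{lem-lim-eigen}. Recall that for $\fc$ with $0<|\fc_i|\le\varepsilon_3$, Lemma \ref{lem-eigen} tells us that $\fc$ is an eigenvalue of $\mathcal R$ precisely when $\mathcal D(\fc)=0$. For an embedded eigenvalue $\fc_r\in\mathbb R$ we cannot apply this directly because of the singularity, so the natural route is to pass to the limit $\fc_i\to0\pm$ and use the two one-sided limits computed in Lemma \ref{lem-lim-eigen}, namely $\lim_{\fc_i\to0\pm}\mathcal D(\fc)=\mathcal J_1(\fc_r)\mp i\mathcal J_2(\fc_r)$.

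First I would make precise the definition of embedded eigenvalue in terms of the limiting resolvent / the limiting Wronskian-type quantity $\mathcal D$: $\fc_r\in\mathbb R=\mathrm{Ran}\,b_0$ is an embedded eigenvalue of $\mathcal R$ if and only if the $H^2_y$ eigenfunction equation \eqref{eq-Rayleigh} at $\fc=\fc_r$ admits a nontrivial solution decaying at both $\pm\infty$. Using the fundamental system $\{\phi(y,\fc),\varphi^\pm(y,\fc)\}$ from Proposition \ref{prop-phi} and representation \eqref{eq-psi-eigenfunction}, such a solution exists iff $\varphi^+$ and $\varphi^-$ are linearly dependent with $\phi$ — equivalently iff $\int_{-\infty}^{+\infty}\phi^{-2}(y',\fc_r)\,dy'$, interpreted as the appropriate limit, vanishes. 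This identifies the relevant scalar condition as $\lim_{\fc_i\to0\pm}\mathcal D(\fc)=0$. One has to be slightly careful here about whether the two one-sided limits give the same condition; but from Lemma \ref{lem-lim-eigen} the two limits are complex conjugates of each other, $\mathcal J_1(\fc_r)\mp i\mathcal J_2(\fc_r)$, so one vanishes iff the other does, iff both $\mathcal J_1(\fc_r)=0$ and $\mathcal J_2(\fc_r)=0$, iff $\mathcal J_1^2(\fc_r)+\mathcal J_2^2(\fc_r)=0$ since $\mathcal J_1,\mathcal J_2$ are real.

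So the key steps, in order, are: (i) characterize embedded eigenvalues through the nontrivial $H^2_y$-solvability of \eqref{eq-Rayleigh} at real $\fc_r$, and reduce this via the fundamental solutions to the condition that the limiting value of $\mathcal D$ vanish; (ii) quote Lemma \ref{lem-lim-eigen} to write that limit as $\mathcal J_1(\fc_r)\mp i\mathcal J_2(\fc_r)$; (iii) observe that $\mathcal J_1(\fc_r),\mathcal J_2(\fc_r)\in\mathbb R$ (clear from their explicit formulas — $\Pi_1,\Pi_2$ are real principal-value and ordinary integrals, $b_0',b_0''$ are real), so the complex number $\mathcal J_1\mp i\mathcal J_2$ is zero iff $\mathcal J_1^2+\mathcal J_2^2=0$. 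The equivalence then follows.

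The main obstacle I expect is step (i): justifying rigorously that an embedded eigenvalue corresponds exactly to the vanishing of the boundary limit of $\mathcal D$. This requires a limiting-absorption-type argument — showing that as $\fc_i\to0$ the $H^2_y$ eigenfunction (if it exists) is captured by the same representation \eqref{eq-psi-eigenfunction}, that $\varphi^\pm(y,\fc_r)$ remain well-defined and give genuine decaying solutions at $\pm\infty$ (using the lower bound $|\phi(y,\fc)|\gtrsim(|y-y_\fc|+|\fc_i|)e^{C_4|y-y_\fc|}$ from \eqref{eq-phi-est} to control the integrals $\int^y_{\pm\infty}\phi^{-2}$), and that no spurious solution appears or disappears in the limit. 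The decay and integrability estimates in Proposition \ref{prop-phi} are exactly what make this controllable, and the no-embedded-eigenvalue conclusion of Lemma \ref{lem-no-emb} presumably later uses this same criterion to show $\mathcal J_1^2+\mathcal J_2^2>0$ for all $\fc_r$ when $M$ is large and $\gamma$ is small.
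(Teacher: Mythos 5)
Your overall target is right, and the ``if'' direction of your plan works: if $\mathcal J_1(\fc_r)=\mathcal J_2(\fc_r)=0$ one can build a decaying second solution and conclude. But the ``only if'' direction --- an embedded eigenvalue forces $\mathcal J_1^2+\mathcal J_2^2=0$ --- is exactly the step you flag as ``the main obstacle'' and then do not carry out, and it does not follow from a limiting-absorption argument in the way you sketch. The problem is that at real $\fc=\fc_r$ the function $\phi(y,\fc_r)=(b_0(y)-\fc_r)\phi_1(y,\fc_r)$ vanishes at the critical point $y_\fc$, so when $b_0''(y_\fc)\neq 0$ the second solutions $\varphi^\pm(y,\fc_r)$ have a genuine critical-layer singularity (the one-sided limits $\fc_i\to0\pm$ of $\varphi^-$ differ by a jump proportional to $b_0''(y_\fc)$, which is precisely why $\lim_{\fc_i\to0\pm}\mathcal D=\mathcal J_1\mp i\mathcal J_2$ are two different numbers). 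Consequently the representation \eqref{eq-psi-eigenfunction} of an $H^2_y$ eigenfunction in terms of $\{\phi,\varphi^\pm\}$ is simply not available at real $\fc_r$ unless you first rule out $b_0''(y_\fc)\neq0$, and your reduction ``embedded eigenvalue $\iff$ limiting $\mathcal D$ vanishes'' is circular at this point.

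The paper closes this gap by an entirely real-variable argument rather than limiting absorption. First, if $b_0''(y_\fc)\neq0$, any $H^2_y$ solution $\psi$ of \eqref{eq-Rayleigh} must satisfy $\psi(y_\fc)=0$ (otherwise the term $b_0''\psi/(b_0-\fc_r)$ is not locally integrable); pairing the equation with $\psi/(b_0-\fc_r)$ and integrating by parts then yields
\begin{align*}
  \int_{\mathbb R}\Big|\psi'-\frac{b_0'\psi}{b_0-\fc_r}\Big|^2\,dy+\int_{\mathbb R}|\psi|^2\,dy=0,
\end{align*}
so $\psi\equiv0$. Hence an embedded eigenvalue forces $b_0''(y_\fc)=0$, i.e.\ $\mathcal J_2(\fc_r)=0$. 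Only then does the paper construct $\varphi^-(y,\fc_r)$ explicitly, using the decomposition \eqref{eq-phi-I} in which the $1/(b_0(y)-\fc_r)$ singularity cancels against the prefactor $\phi$ and the remaining integral $\int\frac{b_0'(y_\fc)-b_0'(y')}{(b_0(y')-\fc_r)^2}dy'$ is regular precisely because $b_0''(y_\fc)=0$; the argument of Lemma \ref{lem-eigen} then shows $\varphi^-\in H^1_y$ iff $\mathcal J_1(\fc_r)=0$. You should insert the energy identity step (or an equivalent device) before invoking any fundamental-system representation at real $\fc_r$; your final algebraic observation that $\mathcal J_1,\mathcal J_2$ are real and $\mathcal J_1\mp i\mathcal J_2=0\iff\mathcal J_1^2+\mathcal J_2^2=0$ is fine once that is done.
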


The proofs of the above proposition and lemmas can be found in Appendix \ref{appendix-B}.
\subsection{$\mathcal R$ has no embedded eigenvalues}
In this subsection, we show that there is no embedded eigenvalue in $\mathbb R$.
\begin{lemma}\label{lem-no-emb}
  Under the assumptions on $M$ and $\gamma$ in Preposition \ref{prop-Rayleigh}, the operator $\mathcal R$ does not have any embedded eigenvalue.
\end{lemma}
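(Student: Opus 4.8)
The plan is to reduce the statement, via the criterion of Lemma~\ref{lem-iff-emb}, to a lower bound for one explicit integral, which will then follow from a rescaling computation together with the pointwise bounds on $\phi_1$ supplied by Proposition~\ref{prop-phi}.

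\textbf{Reduction to a single candidate.} By Lemma~\ref{lem-iff-emb}, a number $\fc_r\in\mathbb R$ is an embedded eigenvalue of $\mathcal R$ if and only if $\mathcal J_1^2(\fc_r)+\mathcal J_2^2(\fc_r)=0$, i.e. if and only if $\mathcal J_1(\fc_r)=\mathcal J_2(\fc_r)=0$. Since $\mathcal J_2(\fc_r)=\pi\,b_0''(y_\fc)/{b_0'}^3(y_\fc)$ and $b_0'\geq 1>0$, the condition $\mathcal J_2(\fc_r)=0$ forces $b_0''(y_\fc)=0$. But from \eqref{eq-Euler-shear},
\begin{align*}
  b_0''(y)=-\frac{4\sqrt\pi M y}{\gamma}\,e^{-y^2/\gamma^2},
\end{align*}
which vanishes only at $y=0$, and $y_\fc=b_0^{-1}(\fc_r)$ with $b_0(0)=0$; hence the only possible embedded eigenvalue is $\fc_r=0$, with $y_\fc=0$. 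So it suffices to prove $\mathcal J_1(0)\neq 0$.

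\textbf{Lower bound for $\Pi_1(0)$.} At $\fc_r=0$ Lemma~\ref{lem-lim-eigen} gives $\mathcal J_1(0)=\Pi_1(0)/b_0'(0)+\Pi_2(0)$. Since $b_0''(0)=0$, the numerator $b_0'(0)-b_0'(y)=2\sqrt\pi M\gamma\big(1-e^{-y^2/\gamma^2}\big)$ vanishes to second order at $y=0$, matching the double zero of $b_0(y)^2$, so the principal value $\Pi_1(0)$ is in fact an absolutely convergent integral with a nonnegative integrand. Writing $\Lambda:=b_0'(0)=1+2\sqrt\pi M\gamma$ (so $\Lambda\leq 2$ once $\gamma\leq\gamma_0(M)$), we have $b_0'\leq\Lambda$, hence $|b_0(y)|\leq\Lambda|y|$, and therefore
\begin{align*}
  \Pi_1(0)=2\sqrt\pi M\gamma\int_{\mathbb R}\frac{1-e^{-y^2/\gamma^2}}{b_0(y)^2}\,dy\geq\frac{2\sqrt\pi M\gamma}{\Lambda^2}\int_{\mathbb R}\frac{1-e^{-y^2/\gamma^2}}{y^2}\,dy=\frac{4\pi M}{\Lambda^2}\geq\pi M,
\end{align*}
using $\int_{\mathbb R}\frac{1-e^{-y^2/\gamma^2}}{y^2}\,dy=\frac{2\sqrt\pi}{\gamma}$ (rescale $y=\gamma z$ and integrate by parts).

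\textbf{Uniform bound for $\Pi_2(0)$ and conclusion.} From \eqref{eq-phi1-est}, $\phi_1(y,0)\geq 1$, so $0\leq 1-\phi_1^{-2}(y,0)\leq\phi_1^2(y,0)-1$; for $|y|\leq 1$ the bounds $\phi_1(y,0)\leq e^{|y|}$ and $|\phi_1(y,0)-1|\leq C|y|^2$ yield $1-\phi_1^{-2}(y,0)\leq C'|y|^2$, which cancels the singularity of $1/b_0(y)^2\leq 1/y^2$ at $y=0$, while for $|y|>1$ one uses $1-\phi_1^{-2}\leq 1$ and $\int_{|y|>1}y^{-2}\,dy=2$. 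Hence $|\Pi_2(0)|\leq C_*$ for a constant $C_*$ depending only on the (uniformly bounded, for $\gamma$ small) bounds of $b_0'$. Combining with the previous paragraph,
\begin{align*}
  \mathcal J_1(0)\geq\frac{\Pi_1(0)}{\Lambda}-|\Pi_2(0)|\geq\frac{\pi M}{2}-C_*>0
\end{align*}
once $M\geq M_0$ with $M_0$ large enough. Thus $\mathcal J_1^2(0)+\mathcal J_2^2(0)>0$, so $\fc_r=0$ is not an embedded eigenvalue, and by the reduction above $\mathcal R$ has no embedded eigenvalue.

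\textbf{Main obstacle.} The delicate step is the control of $\Pi_2(0)$: it involves the genuinely nonlocal object $\phi_1$, and without the precise near-zero behaviour $|\phi_1(y,0)-1|\lesssim|y|^2$ from Proposition~\ref{prop-phi} (which is exactly where the second-order vanishing of $b_0''$ at the critical point enters) the integrand of $\Pi_2(0)$ would fail to be integrable at $y=0$. The remaining care is bookkeeping — checking that all constants produced by Proposition~\ref{prop-phi} are uniform in $\gamma$ (they depend only on the bounds of $b_0'$, which converge to those of Couette flow as $\gamma\to 0$), so that the linear-in-$M$ growth of $\Pi_1(0)$ dominates the $O(1)$ contribution of $\Pi_2(0)$.
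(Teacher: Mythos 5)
Your proposal is correct and follows essentially the same route as the paper: reduce via Lemma~\ref{lem-iff-emb} to showing $\mathcal J_1(0)\neq 0$ (since $b_0''$ vanishes only at $y=0$, so $\mathcal J_2(\fc_r)\neq 0$ for $\fc_r\neq 0$), then prove $\Pi_1(0)\gtrsim M$ while $|\Pi_2(0)|\leq C$ uniformly in $\gamma$ using the bounds on $\phi_1$ from Proposition~\ref{prop-phi}, and take $M$ large. The only cosmetic difference is that you evaluate $\Pi_1(0)$ directly from the difference-quotient form by rescaling the Gaussian, whereas the paper first rewrites it as $-\int b_0''/(b_0\,(b_0')^2)\,dy$ via the Hilbert-transform representation; both give the same positive lower bound of order $M$.
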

\begin{proof}
By the definition of $b_0(y)$, $b_0''(y)=0$ only at $y=0$, thus $\mathcal J_2(y)\neq 0$ if $y\neq 0$. By Lemma \ref{lem-iff-emb}, to prove Lemma \ref{lem-no-emb}, it suffices to show that $\mathcal J_1(0)\neq0$.

From Lemma \ref{lem-lim-eigen}, we have $\lim\limits_{\fc_i\to0}\mathcal D(i\fc_i)=\Pi_1(0)+\Pi_2(0)$. And it follows from \eqref{eq-lim-fci} that
\begin{align*}
  \Pi_1(0)=&-\mathcal H\big(\pa_v^2(b_0^{-1})\big)(0)=-\int_{\mathbb R} \frac{1}{v}\frac{b_0''(b_0^{-1}(v))}{\big(b_0'(b_0^{-1}(v))\big)^3} dv\\
  =&-\int_{\mathbb R} \frac{1}{b_0(y)}\frac{b_0''(y)}{\big(b_0'(y)\big)^2} dy=4\sqrt\pi M \int_{\mathbb R} \frac{y}{b_0(y)}\frac{\frac{1}{\gamma}e^{-\frac{y^2}{\gamma^2}}}{\big(b_0'(y)\big)^2} dy
  \ge CM,
\end{align*}
where we use the change of coordinate $y=b_0^{-1}(v)$. From \eqref{eq-phi1-est}, one can easily check that
\begin{align*}
  |\Pi_2(0)|=\left|\int^{+\infty}_{-\infty}\frac{1}{b_0^2(y)}\left(\frac{1}{\phi^2_1(y,0)}-1\right)dy\right|\le C.
\end{align*}
Finally, taking $M$ big enough, we have that
\begin{align*}
  \mathcal J_1(0)\ge \Pi_1(0)-|\Pi_2(0)|\ge CM>0,
\end{align*}
and there is no embedded eigenvalue of $\mathcal R$.
\end{proof}

\subsection{Possible locations of the eigenvalues}
In this subsection, we show that $\mathcal R$ has at most a finite number of eigenvalues, moreover, they are only in $E=\mathbb C\setminus \big(E_1\cup E_2\cup E_3\big)$, where
\begin{align*}
  E_1=\big\{\fc\in \mathbb C\big||\fc_r|\ge 4\gamma\sqrt{\ln\big(\ln(\gamma^{-1})\big)}, 0<|\fc_i|<8\sqrt\pi M\gamma\big\},\\
   E_2=\big\{\fc\in \mathbb C\big||\fc_i|\ge 8\sqrt\pi M\gamma\big\},\quad E_3=\big\{\fc\in \mathbb C\big||\fc_r|\le 4\gamma\sqrt{\ln\big(\ln(\gamma^{-1})\big)}, 0<|\fc_i|\le \gamma\big\}.
\end{align*}  
We first show that $\mathcal R$ has no eigenvalues in $E_1\cup E_2$  by an energetic argument of the Rayleigh type. 
\begin{lemma}\label{lem-E1E2}
The operator $\mathcal R$ has no eigenvalues in $E_1\cup E_2$.
\end{lemma}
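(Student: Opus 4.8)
\noindent\emph{Proof plan.} The plan is to run a Rayleigh/Fj\"ortoft-type energy argument directly on the homogeneous Rayleigh equation \eqref{eq-Rayleigh}. Suppose $\fc=\fc_r+i\fc_i$ is an eigenvalue of $\mathcal R$; by Lemma \ref{lem-no-emb} we may assume $\fc_i\neq0$, and then the associated stream function $\psi=(\pa_y^2-1)^{-1}\omega_{\fc}\in H^2_y$ is a nonzero $H^2_y$ solution of \eqref{eq-Rayleigh}. Multiplying \eqref{eq-Rayleigh} by $\bar\psi$, integrating over $\mathbb R$ and integrating by parts (the boundary terms vanish since $\psi\in H^2_y$) gives
\begin{align}\label{eq-pp-energy}
  \int_{\mathbb R}\big(|\psi'|^2+|\psi|^2\big)\,dy+\int_{\mathbb R}\frac{b_0''(y)}{b_0(y)-\fc}\,|\psi(y)|^2\,dy=0 .
\end{align}
Its imaginary part is $\fc_i\int_{\mathbb R}\frac{b_0''}{|b_0-\fc|^2}|\psi|^2\,dy=0$, so $\int_{\mathbb R}\frac{b_0''}{|b_0-\fc|^2}|\psi|^2\,dy=0$; using this to replace $\fc_r$ by an arbitrary real constant $U_*$, the real part of \eqref{eq-pp-energy} becomes
\begin{align}\label{eq-pp-real}
  \int_{\mathbb R}\big(|\psi'|^2+|\psi|^2\big)\,dy=-\int_{\mathbb R}\frac{b_0''(y)\big(b_0(y)-U_*\big)}{|b_0(y)-\fc|^2}\,|\psi(y)|^2\,dy,\qquad\forall\,U_*\in\mathbb R .
\end{align}
The two inputs I will keep using are the one-dimensional Sobolev bound $\|\psi\|_{L^\infty}^2\le\|\psi'\|_{L^2}^2+\|\psi\|_{L^2}^2$, and the fact that $|b_0''(y)|=\frac{4\sqrt\pi M|y|}{\gamma}e^{-y^2/\gamma^2}$ is concentrated near $y=0$ at scale $\gamma$ with total mass $\int_{\mathbb R}|b_0''|\,dy=4\sqrt\pi M\gamma$ (and Gaussian tails).

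For $\fc\in E_2$ this closes immediately: since $|b_0(y)-\fc|\ge|\fc_i|\ge8\sqrt\pi M\gamma$ for all $y$, taking $U_*=\fc_r$ in \eqref{eq-pp-real} and bounding $|b_0-\fc_r|\le|b_0-\fc|$,
\begin{align*}
  \int_{\mathbb R}\big(|\psi'|^2+|\psi|^2\big)\,dy\le\|\psi\|_{L^\infty}^2\,\frac{1}{8\sqrt\pi M\gamma}\int_{\mathbb R}|b_0''|\,dy=\tfrac12\|\psi\|_{L^\infty}^2\le\tfrac12\int_{\mathbb R}\big(|\psi'|^2+|\psi|^2\big)\,dy,
\end{align*}
which forces $\psi\equiv0$, a contradiction. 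For $\fc\in E_1$ the mechanism is that the critical layer $y_\fc:=b_0^{-1}(\fc_r)$ lies far from the support of $b_0''$: since $\sup_y|b_0(y)-y|=\pi M\gamma^2$ one has $|y_\fc|\ge|\fc_r|-\pi M\gamma^2\ge 3\gamma\sqrt{\ln(\ln(\gamma^{-1}))}$ for $\gamma$ small, and because $b_0'\ge1$ with $b_0$ monotone, $|b_0(y)-\fc|\ge|b_0(y)-\fc_r|\ge|y-y_\fc|$. On $\{|y|<|y_\fc|/2\}$ this gives $|b_0-\fc|\ge|y_\fc|/2$, so the corresponding piece of the right side of \eqref{eq-pp-real} (with $U_*=\fc_r$) is at most $\frac{8\sqrt\pi M}{3\sqrt{\ln(\ln(\gamma^{-1}))}}\|\psi\|_{L^\infty}^2$. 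On $\{|y|\ge|y_\fc|/2\}$ one has $y^2\ge\frac94\gamma^2\ln(\ln(\gamma^{-1}))$, so $b_0''$ carries there only exponentially small mass, $\int_{|y|\ge|y_\fc|/2}|b_0''|\,dy\le4\sqrt\pi M\gamma\,(\ln(\gamma^{-1}))^{-9/4}$; splitting this set once more into the part away from $y_\fc$ (where $|b_0-\fc|\gtrsim|y_\fc|$ again) and a thin strip around $y_\fc$, and bounding the strip integral $\int\frac{dy}{|b_0-\fc|}$ by a logarithmic factor $C\big(1+\ln(1/|\fc_i|)\big)$, one finds this piece is $\le CM(\ln(\gamma^{-1}))^{-9/4}\big(1+\ln(1/|\fc_i|)\big)\|\psi\|_{L^\infty}^2$. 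Combining with \eqref{eq-pp-real} yields $\int(|\psi'|^2+|\psi|^2)\,dy\le o_{\gamma\to0}(1)\int(|\psi'|^2+|\psi|^2)\,dy$, hence $\psi\equiv0$ --- provided $|\fc_i|$ is not subexponentially small in $\gamma$, say $|\fc_i|\ge e^{-(\ln(\gamma^{-1}))^2}$.

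The delicate case --- and the point I expect to be the main obstacle --- is $\fc\in E_1$ with $|\fc_i|$ extremely small: then the logarithm $\ln(1/|\fc_i|)$ produced by the critical-layer singularity of $1/|b_0-\fc|$ is no longer beaten by the exponential smallness of $b_0''$ near $y_\fc$, and the energy identity alone does not suffice. Here I would leave the energy method and use the criterion function: $\fc$ being an eigenvalue forces $\mathcal D(\fc)=0$ (Lemma \ref{lem-eigen}), while Lemma \ref{lem-lim-eigen} gives $\mathcal D(\fc)\to\mathcal J_1(\fc_r)\mp i\mathcal J_2(\fc_r)$ as $\fc_i\to0^\pm$, uniformly on the relevant range of $\fc_r$. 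Because $|\fc_r|\ge4\gamma\sqrt{\ln(\ln(\gamma^{-1}))}$, the estimates already used in Lemma \ref{lem-no-emb} show $\Pi_2(\fc_r)$ is bounded away from zero while $\Pi_1(\fc_r)=O\big(M/\ln(\ln(\gamma^{-1}))\big)$, so $|\mathcal J_1(\fc_r)|\ge c>0$, whereas $|\mathcal J_2(\fc_r)|=\pi|b_0''(y_\fc)|/{b_0'}^3(y_\fc)$ is exponentially small; hence $|\mathcal D(\fc)|\ge\frac12|\mathcal J_1(\fc_r)|-|\mathcal J_2(\fc_r)|>0$ once $|\fc_i|$ is small enough, contradicting $\mathcal D(\fc)=0$. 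Together with the two energy regimes this rules out eigenvalues in all of $E_1\cup E_2$. (If one insists on a purely energetic proof, one must instead exploit the freedom in the choice of $U_*$ in \eqref{eq-pp-real} more cleverly inside the critical strip; I regard this as the only nontrivial step.)
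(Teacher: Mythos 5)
Your treatment of $E_2$ and of the bulk of $E_1$ is correct and coincides in substance with the paper's: the same energy identity obtained by pairing \eqref{eq-Rayleigh} with $\psi$, the same Sobolev bound $\|\psi\|_{L^\infty_y}^2\le\frac12\|\psi\|_{H^1_y}^2$, and the same splitting into a region far from the critical layer (where $|b_0-\fc|\gtrsim\min(|\fc_r|,1)\gtrsim\gamma\sqrt{\ln\ln(\gamma^{-1})}$) and a neighbourhood of $y_\fc$ (where the Gaussian tail makes $b_0''$ tiny). The genuine gap is exactly the regime you flag yourself: $\fc\in E_1$ with $|\fc_i|$ extremely small. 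There your strip estimate produces the factor $\ln(1/|\fc_i|)$, which is not controlled by any power of $\ln(\gamma^{-1})$, so the energy argument as you set it up does not close. The paper closes this regime \emph{inside} the energy argument by a single integration by parts on the critical strip $D$: writing $\frac{b_0''}{b_0-\fc}=\frac{b_0''}{b_0'}\,\pa_y\ln(b_0-\fc)$ and integrating by parts replaces the kernel $1/(b_0-\fc)$, whose $L^1_y(D)$ norm diverges like $\ln(1/|\fc_i|)$, by $\ln(b_0-\fc)$, whose $L^1_y(D)$ and $L^2_y(D)$ norms are bounded \emph{uniformly in} $\fc_i$ (only powers of $\ln(\gamma^{-1})$ enter, through the length of $D$ and the boundary terms at its endpoints). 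The price is one derivative landing on $b_0''/b_0'$ and on $|\psi|^2$, which is harmless because $b_0''$ and $b_0'''$ are $O\big(M(\ln(\gamma^{-1}))^{-4}\cdot(\cdots)\big)$ on $D$ and $\psi\bar\psi'$ is controlled by $\|\psi\|_{H^1_y}^2$. This is the one idea your argument is missing.

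Your proposed fallback via $\mathcal D(\fc)$ is not wrong in principle, but as written it does not fill the gap. Lemma \ref{lem-lim-eigen} gives only \emph{pointwise} convergence $\mathcal D(\fc)\to\mathcal J_1\mp i\mathcal J_2$ as $\fc_i\to0$; to rule out zeros of $\mathcal D$ in the whole strip $\big\{0<|\fc_i|\le e^{-(\ln(\gamma^{-1}))^2}\big\}$ you need a quantitative bound $|\mathcal D(\fc_r+i\fc_i)-(\mathcal J_1(\fc_r)\mp i\mathcal J_2(\fc_r))|\le\frac12|\mathcal J_1(\fc_r)|$ that is uniform over the unbounded range $|\fc_r|\ge4\gamma\sqrt{\ln\ln(\gamma^{-1})}$, together with the lower bound $|\mathcal J_1(\fc_r)|\ge c>0$ there. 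The latter is essentially the content of Lemma \ref{lem-est-J1J2J3}, whose proof is itself nontrivial (it needs the Hilbert-transform estimate for $\Pi_{1,1}$ and the sign of $\Pi_2$); the former is plausible in this regime precisely because $|\fc_i|\ll\gamma$, but it must be proved, e.g.\ by redoing the $I_r,I_i,II_r,II_i$ computations of Lemma \ref{lem-eigen-E3} for this range of $\fc_r$. So your route is viable but substantially heavier and less self-contained than the paper's, and your closing parenthetical guesses wrong about the ``purely energetic'' fix: the paper does not exploit the Fj\"ortoft freedom in $U_*$ at all; it tames the critical-layer singularity by the logarithmic integration by parts described above.
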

\begin{proof}
  We first assume that $\fc\in E_1$ is an eigenvalue. Taking the inner product of \eqref{eq-Rayleigh} with $\psi$, we have
  \beno
 \|\psi\|_{H^1_y}^2= \|\psi'\|_{L^2}^2+\|\psi\|_{L^2}^2=-\int_{\mathbb R}\frac{b_0''}{b_0-\fc}|\psi|^2 d y.
  \eeno
We write 
\begin{align*}
  \int_{\mathbb R}\frac{b_0''}{b_0-\fc}|\psi|^2 d y=\int_{\mathbb R\setminus D}\frac{b_0''}{b_0-\fc}|\psi|^2 d y+\int_{D}\frac{b_0''}{b_0-\fc}|\psi|^2 d y\eqdef I+II,
\end{align*}
where $D=[\fc_r-\min(\frac{1}{2}|\fc_r|,\frac{1}{2}),\fc_r+\min(\frac{1}{2}|\fc_r|,\frac{1}{2})]$.

It is easy to check that $\|\psi\|_{L^\infty_y}^2\le \frac{1}{2}\|\psi\|_{H^1_y}^2$. For $y\in \mathbb R\setminus D$, it is clear that $|b_0-\fc|\ge \min(\frac{1}{4}|\fc_r|,\frac{1}{4})\ge \min(\gamma\sqrt{\ln\big(\ln(\gamma^{-1})\big)},\frac{1}{4})$.  Therefore,
\begin{align*}
  |I|\le \frac{\int_{\mathbb R}4\sqrt\pi M \frac{|y|}{\gamma}e^{-\frac{y^2}{\gamma^2}}d y}{\min(\frac{1}{4}|\fc_r|,\frac{1}{4})}\|\psi\|_{L^\infty_y}^2\le   \frac{CM}{\sqrt{\ln\big(\ln(\gamma^{-1})\big)}} \|\psi\|_{L^\infty_y}^2\le \frac{1}{4}\|\psi\|_{H^1_y}^2.
\end{align*}
For $II$, by integration by parts, we have
\begin{align*}
  II=&\int_{D}\frac{b_0''}{b_0-\fc}|\psi|^2 d y=\int_{D}\frac{b_0''}{b_0'}|\psi|^2\pa_y\ln(b_0-\fc)  d y\\
  =&-\int_{D}\frac{b_0'''b_0'-(b_0'')^2}{(b_0')^2}|\psi|^2\ln(b_0-\fc)+ 2\frac{b_0''}{b_0'}\Re(\psi\bar\psi')\ln(b_0-\fc) d y+\frac{b_0''}{b_0'}|\psi|^2\ln(b_0-\fc)\Big|^{\fc_r+\min(\frac{1}{2}|\fc_r|,\frac{1}{2})}_{\fc_r-\min(\frac{1}{2}|\fc_r|,\frac{1}{2})}.
\end{align*}
Then we have
\begin{align*}
  |II|\le& C \big(\|b_0'''\|_{L^\infty_y(D)}+\|b_0''\|^2_{L^\infty_y(D)}\big)\|\psi\|_{L^\infty_y}^2 \|\ln(b_0-\fc)\|_{L^1_y(D)}\\
  &+C\|b_0''\|_{L^\infty_y(D)}\|\psi\|_{L^\infty_y}\|\psi'\|_{L^2_y}\|\ln(b_0-\fc)\|_{L^2_y(D)}\\
  &+C\|b_0''\|_{L^\infty_y(D)}\|\psi\|_{L^\infty_y}^2\Big(|\ln\big(b_0(\fc_r+\min(\frac{1}{2}|\fc_r|,\frac{1}{2}))-\fc\big)|+|\ln\big(b_0(\fc_r-\min(\frac{1}{2}|\fc_r|,\frac{1}{2}))-\fc\big)|\Big).
\end{align*}
Recall the definition of $D$.  We have for $p=1,2$ that
\begin{align*}
  &\int_{D}|\ln(b_0-\fc)|^p d y\le C\int_{|z|\le \frac{3}{2}\min(\frac{1}{2}|\fc_r|,\frac{1}{2})} |\ln(z)|^pdz\\
  \le&C\int_{|z|\le \frac{3}{2}\min(\frac{1}{2}|\fc_r|,\frac{1}{2})} |y|^p e^ydy\le C\int_{-\infty}^{\ln\big(\frac{3}{2}\min(\frac{1}{2}|\fc_r|,\frac{1}{2})\big)}|y|^pe^{y}dy.
\end{align*}
Then we have
\begin{align*}
  \int_{D}|\ln(b_0-\fc)|^p d y\le C\fc_r|\ln(\frac{3}{4}\fc_r)|^p \text{ for }|\fc_r|\le1,\quad\int_{D}|\ln(b_0-\fc)|^p d y\le C\text{ for }|\fc_r|>1.
\end{align*}
For the case $2\gamma\sqrt{\ln(\gamma^{-1})}\ge |\fc_r|\ge4\gamma\sqrt{\ln\big(\ln(\gamma^{-1})\big)}$, we have
\begin{align*}
   |b_0''(y)|\le C M \frac{\sqrt{\ln\big(\ln(\gamma^{-1})\big)}}{\big(\ln(\gamma^{-1})\big)^4},\quad |b_0'''(y)|\le C M \frac{\ln\big(\ln(\gamma^{-1})\big)}{\gamma\big(\ln(\gamma^{-1})\big)^4},
\end{align*} 
and
\begin{align*}
  |\ln\big(b_0(\fc_r\pm\min(\frac{1}{2}|\fc_r|,\frac{1}{2}))-\fc\big)|\le C \ln(\gamma^{-1}),\quad  \|\ln(b_0-\fc)\|_{L^p_y(D)}^p\le C\gamma \big(\ln(\gamma^{-1}))^{p+1}\text{ for }p=1,2.
\end{align*}
For $|\fc_r|\ge 2\gamma\sqrt{\ln(\gamma^{-1})}$, we have
\begin{align*}
   |b_0''(y)|\le C M \gamma^3,\quad |b_0'''(y)|\le C M\gamma^2,
\end{align*} 
and
\begin{align*}
  |\ln\big(b_0(\fc_r\pm\min(\frac{1}{2}|\fc_r|,\frac{1}{2}))-\fc\big)|\le C \ln(\gamma^{-1}),\quad  \|\ln(b_0-\fc)\|_{L^p_y(D)}^p\le C\text{ for }p=1,2.
\end{align*}
Combing the above estimates and the Gagliardo-Nirenberg interpolation inequality, we have
\begin{align*}
  |II|\le \frac{1}{4}\|\psi\|_{H_y^1}^2,
\end{align*}
and
\begin{align*}
   \|\psi\|_{H_y^1}^2\le \frac{1}{2} \|\psi\|_{H_y^1}^2,
\end{align*}
which means that $\psi\equiv0$ and $\fc\in E_1$ could not be an eigenvalue.

Next, we assume that $\fc\in E_2$ is an eigenvalue. As $|\fc_i|\ge 8\sqrt\pi M\gamma$, we have
\begin{align*}
  \int_{\mathbb R}\frac{b_0''}{b_0-\fc}|\psi|^2 d y\le&\frac{1}{|\fc_i|}\int_{\mathbb R}|b_0''| dy \|\psi\|_{L^\infty_y}^2\le \frac{2\sqrt\pi M\gamma \|\psi\|_{H^1_y}^2}{8\sqrt\pi M\gamma}\le \frac{1}{2}\|\psi\|_{H^1_y}^2,
\end{align*}
which also implies $\psi\equiv0$ and $\fc\in E_2$ can not be an eigenvalue.  
\end{proof}
Next, we show that $\mathcal R$ has no eigenvalues in $E_3$. We use an ODE argument and study the Wronskian $\mathcal D(c)$.
\begin{lemma}\label{lem-eigen-E3}
  Under the assumptions on $M$ and $\gamma$ in Preposition \ref{prop-Rayleigh}, the operator $\mathcal R$ has no eigenvalues in $E_3$.
\end{lemma}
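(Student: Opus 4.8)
\emph{Proof proposal.} By Lemma~\ref{lem-eigen}, and since $E_3\subset\{\fc:\,0<|\fc_i|\le\gamma\le\varepsilon_3\}$ once $\gamma_0$ is small enough, it suffices to show that the criterion function $\mathcal D(\fc)=\int_{\mathbb R}\phi^{-2}(y,\fc)\,dy$ of Lemma~\ref{lem-eigen} does not vanish on $E_3$. The plan is to produce a representation of $\mathcal D$ on $E_3$ that is uniform in $\gamma$ (and, at the end, in $M$), from which a definite lower bound $|\mathcal D(\fc)|\ge\delta_0>0$ is read off; the key structural facts are that the Gaussian bump of $b_0'$ lives on the scale $\gamma$ — matched by the constraint $|\fc_i|\le\gamma$ on $E_3$ — and that a certain limiting correction is \emph{strictly} negative, hence reinforces rather than cancels the main term in the delicate regime.

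\emph{Step 1: rescaling and the model integrals.} Write $y=\gamma z$, $y_\fc=\gamma z_\fc$ (so $|z_\fc|\le 4\sqrt{\ln(\ln(\gamma^{-1}))}$) and $\fc_i=\gamma s_i$ with $s_i\in(0,1]$. Using the integration--by--parts identity $\mathcal D(\fc)=\int_{\mathbb R}\pa_y\!\big(\tfrac{1}{b_0'\phi_1^2}\big)\tfrac{1}{b_0(y)-\fc}\,dy+\mathcal E_{\phi_2}(\fc)$, the fact that $\pa_y(b_0'^{-1})=-b_0''/(b_0')^2$ with $-b_0''(y)=4\sqrt\pi M\tfrac{y}{\gamma}e^{-y^2/\gamma^2}$ concentrated on the unit $z$--scale, that the bump contributes only $O(M\gamma^2)$ to $b_0(y)-b_0(y_\fc)$, and the estimates of Proposition~\ref{prop-phi}, one obtains after the change of variables
$\mathcal D(\fc)=4\sqrt\pi\,M\big(I(z_\fc,s_i)+iJ(z_\fc,s_i)\big)+\Pi_2^{\mathrm{fix}}+O(M^2\gamma)+\mathcal E_{\phi_2}(\fc)$,
where $I(z_\fc,s_i)=\int_{\mathbb R}\tfrac{ze^{-z^2}(z-z_\fc)}{(z-z_\fc)^2+s_i^2}\,dz$, $J(z_\fc,s_i)=s_i\int_{\mathbb R}\tfrac{ze^{-z^2}}{(z-z_\fc)^2+s_i^2}\,dz$, and $\Pi_2^{\mathrm{fix}}=\int_{\mathbb R}w^{-2}\big(\Phi_1(w)^{-2}-1\big)\,dw$ with $\Phi_1$ the $\gamma$-- and $z_\fc$--independent limit of $\phi_1(y_\fc+\cdot\,,\fc_r)$. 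The term $\mathcal E_{\phi_2}$ is estimated via $\|\phi_2-1\|,\|\phi_2'\|\lesssim|\fc_i|$ and the second--order vanishing of $\phi_2^{-2}-1$ at $y_\fc$ from Proposition~\ref{prop-phi}, giving $|\mathcal E_{\phi_2}(\fc)|\lesssim|\fc_i|=O(\gamma)$. Crucially $\Pi_2^{\mathrm{fix}}<0$ strictly, because the $\phi_1$--ODE forces $\phi_1\ge1$ with equality only at $y_\fc$, hence $\Phi_1\ge1$ with equality only at $0$.

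\emph{Step 2: the sign analysis.} At $s_i=0$ one has $I(z_\fc,0)=\sqrt\pi\big(1-2z_\fc D(z_\fc)\big)$ with $D$ the Dawson function, and $J(z_\fc,0)=\pi z_\fc e^{-z_\fc^2}$ (so these recover $\mathcal J_1$ and $\mp\mathcal J_2$ of Lemma~\ref{lem-lim-eigen} to leading order). Using the monotonicity and asymptotics of $zD(z)$ one checks: there is a universal $z^{**}$ so that $I(z_\fc,s_i)<0$ for all $|z_\fc|\ge z^{**}$ and $s_i\in[0,1]$, while $I(z_\fc,s_i)\approx-\tfrac{\sqrt\pi}{2z_\fc^2}$ for $|z_\fc|$ large. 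Then in the compact window $|z_\fc|\le z^{**}$, $s_i\in[0,1]$, a direct (continuity plus explicit) argument — essentially the rescaled form of Lemma~\ref{lem-no-emb} together with the fact that the genuine eigenvalues occur at $s_i\sim M\gg1$ — gives $|I(z_\fc,s_i)|+|J(z_\fc,s_i)|\ge c>0$, whence $|\mathcal D(\fc)|\gtrsim M\gg\delta_0$ for $\gamma\le\gamma_0(M)$; in the range $|z_\fc|\ge z^{**}$ the leading real part $4\sqrt\pi M\,I(z_\fc,s_i)$ is negative and, since $\Pi_2^{\mathrm{fix}}<0$, it adds to it, so $\Re\mathcal D(\fc)\le\tfrac12\Pi_2^{\mathrm{fix}}<0$. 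In either case $|\mathcal D(\fc)|\ge\delta_0:=\tfrac14|\Pi_2^{\mathrm{fix}}|>0$, so $\fc$ is not an eigenvalue, proving the Lemma.

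\emph{Expected main obstacle.} The hard point is the uniformity of the representation in Step~1. Since $b_0'''$ has size $\sim M/\gamma$, a crude estimate of $\mathcal D(\fc)-\big(\mathcal J_1(\fc_r)\mp i\mathcal J_2(\fc_r)\big)$ — replacing the regularized kernel by its principal value at $\fc_i=0$ — loses a full power of $\gamma$ and only yields an $O(M)$ error, which is useless against the $O(1)$ lower bound $\delta_0$. The way around this is precisely that on $E_3$ the regularization scale $|\fc_i|\le\gamma$ is comparable to the bump width $\gamma$: instead of approximating by the $\fc_i\to0$ limit, one rescales and computes the borderline terms exactly, which produces the explicit Gaussian/Dawson integrals $I,J$ and keeps $s_i=\fc_i/\gamma\in(0,1]$ as a genuine parameter. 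The strict negativity of $\Pi_2^{\mathrm{fix}}$ is then exactly what rules out a cancellation in the only regime ($|z_\fc|$ large, where the Dawson main term degenerates like $z_\fc^{-2}$) in which $\mathcal D$ could otherwise have been small.
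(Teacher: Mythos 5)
Your strategy is genuinely different from the paper's. The paper splits $E_3$ at $|\fc_r|=\tfrac14\gamma$, proves $\Re\,\mathcal D\ge CM$ on the inner piece, and on the outer piece uses the \emph{imaginary} part, with a lower bound $|\Im\, \mathcal D|\gtrsim M\sqrt{\ln(\ln(\gamma^{-1}))}/(\ln(\gamma^{-1}))^4$ that degenerates as $\gamma\to0$ but still beats the error $|\Im\, II|\le C\gamma\ln(\gamma^{-1})$. You instead pass to the rescaled dispersion function $F(w)=\int z e^{-z^2}(z-w)^{-1}dz$, $w=z_\fc+is_i$, and in the outer regime replace the imaginary-part argument by negativity of the real part reinforced by $\Pi_2^{\mathrm{fix}}<0$; this is the $\fc_i\neq0$ analogue of what the paper does at $\fc_i=0$ inside Lemma \ref{lem-est-J1J2J3}, and, if completed, it yields a $\gamma$-uniform lower bound $|\mathcal D|\ge\delta_0$, which is cleaner. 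Step 1 (errors $O(M^2\gamma)$ after the change of variables $v=b_0(y)$, and $\Re\, II=\Pi_2^{\mathrm{fix}}+O(\gamma\ln\gamma^{-1})$ uniformly in $0<s_i\le1$) is plausible and of comparable difficulty to the paper's estimates.

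Step 2, however, has two genuine gaps. First, the compact-window bound $|I|+|J|\ge c>0$ does not follow from ``continuity plus the rescaled Lemma \ref{lem-no-emb}'': that lemma controls only the boundary slice $s_i=0$, and continuity cannot propagate non-vanishing into the interior $0<s_i\le1$; appealing to ``the genuine eigenvalues occur at $s_i\sim M$'' is circular, since the eigenvalue location ($|\fc_i|\ge\gamma$, i.e.\ $s_i\ge1$, right at the edge of $E_3$) is precisely what Proposition \ref{prop-Rayleigh}(3) deduces from this lemma. What you actually need is that $F$ has no zeros in the \emph{closed} upper half-plane; this is true and provable by a Penrose/argument-principle computation (the boundary curve $z_\fc\mapsto I(z_\fc,0)+i\pi z_\fc e^{-z_\fc^2}$ meets the real axis only at $z_\fc=0$, where $\Re F=\sqrt\pi>0$, and its $+2\pi$ argument change is cancelled by the $-2\pi$ from the large semicircle, where $F\sim-\sqrt\pi/(2w^2)$), but it is absent from your write-up. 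Second, the claim that $I(z_\fc,s_i)<0$ for all $|z_\fc|\ge z^{**}$ and all $s_i\in[0,1]$ does not follow from the monotonicity and asymptotics of $zD(z)$, which govern only $s_i=0$; for $s_i>0$ one is looking at the real part of the complex Dawson/Faddeeva function, and the sign is delicate: numerically $I(\sqrt2,0)\approx-0.90$ while $I(\sqrt2,1)\approx+0.03$, and negativity at $s_i=1$ only sets in around $z_\fc\approx2$. You genuinely need the sign (not mere smallness) there, since $4\sqrt\pi M|I|\sim M/z_\fc^2\gg|\Pi_2^{\mathrm{fix}}|$ throughout $z^{**}\le|z_\fc|\lesssim\sqrt M$, so a positive $I$ of that size could drive $\Re\,\mathcal D$ through zero. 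Both points are fixable, but as written the proof is incomplete.
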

\begin{proof}
From Lemma \ref{lem-eigen}, it suffices to show that $\mathcal D(\fc)\neq0$ for $\fc\in E_3$. We prove that, the real part of $\mathcal D(\fc)\neq0$ for $|\fc_r|\le \frac{1}{4}\gamma$ and $|\fc_i|\le \gamma$, and the imaginary part of $\mathcal D(\fc)\neq0$ for $\frac{1}{4}\gamma\le|\fc_r|\le 4\gamma\sqrt{\ln\big(\ln(\gamma^{-1})\big)}$ and $|\fc_i|\le \gamma$.

We write
\begin{align*}
  \mathcal D(\fc)=&\int^{+\infty}_{-\infty}\frac{1}{(b_0(y)-\fc)^2}dy+\int^{+\infty}_{-\infty}\frac{1}{(b_0(y)-\fc)^2}\left(\frac{1}{\phi^2_1(y,\fc_r)\phi^2_2(y,\fc)}-1\right)dy=I+II.
\end{align*}
For $II$, we have
\begin{align*}
  &\int^{+\infty}_{-\infty}\frac{1}{(b_0(y)-\fc)^2}\left(\frac{1}{\phi^2_1(y,\fc)\phi^2_2(y,\fc)}-1\right)dy\\
  =&\int^{+\infty}_{-\infty}\frac{(b_0(y)-\fc_r)^2-{\fc_i}^2+2i{\fc_i}(b_0(y)-\fc_r)}{\big((b_0(y)-\fc_r)^2+{\fc_i}^2\big)^2}\\
  &\qquad\qquad\qquad\cdot\left(\frac{(\phi_{2,r}^2+\phi_{2,i}^2)\big(1-\phi^2_1(\phi_{2,r}^2+\phi_{2,i}^2)\big)-2\phi_{2,i}^2}{\phi^2_1(\phi_{2,r}^2+\phi_{2,i}^2)^2}-\frac{2i\phi_{2,r}\phi_{2,i}}{\phi^2_1(\phi_{2,r}^2+\phi_{2,i}^2)^2}\right)dy'\\
  =&\int^{+\infty}_{-\infty}\frac{(b_0(y)-\fc_r)^2-{\fc_i}^2}{\big((b_0(y)-\fc_r)^2+{\fc_i}^2\big)^2}\frac{(\phi_{2,r}^2+\phi_{2,i}^2)\big(1-\phi^2_1(\phi_{2,r}^2+\phi_{2,i}^2)\big)-2\phi_{2,i}^2}{\phi^2_1(\phi_{2,r}^2+\phi_{2,i}^2)^2}\\
  &\qquad\qquad\qquad\qquad\qquad\qquad\qquad+4\frac{{\fc_i}(b_0(y)-\fc_r)}{\big((b_0(y)-\fc_r)^2+{\fc_i}^2\big)^2}\frac{\phi_{2,r}\phi_{2,i}}{\phi^2_1(\phi_{2,r}^2+\phi_{2,i}^2)^2}dy'\\
  &+2i\int^{+\infty}_{-\infty}\frac{{\fc_i}(b_0(y)-\fc_r)\Big((\phi_{2,r}^2+\phi_{2,i}^2)\big(1-\phi^2_1(\phi_{2,r}^2+\phi_{2,i}^2)\big)-2\phi_{2,i}^2\Big)}{\big((b_0(y)-\fc_r)^2+{\fc_i}^2\big)^2\phi^2_1(\phi_{2,r}^2+\phi_{2,i}^2)^2}dy'\\
  &\qquad\qquad\qquad\qquad\qquad\qquad\qquad-\frac{\big((b_0(y)-\fc_r)^2-{\fc_i}^2\big)\phi_{2,r}\phi_{2,i}}{\big((b_0(y)-\fc_r)^2+{\fc_i}^2\big)^2\phi^2_1(\phi_{2,r}^2+\phi_{2,i}^2)^2}dy'\\
  \eqdef&II_r+iII_i.
\end{align*}
From \eqref{eq-phi1-est} and \eqref{eq-phi2-est}, we have 
\begin{align*}
  |\phi_{1}(y)-1|\le C|y-y_\fc|^2,\quad |\phi_{2,r}(y)-1|\le C|y-y_\fc|^2,\text{ for }|y-y_\fc|\le1,\\
|\phi_{2,r}(y)-1|\le C|\fc_i|\min(|y-y_\fc|,1),\quad |\phi_{2,i}(y)|\le C|\fc_i|\min(|y-y_\fc|,1),\text{ for }y\in\mathbb R.
\end{align*}
Then we have for $|y-y_\fc|\le1$ that
\begin{align*}
  \big|1-\phi^2_1(\phi_{2,r}^2+\phi_{2,i}^2)\big|\le C|y-y_\fc|^2.
\end{align*}
And then,
\begin{align*}
  |II_r|\le&C\int_{[y_\fc-1,y_\fc+1]}\frac{|y-y_\fc|^2+|\fc_i||y-y_\fc|}{(b_0(y)-\fc_r)^2+{\fc_i}^2}dy+C\int_{\mathbb R\setminus [y_\fc-1,y_\fc+1]}\frac{1}{|y-y_\fc|^2}dy\le C,
\end{align*}
and
\begin{align*}
  |II_i|\le&C\int_{[y_\fc-1,y_\fc+1]}\frac{|\fc_i||y-y_\fc|}{(b_0(y)-\fc_r)^2+{\fc_i}^2}dy+C\int_{\mathbb R\setminus [y_\fc-1,y_\fc+1]}\frac{|\fc_i|}{|y-y_\fc|^2}dy\\
  \le& C|\fc_i\ln(\fc_i)|\le C\gamma\ln(\gamma^{-1}),
\end{align*}
where the constant $C$ does not depend on  $M$.

Recall that $b_0(y)$ is a strictly monotonic function. Let $v=b_0(y)$,  we have
\begin{align*}
  I=&\int^{+\infty}_{-\infty}\frac{1}{(b_0(y)-\fc)^2}dy=\int^{+\infty}_{-\infty}\frac{\pa_v\big(b_0^{-1}\big)(v)}{(v-\fc)^2}dv=\int^{+\infty}_{-\infty}\frac{\pa_v^2\big(b_0^{-1}\big)(v)}{v-\fc}dv\\
=&\int^{+\infty}_{-\infty} \frac{(v-\fc_r)\pa_v^2\big(b_0^{-1}\big)(v)}{(v-\fc_r)^2+{\fc_i}^2}dv +\int^{+\infty}_{-\infty} \frac{i{\fc_i}\pa_v^2\big(b_0^{-1}\big)(v)}{(v-\fc_r)^2+{\fc_i}^2}dv\eqdef I_r+iI_i.
\end{align*}
Direct calculations show that
\begin{align*}
  \pa_v\big(b_0^{-1}\big)(v)=\frac{1}{b_0'\big(b_0^{-1}(v)\big)},\quad\pa_v^2\big(b_0^{-1}\big)(v)=-\frac{b_0''\big(b_0^{-1}(v)\big)}{\big(b_0'\big(b_0^{-1}(v)\big)\big)^3}.
\end{align*}
Then we have
\begin{align*}
  I_r=&-\int^{+\infty}_{-\infty} \frac{\big(b_0(y)-\fc_r\big)\frac{b_0''(y)}{\big(b_0'(y)\big)^2}}{\big(b_0(y)-\fc_r\big)^2+{\fc_i}^2}dy=4\sqrt\pi M \int^{+\infty}_{-\infty} \frac{1}{\big(b_0'(y)\big)^2}\frac{\big(b_0(y)-\fc_r\big)\frac{y}{\gamma}e^{-\frac{y^2}{\gamma^2}}}{\big(b_0(y)-\fc_r\big)^2+{\fc_i}^2}dy\\
  =&4\sqrt\pi M\left(\int_{\mathbb R\setminus[0,2y_\fc]}\dots dy+\int^{y_\fc}_{0}\dots dy+\int^{2y_\fc}_{y_\fc}\dots dy\right)\\
   \eqdef&4\sqrt\pi M\left(K_1+K_2+K_3\right).
\end{align*}
We remark that $\|b_0'(y)-1\|_{L^\infty_y}\le CM\gamma$, so we have $|y_\fc-\fc_r|\le CM\gamma^2$, and $|y_\fc|\le\frac{\sqrt2}{4}\gamma$. Then, it is clear that $K_1>0$, $K_2<0$, and $K_3>0$. As $\frac{y}{\gamma}e^{-\frac{y^2}{\gamma^2}}$ grows on $[0,\frac{\sqrt2}{2}\gamma]$, so 
\begin{align*}
  \min_{y\in[y_\fc,2y_\fc]}\frac{y}{\gamma}e^{-\frac{y^2}{\gamma^2}}\ge \max_{y\in[0,y_\fc]}\frac{y}{\gamma}e^{-\frac{y^2}{\gamma^2}}.
\end{align*}
Then we have $K_2+K_3\ge -CM\gamma$. Indeed, we have
\begin{align*}
  K_2+K_3\ge&\int^{y_\fc}_0\frac{1}{\big(b_0'(y)\big)^2}\frac{\big(b_0(y)-\fc_r\big)}{\big(b_0(y)-\fc_r\big)^2+{\fc_i}^2}+\frac{1}{\big(b_0'(2y_\fc-y)\big)^2}\frac{\big(b_0(2y_\fc-y)-\fc_r\big)}{\big(b_0(2y_\fc-y)-\fc_r\big)^2+{\fc_i}^2}dy\\
  =&\int^{\max(y_\fc-2{|\fc_i|},0)}_{0}\dots dy+\int^{y_\fc}_{\max(y_\fc-2{|\fc_i|},0)}\dots dy\eqdef J_1+J_2.
\end{align*}
For $0\le y_1\le y_2$, we have $1\le b_0'(y_2)\le b_0'(y_1)\le 1+CM\gamma$, and then
\begin{align*}
  &\frac{1}{\big(b_0'(y)\big)^2}\frac{\big(b_0(y)-\fc_r\big)}{\big(b_0(y)-\fc_r\big)^2+{\fc_i}^2}+\frac{1}{\big(b_0'(2y_\fc-y)\big)^2}\frac{\big(b_0(2y_\fc-y)-\fc\big)}{\big(b_0(2y_\fc-y)-\fc_r\big)^2+{\fc_i}^2}\\
  \ge&\frac{1}{\big(b_0'(y_\fc)\big)^2}\left(\frac{\big(b_0(y)-\fc_r\big)}{\big(b_0(y)-\fc_r\big)^2+{\fc_i}^2}+\frac{\big(b_0(2y_\fc-y)-\fc\big)}{\big(b_0(2y_\fc-y)-\fc_r\big)^2+{\fc_i}^2}\right).
\end{align*}
It holds that $b_0(y)-\fc_r=b_0'(\tilde y_1)(y-y_\fc)$ and $b_0(2y_\fc-y)-\fc=b_0'(\tilde y_2)(y_\fc-y)$ for some $\tilde y_1\in[y,y_\fc]$ and $\tilde y_2\in[2y_\fc-y,y_\fc]$, we have $\tilde y_2\ge\tilde y_1$ and $1\le b_0'(\tilde y_2)\le b_0'(\tilde y_1)$. Therefore
\begin{align*}
  &\frac{\big(b_0(y)-\fc_r\big)}{\big(b_0(y)-\fc_r\big)^2+{\fc_i}^2}+\frac{\big(b_0(2y_\fc-y)-\fc_r\big)}{\big(b_0(2y_\fc-y)-\fc_r\big)^2+{\fc_i}^2}\\
  =&\frac{\big(b_0'(\tilde y_1)-b_0'(\tilde y_2)\big)(y_\fc-y)\big(b_0'(\tilde y_1)b_0'(\tilde y_2)(y_\fc-y)^2-{\fc_i}^2\big)}{\big((b_0'(\tilde y_1))^2(y-y_\fc)^2+{\fc_i}^2\big)\big((b_0'(\tilde y_1))^2(y-y_\fc)^2+{\fc_i}^2\big)}\eqdef Q.
\end{align*}
For $y_\fc-y\ge2|\fc_i|$, we have $b_0'(\tilde y_1)b_0'(\tilde y_2)(y_\fc-y)^2-{\fc_i}^2\ge0$, then $J_1\ge0$. It is easy to check that
\begin{align*}
  |Q|\le CM\gamma \frac{|y-y_\fc|}{(y-y_\fc)^2+{\fc_i}^2},
\end{align*}
and
\begin{align*}
  |J_2|\le CM\gamma\int^{|\fc_i|}_0 \frac{x}{x^2+{\fc_i}^2}dx\le CM\gamma. 
\end{align*}

For $K_1$, we have
\begin{align*}
  K_1=&\int_{\mathbb R\setminus [0,2y_\fc]} \frac{1}{\big(b_0'(y)\big)^2}\frac{\big(b_0(y)-\fc_r\big)\frac{y}{\gamma}e^{-\frac{y^2}{\gamma^2}}}{\big(b_0(y)-\fc_r\big)^2+{\fc_i}^2}dy\ge\int_{\mathbb R\setminus [-\gamma,\gamma]} \frac{1}{\big(b_0'(y)\big)^2}\frac{\big(b_0(y)-\fc_r\big)\frac{y}{\gamma}e^{-\frac{y^2}{\gamma^2}}}{\big(b_0(y)-\fc_r\big)^2+{\fc_i}^2}dy\\
  \ge&C\int_{\mathbb R\setminus [-\gamma,\gamma]} \frac{\frac{y^2}{\gamma}e^{-\frac{y^2}{\gamma^2}}}{y^2}dy=C\int_{\mathbb R\setminus [-1,1]}e^{-z^2}dz=C>0.
\end{align*}
Combing the above estimates, we have $I_r\ge CM$.

Next, we show that $|I_i|\ge CM \frac{1}{\big(\ln(\gamma^{-1})\big)^3}$ for $|\fc_r|\ge \frac{1}{4}\gamma$ and $|{\fc_i}|\le \gamma$. Recall that
\begin{align*}
  I_i=&\int^{+\infty}_{-\infty} \frac{{\fc_i}\pa_v^2\big(b_0^{-1}\big)(v)}{(v-\fc_r)^2+{\fc_i}^2}dv=-\int^{+\infty}_{-\infty} \frac{{\fc_i}\frac{b_0''(y)}{\big(b_0'(y)\big)^2}}{\big(b_0(y)-\fc_r\big)^2+{\fc_i}^2}dy\\
  =&4\sqrt\pi M \int^{+\infty}_{-\infty} \frac{1}{\big(b_0'(y)\big)^2}\frac{{\fc_i}\frac{y}{\gamma}e^{-\frac{y^2}{\gamma^2}}}{\big(b_0(y)-\fc_r\big)^2+{\fc_i}^2}dy.
\end{align*}
We have $\fc_iI_i>0$ for $\fc_r>0$ and $\fc_iI_i<0$ for $\fc_r<0$. Without loss of generally, we assume $\fc_r,\fc_i>0$. Then we write
\begin{align*}
  I_i=&4\sqrt\pi M  \int^{+\infty}_{0}{\fc_i}\frac{y}{\gamma}e^{-\frac{y^2}{\gamma^2}}\left(\frac{1}{\big(b_0'(y)\big)^2}\frac{1}{\big(b_0(y)-\fc_r\big)^2+{\fc_i}^2}-\frac{1}{\big(b_0'(-y)\big)^2}\frac{1}{\big(b_0(-y)-\fc_r\big)^2+{\fc_i}^2}\right) dy.
\end{align*}
It is clear for each $y\in(0,+\infty)$ we have
\begin{align*}
  \left(\frac{1}{\big(b_0'(y)\big)^2}\frac{1}{\big(b_0(y)-\fc_r\big)^2+{\fc_i}^2}-\frac{1}{\big(b_0'(-y)\big)^2}\frac{1}{\big(b_0(-y)-\fc_r\big)^2+{\fc_i}^2}\right)>0.
\end{align*}
Therefore, we have
\begin{align*}
  |I_i|\ge& CM \int^{y_\fc+{\fc_i}}_{y_\fc}{\fc_i}\frac{y}{\gamma}e^{-\frac{y^2}{\gamma^2}}\left(\frac{1}{\big(b_0'(y)\big)^2}\frac{1}{\big(b_0(y)-\fc_r\big)^2+{\fc_i}^2}-\frac{1}{\big(b_0'(-y)\big)^2}\frac{1}{\big(b_0(-y)-\fc_r\big)^2+{\fc_i}^2}\right) dy\\
  \ge&CM \frac{\sqrt{\ln\big(\ln(\gamma^{-1})\big)}}{\big(\ln(\gamma^{-1})\big)^4}\int^{y_\fc+{\fc_i}}_{y_\fc}\frac{{\fc_i}}{(y-y_\fc)^2+{\fc_i}^2} dy=CM \frac{\sqrt{\ln\big(\ln(\gamma^{-1})\big)}}{\big(\ln(\gamma^{-1})\big)^4}\int^{1}_{0}\frac{1}{y^2+1} dy\\
  \ge&CM \frac{\sqrt{\ln\big(\ln(\gamma^{-1})\big)}}{\big(\ln(\gamma^{-1})\big)^4}.
\end{align*}

As a conclusion, we have
\begin{align*}
  |II_r|\le C,\ |II_i|\le C\gamma\ln(\gamma^{-1}) \text{ for }\fc_r\in\mathbb R,\ |\fc_i|\le \gamma;\quad I_r\ge CM \text{ for }|\fc_r|\le \frac{1}{4}\gamma,\ |\fc_i|\le \gamma,\\
  |I_i|\ge CM \frac{\sqrt{\ln\big(\ln(\gamma^{-1})\big)}}{\big(\ln(\gamma^{-1})\big)^4}\text{ for }\frac{1}{4}\gamma<|\fc_r|< 4\gamma\sqrt{\ln\big(\ln(\gamma^{-1})\big)},\ |\fc_i|\le \gamma.
\end{align*}
Then by taking $M$ big enough, we have $\mathcal D(\fc)\neq0$ for $\fc\in E_3$. Thus we proved Lemma \ref{lem-eigen-E3}. 
\end{proof}
\begin{corol}\label{cor-finit-eigen}
  Under the assumptions on $M$ and $\gamma$ in Preposition \ref{prop-Rayleigh}, $\mathcal R$ could have at most a finite number of eigenvalues.
\end{corol}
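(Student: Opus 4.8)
The plan is to combine the localization of the eigenvalues already obtained with the analyticity of the criterion function $\mathcal D$. First I would observe that by Lemma \ref{lem-no-emb} every eigenvalue $\fc=\fc_r+i\fc_i$ of $\mathcal R$ satisfies $\fc_i\neq0$, and by Lemma \ref{lem-E1E2} and Lemma \ref{lem-eigen-E3} it lies outside $E_1\cup E_2\cup E_3$; unwinding the definitions of $E_1,E_2,E_3$ this forces
\[
\gamma<|\fc_i|<8\sqrt\pi M\gamma,\qquad |\fc_r|<4\gamma\sqrt{\ln\big(\ln(\gamma^{-1})\big)}.
\]
Hence every eigenvalue belongs to the compact set $K=\{\fc\in\mathbb C:\ \gamma\le|\fc_i|\le 8\sqrt\pi M\gamma,\ |\fc_r|\le 4\gamma\sqrt{\ln(\ln(\gamma^{-1}))}\}$, which (after shrinking $\gamma_0(M)$ if necessary, using that the bounds of $b_0'$ tend to those of the constant function $1$ as $\gamma\to0$, so $\varepsilon_3$ may be kept bounded below) is contained in $\{0<|\fc_i|\le\varepsilon_3\}$ and is bounded away from the real axis $\mathrm{Ran}\,b_0=\mathbb R$.

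Next I would invoke analyticity. On each of the two connected strips $\{0<\fc_i\le\varepsilon_3\}$ and $\{-\varepsilon_3\le\fc_i<0\}$ the function $\mathcal D(\fc)$ of Lemma \ref{lem-eigen} is holomorphic: the coefficient $-b_0''(y)/(b_0(y)-\fc)$ of the Rayleigh equation \eqref{eq-Rayleigh} depends holomorphically on $\fc$ away from $\mathbb R$, so the decaying (Jost) solutions $\varphi^{\pm}(y,\fc)$ can be normalized to depend holomorphically on $\fc$, and a direct computation gives $\mathcal D(\fc)=-\mathrm{Wr}[\varphi^+,\varphi^-](\fc)$, a holomorphic function of $\fc$ for $\fc_i\neq0$. (Equivalently one differentiates under the integral sign in $\mathcal D(\fc)=\int_{\mathbb R}\phi^{-2}(y,\fc)\,dy$, which is legitimate because the lower bound \eqref{eq-phi-est} yields $|\phi(y,\fc)^{-2}|\le C(|y-y_\fc|+|\fc_i|)^{-2}e^{-2C_4|y-y_\fc|}$, integrable in $y$ with a bound locally uniform on $\{\fc_i\neq0\}$.) By Lemma \ref{lem-eigen}, the eigenvalues of $\mathcal R$ in $\{0<|\fc_i|\le\varepsilon_3\}$ are exactly the zeros of $\mathcal D$ there.

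The final step is to count. By Lemma \ref{lem-E1E2}, $\mathcal D(\fc)\neq0$ for every $\fc\in E_1$, and $E_1$ contains a nonempty open subset of each strip; hence $\mathcal D$ is not identically zero on either strip. By the identity theorem its zeros are isolated in each strip, so any compact subset of a strip contains only finitely many. Since all eigenvalues of $\mathcal R$ are zeros of $\mathcal D$ lying in the compact set $K$, which splits into two compact pieces, one in each strip, $\mathcal R$ has at most finitely many eigenvalues.

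I expect the only real technical point to be the holomorphic dependence of the solutions of \eqref{eq-Rayleigh} on $\fc$ and the attendant interchange of integral and derivative in $\mathcal D$; this should be routine given the uniform lower bound \eqref{eq-phi-est} on $\phi$, which already provides precisely the exponential decay needed. As an alternative that bypasses $\mathcal D$ entirely, one could note that $\mathcal R$ differs from the multiplication operator $b_0(y)\,\cdot$ by $-b_0''(\pa_y^2-1)^{-1}$, which is compact because $b_0''$ decays at infinity and $(\pa_y^2-1)^{-1}$ is $H^2$-smoothing; then $\sigma_{\mathrm{ess}}(\mathcal R)=\mathbb R$ by Weyl's theorem, and the eigenvalues — being confined to $K$, hence bounded and bounded away from $\mathbb R$ — cannot accumulate anywhere, so they form a finite set.
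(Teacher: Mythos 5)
Your main argument is the same as the paper's: the paper's proof of Corollary \ref{cor-finit-eigen} simply cites Remark \ref{Rmk:D(c)wronskian} for the analyticity of the Wronskian $\mathcal D$, notes that eigenvalues are exactly the zeros of $\mathcal D$ in the bounded set $E$, and concludes. Your write-up is in fact more complete than the paper's two-line argument: you correctly observe that boundedness alone is not enough — one needs the eigenvalues confined to a \emph{compact} subset of the domain where $\mathcal D$ is holomorphic (i.e.\ bounded away from the continuous spectrum $\mathbb R$, which Lemmas \ref{lem-no-emb} and \ref{lem-eigen-E3} provide), and one needs $\mathcal D\not\equiv 0$ on each half-strip before invoking the identity theorem (which follows from Lemma \ref{lem-E1E2} combined with Lemma \ref{lem-eigen}, as you note). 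Your closing alternative via viewing $-b_0''(\pa_y^2-1)^{-1}$ as a compact perturbation of multiplication by $b_0$ is a genuinely different route the paper does not take; it buys independence from the explicit solution $\phi$ of the Rayleigh equation, at the cost of needing the analytic Fredholm alternative (rather than the self-adjoint form of Weyl's theorem) to justify discreteness of the point spectrum off $\mathbb R$ for this non-self-adjoint operator — a point worth making explicit if you pursue that version.
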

\begin{proof}
By Remark \ref{Rmk:D(c)wronskian}, we get that $\fc\in E$ is an eigenvalue of $\mathcal R$ if and only if $\mathcal{D}(\fc)=0$. And $\mathcal{D}(\fc)$ is analytic. As $E$ is a bounded set, there could only be at most a finite number of zero points of $\mathcal{D}(\fc)$.
\end{proof}
\begin{remark}
The Cauchy's argument principle gives that the number of eigenvalues can be estimated by studying the contour integral $\f{1}{2\pi i}\oint_{\pa E}\f{\mathcal{D}'(\fc)}{\mathcal{D}(\fc)}d\fc$. 
\end{remark}

\subsection{Upper bound estimate}
Now, we prove \eqref{eq-up-bound-R} by assuming that \underline{$\mathcal R$ has no eigenvalues}. Note that $\mathcal R$ has no embedded eigenvalues. We consider the following equation
\begin{align}\label{eq: LinearEuler-Psi}
  \left\{
\begin{aligned}
&\pa_t \mathfrak w(t,y)+i\mathcal {R}\mathfrak w(t,y)=0,\\
&(\pa_y^2-1)\Psi(t,y)=\mathfrak w(t,y),\\
&\mathfrak w|_{t=0}(y)=\mathfrak w_{in}(y)=(\pa_y^2-1)\Psi_{in}(y),
\end{aligned}
\right.
\end{align}
and study the upper bound of the semi-group $e^{-it\mathcal R}$ by the following representation formula:
\begin{equation}\label{eq-rep-Psi}
  \begin{aligned}    
    \Psi(t,y)=&\lim_{\fc_i\to 0+}\lim_{T\to \infty}\frac{1}{2\pi i}\int_{-T}^T\big[e^{-i(\fc_r-i\fc_i) t}(\fc_r-i\fc_i-\mathcal{L})^{-1}\Psi_{in}\\
    &\qquad\qquad\qquad\qquad\qquad\qquad-e^{-i(\fc_r+i\fc_i) t}(\fc_r+i\fc_i-\mathcal{L})^{-1}\Psi_{in}\big]d\fc_r
  \end{aligned}
\end{equation}
where $\mathcal L=(\pa_y^2-1)^{-1}\mathcal{R}(\pa_y^2-1)$ and $\s(\mathcal L)=\mathbb{R}$. {{}The representation formula \eqref{eq-rep-Psi} can be formally regarded as the extension of the Dunford integral for unbounded spectral operators, which is however not obvious. We give the rigorous proof of the identity \eqref{eq-rep-Psi} in the proof of Proposition \ref{prop-repr} in Appendix \ref{appendix-B} by using the inverse Fourier-Laplace transform.} Let $(\fc-\mathcal{L})^{-1}\Psi_{in}=i\Phi(y,\fc)$, then $\Phi(y,\fc)$ solves the inhomogeneous Rayleigh equation:
\begin{align}\label{eq-Phi-ori}
  \pa_y^2\Phi(y,\fc)-\Phi(y,\fc)-\frac{b_0''}{b_0-\fc}\Phi(y,\fc)=i\frac{\mathfrak w_{in}(y)}{b_0(y)-\fc}. 
\end{align}

Recall that $\phi(y,\fc)$ given in Proposition \ref{prop-phi} solves the homogeneous Rayleigh equation \eqref{eq-Rayleigh}, it is easy to check that
\begin{align}\label{eq-Phi}
  \pa_y\Big(\phi^2(y,\fc)\pa_y\Big(\frac{\Phi(y,\fc)}{\phi(y,\fc)}\Big)\Big)=i\mathfrak w_{in}(y)\phi_1(y,\fc).
\end{align}
Here we briefly write $\phi_1(y,\fc)=\phi_1(y,\fc_r)\phi_2(y,\fc)$. One can see that $\phi_1(y,\fc)=\phi_1(y,\fc_r)$ if $\fc_i=0$, and $\phi_1(y,\fc)$ is well defined for $\fc\in \mathbb C$.

For $\fc_i\neq 0$, there is a unique solution $\Phi(y,\fc)$ of \eqref{eq-Phi} which decays at infinity. Then $\Phi(y,\fc)$ can be written as follows:{{}
\begin{align}\label{eq-Phi-exp}
\Phi(y,\fc)=&\Phi_{i,l}(y,\fc)-\mu(\mathfrak w_{in},\fc)\Phi_{h,l}(y,\fc)=\Phi_{i,r}(y,\fc)-\mu(\mathfrak w_{in},\fc)\Phi_{h,r}(y,\fc),
\end{align}
where 
\begin{align*}
  \Phi_{i,l}(y,\fc)=&i\phi(y,\fc)\int_{-\infty}^y\frac{\int_{y_{\fc}}^{y'}\mathfrak w_{in}(y'')\phi_1(y'',\fc)dy''}{\phi^2(y',\fc)}dy',\ \Phi_{h,l}(y,\fc)=i\phi(y,\fc)\int_{-\infty}^y\frac{1}{\phi^2(y',\fc)}dy',\\
  \Phi_{i,r}(y,\fc)=&i\phi(y,\fc)\int_{+\infty}^y\frac{\int_{y_{\fc}}^{y'}\mathfrak w_{in}(y'')\phi_1(y'',\fc)dy''}{\phi^2(y',\fc)}dy',\  \Phi_{h,r}(y,\fc)=i\phi(y,\fc)\int_{+\infty}^y\frac{1}{\phi^2(y',\fc)}dy',
\end{align*}
and
\begin{align*}
  \mu(\mathfrak w_{in},\fc)=\frac{\int_{-\infty}^{+\infty}\frac{\int_{y_{\fc}}^{y'}\mathfrak w_{in}(y'')\phi_1(y'',\fc)dy''}{\phi^2(y',\fc)}dy'}{\int_{-\infty}^{+\infty}\frac{1}{\phi^2(y',\fc)}dy'}.
\end{align*}}

Indeed, we have the following lemma. 
\begin{lemma}\label{lem:limit}
Let $\mathfrak w_{in}\in C_c^{\infty}(\mathbb{R})$, it holds for $\forall y\in\mathbb R$ that
\begin{align*}
  \lim_{\fc_i\to 0\pm}\Phi(y,\fc_r+i\fc_i)=\left\{
\begin{aligned}
&i\phi(y,\fc_r)\int_{-\infty}^y\frac{\int_{y_{\fc}}^{y'}\mathfrak w_{in}(y'')\phi_1(y'',\fc_r)dy''}{\phi^2(y',\fc_r)}dy'\\
&\quad -i\mu_{\pm}(\mathfrak w_{in},\fc_r)\phi(y,\fc_r)\int_{-\infty}^y\frac{1}{\phi^2(y',\fc_r)}dy',\quad y<y_\fc,\\
&i\phi(y,\fc_r)\int_{+\infty}^y\frac{\int_{y_{\fc}}^{y'}\mathfrak w_{in}(y'')\phi_1(y'',\fc_r)dy''}{\phi^2(y',\fc_r)}dy'\\
&\quad-i\mu_{\pm}(\mathfrak w_{in},\fc_r)\phi(y,\fc_r)\int_{+\infty}^y\frac{1}{\phi^2(y',\fc_r)}dy',\quad y>y_\fc,
\end{aligned}
\right.
\end{align*}
where 
\begin{align}
  \mu_{\pm}(\mathfrak w_{in},\fc_r)=\frac{\mathcal J_3(\mathfrak w_{in},\fc_r)\pm i\mathcal J_4(\mathfrak w_{in},\fc_r)}{\mathcal J_1(\fc_r)\mp i\mathcal J_2(\fc_r)},
\end{align}
with $\mathcal J_1(\fc_r)$, $\mathcal J_2(\fc_r)$ given in Lemma \ref{lem-lim-eigen}, and
\begin{align}\label{eq:J_3,J_4}
  \mathcal J_3(\mathfrak w_{in},\fc_r)&=\text{P.V.}\int \frac{\int_{y_{\fc}}^{y'}\mathfrak w_{in}(y'')\phi_1(y'',\fc_r)dy''}{\phi^2(y',\fc_r)}dy',\quad\mathcal J_4(\mathfrak w_{in},\fc_r)=\pi\frac{\mathfrak w_{in}(y_\fc)}{\big(b_0'(y_\fc)\big)^2}.
\end{align}
\end{lemma}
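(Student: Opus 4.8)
\textbf{Proof proposal for Lemma \ref{lem:limit}.} The plan is to start from the explicit decaying representation of $\Phi(y,\fc)$ valid for $\fc_i\neq0$ and pass to the boundary $\fc_i\to0\pm$; the only genuine difficulty is the behaviour of the integrals near the critical point $y_\fc=b_0^{-1}(\fc_r)$. First I would record that for $\fc_i\neq0$ the function $\phi(y,\fc)=(b_0(y)-\fc)\phi_1(y,\fc_r)\phi_2(y,\fc)$ from Proposition \ref{prop-phi} has no real zero, so $1/\phi^2\in L^1(\mathbb R)$ by the lower bound in \eqref{eq-phi-est}; the variation-of-parameters formula built from the fundamental pair $\phi,\ \phi\int^y 1/\phi^2$ then gives both stated expressions for $\Phi$, which agree because their difference is a constant multiple of the homogeneous solution $\phi(y,\fc)\int_{-\infty}^{+\infty}1/\phi^2\,dy'$ and the coefficient $\mu(\mathfrak w_{in},\fc)$ is precisely the one forcing decay at both $\pm\infty$. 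Using $|\phi(y',\fc)|\gtrsim(|y'-y_\fc|+|\fc_i|)e^{C_4|y'-y_\fc|}$ together with the compact support of $\mathfrak w_{in}$, one gets, for each fixed $y$ kept away from $y_\fc$, an $\fc_i$-uniform integrable majorant of the integrands on $\{y'<y\}$ (resp.\ $\{y'>y\}$), so on those ranges the limit may be taken under the integral by dominated convergence.

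The heart of the matter is the limit of the singular integrals $\int_{\mathbb R}\frac{N(y')}{\phi^2(y',\fc)}\,dy'$ with $N\equiv1$ (which is $\mathcal D(\fc)$) and with $N(y')=\int_{y_\fc}^{y'}\mathfrak w_{in}(y'')\phi_1(y'',\fc)\,dy''$ (the numerator of $\mu$). Writing $\phi^2=(b_0-\fc)^2\phi_1^2\phi_2^2$ and using that $\phi_1(\cdot,\fc_r),\phi_2(\cdot,\fc)$ are smooth, uniformly bounded away from $0$, and satisfy $\phi_1(y_\fc)=\phi_2(y_\fc)=1$, $\phi_1'(y_\fc)=\phi_2'(y_\fc)=0$, I would split off a remainder whose integrand stays bounded near $y_\fc$ (hence converges by dominated convergence) and reduce to $\int\frac{\widetilde N(y')}{(b_0(y')-\fc)^2}\,dy'$, where $\widetilde N$ vanishes to first order at $y_\fc$ in the second case and is $\equiv1$ up to a smooth factor in the first. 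The substitution $v=b_0(y')$ turns this into $\int\frac{g(v)}{(v-\fc)^2}\,dv$ with $g$ vanishing to first order at $v=\fc_r$ in the second case, and one integration by parts lowers the double pole to a simple pole $\int\frac{g'(v)}{v-\fc}\,dv$ (the boundary terms vanish, $g$ being bounded and in fact constant for $|v|$ large). Since $\fc=\fc_r+i\fc_i$ corresponds to $v-\fc$ with imaginary part $-\fc_i$, the Plemelj jump formula gives, as $\fc_i\to0\pm$, the value $\text{P.V.}\!\int\frac{g'(v)}{v-\fc_r}\,dv\pm i\pi g'(\fc_r)$. Carrying this out reproduces $\lim_{\fc_i\to0\pm}\mathcal D(\fc)=\mathcal J_1(\fc_r)\mp i\mathcal J_2(\fc_r)$ (Lemma \ref{lem-lim-eigen}, which I would simply cite) and, for the numerator of $\mu$, the limit $\mathcal J_3(\mathfrak w_{in},\fc_r)\pm i\mathcal J_4(\mathfrak w_{in},\fc_r)$ with the residue computation giving $\mathcal J_4=\pi\,\mathfrak w_{in}(y_\fc)\phi_1(y_\fc,\fc_r)/b_0'(y_\fc)^2=\pi\,\mathfrak w_{in}(y_\fc)/b_0'(y_\fc)^2$. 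Because $\mathcal R$ has no embedded eigenvalue (Lemma \ref{lem-no-emb}) we have $\mathcal J_1^2(\fc_r)+\mathcal J_2^2(\fc_r)>0$, hence $\mathcal J_1\mp i\mathcal J_2\neq0$, so the quotient passes to the limit: $\mu(\mathfrak w_{in},\fc_r+i\fc_i)\to\mu_\pm(\fc_r)=(\mathcal J_3\pm i\mathcal J_4)/(\mathcal J_1\mp i\mathcal J_2)$.

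Finally I would assemble the boundary value of $\Phi$. For $y<y_\fc$ the representation $\Phi=i\phi\int_{-\infty}^y\frac{N}{\phi^2}-i\mu\,\phi\int_{-\infty}^y\frac1{\phi^2}$ involves only integration over $(-\infty,y)$, which does not reach $y_\fc$, so by the dominated-convergence step together with $\mu\to\mu_\pm$ one reads off exactly the first branch of the claimed formula (no principal value needed in that piece, since $\phi(\cdot,\fc_r)$ vanishes only at $y_\fc$); for $y>y_\fc$ one uses the $\int_{+\infty}^y$ representation instead and obtains the second branch. In other words the entire $\text{P.V.}$ and $\pm i\pi$ content has been absorbed into $\mu_\pm$. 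The integrability $\Phi(y,\cdot)\in L^1_{\fc_r}$ follows by combining the Proposition \ref{prop-phi} bounds with the compact support of $\mathfrak w_{in}$: for $|\fc_r|$ large, $y_\fc$ leaves the support and the numerator integrals decay like a negative power of $\fc_r$ while $\mathcal D(\fc)$ stays bounded below (after the Plemelj reduction), yielding $\fc_r$-decay of $\Phi(y,\fc)$; near $y_\fc$ the bound $|\phi|\gtrsim(|y-y_\fc|+|\fc_i|)$ keeps $\Phi$ bounded, so it is locally integrable in $\fc_r$.

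\textbf{Main obstacle.} The delicate part is exactly the singular-integral analysis at $y_\fc$: reducing the double pole in $1/\phi^2$ to a simple pole via the change of variables $v=b_0(y)$ and integration by parts, tracking the sign of the jump term (governed by whether $b_0^{-1}(\fc_r+i\fc_i)$ lies in the upper or lower half-plane), and keeping all estimates uniform enough in $\fc_r$ to also deliver the $L^1_{\fc_r}$ statement. Everything else is dominated convergence and bookkeeping with the bounds of Proposition \ref{prop-phi}.
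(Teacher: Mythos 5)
Your proposal follows essentially the same route as the paper: dominated convergence away from the critical point using the bounds of Proposition \ref{prop-phi}, the substitution $v=b_0(y')$ together with a Plemelj-type limit for the singular parts of the numerator and denominator of $\mu$ (the paper isolates the singular piece as $\mathcal I_4$ and evaluates $\int (v+i\fc_i)v^{-2}\,dv$ directly rather than integrating by parts, but this is the same computation and yields the same $\mathcal J_3\pm i\mathcal J_4$ and $\mathcal J_1\mp i\mathcal J_2$), and the lower bounds on $\phi$ plus the compact support of $\mathfrak w_{in}$ for the $\fc_r$-integrability. The only understatement is in the last step: the paper's estimate \eqref{eq:PhiinL1} gives exponential decay $e^{-\frac{C_4}{2}|y-y_\fc|}$ of $\Phi$, with $|y-y_\fc|\approx|b_0(y)-\fc_r|$, not merely "a negative power of $\fc_r$", and it is this exponential decay that makes $\Phi(y,\cdot)\in L^1_{\fc_r}$ immediate.
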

\begin{proof}
First, we show $\lim\limits_{\fc_i\to 0\pm}\mu(\mathfrak w_{in},\fc)= \mu_{\pm}(\mathfrak w_{in},\fc_r)$. 
It follows from Lemma \ref{lem-lim-eigen} and the definition of $\mathcal D(\fc)$ that
\begin{align*}
  \lim_{\fc_i\to 0\pm}\int_{-\infty}^{+\infty}\frac{1}{(b_0(y')-\fc)^2\phi_1^2(y',\fc)}dy'=\mathcal J_1(\fc_r)\mp i\mathcal J_2(\fc_r).
\end{align*}
We write the numerator of $\mu(\mathfrak w_{in},\fc)$ as
\begin{equation}\label{eq-nume-mu}
   \begin{aligned}    
&  \int_{-\infty}^{+\infty}\frac{\int_{y_{\fc}}^{y'}\mathfrak w_{in}(y'')\phi_1(y'',\fc)dy''}{(b_0(y')-\fc)^2\phi_1^2(y',\fc)}dy'\\
  =&\int_{-\infty}^{+\infty}\frac{\int_{y_{\fc}}^{y'}\mathfrak w_{in}(y'')\big(\phi_1(y'',\fc)-1\big)dy''}{(b_0(y')-\fc)^2\phi_1^2(y',\fc)}dy'
  +\int_{-\infty}^{+\infty}\frac{\int_{y_{\fc}}^{y'}\mathfrak w_{in}(y'')dy''}{(b_0(y')-\fc)^2}\big(\frac{1}{\phi_1^2(y',\fc)}-1\big)dy'\\
  &+\int_{-\infty}^{+\infty}\frac{\int_{y_{\fc}}^{y'}\mathfrak w_{in}(y'')dy''}{(b_0(y')-\fc)^2}dy'
  \eqdef \mathcal I_1+\mathcal I_2+\mathcal I_3.     
   \end{aligned}
 \end{equation} 
From \eqref{eq-phi1-est}, \eqref{eq-phi2-est} and the Lebesgue dominated convergence theorem, we have
\begin{align*}
  &\lim_{\fc_i\to0}\mathcal I_1+\mathcal I_2\\
  =&\int_{-\infty}^{+\infty}\frac{\int_{y_{\fc}}^{y'}\mathfrak w_{in}(y'')\big(\phi_1(y'',\fc_r)-1\big)dy''}{(b_0(y')-\fc_r)^2\phi_1^2(y',\fc_r)}dy'+\int_{-\infty}^{+\infty}\frac{\int_{y_{\fc}}^{y'}\mathfrak w_{in}(y'')dy''}{(b_0(y')-\fc_r)^2}\big(\frac{1}{\phi_1^2(y',\fc_r)}-1\big)dy'.
\end{align*}
Let $v=b_0(y')$, we have {{}
\begin{equation}\label{eq-I3-c}
  \begin{aligned}    
  \mathcal I_3=&\int_{-\infty}^{\infty}\frac{\int_{b_0^{-1}(\fc_r)}^{b_0^{-1}(v)}\mathfrak w_{in}(y'')dy''}{\big(v-\fc_r- i\fc_i\big)^2} \frac{1}{b'_0(b_0^{-1}(v))}d v\\
  =&\int_{-\infty}^{\infty} \frac{1}{v-\fc_r- i\fc_i} \pa_v \frac{\int_{b_0^{-1}(\fc_r)}^{b_0^{-1}(v)}\mathfrak w_{in}(y'')dy''}{b'_0(b_0^{-1}(v))} d v\\
  =&\int_{-\infty}^{\infty} \frac{v-\fc_r}{(v-\fc_r)^2+\fc_i^2} \pa_v \frac{\int_{b_0^{-1}(\fc_r)}^{b_0^{-1}(v)}\mathfrak w_{in}(y'')dy''}{b'_0(b_0^{-1}(v))} d v\\
  &+i\int_{-\infty}^{\infty} \frac{\fc_i}{(v-\fc_r)^2+\fc_i^2} \pa_v \frac{\int_{b_0^{-1}(\fc_r)}^{b_0^{-1}(v)}\mathfrak w_{in}(y'')dy''}{b'_0(b_0^{-1}(v))} d v
  \eqdef\mathcal I_{3,r}+i\mathcal I_{3,i}.  
  \end{aligned}
\end{equation}
It is clear that
\begin{align*}
  \lim_{\fc_i\to0\pm}\mathcal I_{3,r}+i\mathcal I_{3,i}=\text{P.V.}\int_{-\infty}^{\infty} \frac{1}{v-\fc_r} \pa_v \frac{\int_{b_0^{-1}(\fc_r)}^{b_0^{-1}(v)}\mathfrak w_{in}(y'')dy''}{b'_0(b_0^{-1}(v))} d v\pm \pi i\frac{\mathfrak w_{in}(y_\fc)}{\big(b_0'(y_\fc)\big)^2}.
\end{align*}

It follows that
\begin{align*}
  &\lim_{\fc_i\to0\pm}\int_{-\infty}^{+\infty}\frac{\int_{y_{\fc}}^{y'}\mathfrak w_{in}(y'')\phi_1(y'',\fc)dy''}{(b_0(y')-\fc)^2\phi_1^2(y',\fc)}dy'\\
  =&\text{P.V.}\int \frac{\int_{y_{\fc}}^{y'}\mathfrak w_{in}(y'')\phi_1(y'',\fc_r)dy''}{(b_0(y')-\fc_r)^2\phi_1^2(y',\fc_r)}dy'\pm \pi i\frac{\mathfrak w_{in}(y_\fc)}{\big(b_0'(y_\fc)\big)^2}.
\end{align*}

Next, we show the point-wise limit of $\Phi_{i,l}(y,\fc)$ and $\Phi_{h,l}(y,\fc)$ for $y<y_\fc$, as well as the limits of $\Phi_{i,r}(y,\fc)$ and $\Phi_{h,r}(y,\fc)$ for $y>y_\fc$. We only give the proof for the case $y<y_\fc$.

For $y'\le y<y_{\fc}$ and $y'\le y''\le y_{\fc}$, using Proposition \ref{prop-phi}, we have
\begin{equation}\label{eq-decay-part1}
  \begin{aligned}    
    &\left|\phi(y,\fc) \frac{\int_{y_{\fc}}^{y'}\mathfrak w_{in}(y'')\phi_1(y'',\fc)dy''}{\phi^2(y',\fc)} \right|\le C\frac{\left\|\mathfrak w_{in}\right\|_{L^\infty}\left(e^{-C_4|y'-y_{\fc}|}-1\right)e^{-C_4|y-y'|}}{|y'-y_{\fc}|},   
  \end{aligned}
\end{equation}
\begin{align}\label{eq-decay-part2}
  \left| \frac{\phi(y,\fc)}{\phi^2(y',\fc)}dy_1\right|\le C \frac{ e^{-C_4|y'-y_{\fc}|} e^{-C_4|y-y'|}}{|y'-y_{\fc}|}.
\end{align}
Then by the Lebesgue-dominated convergence theorem, for $y<y_\fc$, we get
\begin{align*}
  \lim_{\fc_i\to0\pm}\Phi_{i,l}(y,\fc)=\phi(y,\fc_r)\int_{-\infty}^y\frac{\int_{y_{\fc}}^{y'}\mathfrak w_{in}(y'')\phi_1(y'',\fc_r)dy''}{\phi^2(y',\fc_r)}dy',\\
  \lim_{\fc_i\to0\pm}\Phi_{h,l}(y,\fc)=\phi(y,\fc_r)\int_{-\infty}^y\frac{1}{\phi^2(y',\fc_r)}dy'.
\end{align*}
This finishes the proof of this lemma.
}

\end{proof}
The following estimates hold for $\mathcal J_1(\fc_r)$, $\mathcal J_2(\fc_r)$, and $\mathcal J_3(\cdot,\fc_r)$.
\begin{lemma}\label{lem-est-J1J2J3}
  Under the assumptions on $M$ and $\gamma$ in Preposition \ref{prop-Rayleigh}, it holds that
  \begin{align}
    \mathcal J_1^2(\fc_r)+\mathcal J_2^2(\fc_r)\ge C_M,\ \text{ for }\ \fc_r\in\mathbb R,\label{eq-est-J1J2}\\
    \|\mathcal J_3(f,\fc_r)\|_{L^2_{\fc_r}}\le C_M \|f\|_{L^2_y}\ \text{ for } \  f(y)\in C_c^{\infty}(\mathbb{R}),\label{eq-est-J3}
  \end{align}
  where $C_M$ is independent of $\gamma$.
\end{lemma}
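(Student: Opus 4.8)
The two estimates are rather independent: \eqref{eq-est-J1J2} is an elementary but case-heavy argument, while \eqref{eq-est-J3} is a singular-integral bound. For \eqref{eq-est-J1J2} I would use, besides the given decomposition $\mathcal J_1(\fc_r)=\frac1{b_0'(y_\fc)}\Pi_1(\fc_r)+\Pi_2(\fc_r)$, the identity (obtained exactly as in the proof of Lemma \ref{lem-lim-eigen}, by the substitution $v=b_0(y)$ followed by one integration by parts)
\[
\frac{\Pi_1(\fc_r)}{b_0'(y_\fc)}=\mathrm{P.V.}\!\int_{\mathbb R}\frac{\partial_v^2(b_0^{-1})(v)}{v-\fc_r}\,dv ,\qquad
\partial_v^2(b_0^{-1})(v)=-\frac{b_0''\!\big(b_0^{-1}(v)\big)}{\big(b_0'(b_0^{-1}(v))\big)^{3}} ,
\]
whose integrand is an odd-type bump of $L^1$-mass $\lesssim M\gamma$ concentrated (up to Gaussian tails) in $\{|v|\lesssim\gamma\}$, and which equals $4\pi M\big(1-2aD(a)\big)+o_{\gamma\to0}(1)$ with $a:=\fc_r/\gamma$ and $D$ the Dawson function. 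Two further inputs: from \eqref{eq-phi1-est} and $b_0'\ge1$ one has the absolute bound $|\Pi_2(\fc_r)|\le C$; and --- the key point --- $\Pi_2(\fc_r)\le-c_0<0$ \emph{uniformly} in $\fc_r$, with $c_0$ absolute. The latter follows from a matching lower bound $\phi_1(y,\fc_r)-1\ge c(y-y_\fc)^2$ on $|y-y_\fc|\le1$: from the first-order form of \eqref{eq-phi1}, $(b_0-\fc_r)^2\phi_1'(y)=\int_{y_\fc}^y(b_0-\fc_r)^2\phi_1\ge\int_{y_\fc}^y(b_0-\fc_r)^2$ (using $\phi_1\ge1$), together with $(y-y_\fc)^2\le(b_0(y)-\fc_r)^2\le(1+2\sqrt\pi M\gamma)^2(y-y_\fc)^2$, one gets $\phi_1'(y)\ge c(y-y_\fc)$ hence the claim; combined with $1\le\phi_1\le e$ there this gives $\frac1{\phi_1^2}-1\le-c(y-y_\fc)^2$, and integrating the everywhere-nonpositive integrand of $\Pi_2$ over just $\{|y-y_\fc|\le1\}$ yields $\Pi_2(\fc_r)\le-c_0$.

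With these in hand I would split $\mathbb R$ according to $a=y_\fc/\gamma$ into three zones. (i) $|a|\le c_1$ ($c_1$ small): the displayed P.V.\ integral is $\ge4\pi M(1-2a^2)+o(1)\ge cM$ --- this is the bound $I_r\ge CM$ already established in the proof of Lemma \ref{lem-eigen-E3}, in the limit $\fc_i\to0$ --- so $\mathcal J_1(\fc_r)\ge cM-C\ge cM/2$. (ii) $c_1\le|a|\le1$: $|\mathcal J_2(\fc_r)|=\pi\frac{|b_0''(y_\fc)|}{(b_0'(y_\fc))^3}=\frac{4\pi\sqrt\pi M\,|a|e^{-a^2}}{(b_0'(y_\fc))^3}\ge c_M'>0$, since $|a|e^{-a^2}\ge c_1/2$ and $(b_0'(y_\fc))^3\le2$. (iii) $|a|\ge1$: $1-2aD(a)\le0$ for all $|a|\ge1$ (a standard monotonicity property of the Dawson function: $D$ attains its maximum at some $a_*<1$ and $1-2aD(a)=D'(a)$), so the P.V.\ integral is $\le o(1)\le c_0/2$, whence $\mathcal J_1(\fc_r)\le c_0/2-c_0=-c_0/2<0$. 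Taking $C_M$ to be the smallest of the three lower bounds gives \eqref{eq-est-J1J2}. The main difficulty here is this very decomposition: one genuinely needs the cancellation (the Dawson asymptotics, equivalently $2aD(a)\ge1$ for $a\ge a_*$) to control $\frac{\Pi_1(\fc_r)}{b_0'(y_\fc)}$ on the range $1\le|a|\lesssim\sqrt{\ln(\gamma^{-1})}$ --- a size estimate only gives $\lesssim M/|a|$, not small there --- while $|a|$ must be allowed to be $O(1)$ in zone (ii) because $\mathcal J_2$ is exponentially small in $a^2$.

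For \eqref{eq-est-J3}, after the change of variables $v=b_0(y')$, $v''=b_0(y'')$, setting $\tilde f(v)=f(b_0^{-1}(v))/b_0'(b_0^{-1}(v))$ (so $\|\tilde f\|_{L^2}\le\|f\|_{L^2_y}$, since $b_0'\ge1$) and $\Phi_1(v,\fc_r):=\phi_1(b_0^{-1}(v),\fc_r)$, one has
\[
\mathcal J_3(f,\fc_r)=\mathrm{P.V.}\!\int_{\mathbb R}\frac{(b_0^{-1})'(v)}{(v-\fc_r)^2\,\Phi_1^2(v,\fc_r)}\Big(\int_{\fc_r}^v\tilde f(v'')\,\Phi_1(v'',\fc_r)\,dv''\Big)dv .
\]
Replacing $(b_0^{-1})'$ and the two $\Phi_1$'s by $1$ and integrating by parts in $v$ gives the leading term $\mathrm{P.V.}\int\frac{\tilde f(v)}{v-\fc_r}\,dv=\pi(H\tilde f)(\fc_r)$, bounded on $L^2$ by Plancherel. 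For the remainder I would write $(b_0^{-1})'(v)=1+\rho(v)$ with $|\rho(v)|\lesssim M\gamma\,e^{-cv^2/\gamma^2}$ and $\|\rho'\|_{L^\infty}\lesssim M$, and $\Phi_1(v,\fc_r)=1+\psi(v,\fc_r)$ with $0\le\psi(v,\fc_r)\lesssim|v-\fc_r|^2$ for $|v-\fc_r|\le1$ and $\Phi_1^{-2}(v,\fc_r)\lesssim e^{-C_4|v-\fc_r|}$ everywhere (all from \eqref{eq-phi1-est} and $(b_0^{-1})'\le1$), expand, and estimate each error operator by splitting $\{|v-\fc_r|\le1\}$ and $\{|v-\fc_r|>1\}$: on the near-diagonal piece the quadratic vanishing of $\psi$ (respectively, after writing $\rho(v)=\rho(\fc_r)+(\rho(v)-\rho(\fc_r))$ with $|\rho(v)-\rho(\fc_r)|\lesssim M|v-\fc_r|$) cancels the $(v-\fc_r)^{-2}$ singularity and leaves a kernel localized near the diagonal with uniformly bounded rows and columns (Schur's test), plus a small ($O(M\gamma)$) or bounded multiple of $H$; on the far-diagonal piece $\Phi_1^{-2}\lesssim e^{-C_4|v-\fc_r|}$ makes the kernel exponentially decaying (Schur), and the single leftover non-decaying term, $-\int_{|v-\fc_r|>1}\frac{1}{(v-\fc_r)^2}\Big(\int_{\fc_r}^v\tilde f(v'')\,dv''\Big)dv$, is a truncated Hilbert-transform-type convolution with bounded Fourier multiplier, again treated by Plancherel. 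The subtle point is to get genuine $L^2_{\fc_r}$ bounds rather than merely pointwise ones --- the naive estimate $\big|\int_{\fc_r}^v\tilde f(v'')\,dv''\big|\le|v-\fc_r|^{1/2}\|\tilde f\|_{L^2}$ only controls $\|\mathcal J_3(f,\cdot)\|_{L^\infty_{\fc_r}}$ --- so the Calderón--Zygmund/convolution structure must be retained throughout; the resulting $C_M$ depends on $M$ (through $\|b_0''\|_{L^\infty}\lesssim M$) but not on $\gamma$.
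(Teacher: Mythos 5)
Your proposal is correct, and its overall architecture coincides with the paper's: for \eqref{eq-est-J1J2} a three-zone analysis in $a=\fc_r/\gamma$ (small $|a|$: $\Pi_1/b_0'(y_\fc)\gtrsim M$ dominates $|\Pi_2|\le C$; intermediate $|a|$: $|\mathcal J_2|\gtrsim_M 1$; large $|a|$: $\mathcal J_1\le \Pi_2+o_{\gamma\to0}(1)\le -c_0/2$), and for \eqref{eq-est-J3} an integration by parts producing a Hilbert-transform main term plus a Schur/Young bound on the error kernel via the quadratic vanishing of $\phi_1-1$ at $y_\fc$ and the exponential growth of $\phi_1$ away from it. The differences are in the details and each is legitimate. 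First, for large $|a|$ the paper only invokes the decay at infinity of the Hilbert transform of the $H^1$ function $Ve^{-V^2}$, so its threshold $C_\varepsilon\gamma$ is implicit; your identity $\Pi_{1,1}=4\pi M\big(1-2aD(a)\big)=4\pi M\,D'(a)\le 0$ for $|a|\ge 1$ gives a sign rather than mere smallness and an explicit threshold, which is slightly cleaner. Second, you actually prove the uniform strict negativity $\Pi_2\le -c_0$ via the matching lower bound $\phi_1-1\ge c(y-y_\fc)^2$, which is a genuine addition: \eqref{eq-phi1-est} only records the upper bound $|\phi_1-1|\le C|y-y_\fc|^2$ near $y_\fc$, and the paper's one-line justification really rests instead on $\phi_1\ge Ce^{C_4|y-y_\fc|}$ for $|y-y_\fc|\ge 1$; either region yields the needed definite negative contribution. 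Third, for $\mathcal J_3$ the paper's model kernel $\frac{1+3x^2}{x^2(1+x^2)^2}=-\pa_x\big(\frac{1}{x(1+x^2)}\big)$ has the far-field decay built in and integrates by parts directly to a maximal-Hilbert-transform-plus-$L^1$-kernel expression, whereas your choice of $1/x^2$ forces a separate treatment of the non-decaying far-diagonal piece; your multiplier argument for it is correct, but note it requires the Fubini swap turning $\int_{|v-\fc_r|>1}(v-\fc_r)^{-2}\int_{\fc_r}^{v}\tilde f$ into convolution with $\mathrm{sgn}(w)\min(1,|w|^{-1})$, which you should spell out. Both routes correctly retain the convolution/Calder\'on--Zygmund structure rather than the pointwise bound $|\int_{\fc_r}^{v}\tilde f|\le |v-\fc_r|^{1/2}\|\tilde f\|_{L^2}$, which, as you observe, would only give $L^\infty_{\fc_r}$ control.
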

\begin{proof}
Recall that 
\begin{align*}
  \mathcal J_1(\fc_r)&=\frac{1}{b_0'(y_\fc)}\Pi_1(\fc_r)+\Pi_2(\fc_r).
\end{align*}
As $M\gamma$ is small enough, we have 
\begin{align*}
   1\le b_0'(y)\le \frac{3}{2}\text{ for }\forall y\in\mathbb R.
 \end{align*} 
Then, from \eqref{eq-phi1-est}, we have that 
\begin{align*}
  -C_6\le\Pi_2(\fc_r)\le -C_7<0,
\end{align*}
where $C_6,C_7>0$ are two constants independent of $M$ and $\gamma$. 

Similar to Lemma \ref{lem-eigen-E3}, we have $\frac{1}{b_0'(y_\fc)}\Pi_1(\fc_r)\ge CM$ for $|\fc_r|\le \frac{1}{4}\gamma$. Thus by taking $M$ big enough, we have $\mathcal J_1\ge C$ for $|\fc_r|\le \frac{1}{4}\gamma$.

Next, we write
\begin{align*}
  \Pi_1(\fc_r)=& \text{P.V.}\int\frac{b_0'(y_\fc)-b_0'(y)}{(b_0(y)-\fc_r)^2} dy=\text{P.V.}\int \frac{\big(b_0'(y_\fc)-b_0'(b_0^{-1}(v))\big)\pa_vb_0^{-1}(v)}{(v-\fc_r)^2} dv\\
  =&-\text{P.V.}\int\frac{b_0'(y_\fc)-b_0'(b_0^{-1}(v))}{b_0'(b_0^{-1}(v))}\pa_v \frac{1}{v-\fc_r} dv=-b_0'(y_\fc)\text{P.V.}\int\frac{b_0''(b_0^{-1}(v))}{\big(b_0'(b_0^{-1}(v))\big)^3}\frac{1}{v-\fc_r} dv\\
  =&-b_0'(y_\fc)\text{P.V.}\int\frac{b_0''(v)}{v-\fc_r} dv-b_0'(y_\fc)\text{P.V.}\int\left(\frac{b_0''(b_0^{-1}(v))}{\big(b_0'(b_0^{-1}(v))\big)^3}-b_0''(v)\right)\frac{1}{v-\fc_r} dv\\
  \eqdef&\Pi_{1,1}(\fc_r)+\Pi_{1,2}(\fc_r).
\end{align*}
Taking $V=\frac{v}{\gamma}$, we have
\begin{align*}
  \Pi_{1,1}(\fc_r)=&-\text{P.V.}\int\frac{b_0''(v)}{v-\fc_r} dv =4\sqrt\pi M\text{P.V.}\int\frac{\frac{v}{\gamma}e^{-\frac{v^2}{\gamma^2}}}{v-\fc_r} dv\\
  =&4\sqrt\pi M\text{P.V.}\int\frac{Ve^{-V^2}}{V-\frac{\fc_r}{\gamma}} dV
  =-4\sqrt\pi M\mathcal H(Ve^{-V^2})(\frac{\fc_r}{\gamma}).
\end{align*}
It is clear that $Ve^{-V^2}\in H^1_V$, then for $\varepsilon>0$ there exists constants $C_\varepsilon$ independent of $\gamma$ that $|\Pi_{1,1}(\fc_r)|\le \varepsilon$ for $|\fc_\gamma|\ge C_\varepsilon\gamma$.

Recalling that $\|b_0(y)-y\|_{L^\infty_y}\le CM\gamma^2$, one can easily check that
\begin{align*}
  \left\|\frac{b_0''(b_0^{-1}(v))}{\big(b_0'(b_0^{-1}(v))\big)^3}-b_0''(v)\right\|_{L^2_v}\le CM^2\gamma^{\frac{3}{2}},\quad\left\|\frac{b_0''(b_0^{-1}(v))}{\big(b_0'(b_0^{-1}(v))\big)^3}-b_0''(v)\right\|_{\dot H^1_v}\le CM^2\gamma^{\frac{1}{2}},
\end{align*}
and then
\begin{align*}
  |\Pi_{1,2}(\fc_r)|\le\left\|b_0'(y_\fc)\text{P.V.}\int\left(\frac{b_0''(b_0^{-1}(v))}{\big(b_0'(b_0^{-1}(v))\big)^3}-b_0''(v)\right)\frac{1}{v-\fc_r} dv\right\|_{L^\infty_{\fc_r}}\le CM^2\gamma.
\end{align*}
Combing the estimates for $\Pi_{1,1}(\fc_r)$ and $\Pi_{1,2}(\fc_r)$, we can see for $\gamma$ small enough, there exists constant $C_8$ independent of $\gamma$ such that for $|\fc_r|\ge C_8\gamma$, $\Pi_1(\fc_r)\le \frac{C_7}{2}$, and then $|\mathcal J_1(\fc_r)|\ge\frac{C_7}{2}$. 

For $\frac{1}{4}\gamma\le|\fc_r|\le C_8\gamma$, we have $\mathcal J_2(\fc_r)\ge C_M$, which gives \eqref{eq-est-J1J2}.

Next, we turn to $\mathcal J_3$. We write
\begin{align*}
  \mathcal J_3(\fc_r)=&\text{P.V.}\int\frac{\int_{\fc_r}^{v} f\big(b_0^{-1}(v')\big)\phi_1\big(b_0^{-1}(v'),\fc_r\big)\pa_{v'}b_0^{-1}(v')dv'\pa_vb_0^{-1}(v)}{(v-\fc_r)^2\phi_1^2\big(b_0^{-1}(v),\fc_r\big)}dv\\
  =&\frac{1}{\big(b_0'(y_\fc)\big)^2}\text{P.V.}\int \frac{\int_{\fc_r}^{v} f\big(b_0^{-1}(v')\big)dv'}{(v-\fc_r)^2}\frac{1+3(v-\fc_r)^2}{\big(1+(v-\fc_r)^2\big)^2}dv\\
  &+\int_{-\infty}^{+\infty}\frac{\int_{\fc_r}^{v} f\big(b_0^{-1}(v')\big)\left(\frac{\phi_1\big(b_0^{-1}(v'),\fc_r\big)\pa_{v'}b_0^{-1}(v')\pa_vb_0^{-1}(v)}{\phi_1^2\big(b_0^{-1}(v),\fc_r\big)} -\frac{1+3(v-\fc_r)^2}{\big(b_0'(y_\fc)\big)^2\big(1+(v-\fc_r)^2\big)^2}\right)dv'}{(v-\fc_r)^2}dv\\
  \eqdef&\mathcal J_{3,1}(\fc_r)+\mathcal J_{3,2}(\fc_r).
\end{align*}
As $|v'-\fc_r|\le|v-\fc_r|$, it follows from \eqref{eq-phi1-est} that $\left|\phi_1\big(b_0^{-1}(v'),\fc_r\big)-1\right|\le C|v'-\fc_r|^2$, and $\phi_1\big(b_0^{-1}(v'),\fc_r\big)\ge Ce^{C_4|b_0^{-1}(v')-y_\fc|}$. Then one can easily check that
\begin{align*}
  \left|\frac{\phi_1\big(b_0^{-1}(v'),\fc_r\big)\pa_{v'}b_0^{-1}(v')\pa_vb_0^{-1}(v)}{\phi_1^2\big(b_0^{-1}(v),\fc_r\big)} -\frac{1+3(v-\fc_r)^2}{\big(b_0'(y_\fc)\big)^2\big(1+(v-\fc_r)^2\big)^2}\right|\le C\min(|v-\fc_r|,\frac{1}{|v-\fc_r|^2}).
\end{align*}
Therefore, we have
\begin{align*}
  \|\mathcal J_{3,2}\|_{L^2_{\fc_r}}\le& C\left\|\int_{-\infty}^{+\infty}\frac{\min(|v-\fc_r|,\frac{1}{|v-\fc_r|^2})}{|v-\fc_r|}\frac{1}{|v-\fc_r|}\int_{\fc_r}^{v}| f\big(b_0^{-1}(v')\big)|dv'dv\right\|_{L^2_{\fc_r}}\\
  \le&C \| f\big(b_0^{-1}(\cdot)\big)\|_{L^2_{v}}\le C \| f\|_{L^2_y}.
\end{align*}
For $\mathcal J_{3,1}(\fc_r)$, we have
\begin{align*}
  \mathcal J_{3,1}(\fc_r)=&-\frac{1}{\big(b_0'(y_\fc)\big)^2}\text{P.V.}\int \int_{\fc_r}^{v} f\big(b_0^{-1}(v')\big)dv'\pa_v\left(\frac{1}{(v-\fc_r)\big(1+(v-\fc_r)^2\big)}\right)dv\\
  =&\frac{1}{\big(b_0'(y_\fc)\big)^2}\text{P.V.}\int \frac{ f\big(b_0^{-1}(v)\big)}{(v-\fc_r)\big(1+(v-\fc_r)^2\big)}dv\\
  =&\frac{1}{\big(b_0'(y_\fc)\big)^2}\text{P.V.}\int_{\fc_r-1}^{\fc_r+1}\frac{ f\big(b_0^{-1}(v)\big)}{(v-\fc_r)}dv-\frac{1}{\big(b_0'(y_\fc)\big)^2}\int_{\fc_r-1}^{\fc_r+1}\frac{ f\big(b_0^{-1}(v)\big)(v-\fc_r)}{1+(v-\fc_r)^2}dv\\
  &+\frac{1}{\big(b_0'(y_\fc)\big)^2}\int_{\mathbb R\setminus[\fc_r-1,\fc_r+1]}\frac{ f\big(b_0^{-1}(v)\big)}{(v-\fc_r)\big(1+(v-\fc_r)^2\big)}dv\\
  =&\frac{\mathcal H_1\big( f(b_0^{-1}(\cdot))\big)(\fc_r)-\mathcal H\big( f(b_0^{-1}(\cdot))\big)(\fc_r)}{\big(b_0'(y_\fc)\big)^2}\\
  &-\frac{\int_{\mathbb R} f\big(b_0^{-1}(v+\fc_r)\big)\frac{\chi_{|v|\le1}v}{v\big(1+v^2\big)}dv-\int_{\mathbb R} f\big(b_0^{-1}(v+\fc_r)\big)\frac{\chi_{|v|\ge1}}{v\big(1+v^2\big)}dv}{\big(b_0'(y_\fc)\big)^2},
\end{align*}
where $\mathcal H_1\big( f(b_0^{-1}(\cdot))\big)(\fc_r)=\int_{\mathbb R\setminus[-1,1]}\frac{ f(b_0^{-1}(\fc_r-v))}{v}dv$. By Young's convolution inequality and the properties of the (maximal) Hilbert operator, we have
\begin{align*}
  \|\mathcal J_{3,1}\|_{L^2_{\fc_r}}\le C \| f\big(b_0^{-1}(\cdot)\big)\|_{L^2_{v}}\le C \| f\|_{L^2_y}.
\end{align*}
The estimate \eqref{eq-est-J3} follows immediately.
\end{proof}{{}
\begin{proposition}[Representation formula]\label{prop-repr}
  Let $\mathfrak w_{in}\in C_c^{\infty}(\mathbb{R})$, it holds that 
  \begin{equation}\label{eq-rep-Psi-lim}
    \begin{aligned}    
      \Psi(t,y)=&-\frac{1}{\pi}\int_{-\infty}^{b_0(y)} e^{-i\fc_r t} \phi(y,\fc_r)\frac{\mathcal J_1\mathcal J_4+\mathcal J_2\mathcal J_3}{\mathcal J_1^2+\mathcal J_2^2}\int_{+\infty}^y\frac{1}{\phi^2(y',\fc_r)}dy'd\fc_r\\
      &\qquad\qquad-\frac{1}{\pi}\int_{b_0(y)}^{+\infty} e^{-i\fc_r t}\phi(y,\fc_r) \frac{\mathcal J_1\mathcal J_4+\mathcal J_2\mathcal J_3}{\mathcal J_1^2+\mathcal J_2^2}\int_{-\infty}^y\frac{1}{\phi^2(y',\fc_r)}dy'd\fc_r.    
    \end{aligned}
  \end{equation}
\end{proposition}
By Lemma \ref{lem:limit}, exchanging the limit and integral in \eqref{eq-rep-Psi} yields \eqref{eq-rep-Psi-lim}, for a detailed proof, we refer the readers to Appendix \ref{appendix-B}.}

Now we give the proof of the upper bound estimate \eqref{eq-up-bound-R}.
\begin{lemma}\label{lem-up-bound-noeigen}
  Under the assumptions on $M$ and $\gamma$ in Preposition \ref{prop-Rayleigh}, and the assumption that $\mathcal R$ has no eigenvalues, it holds that
  \begin{align}\label{eq:semigroup-up-bound-e-itR}
  \|e^{-it\mathcal R}\mathfrak w_{in}\|_{L^2_{y}}\le C\|\mathfrak w_{in}\|_{L^2_{y}},
\end{align}
where $C$ is a constant independent of $\gamma$.
\end{lemma}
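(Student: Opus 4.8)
\end{lemma}

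\emph{Proof strategy.} The plan is to run a limiting absorption / representation formula argument and reduce everything to an $L^2$ estimate in the spectral variable $\fc_r$ that is uniform in $\gamma$. It suffices to prove \eqref{eq:semigroup-up-bound-e-itR} for $\mathfrak w_{in}\in C_c^\infty(\mathbb R)$: for each fixed $t$ the operator $e^{-it\mathcal R}$ is bounded on $L^2_y$ (as $\mathcal R$ is the generator of a unitary group perturbed by the bounded operator $b_0''(\pa_y^2-1)^{-1}$), so once we have a $t$-independent bound on a dense subspace the general case follows by continuity. Since $\mathcal R$ has no eigenvalues, $\mathcal D(\fc)\neq 0$ for $\fc_i\neq0$, hence $\Phi(y,\fc)$ (equivalently the resolvent) is well defined off $\mathbb R$, and by the uniform bound \eqref{eq:PhiinL1} and Lemma \ref{lem:limit} we may pass $\fc_i\to 0\pm$ in the representation formula by dominated convergence. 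This gives $\Psi(t,y)=\frac{1}{2\pi}\int_{\mathbb R}e^{-i\fc_r t}\big(\Phi_-(y,\fc_r)-\Phi_+(y,\fc_r)\big)d\fc_r$, and applying $(\pa_y^2-1)$ under the integral with the inhomogeneous Rayleigh equation $(\pa_y^2-1)\Phi=\frac{b_0''}{b_0-\fc}\Phi+i\frac{\mathfrak w_{in}}{b_0-\fc}$ and the Plemelj formula $\frac{1}{b_0-\fc_r\mp i0}=\text{P.V.}\frac{1}{b_0-\fc_r}\pm i\pi\delta(b_0-\fc_r)$, the $\mathfrak w_{in}$-term contributes exactly the free-streaming profile $-\mathfrak w_{in}(y)e^{-ib_0(y)t}$, with $L^2_y$ norm $\|\mathfrak w_{in}\|_{L^2_y}$. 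Thus it remains to bound, uniformly in $t$ and $\gamma$, the term $\mathfrak w_1(t,y)=\frac{b_0''(y)}{2\pi}\int_{\mathbb R}e^{-i\fc_r t}\big(\frac{\Phi_-(y,\fc_r)}{b_0(y)-\fc_r-i0}-\frac{\Phi_+(y,\fc_r)}{b_0(y)-\fc_r+i0}\big)d\fc_r$.

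The particular part of $\Phi_\pm$ in Lemma \ref{lem:limit} is independent of the sign, so the jump is $\Phi_-(y,\fc_r)-\Phi_+(y,\fc_r)=-i\big(\mu_-(\fc_r)-\mu_+(\fc_r)\big)\varphi(y,\fc_r)$, where $\varphi(y,\fc_r)$ equals $\varphi^-(y,\fc_r)$ for $y<y_\fc$ and $\varphi^+(y,\fc_r)$ for $y>y_\fc$; by Proposition \ref{prop-phi} this $\varphi$ is a homogeneous Rayleigh solution with $|\varphi(y,\fc_r)|\le Ce^{-c|y-y_\fc|}$ ($c,C$ depending only on the bounds of $b_0'$) and $\varphi(y_\fc,\fc_r)=-1/b_0'(y_\fc)$. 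A direct computation from the formula for $\mu_\pm$ gives
\begin{align*}
  \mu_-(\fc_r)-\mu_+(\fc_r)=\frac{-2i\big(\mathcal J_1(\fc_r)\mathcal J_4(\mathfrak w_{in},\fc_r)+\mathcal J_2(\fc_r)\mathcal J_3(\mathfrak w_{in},\fc_r)\big)}{\mathcal J_1^2(\fc_r)+\mathcal J_2^2(\fc_r)}.
\end{align*}
By \eqref{eq-est-J1J2} the denominator is $\ge C_M>0$ uniformly in $\fc_r,\gamma$; from the proof of Lemma \ref{lem-est-J1J2J3} one has $\|\mathcal J_1\|_{L^\infty}+\|\mathcal J_2\|_{L^\infty}\le CM$, while \eqref{eq-est-J3}, the compact support of $\mathcal J_4(\mathfrak w_{in},\cdot)$, and the change of variables $\fc_r=b_0(y_\fc)$ give $\|\mathcal J_3(\mathfrak w_{in},\cdot)\|_{L^2_{\fc_r}}+\|\mathcal J_4(\mathfrak w_{in},\cdot)\|_{L^2_{\fc_r}}\le C_M\|\mathfrak w_{in}\|_{L^2_y}$. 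Hence $\|\mu_--\mu_+\|_{L^2_{\fc_r}}\le C_M\|\mathfrak w_{in}\|_{L^2_y}$ and, similarly, $\|(\mu_-+\mu_+)(b_0(\cdot))\|_{L^2_y}\le C_M\|\mathfrak w_{in}\|_{L^2_y}$. We also use $\|b_0''\|_{L^\infty_y}\le CM$ and $b_0'\ge1$, both uniform in $\gamma$.

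Splitting $\frac{1}{b_0-\fc_r\mp i0}$ by Plemelj, $\mathfrak w_1=\mathfrak w_1^{\delta}+\mathfrak w_1^{\mathrm{pv}}$. For the $\delta$-part, $\mathfrak w_1^{\delta}(t,y)=\frac{i b_0''(y)}{2}e^{-ib_0(y)t}\big(\Phi_-+\Phi_+\big)(y,b_0(y))$; since at the critical layer the particular part of $\Phi_\pm$ vanishes and $\varphi(y_\fc,\fc_r)=-1/b_0'(y_\fc)$, one gets $\big(\Phi_-+\Phi_+\big)(y,b_0(y))=i(\mu_-+\mu_+)(b_0(y))/b_0'(y)$, whence $\|\mathfrak w_1^{\delta}(t)\|_{L^2_y}\le C\|b_0''\|_{L^\infty}\|(\mu_-+\mu_+)(b_0(\cdot))\|_{L^2_y}\le C_M\|\mathfrak w_{in}\|_{L^2_y}$ uniformly in $t$. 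For the principal value part $\mathfrak w_1^{\mathrm{pv}}(t,y)=\frac{-i b_0''(y)}{2\pi}\,\text{P.V.}\!\int_{\mathbb R}e^{-i\fc_r t}\frac{(\mu_--\mu_+)(\fc_r)\,\varphi(y,\fc_r)}{b_0(y)-\fc_r}\,d\fc_r$, substitute $\fc_r=b_0(\tilde y)$ and write $\frac{b_0'(\tilde y)}{b_0(y)-b_0(\tilde y)}=\frac{r(y,\tilde y)}{y-\tilde y}$ with $r$ bounded above and below and $r(y,y)=1$; adding and subtracting the critical-layer value $\varphi(y,b_0(y))r(y,y)=-1/b_0'(y)$ expresses $\mathfrak w_1^{\mathrm{pv}}(t,y)$ as $-\tfrac{b_0''(y)}{2\pi b_0'(y)}\,\mathcal H\!\big[\mathfrak g_0(t,\cdot)\big](y)$ plus a remainder $\int \mathcal K_{\mathrm{rem}}(y,\tilde y)\mathfrak g_0(t,\tilde y)\,d\tilde y$, where $\mathcal H$ is the Hilbert transform, $\mathfrak g_0(t,\tilde y)=e^{-ib_0(\tilde y)t}(\mu_--\mu_+)(b_0(\tilde y))b_0'(\tilde y)$, and $\mathcal K_{\mathrm{rem}}(y,\tilde y)=b_0''(y)\big(\varphi(y,b_0(\tilde y))r(y,\tilde y)+1/b_0'(y)\big)/(y-\tilde y)$ is a bounded kernel with exponential off-diagonal decay $|\mathcal K_{\mathrm{rem}}(y,\tilde y)|\le CMe^{-c|y-\tilde y|}$ (using $b_0''\in L^\infty$, the decay of $\varphi$, and the smoothness of $\varphi$ and $r$ in their parameters from Proposition \ref{prop-phi}). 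The main term is bounded by $\|b_0''\|_{L^\infty}\|\mathcal H\mathfrak g_0(t)\|_{L^2_y}\le CM\|\mathfrak g_0(t)\|_{L^2}\le C_M\|\mathfrak w_{in}\|_{L^2_y}$ ($\mathcal H$ bounded on $L^2$, $e^{-ib_0t}$ unitary, $b_0'\ge1$), and the remainder by Schur's test by $CM\|\mathfrak g_0(t)\|_{L^2}\le C_M\|\mathfrak w_{in}\|_{L^2_y}$, both uniformly in $t$ and $\gamma$. Collecting the pieces yields $\|e^{-it\mathcal R}\mathfrak w_{in}\|_{L^2_y}=\|\mathfrak w(t)\|_{L^2_y}\le(1+C_M)\|\mathfrak w_{in}\|_{L^2_y}$, which is \eqref{eq:semigroup-up-bound-e-itR} with a constant independent of $\gamma$; density in $L^2$ finishes the proof.

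The main obstacle is the principal value term $\mathfrak w_1^{\mathrm{pv}}$. It cannot be estimated by Minkowski's inequality in $\fc_r$, since the integrand $\tfrac{b_0''(y)\varphi(y,\fc_r)}{b_0(y)-\fc_r}$ fails to be square-integrable in $y$ near the critical layer (there it behaves like $1/(y-y_\fc)$), so one must genuinely exploit the $L^2$-boundedness of the Hilbert transform together with the exponential decay of $\varphi$ off the critical layer and, crucially, the uniform lower bound $\mathcal J_1^2+\mathcal J_2^2\ge C_M>0$ that makes $\mu_-\!-\!\mu_+$ controlled in $L^2_{\fc_r}$. The delicate bookkeeping is tracking $\gamma$-independence throughout — in particular using $\|b_0''\|_{L^\infty}\lesssim M$ rather than any $\gamma^{-1}$-type bound, and carrying the change of variables $\fc_r=b_0(\tilde y)$ — and verifying the off-diagonal decay and boundedness of $\mathcal K_{\mathrm{rem}}$, which rests on the estimates for $\phi$, $\varphi$ in Proposition \ref{prop-phi}.
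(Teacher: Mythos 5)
Your route is genuinely different from the paper's. You apply $(\pa_y^2-1)$ directly to the representation formula, use the Plemelj jump relations to peel off the free-streaming term $-\mathfrak w_{in}(y)e^{-ib_0(y)t}$, and control the critical-layer singularity of the remaining term through the $L^2$-boundedness of the Hilbert transform after the change of variables $\fc_r=b_0(\tilde y)$. The paper instead avoids ever differentiating the stream function: it pairs $\mathfrak w(t,\cdot)$ with a test function $g$, integrates by parts twice in $y$, and exploits the algebraic identity $\mathcal K_1+\mathcal K_2+\mathcal K_3=b_0'(y_\fc)g(y_\fc)\mathcal J_1(\fc_r)$ so that the kernel $K(\fc_r)$ is expressed entirely through $\mathcal J_1,\dots,\mathcal J_4$ acting on $\mathfrak w_{in}$ and on $b_0''g$; the bound then follows from Cauchy--Schwarz in $\fc_r$ and Lemma \ref{lem-est-J1J2J3}. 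Both arguments rest on the same pillars (Lemma \ref{lem:limit}, the formula for $\mu_--\mu_+$, and the uniform lower bound $\mathcal J_1^2+\mathcal J_2^2\ge C_M$), and your identification of where the real difficulty sits is accurate.

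Two points need repair. First, the claimed bound $|\mathcal K_{\mathrm{rem}}(y,\tilde y)|\le CMe^{-c|y-\tilde y|}$ is false: for $|y-\tilde y|\ge 1$ the term $\varphi(y,b_0(\tilde y))r(y,\tilde y)$ does decay exponentially, but the subtracted constant $1/b_0'(y)\ge 2/3$ does not, so $|\mathcal K_{\mathrm{rem}}(y,\tilde y)|\gtrsim |b_0''(y)|/|y-\tilde y|$ and $\sup_y\int|\mathcal K_{\mathrm{rem}}(y,\tilde y)|\,d\tilde y=\infty$; Schur's test as invoked does not close. The fix is standard and is the one the paper itself uses in Lemma \ref{lem-est-J1J2J3}: perform the add-and-subtract only on $|y-\tilde y|\le 1$ (i.e.\ use the truncated Hilbert transform $\mathcal H_1$ together with $\mathcal H$), and estimate the far-diagonal piece by Cauchy--Schwarz in $\tilde y$, which yields a factor $\|b_0''\|_{L^2_y}\le CM\gamma^{1/2}$. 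Second, your step of interchanging $(\pa_y^2-1)$ with the $\fc_i\to 0$ limit and the $\fc_r$-integral, and then applying the distributional Plemelj formula against $\Phi_\pm(y,\fc_r)$, presupposes H\"older continuity of $\fc_r\mapsto\mu_\pm(\fc_r)$ and of the particular part (hence of the principal-value integral $\mathcal J_3(\mathfrak w_{in},\fc_r)$); this is plausible for $\mathfrak w_{in}\in C_c^\infty$ but is nowhere established, and it is precisely the technical burden that the paper's duality argument — in which all $y$-integrations are performed before the $\fc_r$-integration — is designed to sidestep. As written, your proof has a genuine (if repairable) gap at these two places; with the truncated-Hilbert-transform fix and a regularity lemma for $\mu_\pm$ it would go through.
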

\begin{proof}
Let $\Psi(t,y)$ be the solution of \eqref{eq: LinearEuler-Psi} { with $\om_{in}\in C_c^{\infty}$}. { It has the representation formula \eqref{eq-rep-Psi-lim}.} Instead of acting $(\pa_{yy}-1)$ on the stream function $\Psi(t,y)$, to estimate $\mathfrak w(t,y)$,  we use the duality argument to give the upper bound of $\|\mathfrak w(t,y)\|_{L^2_y}$. We make inner product of $\mathfrak w(t,y)$ with a test function $g(y)\in C_c^\infty(\mathbb R)$ such that $\|g\|_{L^2_y}=1$, and get
\begin{align*}
  \int_{\mathbb{R}}\mathfrak w(t,y)g(y)dy=\int_{\mathbb{R}}\Psi(t,y)(\pa_y^2-1)g(y)dy=\int_{\mathbb{R}}e^{-i\fc_r t}K(\fc_r)d\fc_r,
\end{align*}
where 
\begin{align*}
  K(\fc_r)=&-\frac{1}{\pi}\frac{\mathcal J_1(\fc_r)\mathcal J_4(\mathfrak w_{in},\fc_r)+\mathcal J_2(\fc_r)\mathcal J_3(\mathfrak w_{in},\fc_r)}{\mathcal J_1^2(\fc_r)+\mathcal J_2^2(\fc_r)}\int^{+\infty}_{-\infty}\phi(y,\fc_r)(\pa_y^2-1)g(y)\\
  &\qquad\quad\cdot\Big(\chi_{y>y_\fc}\int_{+\infty}^y\frac{1}{\phi^2(y',\fc_r)}dy'+\chi_{y<y_\fc}\int_{-\infty}^y\frac{1}{\phi^2(y',\fc_r)}dy'\Big)dy\\
  =&\frac{1}{\pi}\frac{\mathcal J_1(\fc_r)\mathcal J_4(\mathfrak w_{in},\fc_r)+\mathcal J_2(\fc_r)\mathcal J_3(\mathfrak w_{in},\fc_r)}{\mathcal J_1^2(\fc_r)+\mathcal J_2^2(\fc_r)}\int^{+\infty}_{-\infty}\frac{\int_{y_\fc}^{y'}\phi(y,\fc_r)(\pa_y^2-1)g(y)dy}{\phi^2(y',\fc_r)}dy'\\
  =&\frac{1}{\pi}\frac{\mathcal J_1(\fc_r)\mathcal J_4(\mathfrak w_{in},\fc_r)+\mathcal J_2(\fc_r)\mathcal J_3(\mathfrak w_{in},\fc_r)}{\mathcal J_1^2(\fc_r)+\mathcal J_2^2(\fc_r)}\\
  &\quad\cdot\text{P.V.}\int \frac{\int_{y_\fc}^{y'}b_0''(y)\phi_1(y,\fc_r)  g(y)dy+\phi(y',\fc_r) g'(y')-\phi'(y',\fc_r) g(y')+b_0'(y_\fc) g(y_\fc)}{\phi^2(y',\fc_r)}dy',
\end{align*}
and $\mathfrak c_r=b(y_{\fc})$. Here we have used the Fubini theorem in the second identity and integration by part twice in the third identity. 

Hence by the definition of $\mathcal J_3$ in \eqref{eq:J_3,J_4}, we have 
\begin{align*}
  \text{P.V.}\int \frac{\int_{y_\fc}^{y'}b_0''(y)\phi_1(y,\fc_r)  g(y)dy}{\phi^2(y',\fc_r)}dy'=\mathcal J_3(b_0''g,\fc_r).
\end{align*}
We rewrite
\begin{align*}
  &\text{P.V.}\int \frac{\phi(y',\fc_r) g'(y')-\phi'(y',\fc_r) g(y')+b_0'(y_\fc) g(y_\fc)}{\phi^2(y',\fc_r)}dy'\\
  =&\text{P.V.}\int \frac{ g'(y')}{\phi(y',\fc_r)}dy'-\text{P.V.}\int \frac{ g'(y')b_0'(y')\phi_1(y',\fc_r)-b_0'(y_\fc) g(y_\fc)}{\phi^2(y',\fc_r)}dy'\\
  &-\text{P.V.}\int \frac{ g(y')\big(b_0(y')-\fc_r\big)\phi_1'(y',\fc_r)}{\phi^2(y',\fc_r)}dy'
  \eqdef\mathcal K_1+\mathcal K_2+\mathcal K_3.
\end{align*}
For each term, we deduce that
\begin{align*}
  \mathcal K_1=&\text{P.V.}\int \frac{\pa_{y'}\big( g(y')- g(y_\fc)\big)}{\phi(y',\fc_r)}dy'=\text{P.V.}\int \frac{\big( g(y')- g(y_\fc)\big)\phi'(y',\fc_r)}{(b_0(y')-\fc_r)^2\phi_1^2(y',\fc_r)}dy'\\
  =&\text{P.V.}\int \frac{\big( g(y')- g(y_\fc)\big)b_0'(y')\phi_1(y',\fc_r)}{(b_0(y')-\fc_r)^2\phi_1^2(y',\fc_r)}dy'\\
  &+\text{P.V.}\int \frac{\big( g(y')- g(y_\fc)\big)\big(b_0(y')-\fc_r\big)\phi_1'(y',\fc_r)}{(b_0(y')-\fc_r)^2\phi_1^2(y',\fc_r)}dy'
  \eqdef \mathcal K_{1,1}+\mathcal K_{1,2},
\end{align*}
and
\begin{align*}
  &\mathcal K_{1,1}+\mathcal K_2=- g(y_\fc)\text{P.V.}\int \frac{b_0'(y')\phi_1(y',\fc_r)-b_0'(y_\fc)}{(b_0(y')-\fc_r)^2\phi_1^2(y',\fc_r)}dy'\\
  =&- g(y_\fc)\text{P.V.}\int \frac{b_0'(y')-b_0'(y_\fc)}{\big(b_0(y')-\fc_r\big)^2}dy'- g(y_\fc)\int^{+\infty}_{-\infty}\frac{b_0'(y')-b_0'(y_\fc)}{\big(b_0(y')-\fc_r\big)^2}\left(\frac{1}{\phi_1(y',\fc_r)}-1\right)dy'\\
  &- b_0'(y_\fc)g(y_\fc)\int^{+\infty}_{-\infty}\frac{1}{\big(b_0(y')-\fc_r\big)^2}\frac{\phi_1(y',\fc_r)-1}{\phi_1^2(y',\fc_r)}dy',
\end{align*}
and
\begin{align*}
  \mathcal K_{1,2}+\mathcal K_3=&- g(y_\fc)\int^{+\infty}_{-\infty}\frac{\big(b_0(y')-\fc_r\big)\phi_1'(y',\fc_r)}{(b_0(y')-\fc_r)^2\phi_1^2(y',\fc_r)}dy'\\
  =& g(y_\fc)\int^{+\infty}_{-\infty}\frac{1}{b_0(y')-\fc_r}\pa_y\left(\frac{1}{\phi_1(y',\fc_r)}-1\right)dy'\\
  =& g(y_\fc)\int^{+\infty}_{-\infty}\frac{b_0'(y')}{\big(b_0(y')-\fc_r\big)^2}\left(\frac{1}{\phi_1(y',\fc_r)}-1\right)dy'.
\end{align*}
Thus, we conclude that
\begin{align*}
  &\mathcal K_1+\mathcal K_2+\mathcal K_3
  =\mathcal K_{1,1}+\mathcal K_2+\mathcal K_{1,2}+\mathcal K_3\\
  =& g(y_\fc)\text{P.V.}\int \frac{b_0'(y_\fc)-b_0'(y')}{\big(b_0(y')-\fc_r\big)^2}dy'\\
  &+ b_0'(y_\fc)g(y_\fc)\int^{+\infty}_{-\infty}\frac{1}{\big(b_0(y')-\fc_r\big)^2}\left(\frac{1}{\phi_1^2(y',\fc_r)}-1\right)dy'
  = b_0'(y_\fc)g(y_\fc)\mathcal J_1(\fc_r).
\end{align*}
It follows from the above results that
\begin{align*}
  K(\fc_r)=&\frac{1}{\pi}\frac{\mathcal J_1(\fc_r)\mathcal J_4(\mathfrak w_{in},\fc_r)+\mathcal J_2(\fc_r)\mathcal J_3(\mathfrak w_{in},\fc_r)}{\mathcal J_1^2(\fc_r)+\mathcal J_2^2(\fc_r)}\big(\mathcal J_3(b_0''g,\fc_r)+b_0'(y_\fc)g(y_\fc)\mathcal J_1(\fc_r)\big).
\end{align*}
Then by Lemma \ref{lem-est-J1J2J3} and the fact that $\|g\|_{L^2}=1$, we have
\begin{align*}
\|\mathfrak w(t)\|_{L^2_y}&\leq  \|K(\fc_r)\|_{L^1_{c_r}}\\
&\le C\left(\|\mathfrak w_{in}\|_{L^2_y}+\left\|\mathcal J_3(\mathfrak w_{in},\fc_r)\right\|_{L^2_{\fc_r}}\right)\left(\|b_0'g\|_{L^2_y}+\left\|\mathcal J_3(b_0''g,\fc_r)\right\|_{L^2_{\fc_r}}\right)\le C \|\mathfrak w_{in}\|_{L^2_y},
\end{align*}
where $C>0$ is independent of $\gamma$.
{{}At last, by using the fact that $C_{c}^{\infty}$ is dense in $L^2$, and using the $L^2$ boundedness of  operators $J_3(\mathfrak w_{in},\fc_r)$ and $J_4(\mathfrak w_{in},\fc_r)$, the above estimates holds for general $\om_{in}\in L^2$. }
\end{proof}
\begin{remark}
The estimate \eqref{eq:semigroup-up-bound-e-itR} holds if $\mathcal{P}^d_{\mathcal{R}}\mathfrak w_{in}=0$ where $\mathcal{P}^d_{\mathcal{R}}$ is the spectral projection to the eigenspaces which correspond to the discontinuous spectrum $\s_d(\mathcal{R})$. 
\end{remark}
\begin{appendix}
  \section{Key points for the proof of Corollary \ref{col:easy-corol}}
Consistent with \cite{MasmoudiZhao2020cpde}, in the appendix we use $S_\nu(t,s)f$ to denote the solution of 
\begin{equation}
  \left\{
    \begin{array}{l}
      \pa_t\omega+y\pa_x\omega-\nu\Delta\omega=0,\\
      \omega|_{t=s}=f(x,y),
    \end{array}
  \right.
\end{equation}
with $\int_{\mathbb T} f(x,y) d x=0$.

We now consider the nonlinear equation,
\begin{equation}\label{eq-om-nonl-app}
  \left\{
    \begin{array}{l} 
      \pa_t\omega_{\neq}+y\pa_x\omega_{\neq}-\nu\Delta\omega_{\neq}=-\mathcal L-\mathcal N^{(1)}-\mathcal N^{(2)}-\mathcal N^{(3)},\\
      (u^{(1)}_{\neq},u^{(2)}_{\neq})=(\pa_y(-\Delta)^{-1}\omega_{\neq},-\pa_x(-\Delta)^{-1}\omega_{\neq}),\\
      \omega_{\neq}|_{t=0}=P_{\neq}\omega_{in},
    \end{array}
  \right.
\end{equation}
where $b_\nu(t,y)$ is the solution of \eqref{eq:heat},
\begin{align*}
  \mathcal L=(b_\nu-y)\pa_x\omega_{\neq}+\pa_y^2b_\nu\pa_x(\Delta)^{-1}\omega_{\neq},&\quad  \mathcal N^{(1)}=\big(u^{(1)}_{\neq}\pa_x\omega_{\neq}\big)_{\neq}+\big(u^{(2)}_{\neq}\pa_y\omega_{\neq}\big)_{\neq},\\
  \mathcal N^{(2)}=u^{(1)}_{0}\pa_x\omega_{\neq},&\qquad \mathcal N^{(3)}=u^{(2)}_{\neq}\pa_y\omega_0,
\end{align*}
$\omega_0(t,y)$ is the zero mode of vorticity which satisfies
\begin{equation}
  \left\{
    \begin{array}{l}
      \pa_t\omega_0-\nu\pa_y^2\omega_0=-\big(u^{(1)}_{\neq}\pa_x\omega_{\neq}\big)_0-\big(u^{(2)}_{\neq}\pa_y\omega_{\neq}\big)_0,\\
      \omega_0|_{t=0}=P_0\omega_{in},
    \end{array}
  \right.
\end{equation}
and $u^{(1)}_0$ is the zero mode of horizontal velocity which satisfies
\begin{equation}
  \left\{
    \begin{array}{l}
      \pa_t u^{(1)}_0-\nu\pa_y^2u^{(1)}_0=-\big(u^{(1)}_{\neq}\pa_xu^{(1)}_{\neq}\big)_0-\big(u^{(2)}_{\neq}\pa_yu^{(1)}_{\neq}\big)_0,\\
      u^{(1)}_0|_{t=0}=P_0u^{1}_{in}.
    \end{array}
  \right.
\end{equation}
Here
\begin{align*}
  \|b_{in}(y)-y\|_{L^\infty_y\cap \dot{H}^1_y}\leq \varepsilon_0\nu^{\frac{1}{2}},\quad \|\omega_{in}\|_{H^1_xL_y^2}\le\varepsilon_0\nu^{\frac{1}{2}}.
\end{align*}
In \cite{MasmoudiZhao2020cpde}, the authors take $b_\nu(t,y)\equiv y$, and then $\mathcal L\equiv0$. Therefore, to prove Corollary \ref{col:easy-corol}, it suffices to give an estimate for $\mathcal L$ similar to the estimates in Lemma 3.3 of \cite{MasmoudiZhao2020cpde}:
\begin{align*}
  \|\ln(e+|D_x|)\mathcal L(t)\|_{L^1_t([0,T];L^2_{x,y})}\le C \varepsilon_0\|\ln(e+|D_x|)\omega_{in}\|_{L^2_{x,y}}.
\end{align*}

It follows from the properties of the heat kernel that
\begin{align*}
  \|b_\nu(t,y)-y\|_{L^\infty_t([0,T];L^\infty_y\cap \dot{H}^1_y)}+\nu^{\frac{1}{2}}\|\pa_y^2b_\nu\|_{L^2_t([0,T];L^2_y)}\leq C\varepsilon_0\nu^{\frac{1}{2}}.
\end{align*}
Then we have
\begin{align*}
  &\|\ln(e+|D_x|)\mathcal L(t)\|_{L^1_t([0,T];L^2_{x,y})}\\
  \le&\|b_\nu(t,y)-y\|_{L^\infty_t([0,T];L^\infty_y)}\|\ln(e+|D_x|)\pa_x\omega_{\neq}\|_{L^1_t([0,T];L^2_{x,y})}\\
  &+\|\pa_y^2\omega_{\neq}\|_{L^2_t([0,T];L^2_{y})}\|\ln(e+|D_x|)\pa_x(\Delta)^{-1}\omega_{\neq}\|_{L^2_t([0,T];L^2_{x}L^\infty_y)}\\
  \le&C(C_3+C_5)\varepsilon_0\|\ln(e+|D_x|)\omega_{in}\|_{L^2_{x,y}},
\end{align*}
where $C_3$ and $C_5$ are given in Section 3 of \cite{MasmoudiZhao2020cpde}. 

With this estimate, one can easily deduce the result of Corollary \ref{col:easy-corol} by following the proof in \cite{MasmoudiZhao2020cpde}.

\section{Rayleigh equation}\label{appendix-B}
Here we give the proofs of the proposition and lemmas given in Section \ref{sec-Ray}. We modify the argument in \cite{WeiZhangZhao2018} to adapt it to the unbounded domain case. We remark that the estimates in this section hold for general $b_0(y)$ which satisfies:
\begin{align}\label{eq-b-property}
  0<C^{-1}\le|b_0'(y)|\le C \  \text{ for } \ y\in\mathbb R,\quad \|b_0''\|_{L^\infty_y\cap L^1_y}\le C.
\end{align}

To prove Proposition \ref{prop-phi}, we introduce function spaces $X$ and $Y$. For a function $g(y,\fc_r)$ defined on $\mathbb R\times \mathbb R$, we define
\begin{align*}
  \|g\|_{X}\eqdef \sup_{(y,\fc)\in\mathbb R\times \mathbb R}\left|\frac{g(y,\fc_r)}{\cosh(\tilde C(y-y_\fc))}\right|,\quad\|g\|_{Y}\eqdef \sum_{k=0}^2\tilde C^{-k} \|\pa_y^kg\|_{X}.
\end{align*}
Here $\tilde C>0$ is a constant which will be determined later.
\begin{proof}[Proof of Proposition \ref{prop-phi}]
  From \eqref{eq-phi1}, we write
  \begin{align}\label{eq-phi1-exp}
  \phi_1(y,\fc_r)=1+\int^y_{y_\fc} \frac{1}{\big(b_0(y')-b_0(y_\fc)\big)^2}\int^{y'}_{y_\fc}\phi_1(z,\fc_r)\big(b_0(z)-b_0(y_\fc)\big)^2dzdy'.
\end{align}
Submitting $\phi(y,\fc)=\big(b_0(y)-\fc\big)\phi_1(y,\fc_r)\phi_2(y,\fc)$ into \eqref{eq-Rayleigh}, we have that $\phi_2(y,\fc)$ satisfies
\begin{align*}
  (b_0-\fc)\phi_1\phi_2''+2b_0'\phi_1\phi_2'+2(b_0-\fc)\phi_1'\phi_2'+\frac{2i{\fc_i} b_0'}{b_0-\fc_r}\phi_1'\phi_2=0,
\end{align*}
which together with the boundary conditions $\phi_2(y_\fc,\fc)=1$ and $\phi_2'(y_\fc,\fc)=0$ gives \eqref{eq-phi2}. Then we write
\begin{align}\label{eq-phi2-exp}
  \phi_2(y,\fc)=1-2i{\fc_i}\int^y_{y_\fc} \frac{1}{\big(b_0(y')-\fc\big)^2\phi_1^2(y')}\int^{y'}_{y_\fc}\frac{b_0'(z)(b_0(z)-\fc)}{b_0(z)-\fc_r}\phi_1(z)\phi_1'(z)\phi_2(z)dzdy'.
\end{align}

Let $T_1$ be the integral operator that
\begin{align*}
  T_1g(y,\fc_r)=T_0\circ T_{2,2}g(y,\fc_r)=\int^y_{y_\fc} \frac{1}{\big(b_0(y')-b_0(y_\fc)\big)^2}\int^{y'}_{y_\fc}g(z,\fc_r)\big(b_0(z)-b_0(y_\fc)\big)^2dzdy',
\end{align*}
where
\begin{align*}
  T_0 g(y,\fc_r)=&\int^y_{y_\fc} g(y',\fc_r) dy'\\
  T_{k,j}g(y,\fc_r)=&\frac{1}{\big(b_0(y)-b_0(y_\fc)\big)^j}\int^{y}_{y_\fc}g(z,\fc_r)\big(b_0(z)-b_0(y_\fc)\big)^kdz.
\end{align*}
By the definition of the operator $T$, we have $\phi_1=1+T\phi_1$.

It is clear that
\begin{align*}
  \|T_0 g\|_{X}=&\sup_{(y,\fc_r)\in\mathbb R\times \mathbb R}\left|\frac{1}{\cosh(\tilde C(y-y_\fc))}\int^y_{y_\fc}\frac{g(y',\fc_r)}{\cosh(\tilde C(y'-y_\fc))}\cosh(\tilde C(y'-y_\fc))  dy'\right|\\
  \le&\sup_{(y,\fc_r)\in\mathbb R\times \mathbb R}\left|\frac{1}{\cosh(\tilde C(y-y_\fc))}\int^y_{y_\fc}\cosh(\tilde C(y'-y_\fc))  dy'\right|\|g\|_{X}\le\frac{1}{\tilde C}\|g\|_{X},
\end{align*}
and
\begin{align*}
  \|T_{2,2} g\|_{X}=&\sup_{(y,\fc_r)\in\mathbb R\times \mathbb R}\left|\frac{\int^y_{y_\fc}\frac{g(y',\fc_r)\big(b_0(y')-b_0(y_\fc)\big)^2}{\cosh(\tilde C(y'-y_\fc))}\cosh(\tilde C(y'-y_\fc))  dy'}{\cosh(\tilde C(y-y_\fc))\big(b_0(y)-b_0(y_\fc)\big)^2}\right|\\
  \le&\sup_{(y,\fc_r)\in\mathbb R\times \mathbb R}\left|\frac{1}{\cosh(\tilde C(y-y_\fc))}\int^y_{y_\fc}\cosh(\tilde C(y'-y_\fc))  dy'\right|\|g\|_{X}\le\frac{1}{\tilde C}\|g\|_{X}.
\end{align*}
Here we use the fact that $|b_0(y')-b_0(y_\fc)|\le|b_0(y)-b_0(y_\fc)|$ for $|y'-y_\fc|\le |y-y_\fc|$. 

It follows directly that
\begin{align*}
  \|T_1 g\|_{X}\le \frac{1}{\tilde C^2}\|g\|_{X}.
\end{align*}

Direct calculations show that
\begin{align*}
  \pa_yT_1 g(y,\fc_r)=T_{2,2} g(y,\fc_r),\quad \pa_y^2T_1 g(y,\fc_r)=-2b_0'(y)T_{2,3} g(y,\fc_r)+g(y,\fc_r).
\end{align*}

We write
\begin{align*}
  T_{2,3} g(y,\fc_r)=&\frac{1}{\big(b_0(y)-b_0(y_\fc)\big)^3}\int^{y}_{y_\fc}g(z,\fc_r)\big(b_0(z)-b_0(y_\fc)\big)^2dz\\
  =&\int^y_{y_\fc} \frac{(z-y_\fc)^2(\int^1_0b_0'(y_\fc+s(z-y_\fc))ds)^2}{(y-y_\fc)^3(\int^1_0b_0'(y_\fc+s(y-y_\fc))ds)^3}g(z,\fc_r)dz\\
  =&\int^1_{0} \frac{(\int^1_0b_0'(y_\fc+st(y-y_\fc))ds)^2}{(\int^1_0b_0'(y_\fc+s(y-y_\fc))ds)^3}g(y_\fc+t(y-y_\fc),\fc)t^2dt,
\end{align*}
then we have
\begin{align*}
  \|T_{2,3} g\|_{X}\le&C\sup_{(y,\fc)\in\mathbb R\times \mathbb R}\left|\frac{1}{\cosh(\tilde C(y-y_\fc))}\int^1_{0}\frac{g(y_\fc+t(y-y_\fc),\fc)t^2}{\cosh(\tilde Ct(y-y_\fc))}\cosh(\tilde Ct(y-y_\fc))  dy'\right|\\
  \le&C\sup_{(y,\fc)\in\mathbb R\times \mathbb R}\left|\frac{1}{\cosh(\tilde C(y-y_\fc))}\int^1_{0}\cosh(\tilde Ct(y-y_\fc))  dy'\right|\|g\|_{X}\le C\|g\|_{X},
\end{align*}
which gives that
\begin{align*}
  \frac{1}{\tilde C}\|\pa_yT_1 g\|_{X}\le \frac{1}{\tilde C^2}\|g\|_{X},\quad \frac{1}{\tilde C^2}\|\pa_y^2T_1 g\|_{X}\le \frac{C}{\tilde C^2}\|g\|_{X}.
\end{align*}
It follows that
\begin{align*}
  \|T_1 g\|_{Y}\le \frac{C}{\tilde C^2}\|g\|_{Y}.
\end{align*}
By taking $\tilde C$ big enough, we have that $I-T_1$ is invertible in the space $Y$. Thus,
\begin{align*}
  \phi_1(y,\fc_r)=(I-T_1)^{-1}1,
\end{align*}
with the bound $\| \phi_1\|_{Y}\le C$. Then, from the expression \eqref{eq-phi1-exp}, one can easily verify that $\phi_1(y,\fc_r)\ge1$.

Let $F(y,\fc_r)=\frac{\phi_1'(y,\fc_r)}{\phi_1(y,\fc_r)}$. It is easy to check that
\begin{align*}
  F'(y,\fc_r)+F^2(y,\fc_r)+\frac{2b_0'(y)F(y,\fc_r)}{b_0(y)-\fc_r}-1=0.
\end{align*}
It follows from $\phi_1(y_\fc,\fc_r)=1$ and $\phi_1'(y_\fc,\fc_r)=0$ that $F(y_\fc,\fc_r)=0$. Then we can see that
\begin{align*}
  \lim_{y\to y_\fc}F'(y,\fc_r)=&1-\lim_{y\to y_\fc}F^2(y,\fc_r)-\lim_{y\to y_\fc}\frac{2b_0'(y)F(y,\fc_r)}{b_0(y)-\fc_r}\\
  =&1-2\lim_{y\to y_\fc}F'(y,\fc_r).
\end{align*}
Therefore, we have $F'(y_\fc,\fc_r)=\frac{1}{3}>0$. From \eqref{eq-phi1-exp} we have
\begin{align*}
  \phi_1'(y,\fc_r)=\frac{1}{\big(b_0(y)-b_0(y_\fc)\big)^2}\int^{y}_{y_\fc}\phi_1(z,\fc_r)\big(b_0(z)-b_0(y_\fc)\big)^2dz,
\end{align*}
and $\phi_1'(y,\fc_r)>0$ for $y>y_\fc$ and $\phi_1'(y,\fc_r)<0$ for $y<y_\fc$. Therefore $\phi_1(y,\fc_r)\ge\phi_1(z,\fc_r)$ for $z\in[y,y_\fc]$ or $z\in[y_\fc,y]$. Then we have $\frac{F(y,\fc_r)}{b_0(y)-\fc_r}\ge0$ for $\forall y\in\mathbb R$, and
\begin{align*}
  |F(y,\fc_r)|=\left|\frac{\phi_1'(y,\fc_r)}{\phi_1(y,\fc_r)}\right|=\left|\frac{1}{\big(b_0(y)-b_0(y_\fc)\big)^2}\int^{y}_{y_\fc}\frac{\phi_1(z,\fc_r)}{\phi_1(y,\fc_r)}\big(b_0(z)-b_0(y_\fc)\big)^2dz\right|\le |y-y_\fc|.
\end{align*}

Next, we show that $|F(y,\fc_r)|\le1$. If $F$ attains its maximum (minimum) at $y_0$, we have $F'(y_0,\fc_r)=0$ and 
\begin{align*}
  F^2(y_0,\fc_r)=1-\frac{2b_0'(y_0)F(y_0,\fc_r)}{b_0(y_0)-\fc_r}\le 1.
\end{align*}
Therefore $|F(y,\fc_r)|\le1$. If $F(y,\fc_r)>1$ at $y_1$, we know that $y_1>y_\fc$ as $F(y_1,\fc_r)>0$. Then we have $F'(y_1,\fc_r)< 0$. Recall that $F'(y_\fc,\fc_r)=\frac{1}{3}>0$, so there exits $y_2\in(y_\fc,y_1)$ such that $F'(y_2,\fc_r)=0$ and $F(y_2,\fc_r)\ge F(y_1,\fc_r)>1$, which is impossible. If $F(y,\fc_r)<-1$ at $y_3$, we know that $y_3<y_\fc$ and $F'(y_3,\fc_r)<0$, then $F(y,\fc_r)$ decreases strictly on $[y_3,y_\fc]$, which contradicts $F(y_\fc,\fc_r)=0$. As a conclusion, we have $|F(y,\fc_r)|\le \min(1,|y-y_\fc|)$. It follows that
\begin{align*}
  e^{-|y-y'|}\le \frac{\phi_1(y',\fc_r)}{\phi_1(y,\fc_r)}\le e^{|y-y'|}.
\end{align*}

Next, we show that for $y$ such that $|y-y_\fc|\ge 1$, $|F(y,\fc_r)|\ge C_4>0$. Without loss of generality, we assume $y\ge y_\fc+1$, and have
\begin{align*}
  F(y,\fc_r)=&\frac{1}{\big(b_0(y)-b_0(y_\fc)\big)^2}\int^{y}_{y_\fc}\frac{\phi_1(z,\fc_r)}{\phi_1(y,\fc_r)}\big(b_0(z)-b_0(y_\fc)\big)^2dz\\
  \ge&\frac{1}{\big(b_0(y)-b_0(y_\fc)\big)^2}\int^{y}_{y_\fc}e^{-|y-z|}\big(b_0(z)-b_0(y_\fc)\big)^2dz\\
  \ge&\frac{\big(b_0(y-\frac{1}{2})-b_0(y_\fc)\big)^2}{\big(b_0(y)-b_0(y_\fc)\big)^2}\int^{y}_{y-\frac{1}{2}}e^{-|y-z|}dz
  \ge C.
\end{align*}
Here we use \eqref{eq-b-property} that $b_0(y)$ is a strictly monotonic function. Then we get the existence of $\phi_1(y,\fc_r)$ and the estimates \eqref{eq-phi1-est}.

Similarly, we introduce operator $T_2$ and write \eqref{eq-phi2-exp} as
\begin{align*}
  \phi_2(y,\fc)=&1-2i{\fc_i}\int^y_{y_\fc} \frac{1}{\big(b_0(y')-\fc\big)^2\phi_1^2(y')}\int^{y'}_{y_\fc}\frac{b'_0(z)(b_0(z)-\fc)}{b_0(z)-\fc_r}\phi_1(z,\fc_r)\phi_1'(z,\fc_r)\phi_2(z,\fc)dzdy'\\
  =&1+T_2\phi_2(y,\fc).
\end{align*}
Recalling that $F(y,\fc_r)=\frac{\phi_1'(y,\fc_r)}{\phi_1(y,\fc_r)}$, we have
\begin{align*}
  &\left|\int^{y'}_{y_\fc}\frac{b_0'(z)(b_0(z)-\fc)}{b_0(z)-\fc_r}\phi_1(z,\fc_r)\phi_1'(z,\fc_r)\phi_2(z,\fc)dz\right|\\
  \le&\left|\int^{y'}_{y_\fc}b_0'(z)\phi_1(z,\fc_r)\phi_1'(z,\fc_r)\phi_2(z,\fc)dz\right|+{\fc_i}\left|\int^{y'}_{y_\fc}\frac{b_0'(z)F}{b_0(z)-\fc_r}\phi_1^2(z,\fc)\phi_2(z,\fc)dz\right|.
\end{align*}
As $\phi_1'(y,\fc_r)>0$ for $y>y_\fc$, and $\phi_1'(y,\fc_r)<0$ for $y<y_\fc$, we have
\begin{align*}
  &\left|\int^{y}_{y_\fc}b_0'(z)\phi_1(z,\fc_r)\phi_1'(z,\fc_r)\phi_2(z,\fc)dz\right|\\
  \le& C \|\phi_2\|_{L^\infty}\left|\int^{y}_{y_\fc}\phi_1(z,\fc_r)\phi_1'(z,\fc_r)dz\right|=C\|\phi_2\|_{L^\infty} \big(\phi_1^2(y,\fc_r)-1\big),
\end{align*}
and
\begin{align*}
  &\left|\int^{y}_{y_\fc}\frac{b_0'(z)F}{b_0(z)-\fc}\phi_1^2(z,\fc_r)\phi_2(z)dz\right|\le C \|\phi_2\|_{L^\infty}\int^{y}_{y_\fc}\phi_1^2(z,\fc_r)dz\\
  \le& \|\phi_2\|_{L^\infty}\int^{y_\fc+1}_{y_\fc}\phi_1^2(z,\fc_r)dz+C\|\phi_2\|_{L^\infty}\left|\int^{y}_{y_\fc+1}\phi_1(z,\fc_r)\phi_1'(z,\fc_r)dz\right|\\
  \le& C\|\phi_2\|_{L^\infty} \big(\phi_1^2(y,\fc_r)+C\big).
\end{align*}
Here we use the fact that $|F(y,\fc_r)|\le\min(|y-y_\fc|,1)$ and $|F(y,\fc_r)|\ge C_4>0$ for $|y-y_\fc|\ge 1$. 

We have $\big(b_0(y')-\fc\big)^2=\big(b_0(y')-\fc_r\big)^2-\fc_i^2+2i(b_0(y')-\fc_r)\fc_i$. It follows that
\begin{align*}
  \int^y_{y_\fc} \frac{C\|\phi_2\|_{L^\infty} \big(\phi_1^2(y',\fc)-1\big)}{\big(b_0(y')-\fc\big)^2\phi_1^2(y')}dy'\le C\|\phi_2\|_{L^\infty},
\end{align*}
and
\begin{align*}
  \int^y_{y_\fc} \frac{C\|\phi_2\|_{L^\infty} \big(\phi_1^2(y',\fc)+C\big)}{\big(b_0(y')-\fc\big)^2\phi_1^2(y',\fc_r)}dy'\le \frac{C}{{\fc_i}}\|\phi_2\|_{L^\infty}.
\end{align*}
Combing \eqref{eq-phi2-exp} and the above two inequalities, we have 
\begin{align*}
  \|T_2\phi_2\|_{L^\infty}\le C{\fc_i}\|\phi_2\|_{L^\infty}.
\end{align*}
For $\fc_i$ small enough, we have that $I-T_2$ is invertible in $L^\infty_y$. Thus there exists
\begin{align*}
  \phi_2(y,\fc)=(I-T_2)^{-1}1,   
\end{align*}
such that
\begin{align*}
  \|\phi_2(y,\fc)-1\|_{L^\infty}\le C{\fc_i}.
\end{align*}

Taking the derivative of \eqref{eq-phi2-exp}, it holds that
\begin{align*}
  \phi_2'(y,\fc)=-2i{\fc_i} \frac{1}{\big(b_0(y)-\fc\big)^2\phi_1^2(y)}\int^{y}_{y_\fc}\frac{b_0'(z)(b_0(z)-\fc)}{b_0(z)-\fc_r}\phi_1(z,\fc_r)\phi_1'(z,\fc_r)\phi_2(z)dz.
\end{align*}
As $|F(y,\fc_r)|\le \min(1,|y-y_\fc|)$, we have
\begin{align*}
  \left|\int^{y}_{y_\fc}\frac{b_0'(z)F}{b_0(z)-\fc_r}\phi_1^2(z,\fc)\phi_2(z)dz\right|\le C \|\phi_2\|_{L^\infty}\int^{y}_{y_\fc}\phi_1^2(z,\fc)dz\le C|y-y_\fc|, \text{ for }|y-y_\fc|\le 1.
\end{align*}
Then by a similar argument to $\|\phi_2(y)-1\|_{L^\infty}$, one can easily deduce that
\begin{align*}
  |\phi_2'(y)|\le C{\fc_i},
\end{align*}
and $|\phi_2'(y)|$ decay to $0$ as $|y|\to+\infty$.

We write from \eqref{eq-phi2} that
\begin{align*}
  \phi_2''=-\frac{2b_0'\phi_2'}{b_0-\fc}-2 \frac{\phi_1'}{\phi_1}\phi_2'-\frac{2i{\fc_i} b_0'\phi_1'}{(b_0-\fc)(b_0-\fc_r)\phi_1}\phi_2,
\end{align*}
from which we can see that $\|\phi_2''\|_{L^{\infty}}\le C$. Then we get the existence of $\phi_2(y,\fc)$ and the estimates \eqref{eq-phi2-est}.

The existence of $\phi(y,\fc)$ and the estimate \eqref{eq-phi-est} follow immediately.
\end{proof}
Then we give the proof of Lemma \ref{lem-eigen}.
\begin{proof}[Proof of Lemma \ref{lem-eigen}]
  From Proposition \ref{prop-phi}, since $\phi(y,\fc)=\big(b_0(y)-\fc\big)\phi_1(y,\fc_r)\phi_2(y,\fc)$ is a solution to \eqref{eq-Rayleigh}, then 
\begin{align*}
  \varphi^-(y,\fc)=&\phi(y,\fc)\int^y_{-\infty}\frac{1}{\phi^2(y',\fc)}dy'
\end{align*}
is a solution to \eqref{eq-Rayleigh} which is independent of $\phi(y,\fc)$. If $\fc$ is an eigenvalue, and $\psi(y,\fc)\in H^2_y$ is the corresponding eigenfunction, then by \eqref{eq-psi-eigenfunction} and the fact that as $y\to-\infty$, $\phi(y,\fc)\to +\infty$ and $\varphi^-(y,\fc)\to0$, then necessarily $a_1^-$ in \eqref{eq-psi-eigenfunction} is $0$ and $\lim_{y\to+\infty} \varphi^-(y,\fc)=0$. Then it follows from the fact $|\phi(y,\fc)|\ge  C|y-y_\fc|e^{C_4|y-y_\fc|}$ that $\mathcal D(\fc)=0$. 

Next we show that if $\mathcal D(\fc)=0$, then $\varphi^-(y,\fc)\in H^1_y$. We claim that as $y\to-\infty$, $\varphi^-(y,\fc)$ and ${\varphi^{-}}'(y,\fc)$ decay to $0$ exponentially. Indeed, by the fact that $\phi_1(y',\fc_r)\geq \phi_1(y,\fc_r)\geq e^{C_4(y_\fc-y)}$ for $y'\leq y\leq y_{\fc}-1$, thus we have,
\begin{equation}\label{eq-exp-decay-phi}
  \begin{aligned}    
\left|e^{\frac{C_4}{2}(y_\fc-y)}\varphi^{-}(y,\fc)\right|=&\left|e^{\frac{C_4}{2}(y_\fc-y)}\phi(y,\fc)\int^y_{-\infty}\frac{1}{\phi^2(y',\fc)}dy'\right|\\
\leq &\left|\int^y_{-\infty}\frac{1}{|(b(y')-\fc)^2||\phi_2^2(y',\fc)|^2e^{\frac{C_4}{3}(y_\fc-y')}}dy'\right|\leq C.
  \end{aligned}
\end{equation}
Similarly, we have for $y'\leq y\leq y_{\fc}-1$
 \begin{align*}
 \left|e^{\frac{C_4}{2}(y_\fc-y)}{\varphi^{-}}'(y,\fc)\right|
 \leq\left|\frac{e^{\frac{C_4}{2}(y_\fc-y)}}{\phi(y,\fc)}\right|+\left|e^{\frac{C_4}{2}(y_\fc-y)}F(y,\fc)\varphi^{-}(y,\fc)\right|\leq C.
\end{align*}
Here $C$ is a constant independent of $\fc_i$. 
As $\varphi^+(y,\fc)=\phi(y,\fc)\int_y^{+\infty}\frac{1}{\phi^2(y',\fc)}dy'$ and $\phi(y,\fc)$ are two independent solutions of \eqref{eq-Rayleigh}, there exists two constants $a_1$ and $a_2$ such that
\begin{align*}
  \varphi^-(y,\fc)=\phi(y,\fc)\int^y_{-\infty}\frac{1}{\phi^2(y',\fc)}dy'=a_1\phi(y,\fc)+a_2\phi(y,\fc)\int_y^{+\infty}\frac{1}{\phi^2(y',\fc)}dy',
\end{align*}
then,
\begin{align*}
  0=\mathcal D(\fc)=\lim_{y\to+\infty}\int^y_{-\infty}\frac{1}{\phi^2(y',\fc)}dy'=\lim_{y\to+\infty}\frac{\varphi(y,\fc)}{\phi(y,\fc)}=a_1+\lim_{y\to+\infty}a_2\int_y^{+\infty}\frac{1}{\phi^2(y',\fc)}dy'=a_1.
\end{align*}
Thus we have $\varphi^-(y,\fc)=a_2\phi(y,\fc)\int_y^{+\infty}\frac{1}{\phi^2(y',\fc)}dy'$. Similar to the case $y\to-\infty$, one can prove that as $y\to+\infty$, $\varphi^-(y,\fc)$ and ${\varphi^-}'(y,\fc)$ decay to $0$ exponentially. Therefore $\varphi^-\in H^1_y$. Then by the equation \eqref{eq-Rayleigh}, $\varphi^-\in H^2_y$, and $\fc$ is an eigenvalue of the Rayleigh operator $\mathcal R$.
\end{proof}
\begin{remark}\label{Rmk:D(c)wronskian}
Let $\varphi^{\pm}(y,\fc)=\phi(y,\fc)\int_{\pm\infty}^y\f{1}{\phi^2(y',\fc)}dy'$, then 
\beno
\mathcal{D}(\fc)=\int_{\mathbb{R}}\f{1}{\phi^2(y,\fc)}dy=\det\left(\begin{matrix}\varphi^{-}(y,\fc)&\varphi^{+}(y,\fc)\\
 \pa_y\varphi^{-}(y,\fc)&\pa_y\varphi^{+}(y,\fc)\end{matrix}\right)
 \eeno 
 is the Wronskian. {{} There exists a non-zero function $f(\fc)$ such that $f(\fc)\mathcal{D}(\fc)$ is analytic for $0<|\fc_i|\le \varepsilon_3$, where $\varepsilon_3$ is given in Proposition \ref{prop-phi}.}
\end{remark}{{}
\begin{proof}
  We claim that there exists constant $Y>0$ such that for each $y$ and $0<|\fc_i|\le \varepsilon_3$, $\varphi^{+}(Y,\fc)\neq 0$, $\varphi^{-}(-Y,\fc)\neq 0$, and both $\frac{\varphi^{+}(y,\fc)}{\varphi^{+}(Y,\fc)}$ and $\frac{\varphi^{-}(y,\fc)}{\varphi^{-}(-Y,\fc)}$  are analytic.

  Therefore, by taking $f(\fc)=\frac{1}{\varphi^{+}(Y,\fc)\varphi^{-}(-Y,\fc)}$, we have that
\begin{equation*}
  \frac{1}{\varphi^{+}(Y,\fc)\varphi^{-}(-Y,\fc)}\mathcal{D}(\fc)=\text{det}\left(
    \begin{array}{ll}
      \frac{\varphi^{-}(y,\fc)}{\varphi^{-}(-Y,\fc)}&\frac{\varphi^{+}(y,\fc)}{\varphi^{+}(Y,\fc)}\\
      \frac{\pa_y\varphi^{-}(y,\fc)}{\varphi^{-}(-Y,\fc)}&\frac{\pa_y\varphi^{+}(y,\fc)}{\varphi^{+}(Y,\fc)}
    \end{array}
  \right)
\end{equation*}
is analytic.

Next, we provide the proof of the claim, focusing only on the part related to function $\varphi^{-}$. By using the same techniques to Lemma \ref{lem-E1E2}, one can see that their exists $Y$ such that the equation \eqref{eq-Rayleigh} with boundary condition
\begin{align}\label{eq-boun-con-psi}
  \lim_{y\to-\infty}\psi(y)=0,\quad \psi(-Y)=0,
\end{align}
has only zero solution. It follows that $\varphi^{-}(-Y,\fc)\neq 0$. We can see that $\pa_{\bar{\fc}} \frac{\varphi^{-}(y,\fc)}{\varphi^{-}(-Y,\fc)}$ also satisfies \eqref{eq-Rayleigh} and the boundary condition $\pa_{\bar{\fc}} \frac{\varphi^{-}(y,\fc)}{\varphi^{-}(-Y,\fc)}\Big|_{y=-Y}=0$. Provided $\lim_{y\to-\infty}\pa_{\bar{\fc}} \frac{\varphi^{-}(y,\fc)}{\varphi^{-}(-Y,\fc)} =0$, it holds that $\pa_{\bar{\fc}} \frac{\varphi^{-}(y,\fc)}{\varphi^{-}(-Y,\fc)} \equiv0$. Then we have $\frac{\varphi^{-}(y,\fc)}{\varphi^{-}(-Y,\fc)} $ is analytic. Therefore, to prove the claim, it suffices to show that $\lim_{y\to-\infty}\pa_{\bar{\fc}} \frac{\varphi^{-}(y,\fc)}{\varphi^{-}(-Y,\fc)} =0$.

We introduce a good derivative $\pa_G=\pa_y+\pa_{y_{\fc}}$. A direct calculation gives
  \begin{equation*} 
    \left\{
      \begin{array}{ll}
        &\pa_y\left((b_0(y)-b_0(y_{\fc}))^2\phi_{1}^2(y,\fc_r)\pa_y\Big(\frac{\pa_G\phi_{1}(y,\fc_r)}{\phi_{1}(y,\fc_r)}\Big)\right)\\
        &=-\pa_y\left(\frac{2(b'_0(y)-b_0'(y_{\fc}))}{b_0(y)-b_0(y_{\fc})}\right)(b_0(y)-b_0(y_{\fc}))^2\phi_{1}(y,\fc_r)\pa_y\phi_{1}(y,\fc_r),\\
      & \pa_G\phi_{1}(y_{\fc},\fc_r)=0,\quad\pa_y \pa_G\phi_{1}(y_{\fc},\fc_r)=0.
      \end{array}
    \right.
  \end{equation*}
Then we have
\begin{align*}
  \frac{\pa_G\phi_{1}(y,\fc_r)}{\phi_{1}(y,\fc_r)}=&-2\int_{y_{\fc}}^y\frac{1}{(b_0(y')-b_0(y_{\fc}))^2\phi_{1}^2(y',\fc_r)}\\
&\quad \times\int_{y_{\fc}}^{y'}\pa_{z}\left(\frac{(b'_0(z)-b_0'(y_{\fc}))}{b_0(z)-b_0(y_{\fc})}\right)(b_0(z)-b_0(y_{\fc}))^2\phi_{1}(z,\fc_r)\pa_y\phi_{1}(z,\fc_r)dzdy'.
\end{align*}
By using Proposition \ref{prop-phi}, one can easily check that
\begin{align}\label{eq-est-G-phi1}
  \left|\frac{\pa_G\phi_{1}(y,\fc_r)}{\phi_{1}(y,\fc_r)}\right|\le C|y-y_{\fc}|^3,\quad\left|\frac{\pa_y\pa_G\phi_{1}(y,\fc_r)}{\phi_{1}(y,\fc_r)}\right|\le C|y-y_{\fc}|^2.
\end{align}

Taking derivative of \eqref{eq-phi2-exp} with respect to $y+y_{\fc}$ and $\fc_i$, we get the integral equations of $\pa_G\phi_{2}(y,\fc)$ and $\pa_{\fc_i}\phi_{2}(y,\fc)$ respectively. By employing similar techniques used for proving Proposition \ref{prop-phi}, one could get that
\begin{align}\label{eq-est-G-phi2}
  \left|\pa_G\phi_{2}(y,\fc)\right|+\left|\pa_{\fc_i}\phi_{2}(y,\fc)\right|\le C|y-y_{\fc}|.
\end{align}

By utilizing the estimates \eqref{eq-est-G-phi1} and \eqref{eq-est-G-phi2}, we can employ the approach in the proof of Lemma \ref{lem-eigen} to get the estimate for $y\le y_{\fc}-1$ that
\begin{align*}
  \left|\pa_{\bar{\fc}}\frac{\varphi^{-}(y,\fc)}{\varphi^{-}(-Y,\fc)}\right|\le Ce^{-c|y-y_{\fc}|}.
\end{align*}

Note that $\varepsilon_3$ depends only on the upper and lower bound of $b_0'$, so for small enough $\gamma$, we will always have $\varepsilon_3\ge 8\sqrt\pi M\gamma$.
This finishes the proof of this remark.
\end{proof}}
\begin{proof}[Proof of Lemma \ref{lem-lim-eigen}]
We write
\begin{align*}
  \mathcal D(\fc)=&\int^{+\infty}_{-\infty}\frac{1}{(b_0(y)-\fc)^2}dy+\int^{+\infty}_{-\infty}\frac{1}{(b_0(y)-\fc)^2}\left(\frac{1}{\phi^2_1(y,\fc_r)\phi^2_2(y,\fc)}-1\right)dy\eqdef I+II.
\end{align*}
From \eqref{eq-phi1-est} and the Lebesgue-dominated convergence theorem, we get
\begin{align*}
  \lim_{{\fc_i}\to0}II=&\lim_{{\fc_i}\to0}\int^{+\infty}_{-\infty}\frac{1}{(b_0(y)-\fc_r-i\fc_i)^2}\left(\frac{1}{\phi^2_1(y,\fc_r)\phi^2_2(y,\fc_r+i\fc_i)}-1\right)dy\\
  =&\int^{+\infty}_{-\infty}\frac{1}{(b_0(y)-\fc_r)^2}\left(\frac{1}{\phi^2_1(y,\fc_r)}-1\right)dy=\Pi_2(\fc_r).
\end{align*}
Let $v=b_0(y)$,  we have
\begin{align*}
  I=&\int^{+\infty}_{-\infty}\frac{1}{(b_0(y)-\fc)^2}dy=\int^{+\infty}_{-\infty}\frac{\pa_v\big(b_0^{-1}\big)(v)}{(v-\fc)^2}dv=\int^{+\infty}_{-\infty}\frac{\pa_v^2\big(b_0^{-1}\big)(v)}{v-\fc}dv\\
=&\int^{+\infty}_{-\infty} \frac{(v-\fc_r)\pa_v^2\big(b_0^{-1}\big)(v)}{(v-\fc_r)^2+{\fc_i}^2}dv +\int^{+\infty}_{-\infty} \frac{i{\fc_i}\pa_v^2\big(b_0^{-1}\big)(v)}{(v-\fc_r)^2+{\fc_i}^2}dv\eqdef I_r+iI_i.
\end{align*}
From the properties of the Hilbert transform and the Poisson kernel, we know that
  \begin{align}    \label{eq-lim-fci}
      &\lim_{{\fc_i}\to0\pm}I=-\mathcal H\big(\pa_v^2(b_0^{-1})\big)(\fc_r)\pm i\pi\pa_v^2\big(b_0^{-1}\big)(\fc_r)\\
 \nonumber  =&\frac{1}{b_0'(y_\fc)}\text{P.V.}\int_{\mathbb R} \frac{1}{v-\fc_r}\pa_v\left(\frac{b_0'(y_\fc)-b_0'(b_0^{-1}(v))}{b_0'(b_0^{-1}(v))} \right)dv\mp i\pi\frac{b_0''(y_\fc)}{\big(b_0'(y_\fc)\big)^3}\\
 \nonumber  =&\frac{1}{b_0'(y_\fc)}\text{P.V.}\int  \frac{b_0'(y_\fc)-b_0'(y)}{(b_0(y)-\fc)^2} dy\mp i\pi\frac{b_0''(y_\fc)}{\big(b_0'(y_\fc)\big)^3} 
 =\frac{1}{b_0'(y_\fc)}\Pi_1(\fc_r)\mp i\mathcal J_2(\fc_r),
  \end{align}
which gives the lemma.  
\end{proof}
\begin{proof}[Proof of Lemma \ref{lem-iff-emb}]
  We first show that if $\fc_r\in\mathbb R$ is an eigenvalue, then $b_0''(y_\fc)=0$. Let $\psi(y,\fc_r)\in H^1_y$ be the corresponding eigenfunction, namely, the solution of \eqref{eq-Rayleigh}. If $b_0''(y_\fc)\neq0$, we can see from \eqref{eq-Rayleigh} that $\psi(y_\fc)=0$. Then by taking inner product with $\frac{\psi}{b_0-\fc_r}$ on both sides of \eqref{eq-Rayleigh} and integration by parts, we have
\begin{align*}
  \int_{\mathbb R}\left|\psi'-\frac{b_0'\psi}{b_0-\fc_r}\right|^2 d y+\int_{\mathbb R}|\psi|^2 d y=0,
\end{align*}
which implies that $\psi\equiv0$. Thus, if $\fc_r$ is an eigenvalue, $b_0''(y_\fc)=0$ and $\mathcal J_2(\fc_r)=0$. Therefore, we only need to study $\mathcal J_1(\fc_r)$ under the assumption that $b_0''(y_\fc)=0$.

For $\fc_r\in\mathbb R$, $\phi_2(y,\fc_r)\equiv1$ and $\phi(y,\fc_r)=\phi_1(y,\fc_r)\big(b_0(y)-\fc_r\big)$ is a solution to \eqref{eq-Rayleigh}, then 
\begin{align*}
  \varphi^-(y,\fc_r)=&\phi(y,\fc_r)\int^y_{-\infty}\frac{1}{\phi^2(y',\fc_r)}dy'
\end{align*}
is another solution of \eqref{eq-Rayleigh}.

We write
\begin{align*}
  &\varphi^-(y,\fc_r)=\phi(y,\fc_r)\int^y_{-\infty}\frac{1}{\phi^2(y',\fc_r)}dy'\\
  =&\phi(y,\fc_r)\int^y_{-\infty}\frac{1}{(b_0(y')-\fc_r)^2}dy'+\phi(y,\fc_r)\int^y_{-\infty}\frac{1}{(b_0(y')-\fc_r)^2}\left(\frac{1}{\phi_1^2(y',\fc_r)}-1\right)dy'\\
  \eqdef&I+II.
\end{align*}
As $\phi_1(y_\fc,\fc_r)=1$ and $\phi_1'(y_\fc,\fc_r)=0$, the integral function in $II$ does not have singularity at $y_\fc$, then $II$ is well defined on $\mathbb R$. We also deduce that 
  \begin{align}  \label{eq-phi-I}  
      I=&\frac{\phi(y,\fc_r)}{b_0'(y_\fc)}\int^{y}_{-\infty}\frac{b_0'(y_\fc)-b_0'(y')}{(b_0(y')-\fc_r)^2}dy'+\frac{\phi(y,\fc_r)}{b_0'(y_\fc)}\int^{y}_{-\infty}\frac{b_0'(y')}{(b_0(y')-\fc_r)^2}dy'\\
 \nonumber =&\frac{\phi(y,\fc_r)}{b_0'(y_\fc)}\int^{y}_{-\infty}\frac{b_0'(y_\fc)-b_0'(y')}{(b_0(y')-\fc_r)^2}dy'-\phi(y,\fc_r)\frac{1}{b_0'(y_\fc)}\frac{1}{b_0(y)-\fc_r}\\
 \nonumber =&\frac{\phi(y,\fc_r)}{b_0'(y_\fc)}\int^{y}_{-\infty}\frac{b_0'(y_\fc)-b_0'(y')}{(b_0(y')-\fc_r)^2}dy'-\frac{\phi_1(y,\fc_r)}{b_0'(y_\fc)}.
  \end{align}
For the first term on the right hand side of \eqref{eq-phi-I}, as $b_0''(y_\fc)=0$, we have
\begin{align*}
  \frac{\phi(y,\fc_r)}{b_0'(y_\fc)}\int^{y}_{-\infty}\frac{b_0'(y_\fc)-b_0'(y')}{(b_0(y')-\fc_r)^2}dy' =\frac{\phi(y,\fc_r)}{b_0'(y_\fc)}\int^{y}_{-\infty}\frac{(y'-y_\fc)^2}{(b_0(y')-\fc_r)^2}\frac{\frac{b_0'(y_\fc)-b_0'(y')}{y'-y_\fc}-b_0''(y_\fc)}{y'-y_\fc}dy'.
\end{align*}
We can see that the integrand is not singular, so $I$ is well defined at $y_\fc$ and then on $\mathbb R$. Then by using the same argument in Lemma \ref{lem-eigen}, one can prove that $\varphi\in H^1_y$ if and only if $\mathcal J_1(\fc_r)=0$. As conclusion, $\fc_r\in \mathbb R$ is an embedded eigenvalue of $\mathcal R$ if and only if 
\begin{align*}
  \mathcal J_1^2(\fc_r)+\mathcal J_2^2(\fc_r)=0.
\end{align*}
Thus we proved the lemma. 
\end{proof}
{{}
\begin{proof}[Proof of Proposition \ref{prop-repr}]
First, we provide the rigorous deduction of the identity \eqref{eq-rep-Psi}. Given $\mathfrak w_{in}\in C_c^{\infty}(\mathbb{R})$, $M$, and $\gamma$. Recall \eqref{eq: LinearEuler-Psi}. By the standard energy method, one can show that there exists $c^*>0$ such that
\begin{equation}\label{eq-grow-est1}
   \begin{aligned}    
  \mathfrak w(t,y)\le C e^{c^*t}, \quad\pa_t\mathfrak w(t,y)\le C e^{c^*t},\text{ for }t\ge0,y\in\mathbb R,\\
  \Psi(t,y)\le C e^{c^*t}, \quad\pa_t\Psi(t,y)\le C e^{c^*t},\text{ for }t\ge0,y\in\mathbb R.     
   \end{aligned}
 \end{equation} 
Applying the Fourier-Laplace transform on the evolution equation of the stream function
\begin{align}\label{eq: LinearEuler-Psi2}
  \pa_t\Psi(t,y)+i\mathcal L\Psi(t,y)=0,\quad\Psi(0,y)=\Psi_{in}(y)
\end{align}
we have
\begin{align*}
  \tilde\fc \Psi^*(\tilde\fc,y)+i\mathcal L\Psi^*(\tilde\fc,y)=\Psi_{in}(y),\text{ for }\tilde\fc_r> c^*,
\end{align*}
where $\tilde\fc=\tilde\fc_r+i\tilde\fc_i$ and
\begin{align*}
  \Psi^*(\tilde\fc,y)=\int^{+\infty}_0e^{-\tilde\fc t}\Psi(t,y)dt.
\end{align*}
It follows that
\begin{align}\label{eq-resolvent1}
  \Psi^*(\tilde\fc,y)=\left(\tilde\fc+i\mathcal L \right)^{-1}\Psi_{in}(y),\text{ for }\tilde\fc_r> c^*.
\end{align}
We remark that here $\tilde\fc$ is different to the notation $\fc$ used in \eqref{eq-rep-Psi}. Here we introduce this new notation $\tilde\fc$ to be consistent with the classical theory of the Fourier-Laplace transform.

By the inverse Fourier-Laplace transform (which is known by various names, the Bromwich integral, and Mellin's inverse formula), we have (see, \cite{Terras1985, HP1957})
\begin{equation}\label{eq-inver-1}
  \frac{1}{2\pi i}\lim_{T\to+\infty} \int^{c^*+iT}_{c^*-iT} e^{\tilde\fc t}\Psi^*(\tilde\fc,y) d\tilde\fc=\left\{
    \begin{array}{ll}
      \Psi(t,y),&\text{ for }t>0,\\
      \frac{1}{2}\Psi_{in}(y),&\text{ for }t=0,\\
      0,&\text{ for }t<0.
    \end{array}
  \right.
\end{equation}
So, at $t=0$, the inverse Fourier-Laplace transform fails to recover the original function. To address this issue, we introduce $\underline{\Psi}(t',y)$ which satisfies
\begin{align}\label{eq: LinearEuler-Psi3}
  \pa_{t'}\underline{\Psi}(t',y)-i\mathcal L\underline{\Psi}(t',y)=0,\quad\Psi(0,y)=\Psi_{in}(y).
\end{align}
One can regard $\underline{\Psi}(t',y)=\Psi(-t',y)$ for $t'\ge0$, and \eqref{eq: LinearEuler-Psi3} as the time-backward extension of \eqref{eq: LinearEuler-Psi2}. It is clear that \eqref{eq-grow-est1} also holds for $\underline{\Psi}(t',y)$. Similar to $\Psi(t,y)$, we also have 
\begin{align}\label{eq-resolvent2}
  \underline{\Psi}^*(\tilde\fc',y)=\left(\tilde\fc'-i\mathcal L \right)^{-1}\Psi_{in}(y),\text{ for }\tilde\fc_r'> c^*.
\end{align}
and
\begin{equation}\label{eq-inver-2}
  \frac{1}{2\pi i}\lim_{T\to+\infty} \int^{c^*+iT}_{c^*-iT} e^{\tilde\fc' t'}\underline{\Psi}^*(\tilde\fc',y) d\tilde\fc'=\left\{
    \begin{array}{ll}
      \underline{\Psi}(t',y),&\text{ for }t'>0,\\
      \frac{1}{2}\Psi_{in}(y),&\text{ for }t'=0,\\
      0,&\text{ for }t'<0.
    \end{array}
  \right.
\end{equation}
Let $t=-t'$ and $\tilde \fc=-\tilde\fc'$, we write \eqref{eq-inver-2} as
\begin{equation}\label{eq-inver-3}
  \frac{1}{2\pi i}\lim_{T\to+\infty} \int^{-c^*+iT}_{-c^*-iT} e^{\tilde\fc t}\underline{\Psi}^*(-\tilde\fc,y) d\tilde\fc=\left\{
    \begin{array}{ll}
      0,&\text{ for }t>0\\
      \frac{1}{2}\Psi_{in}(y),&\text{ for }t=0,\\
      \Psi(t,y),&\text{ for }t<0.
    \end{array}
  \right.
\end{equation}
Combing \eqref{eq-resolvent1}, \eqref{eq-inver-1}, \eqref{eq-resolvent2}, and \eqref{eq-inver-3}, we have for $t\in\mathbb R$ that
\begin{equation}
  \begin{aligned}    
   \Psi(t,y)=&\frac{1}{2\pi i}\lim_{T\to+\infty} \int^{c^*+iT}_{c^*-iT} e^{\tilde\fc t}\left(\tilde\fc+i\mathcal L \right)^{-1}\Psi_{in}(y) d\tilde\fc\\
  &-\frac{1}{2\pi i}\lim_{T\to+\infty} \int^{-c^*+iT}_{-c^*-iT} e^{\tilde\fc t}\left(\tilde\fc+i\mathcal L \right)^{-1}\Psi_{in}(y) d\tilde\fc   
  \end{aligned}
\end{equation}
Let $\fc=i\tilde \fc$, we have
\begin{equation}
  \begin{aligned}    
   \Psi(t,y)=&\frac{1}{2\pi i}\lim_{T\to+\infty} \int^{T}_{-T} e^{-i \left(\fc_r-ic^*\right) t}\left(\fc_r-ic^*-\mathcal L \right)^{-1}\Psi_{in}(y) d\fc_r\\
  &-\frac{1}{2\pi i}\lim_{T\to+\infty} \int^{T}_{-T} e^{-i \left(\fc_r+ic^*\right) t}\left(\fc_r+ic^*-\mathcal L \right)^{-1}\Psi_{in}(y) d\fc_r,
  \end{aligned}
\end{equation}
which is consistent with the form of \eqref{eq-rep-Psi}.

By our assumption that $\mathcal R$ has no eigenvalues, we have $\mathcal L$ has no eigenvalues. Moreover, by a classical compactness argument, it holds that under such an assumption, $\mathbb C\setminus\mathbb R$ is in the resolvent set of $\mathcal L$, namely $\mathbb C\setminus\mathbb R\subset \mathbb C\setminus \s(\mathcal L)$. Thus, $\left(\fc-\mathcal L \right)^{-1}$ is an analytic operator-value function of $\fc$ on $\mathbb C\setminus\mathbb R$. Recall that $\Phi(y,\fc)=-i(\fc-\mathcal{L})^{-1}\Psi_{in}\in H_y^2(\mathbb{R})$ which satisfies \eqref{eq-Phi-ori}, and $\mathfrak w_{in}\in C_c^{\infty}(\mathbb{R})$. By applying the same technique as in Lemma \ref{lem-E1E2}, one can easily check that
\begin{align*}
  \left\|\Phi(y,\fc)\right\|_{H^1_y}\le C \frac{1}{|\fc_r|+1},
\end{align*}
where $C$ is uniformly in $\fc_i$.

It follows that for any $c^{**}>0$, we have
\begin{align*}
  &\lim_{T\to+\infty} \int^{T}_{-T} e^{-i \left(\fc_r+ic^{**}\right) t}\left(\fc_r+ic^{**}-\mathcal L \right)^{-1}\Psi_{in}(y) d\fc_r\\
  =&\lim_{T\to+\infty} \int^{T}_{-T} e^{-i \left(\fc_r+ic^*\right) t}\left(\fc_r+ic^*-\mathcal L \right)^{-1}\Psi_{in}(y) d\fc_r\\
  &+\lim_{T\to+\infty} i\int^{c^{**}}_{c^*} e^{-i \left(T+i\fc_i\right) t}\left(T+i\fc_i-\mathcal L \right)^{-1}\Psi_{in}(y) d\fc_i\\
  &+\lim_{T\to+\infty} i\int^{c^*}_{c^{**}} e^{-i \left(-T+i\fc_i\right) t}\left(-T+i\fc_i-\mathcal L \right)^{-1}\Psi_{in}(y) d\fc_i\\
  =&\lim_{T\to+\infty} \int^{T}_{-T} e^{-i \left(\fc_r+ic^*\right) t}\left(\fc_r+ic^*-\mathcal L \right)^{-1}\Psi_{in}(y) d\fc_r.
\end{align*}
Therefore,
\begin{equation}\label{eq-rep-Psi2}
  \begin{aligned}    
   \Psi(t,y)=&\lim_{\fc_i\to0+}\frac{1}{2\pi i}\lim_{T\to+\infty} \int^{T}_{-T} e^{-i \left(\fc_r-i\fc_i\right) t}\left(\fc_r-i\fc_i-\mathcal L \right)^{-1}\Psi_{in}(y) d\fc_r\\
  &-\lim_{\fc_i\to0+}\frac{1}{2\pi i}\lim_{T\to+\infty} \int^{T}_{-T} e^{-i \left(\fc_r+i\fc_i\right) t}\left(\fc_r+i\fc_i-\mathcal L \right)^{-1}\Psi_{in}(y) d\fc_r.
  \end{aligned}
\end{equation}

We will prove that \eqref{eq-rep-Psi2} is equivalent to \eqref{eq-rep-Psi-lim}. 

We first show that for given $\mathfrak w_{in}\in C_c^{\infty}(\mathbb{R})$, $M$, $\gamma$, and fixed $y\in\mathbb R$, there exists $\varepsilon_4>0$ small enough such that for any $0<|\fc_i|\le \varepsilon_4$, $\left|\mu(\mathfrak w_{in},\fc)\right|$ has a uniform upper bound. 

We write
\begin{align*}
  \mathcal D(\fc)=&\int_{-\infty}^{+\infty}\frac{1}{\phi^2(y',\fc)}dy'\\
  =&\int^{+\infty}_{-\infty}\frac{1}{\big(b_0(y')-\fc_r-i\fc_i\big)^2}dy' 
  +\int^{+\infty}_{-\infty}\frac{1}{\big(b_0(y')-\fc_r-i\fc_i\big)^2} \left(\frac{1}{\phi_{1}^2(y',\fc_r)}-1\right)dy' \\
  &+\int^{+\infty}_{-\infty}\frac{1}{\big(b_0(y')-\fc_r-i\fc_i\big)^2}\frac{1}{\phi_{1}^2(y',\fc_r)} \left(\frac{1}{\phi_{2}^2(y',\fc)}-1\right)dy'\\
  \eqdef&\tilde I_1+\tilde I_2+\tilde I_3.
\end{align*}
From \eqref{eq-phi2-est} we have $\left|\phi_2(y,\fc)-1\right|\le C|\fc_i|^{\frac{1}{2}}|y-y_\fc|^{\frac{3}{2}}$, and then
\begin{align*}
  \left|\tilde I_3\right|\le&\int^{+\infty}_{-\infty}\frac{1}{\left|b_0(y')-\fc_r\right|^2}\frac{1}{\phi_{1}^2(y',\fc_r)} \frac{\left|\phi_{2}+1\right|\left|\phi_{2}-1\right|}{{\phi_{2}^\pm}^2(y',\fc_r)} dy'\\
  \le&C\int_{|y'-y_\fc|\le 1}\frac{1}{\left|b_0(y')-\fc_r\right|^2}\frac{1}{\phi_{1}^2(y',\fc_r)} \frac{ |\fc_i|^\frac{1}{2}|y-y_\fc|^{\frac{3}{2}}}{{\phi_{2}^\pm}^2(y',\fc_r)} dy'\\
  &+C \int_{|y'-y_\fc|>1}\frac{1}{\left|b_0(y')-\fc_r\right|^2}\frac{1}{\phi_{1}^2(y',\fc_r)} \frac{|\fc_i|}{{\phi_{2}^\pm}^2(y',\fc_r)} dy'
    \le C \left(|\fc_i|^\frac{1}{2}+|\fc_i|\right).
\end{align*}

We write
\begin{align*}
  \tilde I_2=&\int^{+\infty}_{-\infty}\frac{(b_0(y')-\fc_r)^2-\fc_i^2}{\big((b_0(y')-\fc_r)^2+\fc_i^2\big)^2}  \frac{(1+\phi_{1}(y',\fc_r))(1-\phi_{1}(y',\fc_r))}{\phi_{1}^2(y',\fc_r)} dy'\\
  &+i\int^{+\infty}_{-\infty}\frac{2\fc_i(b_0(y')-\fc_r) }{\big((b_0(y')-\fc_r)^2+\fc_i^2\big)^2}  \frac{(1+\phi_{1}(y',\fc_r))(1-\phi_{1}(y',\fc_r))}{\phi_{1}^2(y',\fc_r)} dy'\\
  \eqdef&\tilde I_{2,r}+i\tilde I_{2,i}.
\end{align*}
It holds that
\begin{align*}
  (b_0(y')-\fc_r)^2-\fc_i^2\ge0 \text{ for }|y'-y_\fc|\ge C|\fc_i|.
\end{align*}
Then we have
\begin{align*}
  \tilde I_{2,r}\le &\int_{|y'-y_\fc|\ge C|\fc_i|}\frac{(b_0(y')-\fc_r)^2-\fc_i^2}{\big((b_0(y')-\fc_r)^2+\fc_i^2\big)^2}  \frac{(1+\phi_{1}(y',\fc_r))(1-\phi_{1}(y',\fc_r))}{\phi_{1}^2(y',\fc_r)} dy'\\
  &+\int_{|y'-y_\fc|< C|\fc_i|}\frac{\left|(b_0(y')-\fc_r)^2-\fc_i^2\right|}{\big((b_0(y')-\fc_r)^2+\fc_i^2\big)^2}  \frac{(1+\phi_{1}(y',\fc_r))(1-\phi_{1}(y',\fc_r))}{\phi_{1}^2(y',\fc_r)} dy'\\
  \le& -C(1-|\fc_i|).
\end{align*}
Thus by taking $\varepsilon_4$ small enough, we have for $0<|\fc_i|\le \varepsilon_4$ that
\begin{align*}
  \tilde I_{2,r}+\left|\tilde I_3\right|\le -\widetilde C,
\end{align*}
where $\widetilde C>0$ is a constant independent of $\fc_r$ and $\fc_i$.

We write
\begin{align*}
  \tilde I_1=\int^{+\infty}_{-\infty} \frac{(v-\fc_r) \pa_v^2(b_0^{-1})(v) }{ (v-\fc_r)^2+\fc_i^2 }dv +i\int^{+\infty}_{-\infty} \frac{\fc_i \pa_v^2(b_0^{-1})(v) }{ (v-\fc_r)^2+\fc_i^2 }dv\eqdef \tilde I_{1,r}+i\tilde I_{1,i}.
\end{align*}
By the properties of the Poisson kernel and Hilbert transform, we have
\begin{align*}
  \tilde I_{1,r}=\int^{+\infty}_{-\infty} \frac{\fc_i \mathcal H \left(\pa_v^2(b_0^{-1})\right) (v) }{ (v-\fc_r)^2+\fc_i^2 }dv.
\end{align*}
As $\mathcal H \left(\pa_v^2(b_0^{-1})\right) (v)\in H^1_v(\mathbb R)$, and the Poisson kernel is an approximate identity,   there exists a constant $C_{\fc_r}$ independent of $\fc_i$ such that $|\tilde I_{1,r}|\le \frac{1}{2}\tilde C$ for $|\fc_i|\le \varepsilon_4$ and $|\fc_r|\ge C_{\fc_r}$. It follows that
\begin{align*}
  \left|\mathcal D(\fc)\right|\ge \frac{1}{2}\tilde C,\text{ for }|\fc_i|\le \varepsilon_4,|y_\fc|\ge C_{\fc_r}.
\end{align*}
From Lemma \ref{lem-lim-eigen} and Lemma \ref{lem-est-J1J2J3}, and the fact that $\mathcal R$ has no eigenvalue for $0<|\fc_i|\le\gamma$, we can extend $\mathcal D(\fc)$ to a continues function $\mathcal D^+(\fc)$ on the domain $D_\fc^+=\left\{\fc\big||\fc_r|\le C_{\fc_r}, 0\le \fc_i\le \varepsilon_4\right\}$. As $D_\fc^+$ is compact, and $\mathcal D^+(\fc)\neq0$ for all $\fc\in D_\fc^+$, there exists $\delta_*>0$ such that 
\begin{align*}
  \min_{\fc\in D_\fc^+}\left|\mathcal D^+(\fc)\right|\ge \delta_*.
\end{align*}
Using the same argument on the domain $D_\fc^-=\left\{\fc\big||\fc_r|\le C_{\fc_r}, 0\ge \fc_i\ge -\varepsilon_4\right\}$. Then we deduce that
\begin{align*}
  \left|\mathcal D(\fc)\right|\ge \delta_* \text{ for }|\fc_i|\le \varepsilon_4.
\end{align*}

Recall \eqref{eq-nume-mu} and \eqref{eq-I3-c} that
\begin{align*}
  \int_{-\infty}^{+\infty}\frac{\int_{y_{\fc}}^{y'}\mathfrak w_{in}(y'')\phi_1(y'',\fc)dy''}{(b_0(y')-\fc)^2\phi_1^2(y',\fc)}dy'=\mathcal I_1+\mathcal I_2+\mathcal I_{3,r}+i\mathcal I_{3,i}.
\end{align*}
It is clear that $\left|\mathcal I_{3,i}\right|\le C\left\|\mathfrak w_{in}\right\|_{L^\infty}$. By Proposition \ref{prop-phi}, we have
\begin{align*}
  \left|\frac{\int_{y_{\fc}}^{y'}\mathfrak w_{in}(y'')\big(\phi_1(y'',\fc)-1\big)dy''}{(b_0(y')-\fc)^2\phi_1^2(y',\fc)}\right|\le C \frac{\left|y'-y_\fc\right|^{\frac{1}{2}}\left\|\mathfrak w_{in}\right\|_{L^2}}{1+(y'-y_\fc)^2}.
\end{align*}
It follows that
\begin{align*}
  \left|\mathcal I_1\right|\le C \left\|\mathfrak w_{in}\right\|_{L^2}.
\end{align*}
For the same reason, we also have
\begin{align*}
  \left|\mathcal I_2\right|\le C \left\|\mathfrak w_{in}\right\|_{L^2}.
\end{align*}

We write 
\begin{align*}
  \mathcal I_{3,r}=&\int_{-\infty}^{\infty} \frac{v-\fc_r}{(v-\fc_r)^2+\fc_i^2} \frac{ \mathfrak w_{in}(b_0^{-1}(v))}{\left(b'_0(b_0^{-1}(v))\right)^2} d v\\
  &-\int_{-\infty}^{\infty} \frac{v-\fc_r}{(v-\fc_r)^2+\fc_i^2}\frac{b_0''(b_0^{-1}(v))\int_{b_0^{-1}(\fc_r)}^{b_0^{-1}(v)}\mathfrak w_{in}(y'')dy''}{\left(b'_0(b_0^{-1}(v))\right)^3}dv\\
  =&\int_{-\infty}^{\infty} \frac{\fc_i}{(v-\fc_r)^2+\fc_i^2} \mathcal H \left(\frac{ \mathfrak w_{in}(b_0^{-1}(v))}{\left(b'_0(b_0^{-1}(v))\right)^2}\right) d v\\
  &-\int_{-\infty}^{\infty} \frac{v-\fc_r}{(v-\fc_r)^2+\fc_i^2}\frac{b_0''(b_0^{-1}(v))\int_{b_0^{-1}(\fc_r)}^{b_0^{-1}(v)}\mathfrak w_{in}(y'')dy''}{\left(b'_0(b_0^{-1}(v))\right)^3}dv\\
  \eqdef&\mathcal I_{3,r}^{(1)}+\mathcal I_{3,r}^{(2)}.
\end{align*}
One can easily check that
\begin{align*}
  \left|\mathcal I_{3,r}^{(1)}\right| \le C \left\|\mathfrak w_{in}\right\|_{H^1},\quad  \left|\mathcal I_{3,r}^{(2)}\right| \le C \left\|\mathfrak w_{in}\right\|_{L^\infty}\left\| b''_0\right\|_{L^1}.
\end{align*}

Then we have 
\begin{align*}
  \left|\int_{-\infty}^{+\infty}\frac{\int_{y_{\fc}}^{y'}\mathfrak w_{in}(y'')\phi_1(y'',\fc)dy''}{(b_0(y')-\fc)^2\phi_1^2(y',\fc)}dy'\right|\le C\left\|\mathfrak w_{in}\right\|_{L^\infty},
\end{align*}
and then
\begin{align*}
  \left|\mu(\mathfrak w_{in},\fc)\right|\le C,
\end{align*}
where $C$ is a constant independent of $\fc_r$ and $\fc_i$.

Next, we show that $\Phi_{h,l}(y,\fc)\chi_{y_\fc>y}+\Phi_{h,r}(y,\fc)\chi_{y_\fc<y}$ has an integrable control function. 

From \eqref{eq-decay-part2}, we have for $y\le y_\fc-1$ that
\begin{align*}
  \left|\phi(y,\fc)\int_{+\infty}^y\frac{1}{\phi^2(y',\fc)}dy'\right|\le C e^{-C_4|y-y_{\fc}|}.
\end{align*}
For $y_\fc>y> y_\fc-1$, we have
\begin{align*}
  \phi(y,\fc)\int_{+\infty}^y\frac{1}{\phi^2(y',\fc)}dy'=&\phi(y,\fc)\int_{+\infty}^{y_\fc-1}\frac{1}{\phi^2(y',\fc)}dy'\\
  &+\phi(y,\fc)\int_{y_\fc-1}^y\frac{1}{\big(b_0(y')-\fc_r-i\fc_i\big)^2} \left(\frac{1}{\phi^2_1(y',\fc)}-1\right)dy'\\
  &+\phi(y,\fc)\int_{y_\fc-1}^y\frac{1}{\big(b_0(y')-\fc_r-i\fc_i\big)^2}  dy'.
\end{align*}
For the third term, we write
\begin{align*}
  &\phi(y,\fc)\int_{y_\fc-1}^y\frac{1}{\big(b_0(y')-\fc_r-i\fc_i\big)^2}  dy'
  =\phi(y,\fc)\int_{b_0 (y_\fc-1)}^{b_0 (y)}\frac{1}{\big(v-\fc_r-i\fc_i\big)^2} \frac{1}{b'_0(b_0^{-1}(v))} dv\\
  =&\phi(y,\fc) \left(\frac{1}{\big(b_0 (y_\fc-1)-\fc_r-i\fc_i\big)} \frac{1}{b'_0(y_\fc-1)}-\frac{1}{\big(b_0 (y)-\fc_r-i\fc_i\big)}\frac{1}{b'_0(y)}\right)\\
  &+\phi(y,\fc)\int_{b_0 (y_\fc-1)}^{b_0 (y)}\frac{1}{\big(v-\fc_r-i\fc_i\big)} \pa_v\frac{1}{b'_0(b_0^{-1}(v))} dv.
\end{align*}
Then one can easily check that
\begin{align*}
  \left|\phi(y,\fc)\int_{+\infty}^y\frac{1}{\phi^2(y',\fc)}dy'\right|\le C,\text{ for }y_\fc>y> y_\fc-1,
\end{align*}
and then
\begin{align*}
  \left|\phi(y,\fc)\int_{+\infty}^y\frac{1}{\phi^2(y',\fc)}dy'\right|\le C e^{-C_4|y-y_{\fc}|}\text{ for }y< y_\fc.
\end{align*}
In this way, we get
\begin{align*}
  \left|\Phi_{h,l}(y,\fc)\chi_{y_\fc>y}+\Phi_{h,r}(y,\fc)\chi_{y_\fc<y}\right|\le C e^{-C_4|y-y_{\fc}|}, \forall y_\fc\in\mathbb R,\ 0<|\fc_i|\le \varepsilon_4.
\end{align*}

From \eqref{eq-decay-part1}, we can see that
\begin{align*}
  \left|\Phi_{i,l}(y,\fc)\chi_{y_\fc>y}+\Phi_{i,r}(y,\fc)\chi_{y_\fc<y}\right|\le C \frac{\left\|\mathfrak w_{in}\right\|_{H^1}}{1+|y-y_{\fc}|}.
\end{align*}
However, $\frac{1}{1+|y-y_{\fc}|}$ is not an integrable function. Thus we could not use the Lebesgue-dominated convergence theorem directly. To overcome this difficulty, we introduce an auxiliary function $\mathring\Phi(y,\mathfrak c)$ which solves
\begin{align*}
  \pa_{yy}\mathring\Phi(y,\mathfrak c)-\mathring\Phi(y,\mathfrak c)= i\frac{\mathfrak w_{in}(y)}{b_0(y)-\mathfrak c}.
\end{align*}
By using the fundamental solution of $\pa_{yy}-1$, we write
\begin{align*}
  \mathring\Phi(y,\fc_r+i\fc_i)=&\frac{i}{2}\int_{\mathbb R} e^{-|y-y'|}\frac{\mathfrak w_{in}(y')}{b_0(y')-\fc_r-i\fc_i}dy'\\
  =&\frac{i}{2}\int_{\mathbb R} e^{-|y-b_0^{-1}(v)|}\frac{\mathfrak w_{in}(b_0^{-1}(v))}{v-\fc_r-i\fc_i}\frac{1}{b'_0(b_0^{-1}(v))}d v.
\end{align*}
It holds for all $y\in\mathbb R$ that
\begin{equation}\label{eq-mr-Phi}
  \begin{aligned}    
  \lim_{\fc_i\to0\pm}\mathring\Phi(y,\fc_r+i\fc_i)=&\text{P.V.}\frac{i}{2}\int_{\mathbb R} e^{-|y-y'|}\frac{\mathfrak w_{in}(y')}{b_0(y')-\fc_r}dy'\mp \frac{\pi}{2}e^{-|y-y_\fc|}\mathfrak w_{in}(y_\fc)\frac{1}{b'_0(y_\fc)}\\
  \eqdef& \mathring\Phi^{\pm}(y,\fc_r).    
  \end{aligned}
\end{equation}
By applying the same technique as in Lemma \ref{lem-E1E2}, one can see that  $\left\|\mathring\Phi(y,\mathfrak c)\right\|_{H^1_y}$ has an uniformly bound.  As $\mathfrak w_{in}$ has compact support, there exists a constant $Y$ such that $\mathfrak w_{in}(y)=0$ for $|y|\ge Y$, and
\begin{align*}
  \left\|\mathring\Phi(y,\mathfrak c)\right\|_{H^1_y}\le C \frac{\left\|\mathfrak w_{in}\right\|_{L^2_y}}{|y_\fc|-Y}.
\end{align*}

We can see that both $\Phi(y,\fc)$ given in \eqref{eq-Phi-exp} and $\mathring\Phi(y,\mathfrak c)$ decay in $y$ at infinity. It holds that 
\begin{align*}
  \left(\pa_{yy}-1-\frac{b_0''(y)}{b_0(y')-\fc}\right)\left(\Phi(y,\fc)-\mathring\Phi(y,\mathfrak c)\right)=\frac{b_0''(y)\mathring\Phi(y,\mathfrak c)}{b_0(y')-\fc},
\end{align*}
and
\begin{align*}
  &\Phi(y,\fc)-\mathring\Phi(y,\mathfrak c)\\
  =&i\phi(y,\fc)\int_{-\infty}^y\frac{\int_{y_\fc}^{y'}b_0''(y'')\mathring\Phi(y'',\fc)\phi_{1}(y'',\fc) dy''}{\phi^2(y',\fc)}dy'-i\mu(ib_0''\mathring\Phi,\fc)\phi(y,\fc)\int_{-\infty}^y\frac{1}{\phi^2(y',\fc)}dy'.
\end{align*}
Then similar to \eqref{eq-decay-part1}, we have for $y<y_\fc$ and $y_\fc\ge 2Y$ that
\begin{align*}
  \left|\Phi(y,\fc)-\mathring\Phi(y,\mathfrak c)\right|\le& C  \frac{\left\|b_0''\mathring\Phi(\cdot,\fc)\right\|_{H^1}}{1+|y-y_\fc|}\le  C \frac{1}{(1+|y-y_\fc|) \left|y_\fc-Y\right|}.
\end{align*}
In the same way, we also have for $y>y_\fc$ and $y_\fc\le -2Y$ that
\begin{align*}
  \left|\Phi(y,\fc)-\mathring\Phi(y,\mathfrak c)\right|\le& C\frac{1}{(1+|y-y_\fc|)\left|y_\fc+Y\right|}.
\end{align*}

Then we rewrite \eqref{eq-rep-Psi2} as
\begin{equation}\label{eq-rep-Psi3}
  \begin{aligned}    
   \Psi(t,y)=&\lim_{\fc_i\to0+}\frac{1}{2\pi}\lim_{T\to+\infty} \int^{T}_{-T} e^{-i \left(\fc_r-i\fc_i\right) t}\Phi(y,\fc_r-i\fc_i) -\mathring\Phi(y,\fc_r-i\fc_i) d\fc_r\\
  &-\lim_{\fc_i\to0+}\frac{1}{2\pi }\lim_{T\to+\infty} \int^{T}_{-T} e^{-i \left(\fc_r+i\fc_i\right) t}\Phi(y,\fc_r+i\fc_i) -\mathring\Phi(y,\fc_r+i\fc_i) d\fc_r\\
  &+\lim_{\fc_i\to0+}\frac{1}{2\pi}\lim_{T\to+\infty} \int^{T}_{-T} e^{-i \left(\fc_r-i\fc_i\right) t} \mathring\Phi(y,\fc_r-i\fc_i) d\fc_r\\
  &-\lim_{\fc_i\to0+}\frac{1}{2\pi }\lim_{T\to+\infty} \int^{T}_{-T} e^{-i \left(\fc_r+i\fc_i\right) t} \mathring\Phi(y,\fc_r+i\fc_i) d\fc_r.
  \end{aligned}
\end{equation}
For each $\fc_i=c^{**}>0$, let $\fc=i\tilde \fc$, we deduce that
\begin{align*}
  &\frac{1}{2\pi}\lim_{T\to+\infty} \int^{T}_{-T} e^{-i \left(\fc_r-i\fc_i\right) t} \mathring\Phi(y,\fc_r-i\fc_i) d\fc_r\\
  =&\frac{1}{2\pi} \frac{i}{2}\lim_{T\to+\infty} \int^{T}_{-T} e^{-i \left(\fc_r-i\fc_i\right) t} \int_{\mathbb R} e^{-|y-y'|}\frac{\mathfrak w_{in}(y')}{b_0(y')-\fc_r+i\fc_i}dy' d\fc_r\\
  =&\frac{1}{2\pi} \frac{i}{2}\lim_{T\to+\infty} \int_{\mathbb R}e^{-|y-y'|}\mathfrak w_{in}(y')\int^{b_0(y')+T}_{b_0(y')-T}   \frac{e^{-i \left(\fc_r-i\fc_i\right) t}}{b_0(y')-\fc_r+i\fc_i}d\fc_rdy'\\
  &-\frac{1}{2\pi} \frac{i}{2}\lim_{T\to+\infty} \int_{\mathbb R}e^{-|y-y'|}\mathfrak w_{in}(y')\int^{b_0(y')+T}_{T}   \frac{e^{-i \left(\fc_r-i\fc_i\right) t}}{b_0(y')-\fc_r+i\fc_i}d\fc_rdy'\\
  &-\frac{1}{2\pi} \frac{i}{2}\lim_{T\to+\infty} \int_{\mathbb R}e^{-|y-y'|}\mathfrak w_{in}(y')\int^{b_0(y')-T}_{-T}   \frac{e^{-i \left(\fc_r-i\fc_i\right) t}}{b_0(y')-\fc_r+i\fc_i} d\fc_rdy'\\
  =&\frac{1}{2\pi} \frac{i}{2}\lim_{T\to+\infty} \int_{\mathbb R}e^{-|y-y'|}\mathfrak w_{in}(y')e^{-ib_0(y')t} \int^{T-ic^{**}}_{-T-ic^{**}}   \frac{e^{-i \fc t}}{-\fc}d\fc dy'\\
  =&\frac{1}{2\pi i} \frac{1}{2}\lim_{T\to+\infty} \int_{\mathbb R}e^{-|y-y'|}\mathfrak w_{in}(y')e^{-ib_0(y')t}\int^{c^{**}+iT}_{c^{**}-iT}   \frac{e^{\tilde\fc t}}{\tilde\fc} d \tilde\fc dy'\\
  =&\frac{1}{2} \int_{\mathbb R}e^{-|y-y'|}\mathfrak w_{in}(y') e^{-ib_0(y')t}H(t) dy',
\end{align*}
where $H(t)=\frac{1}{2}(1+\text{sgn}(t))$ is the Heaviside step function. In this way, one can get for each $\fc_i=c^{**}>0$ that
\begin{align*}
   &\frac{1}{2\pi}\lim_{T\to+\infty} \int^{T}_{-T} e^{-i \left(\fc_r-i\fc_i\right) t} \mathring\Phi(y,\fc_r-i\fc_i) d\fc_r\\
  &- \frac{1}{2\pi}\lim_{T\to+\infty} \int^{T}_{-T} e^{-i \left(\fc_r+i\fc_i\right) t} \mathring\Phi(y,\fc_r+i\fc_i) d\fc_r
  =\frac{1}{2} \int_{\mathbb R}e^{-|y-y'|}\mathfrak w_{in}(y')e^{-ib_0(y')t}dy'=\mathring \psi(t,y).
\end{align*}
Indeed, $\mathring \psi$ is the stream function associated with the vorticity $\mathring w$ that solves
\begin{align*}
  \pa_t \mathring w(t,y)+ i b_0(y) \mathring w(t,y)  =0,\qquad w(0,y)=\mathfrak w_{in}(y).
\end{align*}
The above calculation is the inverse Fourier-Laplace transform of the Fourier-Laplace transform of $\mathring \psi$ which is the same to \eqref{eq-rep-Psi2}. Besides, for the real part of $\mathring\Phi^{\pm}(y,\fc_r)$ given in \eqref{eq-mr-Phi}, we have
\begin{equation}\label{eq-lim-mr-Phi}
  \begin{aligned}    
    &\frac{1}{2\pi} \int^{+\infty}_{-\infty} e^{-i \fc_r t} \left( - \mathfrak R \mathring\Phi^{-}(y,\fc_r)\right)  d\fc_r-\frac{1}{2\pi }\int^{+\infty}_{-\infty} e^{-i \fc_r t}\left( - \mathfrak R \mathring\Phi^{+}(y,\fc_r)\right) d\fc_r\\
    =&-\frac{1}{2}\int^{+\infty}_{-\infty}e^{-i \fc_r t} e^{-|y-y_\fc|}\mathfrak w_{in}(y_\fc)\frac{1}{b'_0(y_\fc)} d\fc_r=-\frac{1}{2} \int_{\mathbb R}e^{-|y-y'|}\mathfrak w_{in}(y')e^{-ib_0(y')t}dy'.
  \end{aligned}
\end{equation}

Then by the Lebesgue-dominated convergence theorem, we derive from \eqref{eq-rep-Psi3}, Lemma \ref{lem:limit}, \eqref{eq-mr-Phi}, and \eqref{eq-lim-mr-Phi} that
\begin{equation}\label{eq-rep-Psi4}
  \begin{aligned}    
   \Psi(t,y)=
  &\frac{1}{2\pi} \int^{+\infty}_{-\infty}\lim_{\fc_i\to0+} e^{-i \left(\fc_r-i\fc_i\right) t}\Phi(y,\fc_r-i\fc_i) -\mathring\Phi(y,\fc_r-i\fc_i) d\fc_r\\
  &-\frac{1}{2\pi }\int^{+\infty}_{-\infty}\lim_{\fc_i\to0+} e^{-i \left(\fc_r+i\fc_i\right) t}\Phi(y,\fc_r+i\fc_i) -\mathring\Phi(y,\fc_r+i\fc_i) d\fc_r\\
  &+\frac{1}{2} \int_{\mathbb R}e^{-|y-y'|}\mathfrak w_{in}(y')e^{-ib_0(y')t}dy'\\
  =&-\frac{1}{\pi}\int_{-\infty}^{b_0(y)} e^{-i\fc_r t} \phi(y,\fc_r)\frac{\mathcal J_1\mathcal J_4+\mathcal J_2\mathcal J_3}{\mathcal J_1^2+\mathcal J_2^2}\int_{+\infty}^y\frac{1}{\phi^2(y',\fc_r)}dy'd\fc_r\\
      &-\frac{1}{\pi}\int_{b_0(y)}^{+\infty} e^{-i\fc_r t}\phi(y,\fc_r) \frac{\mathcal J_1\mathcal J_4+\mathcal J_2\mathcal J_3}{\mathcal J_1^2+\mathcal J_2^2}\int_{-\infty}^y\frac{1}{\phi^2(y',\fc_r)}dy'd\fc_r.
  \end{aligned} 
\end{equation}
This finishes the proof of this proposition.
\end{proof}
}
\section{Enhanced dissipation}\label{appendix-C}
In this section, we prove the enhanced dissipation of the solution to \eqref{eq:b-perturbation-linear}. We have the following proposition. 
\begin{proposition}
Let $\om$ be the solution of \eqref{eq:b-perturbation-linear} with initial data $\om_{in}$, then there exist $t_0$ such that for $0\leq t\leq t_0$,
\beno
\|P_{\neq }\om\|_{L^2}\leq Ce^{C\g t}\|P_{\neq}\om_{in}\|_{L^2},
\eeno
and for $t\geq t_0$, 
\beno
\|P_{\neq}\om\|_{L^2}\leq Ce^{C\g t_0}e^{-c\nu^{\f13}t}\|P_{\neq}\om_{in}\|_{L^2}.
\eeno 
Here the constants $c, C$ are independent of $t,\nu,\g$. 
\end{proposition}
\begin{proof}
We use the idea in the proof of Proposition \ref{prop-lin-upper}. 
Let $z=x-ty$, $f(t,z,y)=\om(t,x,y)$, and $\hat{f}_k(t,\xi)=\f{1}{4\pi^2}\int_{\mathbb{T}\times \mathbb{R}}f(t,z,y)e^{-ikz-i\xi y}dzdy$ then 
\begin{align*}
\pa_t\hat{f}_k(t,\xi)&+ik \mathcal{F}_{(z,y)\to (k,\xi)}\big((b-y)f(t,z,y)\big)+\nu (k^2+(\xi-kt)^2)\hat{f}_k(t,\xi)\\
&-\int_{\mathbb{R}}M\gamma^2(\xi-\eta)e^{-(\nu t+\frac{\gamma^2}{4})|\xi-\eta|^2}\frac{k\hat f_k(t,\eta)}{(\eta-kt)^2+k^2} d\eta =0.
\end{align*}
Let $A_k(t,\xi)=e^{\arctan(\f{\xi}{k}-t)}e^{\arctan((\nu k^2)^{\f13}(\f{\xi}{k}-t))}$, then $e^{-\pi}\leq A_k(t,\xi)\leq e^{\pi}$ and 
\begin{align*}
\f{\pa_tA_k(t,\xi)}{A_k(t,\xi)}=-\f{1}{(\f{\xi}{k}-t)^2+1}-\f{(\nu k^2)^{\f13}}{(\nu k^2)^{\f23}(\f{\xi}{k}-t)^2+1}.
\end{align*}
Then we have
\begin{align*}
&\f12\f{d}{dt}\|A_k(t,\xi)\hat{f}_k(t,\xi)\|_{L^2}^2
+ \nu\|(k^2+(\xi-kt)^2)^{\f12}A_k(t,\xi)\hat{f}_k(t,\xi)\|_{L^2}^2\\
&\quad+\left\|\sqrt{\f{k^2}{(\xi-kt)^2+k^2}}A_k(t,\xi)\hat{f}_k(t,\xi)\right\|_{L^2}^2
+\left\|\sqrt{\f{(\nu k^2)^{\f13}}{(\nu k^2)^{\f23}(\f{\xi}{k}-t)^2+1}}A_k(t,\xi)\hat{f}_k(t,\xi)\right\|_{L^2}^2\\
&\leq C\|b-y\|_{L^{\infty}}\||k|^{\f12}A_k(t,\xi)\hat{f}_k(t,\xi)\|_{L^2}^2\\
&\quad+\left|\int_{\mathbb{R}^2}M\gamma^2(\xi-\eta)e^{-(\nu t+\frac{\gamma^2}{4})|\xi-\eta|^2}\frac{A_k(t,\xi)k\hat f_k(t,\eta)}{(\eta-kt)^2+k^2}A_k(t,\xi)\hat f_k(t,\xi) d\eta d\xi\right|\\
&\leq CM\g^2\||k|^{\f12}A_k(t,\xi)\hat{f}_k(t,\xi)\|_{L^2}^2\\
&\quad+C\f{M}{|k|^{\f12}}\left\|\gamma^2\eta e^{-(\nu t+\frac{\gamma^2}{4})|\eta|^2}\right\|_{L^2_{\eta}}\left\|\sqrt{\f{k^2}{(\xi-kt)^2+k^2}}A\hat{f}_k(t,\xi)\right\|_{L^2}\|A\hat f_k(t,\xi)\|_{L^2}.
\end{align*}
Here we use the fact that 
\beno
\left\|\sqrt{\f{1}{(\xi-kt)^2+k^2}}\right\|_{L^2_{\xi}}\leq \f{1}{|k|^{\f12}}. 
\eeno
We also have 
\begin{align*}
  \left\|\gamma^2\eta e^{-(\nu t+\frac{\gamma^2}{4})|\eta|^2}\right\|_{L^2_{\eta}}\le C \frac{\gamma^2}{(\nu t+\frac{\gamma^2}{4})^{\frac{3}{4}}},
\end{align*}
and
\begin{align*}
&\nu\|(k^2+(\xi-kt)^2)^{\f12}A_k(t,\xi)\hat{f}_k(t,\xi)\|_{L^2}^2
+\left\|\sqrt{\f{(\nu k^2)^{\f13}}{(\nu k^2)^{\f23}(\f{\xi}{k}-t)^2+1}}A_k(t,\xi)\hat{f}_k(t,\xi)\right\|_{L^2}^2\\
&\geq c\left\|(\nu k^2)^{\f16}A_k(t,\xi)\hat{f}_k(t,\xi)\right\|_{L^2}^2+c\left\|(\nu k^2)^{\f12}A_k(t,\xi)\hat{f}_k(t,\xi)\right\|_{L^2}^2+c\left\|(\nu k^2)^{\f14}A_k(t,\xi)\hat{f}_k(t,\xi)\right\|_{L^2}^2.
\end{align*}
Thus we have for $0\leq t\leq t_0$ and $\g^2\ll \nu^{\f12}$
\begin{align}\label{eq:|k|large}
\f12\f{d}{dt}\|A_k(t,\xi)\hat{f}_k(t,\xi)\|_{L^2}^2+c\left\|(\nu k^2)^{\f16}A_k(t,\xi)\hat{f}_k(t,\xi)\right\|_{L^2}^2
\leq C \f{M^2\g}{|k|} \|A\hat f_k(t,\xi)\|_{L^2}^2,
\end{align}
and for $t\geq t_0$
\begin{align*}
&\f12\f{d}{dt}\|A_k(t,\xi)\hat{f}_k(t,\xi)\|_{L^2}^2+c\left\|(\nu k^2)^{\f16}A_k(t,\xi)\hat{f}_k(t,\xi)\right\|_{L^2}^2+\f12\left\|\sqrt{\f{k^2}{(\xi-kt)^2+k^2}}A\hat{f}_k(t,\xi)\right\|_{L^2}^2\\
&\leq C M\f{\g^2\nu^{-\f16}}{(\nu t_0+\frac{\gamma^2}{4})^{\f34}} \nu^{\f16}\|A\hat f_k(t,\xi)\|_{L^2}\left\|\sqrt{\f{k^2}{(\xi-kt)^2+k^2}}A\hat{f}_k(t,\xi)\right\|_{L^2},
\end{align*}
The proposition follows directly by taking $t_0\geq CM^{\f43}\nu^{-\f13-\f{16}{9}\d_0}$. 

Note that by the estimate \eqref{eq:|k|large}, for $k\gg v^{-\f{2}{5}\d_0}$, it holds that
\beno
\|\hat{f}_k(t,\xi)\|_{L^2}^2\leq \|\hat{f}(0,\xi)\|_{L^2}^2,\quad \text{for}\ t\leq t_0,
\eeno
which means that the growth can only happen for small $k$. 
\end{proof}

\end{appendix}

\section*{Acknowledgements}
The work of N. M. is supported by NSF grant DMS-1716466 and by Tamkeen under the NYU Abu Dhabi Research Institute grant of the center SITE. 

\bibliographystyle{siam.bst} 
\bibliography{unstableshearflow.bib}

\end{document}